\documentclass[10pt,a4paper]{amsart}
\usepackage{amsfonts,amsmath,amssymb}
\usepackage{hyperref}
\usepackage[all]{xy}

\newtheorem*{theorem*}{Theorem}
\newtheorem*{corollary*}{Corollary}
\newtheorem{lemma}{Lemma}[subsection]
\newtheorem{proposition}[lemma]{Proposition}
\newtheorem{remark}[lemma]{Remark}

\newtheorem{theorem}[lemma]{Theorem}
\newtheorem{definition}[lemma]{Definition}
\newtheorem{notation}[lemma]{Notation}

\newtheorem{corollary}[lemma]{Corollary}
\newtheorem{fact}[lemma]{Fact}

\newtheorem*{conjecture*}{Conjecture}

\oddsidemargin=0.5cm
\evensidemargin=0.5cm
\textheight=21.5cm
\linespread{1.05}

\baselineskip 24pt \textwidth 15 cm \sloppy \theoremstyle{plain}
%
%
%


\newcommand{\tr}{\operatorname{Tr}}

\newcommand{\Hom}{\operatorname{Hom}}

\newcommand{\Spec}{\operatorname{Spec}}
\newcommand{\Lie}{\operatorname{Lie}}

\newcommand{\cc}{\mathbb{C}}

\newcommand{\eps}{\varepsilon}

\newcommand{\re}{\operatorname{Re}}

\newcommand{\Gr}{\operatorname{Gr}}

\newcommand{\N}{{\mathbb N}}

\newcommand{\Z}{{\mathbb Z}}

\newcommand{\R}{{\mathbb R}}
\newcommand{\C}{{\mathbb C}}

\newcommand{\Sc}{{\mathcal S}}

\newcommand{\Fre}{{Fr\'{e}chet \,}}

\newcommand{\tG}{{\widetilde{G}}}

\newcommand{\Fou}{{\mathcal{F}}}

\newcommand{\cD}{{\mathcal{D}}}
\newcommand{\g}{{\mathfrak{g}}}
\newcommand{\h}{{\mathfrak{h}}}

\newcommand{\Supp}{\mathrm{Supp}}

\newcommand{\cN}{{\mathcal{N}}}
\newcommand{\Zar}{{Zariski }}
\renewcommand{\sl}{{\mathrm{sl} }}

\begin{document}

\author{Avraham Aizenbud}
\address{Avraham Aizenbud and Dmitry Gourevitch, Faculty of Mathematics
and Computer Science, The Weizmann Institute of Science POB 26,
Rehovot 76100, ISRAEL.} \email{aizenr@yahoo.com}
\author{Dmitry Gourevitch} \email{guredim@yahoo.com}

\title[Multiplicity one theorem for $(\mathrm{GL}_{n+1}(\R),\mathrm{GL}_n(\R))$]
{Multiplicity one theorem for $(\mathrm{GL}_{n+1}(\R),\mathrm{GL}_n(\R))$}
\date{\today}

\keywords{Multiplicity one, Gelfand pair, invariant distribution, coisotropic subvariety. \\
\indent MSC Classification: 20G05, 22E45, 20C99, 46F10}
%
%
%
%
%
%
%
%
%
%

\begin{abstract}
Let $F$ be either $\R$ or $\C$. Consider the standard embedding
$\mathrm{GL}_n(F) \hookrightarrow \mathrm{GL}_{n+1}(F)$ and the
action of $\mathrm{GL}_n(F)$ on $\mathrm{GL}_{n+1}(F)$ by
conjugation.

In this paper we show that any $\mathrm{GL}_n(F)$-invariant
distribution on $\mathrm{GL}_{n+1}(F)$ is invariant with respect
to transposition.

We show that this implies that for any irreducible admissible
smooth \Fre representations $\pi$ of $\mathrm{GL}_{n+1}(F)$ and
$\tau$ of $\mathrm{GL}_{n}(F)$,
$$\dim \Hom_{\mathrm{GL}_n(F)}(\pi,\tau) \leq 1.$$

For p-adic fields those results were proven in \cite{AGRS}.
\end{abstract}

\maketitle
%
\setcounter{tocdepth}{3}
\tableofcontents

\section{Introduction}


Let $F$ be an archimedean local field, i.e. $F = \R$ or $F=\C$.
Consider the standard imbedding $\mathrm{GL}_n(F) \hookrightarrow
\mathrm{GL}_{n+1}(F)$. We consider the action of
$\mathrm{GL}_n(F)$ on $\mathrm{GL}_{n+1}(F)$ by conjugation. In
this paper we prove the following theorem:
\begin{theorem*} {\bf A}.
Any $\mathrm{GL}_n(F)$ - invariant
distribution on $\mathrm{GL}_{n+1}(F)$ is invariant with respect
to transposition.
\end{theorem*}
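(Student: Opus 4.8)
The plan is to follow the Gelfand--Kazhdan approach, reducing the statement about invariant distributions on the group to a statement about invariant distributions on the Lie algebra, and then to analyze the latter using a local ``wave-front'' (singular support) argument together with an induction on $n$. Concretely, write $G = \mathrm{GL}_{n+1}(F)$, $H = \mathrm{GL}_n(F)$ acting by conjugation. The transposition $\theta(g) = g^t$ is an anti-involution which, composed with inversion, gives an involution $\sigma$ of $G$ commuting with the $H$-action; since $\theta$ preserves each $H$-orbit-closure data we want, it suffices to show every $H$-invariant distribution on $G$ is $\sigma$-invariant. A Harish-Chandra type descent/linearization reduces this to the analogous statement on $\g = \mathfrak{gl}_{n+1}(F)$, or more precisely on the space $X = \mathfrak{gl}_n(F) \oplus F^n \oplus (F^n)^* \oplus F$ obtained by writing an element of $\g$ in block form relative to the embedding; here $H$ acts in the natural way and $\sigma$ acts by transposing the off-diagonal vectors. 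So the goal becomes: every $H$-invariant distribution on $X$ is invariant under the involution swapping $v \in F^n$ with $\phi \in (F^n)^*$.

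The key steps, in order, are: (1) Set up the Gelfand pair formalism and verify that $\sigma$-invariance of all $H$-invariant distributions implies the Hom-space bound, via the standard Gelfand--Kazhdan criterion adapted to smooth \Fre representations (one needs a version of the criterion valid in the archimedean setting --- this is where results like the ones cited in \cite{AGRS} and their archimedean analogues enter). (2) Perform Harish-Chandra descent: stratify $G$ (resp.\ $\g$) by $H$-orbits through semisimple elements, use Frobenius reciprocity / Bernstein's localization principle to reduce to a neighborhood of each semisimple orbit, and linearize via the exponential map, reducing to the Lie algebra problem on $X$ and, by an inductive application to smaller general linear groups arising as centralizers, reduce to the ``nilpotent'' part. (3) Analyze the remaining distributions supported on the nilpotent cone: here one uses the Fourier transform and the homogeneity/Weil-representation action of $\mathfrak{sl}_2$ (the $\sl_2$-triple $(e,h,f)$) to constrain the singular support, showing any such distribution must be supported on a ``small'' (coisotropic) subvariety, and then that on this subvariety the distribution is automatically $\sigma$-invariant. (4) Combine: an $H$-invariant distribution not killed by $(1-\sigma)$ would, after descent and Fourier analysis, be forced to live on a subvariety that is not coisotropic with respect to the natural symplectic form, contradicting the integrability/coisotropicity theorem for singular supports of invariant distributions.

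The main obstacle --- and the genuinely new content compared to the $p$-adic case --- is Step (3) together with the Fourier-analytic input in Step (4): over archimedean fields one cannot use the $p$-adic ``no small $\ell$-adic shells'' counting, so one must replace it with the theory of the singular support (wave front set) of a distribution and the theorem that the singular support of an equivariant distribution is coisotropic, combined with a careful homogeneity argument using the action of $\mathfrak{sl}_2$ via the Weil representation to play Fourier transform against scaling. Making the descent (Step 2) rigorous in the archimedean category is also delicate: one needs Bernstein--Gelfand localization, Frobenius descent for Nash manifolds, and control of distributions along non-closed orbits, all of which require the smooth \Fre / Nash-manifold machinery rather than the elementary $\ell$-sheaf arguments available $p$-adically. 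I expect the bulk of the work, and the place where a subtle error is most likely to hide, to be the verification that after all reductions the offending distribution lives on a subvariety violating coisotropicity --- i.e., the precise geometric computation identifying the relevant symplectic form and the nilpotent strata.
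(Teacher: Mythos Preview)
Your outline matches the paper's approach closely: linearization to $X = \sl(V)\times V\times V^*$, Harish--Chandra descent with induction on $n$ to reduce to the singular set, then D-module singular support plus Gabber's coisotropicity theorem, with the Homogeneity Theorem (via the Weil representation) to finish. You have also correctly identified where the archimedean-specific difficulty lies.

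One clarification about the logical flow in your Steps (3)--(4). The argument does \emph{not} end in a direct ``support is non-coisotropic, contradiction with Gabber.'' Rather, the paper proves a purely geometric statement: every coisotropic subvariety of $\check{S}\subset T^*X$ is already contained in the set $\check{C}_{X\times X}$ corresponding to $\sl(V)\times C$ with $C=(V\times 0)\cup(0\times V^*)$. Gabber then forces $SS(\xi)\subset\check{C}_{X\times X}$, hence $\Supp(\xi)\subset \sl(V)\times C$; at this point the Homogeneity Theorem (not used earlier to constrain the singular support, but only here, on the $V\times V^*$ factor) shows any such $\xi$ vanishes. So the Weil/homogeneity input and the coisotropicity input play separate roles, in the opposite order from what you wrote. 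Also, your Step (1) concerns the implication A $\Rightarrow$ B and is not part of the proof of Theorem A itself.
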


It has the following corollary in representation theory.

\begin{theorem*} {\bf B}.
Let $\pi$ be an irreducible admissible smooth \Fre representation of
$\mathrm{GL}_{n+1}(F)$ and $\tau$  be an irreducible admissible smooth \Fre representation of
$\mathrm{GL}_{n}(F)$. Then
\begin{equation}\label{dim1}
\dim \Hom_{\mathrm{GL}_n(F)}(\pi,\tau) \leq 1.
\end{equation}
\end{theorem*}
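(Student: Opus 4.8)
The plan is to deduce \eqref{dim1} from Theorem A via the Archimedean Gelfand--Kazhdan method. Put $\widetilde G:=\mathrm{GL}_{n+1}(F)\times\mathrm{GL}_n(F)$ and let $\widetilde H$ be the copy of $\mathrm{GL}_n(F)$ embedded diagonally in $\widetilde G$ through the inclusion $\mathrm{GL}_n\hookrightarrow\mathrm{GL}_{n+1}$. For irreducible admissible smooth \Fre representations $\pi$ of $\mathrm{GL}_{n+1}(F)$ and $\tau$ of $\mathrm{GL}_n(F)$, Casselman--Wallach theory yields a natural isomorphism $\Hom_{\mathrm{GL}_n(F)}(\pi,\tau)\cong\Hom_{\widetilde H}(\pi\,\widehat\otimes\,\widetilde\tau,\,\C)$, where $\pi\,\widehat\otimes\,\widetilde\tau$ is regarded as a representation of $\widetilde G$ restricted to $\widetilde H$. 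Since the contragredient on $\mathrm{GL}_m(F)$ is precomposition with the automorphism $g\mapsto{}^tg^{-1}$, and this automorphism of $\mathrm{GL}_{n+1}$ restricts to the corresponding one on $\mathrm{GL}_n$, for $\rho:=\pi\boxtimes\widetilde\tau$ we have $\widetilde\rho\cong\widetilde\pi\boxtimes\tau$ and hence $\Hom_{\widetilde H}(\widetilde\rho,\C)\cong\Hom_{\mathrm{GL}_n(F)}(\widetilde\pi,\widetilde\tau)\cong\Hom_{\mathrm{GL}_n(F)}(\pi,\tau)$. Therefore \eqref{dim1} will follow once we prove that $(\widetilde G,\widetilde H)$ is a Gelfand pair, in the sense that $\dim\Hom_{\widetilde H}(\rho,\C)\cdot\dim\Hom_{\widetilde H}(\widetilde\rho,\C)\le 1$ for every irreducible admissible smooth \Fre representation $\rho$ of $\widetilde G$.

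To establish the Gelfand pair property I would invoke the Archimedean Gelfand--Kazhdan criterion: it is enough to produce a continuous involutive anti-automorphism $\tau_0$ of $\widetilde G$ with $\tau_0(\widetilde H)=\widetilde H$ such that every bi-$\widetilde H$-invariant distribution on $\widetilde G$ is $\tau_0$-invariant. I take $\tau_0(g,h):=({}^tg,{}^th)$, which is involutive and preserves $\Delta\mathrm{GL}_n(F)$. Verifying the distributional hypothesis is where Theorem A comes in. The change of variables $(g,h)\mapsto(m,h)$ with $m:=h^{-1}g$ turns the two-sided $\widetilde H$-action on $\widetilde G$ into the action $h\mapsto k_1hk_2$, $m\mapsto k_2^{-1}mk_2$ on $\mathrm{GL}_n(F)\times\mathrm{GL}_{n+1}(F)$ (for $k_1,k_2\in\mathrm{GL}_n(F)$). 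Because $\mathrm{GL}_n(F)$ is unimodular, a distribution invariant under left translation in the $h$-coordinate is of the form (Haar measure in $h$)$\,\boxtimes\,\eta$ for a unique $\eta\in\mathcal{D}'(\mathrm{GL}_{n+1}(F))$, and the remaining invariance then says precisely that $\eta$ is invariant under $\mathrm{GL}_n(F)$-conjugation; this sets up a bijection between bi-$\widetilde H$-invariant distributions on $\widetilde G$ and $\mathrm{GL}_n(F)$-conjugation-invariant distributions on $\mathrm{GL}_{n+1}(F)$. Tracing $\tau_0$ through this bijection, in the $(m,h)$-coordinates $\tau_0$ acts by $h\mapsto{}^th$ and $m\mapsto{}^t(hmh^{-1})$; transposition preserves the Haar measure of $\mathrm{GL}_n(F)$, and --- since $\eta$ is already conjugation-invariant --- pushing $\eta$ forward along $m\mapsto{}^t(hmh^{-1})$ gives the same result as pushing it forward along $m\mapsto{}^tm$, regardless of $h$. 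Hence $\tau_0$-invariance of the distribution on $\widetilde G$ is equivalent to invariance of $\eta$ under transposition on $\mathrm{GL}_{n+1}(F)$, which is exactly the assertion of Theorem A.

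Assembling these steps proves \eqref{dim1}. I expect the main obstacle in carrying this out to be not geometric --- all of the geometry is concentrated in Theorem A --- but rather the functional-analytic bookkeeping specific to the Archimedean setting: justifying the Gelfand--Kazhdan criterion for smooth \Fre (Casselman--Wallach) representations, which rests on the injectivity and $\widetilde G\times\widetilde G$-equivariance of the generalized matrix-coefficient map $\rho\,\widehat\otimes\,\widetilde\rho\to\mathcal{D}'(\widetilde G)$ together with finite-dimensionality of the relevant $\Hom$-spaces, and the adjunction isomorphism $\Hom_{\mathrm{GL}_n(F)}(\pi,\tau)\cong\Hom_{\widetilde H}(\pi\,\widehat\otimes\,\widetilde\tau,\C)$ for such representations. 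All of these ingredients are available in the literature on the Archimedean multiplicity-one problem, so once they are cited the deduction becomes routine.
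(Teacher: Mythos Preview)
Your proposal is correct and follows essentially the same route as the paper's Appendix~\ref{BtoA}: pass to the pair $(\mathrm{GL}_{n+1}\times\mathrm{GL}_n,\Delta\mathrm{GL}_n)$, reduce bi-invariant distributions to conjugation-invariant ones on $\mathrm{GL}_{n+1}$ (the paper does this via Frobenius reciprocity rather than your explicit change of variables), apply Theorem~A together with the archimedean Gelfand--Kazhdan criterion from \cite{AGS}, and use $\widetilde\pi\cong\widehat\pi$ to match the two $\Hom$-spaces. Two small points to tighten: the paper only proves (and only needs) an \emph{embedding} $\Hom_{\mathrm{GL}_n}(\pi,\tau)\hookrightarrow\Hom_{\Delta\mathrm{GL}_n}(\pi\widehat\otimes\widetilde\tau,\C)$ rather than the isomorphism you assert, and you should record that $\pi\boxtimes\widetilde\tau$ is irreducible (Proposition~\ref{PropTensor}) before invoking the criterion.
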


We deduce Theorem B from Theorem A using an argument due to
Gelfand and Kazhdan adapted to the archimedean case in \cite{AGS}.

Property (\ref{dim1}) is sometimes called {\it strong Gelfand
property} of the pair $(\mathrm{GL}_{n+1}(F),\mathrm{GL}_n(F))$.
It is equivalent to the fact that the pair $(\mathrm{GL}_{n+1}(F)
\times \mathrm{GL}_{n}(F), \Delta \mathrm{GL}_{n}(F))$ is a {\it
Gelfand pair}.

\begin{remark}
Using the tools developed here, combined with \cite{AGRS}, one can
easily show that Theorem A implies an analogous theorem for the
unitary groups.
\end{remark}

\begin{remark}
After the completion of this work we found out that Chen-Bo Zhu
and Sun Binyong have obtained the same results simultaneously,
independently and in a different way, see \cite{SZ}.

They also proved an analogous theorem for the orthogonal groups.
\end{remark}

\subsection{Some related results} $ $\\
For non-archimedean local fields of characteristic zero theorems A
and B were proven in \cite{AGRS}. The current paper heavily uses
the theory of D-modules, which cannot be directly applied to the
non-archimedean case. For this reason, currently there is no
uniform proof for all local fields. However, we hope that one
could develop non-archimedean analogues of the D-module techniques
that we use here.

In \cite{AGS}, a special case of  Theorem B was proven for all
local fields; namely the case when $\tau$ is one-dimensional.

Theorem A has the following corollary.

\begin{corollary*}
Let $P_n \subset \mathrm{GL}_n$ be the subgroup consisting of all
matrices whose last row is $(0,...,0,1)$. Let $ \mathrm{GL}_n$ act
on itself by conjugation. Then every $ P_n(F)$ - invariant
distribution on $ \mathrm{GL}_n(F)$ is $ \mathrm{GL}_n(F)$ -
invariant.
\end{corollary*}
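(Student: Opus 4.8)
The plan is to derive the corollary from Theorem A by a direct distribution-theoretic argument. First I would reformulate the assertion: a $P_n(F)$-invariant distribution $\xi$ on $\mathrm{GL}_n(F)$ is $\mathrm{GL}_n(F)$-invariant if and only if it is invariant under the diagonal torus, or equivalently (since $\mathrm{GL}_n$ is generated by $P_n$ together with a single extra element, or by $P_n$ and its transpose) it suffices to show $\xi$ is invariant under transposition $g \mapsto g^t$. Indeed, $P_n$ and $P_n^t$ generate $\mathrm{GL}_n$ — the only thing missing from $P_n$ is the bottom-left entries, which $P_n^t$ supplies — so if $\xi$ is invariant under both $P_n$ and transposition, it is invariant under $P_n^t$ and hence under all of $\mathrm{GL}_n(F)$.

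So the task reduces to showing that a $P_n(F)$-invariant, conjugation-relevant distribution is transposition-invariant, and the idea is to embed this into the situation of Theorem A. Here I would use the standard identification of $\mathrm{GL}_{n-1} \backslash\backslash \mathrm{GL}_n$-type geometry: the group $P_n$ is, up to the center and a unipotent radical, isomorphic to $\mathrm{GL}_{n-1} \ltimes F^{n-1}$, and a $P_n(F)$-invariant distribution on $\mathrm{GL}_n(F)$ can be related to a $\mathrm{GL}_{n-1}(F)$-invariant distribution on $\mathrm{GL}_n(F)$ via integration along the unipotent radical of $P_n$ (using that the relevant unipotent action has no invariant distributions obstructing this, or more carefully via a Frobenius-type descent). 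Concretely, one shows that $P_n(F)$-invariance of $\xi$ on $\mathrm{GL}_n(F)$, combined with the structure of $\mathrm{GL}_n = \mathrm{GL}_{n-1} \ltimes (\text{stuff})$ acting by conjugation, forces $\xi$ to be $\mathrm{GL}_{n-1}(F)$-invariant. Then Theorem A, applied with $n$ replaced by $n-1$ (so for the pair $(\mathrm{GL}_n(F), \mathrm{GL}_{n-1}(F))$), tells us that every $\mathrm{GL}_{n-1}(F)$-invariant distribution on $\mathrm{GL}_n(F)$ is transposition-invariant, which is exactly what we need.

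I expect the main obstacle to be the descent step: showing that $P_n(F)$-invariance upgrades to $\mathrm{GL}_{n-1}(F)$-invariance, i.e. that the extra unipotent directions in $P_n$ relative to $\mathrm{GL}_{n-1}$ do not cost us anything. The naive hope is that $P_n \supset \mathrm{GL}_{n-1}$ and the quotient is unipotent, so invariance under the bigger group $P_n$ should trivially imply invariance under the smaller group $\mathrm{GL}_{n-1}$ — and indeed, $\mathrm{GL}_{n-1} \subset P_n$ as the subgroup fixing the last coordinate vector, so any $P_n(F)$-invariant distribution is automatically $\mathrm{GL}_{n-1}(F)$-invariant. So this direction is in fact immediate, and the real content is just the generation statement $\langle P_n, P_n^t\rangle = \mathrm{GL}_n$ together with transposition-invariance from Theorem A. The one subtlety worth checking carefully is that transposition, which is an anti-automorphism, interacts correctly with the conjugation action: if $\xi$ is $\mathrm{GL}_{n-1}(F)$-invariant then so is its transpose-pushforward $\xi^t$, and Theorem A says $\xi = \xi^t$; combined with $P_n(F)$-invariance of $\xi$ this gives $P_n^t(F)$-invariance, hence $\mathrm{GL}_n(F)$-invariance. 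I would write out this last chain of implications explicitly, as it is the crux of the deduction.
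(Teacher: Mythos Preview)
Your argument is correct, and it is presumably what the authors have in mind: the paper does not actually supply a proof of this corollary, it only states it as a consequence of Theorem A. The clean chain of implications you arrive at (after the unnecessary detour through Frobenius-type descent and unipotent radicals) is exactly right: since $\mathrm{GL}_{n-1}(F) \subset P_n(F)$, any $P_n(F)$-invariant distribution is $\mathrm{GL}_{n-1}(F)$-invariant; Theorem A for the pair $(\mathrm{GL}_n,\mathrm{GL}_{n-1})$ then gives transposition-invariance; transposition intertwines $P_n$-conjugation with $P_n^t$-conjugation, so $\xi$ is $P_n^t(F)$-invariant as well; and $P_n$ together with $P_n^t$ generates $\mathrm{GL}_n$.

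One small point worth making explicit in a write-up: the generation claim $\langle P_n, P_n^t\rangle = \mathrm{GL}_n$ deserves a line of justification. For instance, $P_n$ contains all upper unipotent elementary matrices and $P_n^t$ all lower ones, so together they contain $\mathrm{SL}_n$; and $P_n$ already contains $\mathrm{diag}(d,1,\ldots,1)$, which with $\mathrm{SL}_n$ gives all of $\mathrm{GL}_n$.
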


This has been proven in \cite{Bar} for eigendistributions with respect to the center of $U_{\C}(\mathrm{gl}_n)$.
In \cite{Bar} it is also shown that this implies Kirillov's conjecture.

\subsection{Structure of the proof}  $ $\\
We will now briefly sketch the main ingredients of our proof of
Theorem A.

First we show that we can switch to the following problem. The
group $\mathrm{GL}_n(F)$ acts on a certain linear space $X_n$ and
$\sigma$ is an involution of $X_n$. We have to prove that every
$\mathrm{GL}_n(F)$-invariant distribution on $X_n$ is also
$\sigma$-invariant. We do that by induction on $n$. Using the
Harish-Chandra descent method we show that the induction
hypothesis implies that this holds for distributions on the
complement to a certain small closed subset $S \subset X_n$. We
call this set {\it the singular set}.

Next we assume the contrary: there exists a non-zero
$\mathrm{GL}_n(F)$-invariant distribution $\xi$ on $X$ which is
anti-invariant with respect to $\sigma$.

We use the notion of singular support of a distribution from the
theory of D-modules. Let $T \subset T^*X$ denote the singular
support of $\xi$. Using Fourier transform and the fact any such
distribution is supported in $S$ we obtain that $T$ is contained
in $\check{S}$ where $\check{S}$ is a certain small subset in
$T^*X$.

Then we use a deep result from the theory of D-modules which
states that the singular support of a distribution is a
coisotropic variety in the cotangent bundle. This enables us to
show, using a complicated but purely geometric argument, that the
support of $\xi$ is contained in a much smaller subset of $S$.

Finally it remains to prove that any $\mathrm{GL}_n(F)$-invariant
distribution that is supported on this subset together with its
Fourier transform is zero. This is proven using Homogeneity
Theorem (Theorem \ref{ArchHom}) which in turn uses Weil
representation.

\subsection{Content of the paper}$ $\\
In section \ref{Prel} we give the necessary preliminaries for the paper.

In subsection \ref{GenNat} we fix the general notation that we will use.

In subsection \ref{InvDist} we discuss invariant distributions and introduce some tools to work with them. The most advanced are
\begin{itemize}
\item The Homogeneity theorem and Fourier transform.
\item The Harish-Chandra descent method.
\end{itemize}
%

In subsection \ref{D-mod} we discuss the notion of
singular support of a distribution. The most important for us
property of this singular support is being coisotropic. This fact
is a crucial tool of this paper.

In subsection \ref{SpecNat} we introduce notation that we will use
in our proof.\\

In section \ref{HC} we use the Harish-Chandra descent method.

In subsection \ref{Lin} we linearize the problem to a problem on
the linear space $X=\sl(V)\times V \times V^*$, where $V=F^n$.

In subsection \ref{subHC} we perform the Harish-Chandra descent on
the $\sl(V)$-coordinate and $V\times V^*$ coordinate separately
and then use automorphisms $\nu_{\lambda}$ of $X$ to descend
further
to the singular set $S$.\\

In section \ref{Red2Geo} we reduce Theorem A to the following
geometric statement: any coisotropic subvariety of $\check{S}$ is
contained in a certain set $\check{C}_{X \times X}$. The reduction
is done using the fact that the singular support of a distribution
has to be coisotropic, and the following proposition: any
$\mathrm{GL}(V)$-invariant distribution on $X$ such that it and
its Fourier transform are supported on $\sl(V)\times (V \times 0
\cup 0 \times V^*)$ is zero.

In subsection \ref{ProofLemCros} we prove this proposition using
Homogeneity theorem.\\

In section \ref{ProofGeo} we prove the geometric statement. This is the most complicated part of the paper.

In subsection \ref{PrelCoisot} we give preliminaries on
coisotropic subvarieties. In particular, we give a geometric
partial analog of Frobenius reciprocity for coisotropic
subvarieties (Corollaries \ref{PreGeoFrob} and \ref{GeoFrob}).

In subsection \ref{RedKeyProp} we stratify the set $\check{S}$ and use an inductive argument on the strata. This reduces the geometric statement to a
proposition on one stratum that we call {\it the Key Proposition}.

In subsection \ref{RedKeyLem} we analyze a stratum of $\check{S}$ and then use the geometric analog of Frobenius reciprocity to reduce the Key
Proposition to a lemma on $V \times V^* \times V \times V^*$ that we call {\it the Key Lemma}.

In subsection \ref{ProofKeyLem} we prove the Key Lemma.\\

In Appendix \ref{BtoA} we prove that Theorem A implies Theorem B using an archimedean analog of Gelfand-Kazhdan technique.

In Appendix \ref{AppDmod} we give more details on the facts
concerning the theory of D-modules listed in subsection
\ref{D-mod}.
\subsection{Acknowledgements}$ $\\
We thank {\bf Joseph Bernstein} for our mathematical education. We
 thank {\bf Joseph Bernstein}, {\bf David Kazhdan}, {\bf
Bernhard Kroetz}, {\bf Eitan Sayag} and {\bf G\'{e}rard
Schiffmann} for fruitful discussions. We also thank {\bf Moshe
Baruch}, {\bf Erez Lapid} and {\bf Siddhartha Sahi} for useful
remarks.

Part of the work on this paper was done while we visited the Max
Planck Institute for Mathematics in Bonn. This visit was funded by
the Bonn International Graduate School.
%

\section{Preliminaries} \label{Prel}

\subsection{General notation} \label{GenNat}

\begin{itemize}
\item In this paper all the algebraic varieties are defined over $F$.

\item
For an algebraic variety $X$ we
denote by $X(F)$ the topological space or smooth manifold of $F$ points of $X$.

\item We consider linear spaces as algebraic varieties and treat
them in the same way.

\item For an algebraic variety $X$ we denote by $X_{\C}$ its complexification $X \times_{\Spec \R} \Spec \C$. Note that if $X$ is defined over
$\R$ then $X_{\C} \cong X \times X$.

%
\item For a group $G$ acting on a set $X$ and a point $x \in X$ we denote by $Gx$ or by $G(x)$ the orbit of $x$ and by $G_x$ the stabilizer of $x$.

\item An action of a Lie algebra $\g$ on a (smooth, algebraic, etc) manifold $M$ is a Lie algebra homomorphism from $\g$ to the Lie algebra of vector fields on $M$.
Note that an action of a (Lie, algebraic, etc) group on $M$ defines an action of its Lie algebra on $M$.

\item For a Lie algebra $\g$ acting on $M$, an element $\alpha \in \g$ and a point $x \in M$ we denote by $\alpha(x) \in T_xM$ the value at point $x$ of the vector field corresponding to $\alpha$.
We denote by $\g x \subset T_xM$ or by $\g (x)$ the image of the map $\alpha \mapsto \alpha(x)$ and by $\g_x \subset \g$ its kernel.

\item  For manifolds  $L \subset M$ we
denote by $N_L^M:=(T_M|_L)/T_L $ the normal bundle to $L$ in $M$.

\item Denote by $CN_L^M:=(N_L^M)^*$ the conormal  bundle.

\item For a point
$y\in L$ we denote by $N_{L,y}^M$ the normal space to $L$ in $M$
at the point $y$ and by $CN_{L,y}^M$ the conormal space.
\end{itemize}

\subsection{Invariant distributions} \label{InvDist}  
%
\subsubsection{Distributions on smooth manifolds} 
%

\begin{notation}
Let $X$ be a smooth manifold. Denote by $C_c^{\infty}(X)$ the space
of test functions on $X$, that is smooth compactly supported
functions, with the standard topology, i.e. the topology of
inductive limit of \Fre spaces.

Denote $\cD (X):= C_c^{\infty}(X)^*$ to be the dual space to
$C_c^{\infty}(X)$.

For any vector bundle $E$ over $X$ we denote by
$C_c^{\infty}(X,E)$ the space of smooth compactly supported
sections of $E$ and by $\cD (X,E)$ its dual space. Also, for any
finite dimensional real vector space $V$ we denote
$C_c^{\infty}(X,V):=C_c^{\infty}(X,X \times V)$ and
$\cD(X,V):=\cD(X,X\times V)$, where $X \times V$ is a trivial
bundle.
\end{notation}
\subsubsection{Schwartz distributions on Nash manifolds}
$ $\\
Our proof of Theorem A widely uses Fourier transform which cannot
be applied to general distributions. For this we require a theory
of Schwartz functions and distributions as developed in \cite{AG1}.
This theory is developed for Nash manifolds. Nash manifolds are
smooth semi-algebraic manifolds but in the present work only
smooth real algebraic manifolds are considered. Therefore the reader can
safely replace the word {\it Nash} by {\it smooth real algebraic}.

Schwartz functions are functions that decay, together with all
their derivatives, faster than any polynomial. On $\R^n$ it is the
usual notion of Schwartz function. For precise definitions of
those notions we refer the reader to \cite{AG1}. We will use the
following notations.

\begin{notation}
Let $X$ be a Nash manifold. Denote by $\Sc(X)$ the \Fre space of
Schwartz functions on $X$.

Denote by $\Sc^*(X):=\Sc(X)^*$ the space of Schwartz distributions
on $X$.

For any Nash vector bundle $E$ over $X$ we denote by $\Sc(X,E)$ the
space of Schwartz sections of $E$ and by $\Sc^*(X,E)$ its dual
space.
\end{notation}

\begin{notation}
Let $X$ be a smooth manifold and let $Z \subset X$ be a closed
subset. We denote $\Sc^*_X(Z):= \{\xi \in \Sc^*(X)|\Supp(\xi)
\subset Z\}$.

For a locally closed subset $Y \subset X$ we denote
$\Sc^*_X(Y):=\Sc^*_{X\setminus (\overline{Y} \setminus Y)}(Y)$. In
the same way, for any bundle $E$ on $X$ we define $\Sc^*_X(Y,E)$.
\end{notation}



\begin{remark}
Schwartz distributions have the following two advantages over
general distributions:\\
(i) For a Nash manifold $X$ and an open Nash submanifold $U\subset
X$, we have the following exact sequence
$$0 \to \Sc^*_X(X \setminus U)\to \Sc^*(X) \to \Sc^*(U)\to 0.$$
(ii) Fourier transform defines an isomorphism $\Fou:\Sc^*(\R^n)
\to \Sc^*(\R^n)$.
\end{remark}

The following theorem allows us to switch between general
distributions and Schwartz distributions.

\begin{theorem}   \label{NoSNoDist}
Let a reductive group $G$ act on a smooth affine variety $X$. Let
$V$ be a finite dimensional continuous representation of $G(F)$
over $\R$. Suppose that $\Sc^*(X(F),V)^{G(F)}=0$. Then
$\cD(X(F),V)^{G(F)}=0$.
\end{theorem}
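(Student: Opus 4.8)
The plan is to reduce the statement about general distributions to the analogous statement about Schwartz distributions, which is assumed to vanish by hypothesis. The key point is that the quotient $\cD(X(F),V)^{G(F)}/\Sc^*(X(F),V)^{G(F)}$ can be analyzed by a devissage on $G(F)$-orbits, and on each orbit the relevant obstruction space is again controlled by Schwartz distributions on a smaller variety. More precisely, I would argue as follows.

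First I would recall Bernstein's localization principle (or the "Frobenius descent" formalism for invariant distributions): since $G$ is reductive acting on an affine variety $X$, the $G(F)$-orbits in $X(F)$ are locally closed and admit a stratification $X(F) = \bigsqcup_i Z_i$ by $G(F)$-stable locally closed subsets (e.g. strata of the stratification by orbit type, or a filtration by closed subsets $X = X_0 \supset X_1 \supset \cdots$ with $X_i \setminus X_{i+1}$ a smooth $G(F)$-variety). A general distribution $\xi \in \cD(X(F),V)^{G(F)}$, when restricted to the largest open stratum, lies in the space of $G(F)$-invariant distributions there; by Frobenius reciprocity (Bernstein) this space is built out of $G_x(F)$-invariant functionals on $V \otimes (\text{normal data})$ for $x$ in that stratum. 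The crucial observation is that \emph{whether a general distribution or a Schwartz distribution}, the obstruction to extending from $U$ to $X$ or to having nonzero invariant distributions on a single orbit is the same finite-dimensional linear-algebra datum — it does not see the difference between $\cD$ and $\Sc^*$, because a single orbit is a homogeneous space $G(F)/G_x(F)$ and invariant distributions on it correspond to $G_x(F)$-coinvariants of a fixed vector space of "transversal jets," with no growth condition entering.

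Concretely, the steps in order: (1) Choose a $G(F)$-invariant finite stratification of $X(F)$ into smooth locally closed pieces $Z_i$. (2) By a standard induction on the stratification (using that $\cD$ also has the exact sequence $0 \to \cD_X(X\setminus U) \to \cD(X) \to \cD(U) \to 0$ for $U$ open), it suffices to show that for each $i$, the space of $G(F)$-invariant distributions on $X(F)$ supported on $\overline{Z_i}$ with given transversal order along $Z_i$, valued in $V$, vanishes — and similarly that $\Sc^*$-version of this vanishes. (3) Identify, via Frobenius descent, the $G(F)$-invariant distributions supported on the orbit closure of $x \in Z_i$ with the appropriate $\tr$-twisted $G_x(F)$-invariant functionals on $V \otimes \Sym(CN^{X}_{Z_i,x}) \otimes (\text{density twist})$; the same identification holds verbatim for Schwartz distributions (a Schwartz function on $G(F)/G_x(F)$ still surjects onto all the jet data). (4) Conclude: if the Schwartz-invariant distribution space is zero, then all these finite-dimensional functional spaces are zero, hence the general invariant distribution space is zero as well.

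The main obstacle I expect is making step (3) precise, i.e. showing that the "Frobenius descent" description of invariant distributions on a neighborhood of an orbit is insensitive to passing from $\cD$ to $\Sc^*$. This requires knowing that every Schwartz function on $X(F)$ restricts to "enough" Schwartz functions/jets along each orbit — which is where the Nash-category machinery of \cite{AG1} (Schwartz sections, the exact sequence for open Nash submanifolds, and Nash tubular neighborhoods of $G(F)$-orbits) is genuinely needed. One must also be slightly careful that the stratification and the tubular neighborhoods can be chosen in the Nash category so that everything is compatible; since $X$ is affine and $G$ reductive this is available. Modulo these technical points, the argument is a routine devissage, and indeed I would expect the authors to cite or reprove a general "comparison" lemma of this type (it is essentially Lemma-level technology from \cite{AGS} or \cite{AG1}).
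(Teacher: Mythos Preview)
The paper does not prove this theorem; it simply cites \cite{AG_HC}, Theorem 4.0.8. So there is no argument in the present paper to compare against, and you are right to anticipate in your last paragraph that the authors outsource this to a general comparison lemma proved elsewhere.

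Your sketch, however, has a genuine gap, and it is not the one you flag. Your logic in step (4) runs: stratify, identify each graded piece via Frobenius with a finite-dimensional space $W_{i,k}$ of $G_x(F)$-invariants, and then argue ``$\Sc^*(X,V)^G=0 \Rightarrow$ all $W_{i,k}=0 \Rightarrow \cD(X,V)^G=0$''. The second implication is fine. The first is not: the devissage shows that \emph{if} all $W_{i,k}$ vanish \emph{then} $\Sc^*(X,V)^G=0$, but you need the converse. Concretely, from $\Sc^*(X,V)^G=0$ and the sequence $0\to\Sc^*_X(Z,V)\to\Sc^*(X,V)\to\Sc^*(U,V)\to 0$ you only learn that $\Sc^*_X(Z,V)^G=0$; taking $G$-invariants is merely left exact, and for non-compact $G(F)$ there is no averaging to promote an invariant Schwartz distribution on the open piece $U$ to an invariant one on $X$. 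So a nonzero $W_{i,k}$ could produce an invariant Schwartz distribution on some locally closed piece that fails to extend $G$-invariantly to $X$. There is also a secondary issue with step (3): a finite $G$-invariant stratification (by orbit type, say) does not consist of single orbits, so Frobenius descent lands not in a finite-dimensional space but in $\Sc^*$ (resp.\ $\cD$) of the positive-dimensional orbit space $Z_i/G(F)$ with trivial $G$-action --- exactly where the two theories differ. Stratifying by actual orbits instead gives infinitely many pieces.

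The argument in \cite{AG_HC} is of a different nature and does not go through orbit devissage. It uses the categorical quotient $\pi:X\to X/\!/G$ together with growth estimates for invariant polynomials (special to reductive $G$ acting on affine $X$) to show, roughly, that any $G(F)$-invariant distribution can be cut down by a $G$-invariant multiplier pulled back from $X/\!/G$ so as to become tempered without being killed. Reductivity and affineness enter essentially there, whereas in your outline they are used only to get a stratification.
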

For proof see \cite{AG_HC}, Theorem 4.0.8.

\subsubsection{Basic tools}
$ $\\
We present here some basic tools on  equivariant distributions that we
will use in  this paper.

\begin{proposition} \label{Strat}
Let a Nash group $G$ act on a Nash manifold $X$.
Let $Z \subset X$ be a closed subset.

Let $Z =
\bigcup_{i=0}^l Z_i$ be a Nash $G$-invariant stratification of
$Z$. Let $\chi$ be a character of $G$. Suppose that for any $k \in
\Z_{\geq 0}$ and $0 \leq i \leq l$ we have $\Sc^*(Z_i,Sym^k(CN_{Z_i}^X))^{G,\chi}=0$. Then
$\Sc^*_X(Z)^{G,\chi}=0$.
\end{proposition}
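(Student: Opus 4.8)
The plan is to prove Proposition \ref{Strat} by induction on the length $l$ of the stratification, reducing at each step to a single stratum via the exact sequence for Schwartz distributions together with the standard device of analyzing a distribution supported on a submanifold through its transversal Taylor expansion (the normal bundle filtration). I would first reduce the general case to the following key case: $Z$ is a single closed Nash submanifold $Y$ of $X$, and the hypothesis is that $\Sc^*(Y,\Sym^k(CN_Y^X))^{G,\chi}=0$ for all $k\geq 0$, from which I wish to conclude $\Sc^*_X(Y)^{G,\chi}=0$. Indeed, writing $U_i := X \setminus (Z_0 \cup \dots \cup Z_i)$ for the open complements of the closed partial unions and noting each $Z_i$ is closed in $U_{i-1}$ and Nash, the exact sequence $0 \to \Sc^*_{U_{i-1}}(Z_i) \to \Sc^*_{U_{i-1}}(Z_i \cup \dots \cup Z_l) \to \Sc^*_{U_i}(Z_{i+1} \cup \dots \cup Z_l) \to 0$ lets one peel off strata one at a time, so that $\Sc^*_X(Z)^{G,\chi}=0$ follows once $\Sc^*_{U_{i-1}}(Z_i)^{G,\chi}=0$ for each $i$; since $Z_i$ is relatively closed in the Nash manifold $U_{i-1}$, this is exactly an instance of the key case (with $U_{i-1}$ in place of $X$, $Z_i$ in place of $Y$, and noting $CN_{Z_i}^{U_{i-1}} = CN_{Z_i}^X$).

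For the key case, I would use the filtration of $\Sc^*_X(Y)$ by order of transversal vanishing. Concretely, for a closed Nash submanifold $Y\subset X$ there is a decreasing filtration $\Sc^*_X(Y) = \ff^0 \supset \ff^1 \supset \dots$ whose successive quotients are identified $G$-equivariantly with $\Sc^*(Y, \Sym^k(CN_Y^X) \otimes D_{X/Y})$, where $D_{X/Y}$ is the appropriate bundle of densities along the normal directions (this is the content of, e.g., \cite{AGS} and the localization principle / Bruhat filtration; it is also used systematically elsewhere in this paper). Twisting the character $\chi$ into the coefficient bundle does not change the shape of the statement, so the hypothesis $\Sc^*(Y,\Sym^k(CN_Y^X))^{G,\chi}=0$ for all $k$ forces every graded piece of the filtration to have no $(G,\chi)$-equivariant vectors. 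Since $\intersection_k \ff^k = 0$ (a Schwartz distribution supported on $Y$ and vanishing to infinite transversal order is zero — here one uses that we are working with Schwartz, not general, distributions, and $Y$ is a Nash submanifold), any $\xi \in \Sc^*_X(Y)^{G,\chi}$ lies in all $\ff^k$ and hence is zero.

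The main obstacle I anticipate is the verification that the graded pieces of the normal filtration are precisely $\Sc^*(Y,\Sym^k(CN_Y^X)\otimes(\text{densities}))$ with its natural $G$-action, and in particular that the $G$-action on the $k$-th graded piece is the one induced from the $G$-action on $Y$ and on the normal bundle — so that the hypothesis applies verbatim. This requires choosing (locally, $G$-equivariantly in an appropriate sense, or globally via a Nash tubular neighborhood) an identification of a neighborhood of $Y$ with a neighborhood of the zero section of $N_Y^X$, and checking that the resulting identification of graded pieces is independent of choices up to the relevant bundle twist. The density/orientation bundle bookkeeping is the fiddly part, but it is routine and is handled exactly as in \cite{AG1} and \cite{AGS}; I would simply cite those. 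One should also note $\Sym^k$ of the conormal bundle (rather than normal) appears because transversal derivatives of test functions pair with transversal jets, consistent with the statement as written. Modulo this bookkeeping, the proof is the two-step reduction above: peel strata via the exact sequence, then kill each single stratum via the infinite-order-vanishing argument. \proofend
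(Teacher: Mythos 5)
Your argument is correct and is precisely the standard proof of the result the paper invokes here: the paper gives no argument of its own but simply cites Corollary B.2.6 of \cite{AGS}, whose proof is exactly your two-step reduction (peeling strata via the exact sequence for Schwartz distributions on open Nash submanifolds, then filtering distributions supported on a single closed stratum by transversal order, with graded pieces embedding equivariantly into $\Sc^*(Z_i,Sym^k(CN_{Z_i}^X))$). The density-bundle bookkeeping you flag is in fact vacuous under the paper's convention that $\Sc^*(Y,E)$ is the dual of $\Sc(Y,E)$ — the $k$-th graded piece injects into the dual of the space of $k$-th transversal jets, which is $\Sc^*(Y,\Sym^k(CN_Y^X))$ with no twist — so your proof matches the cited one.
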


This proposition immediately follows from Corollary B.2.6 in
\cite{AGS}.

\begin{proposition} \label{Product}
Let $G_i$ be Nash groups acting on Nash manifolds $X_i$
for $i=1 \ldots n$. Let $E_i \to X_i$ be
$G_i$-equivariant Nash vector bundles. \\
(i) Suppose that $\Sc^*(X_j,E_j)^{G_j}=0$ for some $1\leq j \leq n$. Then
$$ \Sc^*(\prod_{i=1}^n X_i, \boxtimes E_i)^{\prod G_i}=0,$$
where $\boxtimes$ denotes the external product of vector
bundles.\\
(ii) Let $H_i <G_i$ be Nash subgroups. Suppose that
$\Sc^*(X_i,E_i)^{H_i}=\Sc^*(X_i,E_i)^{G_i}$ for all $i$. Then
$$\Sc^*(\prod X_i, \boxtimes E_i)^{\prod H_i}=\Sc^*(\prod X_i, \boxtimes E_i)^{\prod G_i},$$
\end{proposition}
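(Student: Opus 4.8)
The plan is to prove both parts by induction on $n$, so that it suffices to treat the case $n=2$; the general case then follows by grouping $\prod_{i=1}^n X_i = X_1 \times (\prod_{i=2}^n X_i)$ and applying the two-factor statement together with the inductive hypothesis. So fix Nash groups $G_1, G_2$ acting on Nash manifolds $X_1, X_2$ with equivariant Nash vector bundles $E_i \to X_i$.

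For part (i), assume $\Sc^*(X_1, E_1)^{G_1} = 0$ (the case $j=2$ is symmetric). The key tool is the Schwartz version of the Fubini / kernel theorem for Nash manifolds from \cite{AG1}: the completed tensor product identifies $\Sc(X_1 \times X_2, E_1 \boxtimes E_2)$ with $\Sc(X_1, E_1) \widehat\otimes \Sc(X_2, E_2)$, and correspondingly a Schwartz distribution $\xi \in \Sc^*(X_1 \times X_2, E_1 \boxtimes E_2)$ can be regarded as a continuous linear map $\Sc(X_2, E_2) \to \Sc^*(X_1, E_1)$, equivariantly with respect to the group actions. If $\xi$ is $(G_1 \times G_2)$-invariant, then for every $f \in \Sc(X_2, E_2)$ the distribution $\xi(f) \in \Sc^*(X_1, E_1)$ is $G_1$-invariant, hence zero by hypothesis; since this holds for all $f$, we get $\xi = 0$. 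The only point needing care is that the kernel theorem and the identification of equivariance conditions are available in the Nash/Schwartz category, for which I would cite the relevant results of \cite{AG1} (nuclearity of $\Sc(X)$ and the description of $\Sc^*(X_1 \times X_2)$).

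For part (ii), I would deduce it from part (i) by a standard argument. Consider the functor $\xi \mapsto \xi - g\cdot\xi$ and, more precisely, filter the problem by one group at a time: it suffices to show that $\Sc^*(X_1 \times X_2, E_1 \boxtimes E_2)^{H_1 \times G_2} = \Sc^*(X_1 \times X_2, E_1 \boxtimes E_2)^{G_1 \times G_2}$ and then the analogous statement swapping the roles of the two factors, and compose. So assume $\Sc^*(X_1, E_1)^{H_1} = \Sc^*(X_1, E_1)^{G_1}$; we must show an $H_1 \times G_2$-invariant distribution $\xi$ is automatically $G_1$-invariant. Pick $g \in G_1$; then $\eta := \xi - (g,1)\cdot \xi$ is again $H_1 \times G_2$-invariant (since $H_1$ is normalized appropriately — here one uses that we only need $H_1$-invariance, which is preserved, replacing $H_1$ by $H_1 \cap gH_1g^{-1}$ if necessary, or better, one argues directly with the map to $\Sc^*(X_1,E_1)$). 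Viewing $\eta$ via the kernel theorem as a $G_2$-equivariant map $\Sc(X_2, E_2) \to \Sc^*(X_1, E_1)$, each $\eta(f)$ lies in $\Sc^*(X_1, E_1)^{H_1}$; but by hypothesis this equals $\Sc^*(X_1, E_1)^{G_1}$, so $\eta(f)$ is $G_1$-invariant, in particular $g$-invariant, i.e. $\eta(f) = g\cdot \eta(f)$. Feeding this back, one concludes $\eta = 0$, i.e. $\xi$ is $g$-invariant; as $g$ was arbitrary, $\xi$ is $G_1$-invariant.

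The main obstacle is making the kernel-theorem identification fully rigorous in the equivariant Schwartz setting — in particular checking that the bijection $\Sc^*(X_1\times X_2, E_1\boxtimes E_2) \cong \mathrm{Hom}_{\mathrm{cont}}(\Sc(X_2,E_2), \Sc^*(X_1,E_1))$ carries the diagonal $G_1\times G_2$-action to the natural action on homomorphisms, and that "for all $f$, $\eta(f)=0$" does imply $\eta = 0$ (this is just separation of points of $\Sc(X_1\times X_2,\cdot)$ by the vectors $f\otimes h$, which follows from density of the algebraic tensor product). Everything else is formal manipulation with the two group factors. I expect the clean write-up to simply invoke Corollary B.2.6-style results from \cite{AGS} or the tensor-product statements of \cite{AG1} and reduce both parts to these, exactly as the surrounding propositions in this subsection do.
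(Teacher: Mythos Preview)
Your approach is correct and matches what the paper does --- the paper itself simply declares the proof ``trivial and the same as the proof of Proposition 3.1.5 in \cite{AGS}'', and that argument is precisely the kernel-theorem reduction you describe: identify $\Sc^*(X_1\times X_2,E_1\boxtimes E_2)$ with $\Hom_{\mathrm{cont}}(\Sc(X_2,E_2),\Sc^*(X_1,E_1))$ via nuclearity of Schwartz spaces on Nash manifolds, and read off invariance factor by factor.

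One small point: your write-up of part (ii) takes an unnecessary detour through $\eta=\xi-(g,1)\cdot\xi$, and the claim that ``each $\eta(f)$ lies in $\Sc^*(X_1,E_1)^{H_1}$'' is not justified as stated (the translate $g\cdot\xi$ need not be $H_1$-invariant unless $H_1$ is normal in $G_1$). The clean argument is the one you yourself flag as ``or better'': apply the kernel map directly to $\xi$. Since $\xi$ is $H_1$-invariant, every $\xi(f)\in\Sc^*(X_1,E_1)^{H_1}=\Sc^*(X_1,E_1)^{G_1}$, hence $g\cdot\xi(f)=\xi(f)$ for all $g\in G_1$ and all $f$, so $\xi$ is $G_1$-invariant; then repeat in the second factor. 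Drop the $\eta$ paragraph and the proof is complete.
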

The proof is trivial and the same as the proof of
Proposition 3.1.5 in \cite{AGS}.

\begin{theorem}[Frobenius reciprocity] \label{Frob}
Let a unimodular Nash group $G$ act transitively on a Nash
manifold $Z$. Let $\varphi:X \to Z$ be a $G$-equivariant Nash
map. Let $z\in Z$. Suppose that its stabilizer
$G_z$ is unimodular. Let $X_z$ be the fiber of $z$.
Let $\chi$ be a character of $G$. Then $\Sc^*(X)^{G,\chi}$ is
canonically isomorphic to $\Sc^*(X_z)^{G_z,\chi}$.

Moreover, for any $G$-equivariant bundle $E$ on $X$, the space
$\Sc^*(X,E)^{G,\chi}$ is canonically isomorphic to
$\Sc^*(X_z,E|_{X_z})^{G_z,\chi}$.
\end{theorem}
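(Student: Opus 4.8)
The plan is to exhibit $X$ as the space induced from the $G_z$-manifold $X_z$ and then to compute the equivariant Schwartz distributions by ``unfolding'' the $G$-direction; I will carry the bundle $E$ through the argument rather than first reducing to a trivial bundle, since that reduction is no easier. Since $G$ is a Nash group acting transitively on $Z$, the orbit map identifies $Z$ with $G/G_z$ as Nash manifolds and presents $\varphi$ as a Nash submersion, while $G\to G/G_z$ admits Nash local sections. Form the multiplication map $q\colon G\times X_z\to X$, $q(g,x)=g\cdot x$. One checks that $q$ is a surjective Nash submersion whose fibres are exactly the orbits of the free and proper $G_z$-action $h\cdot(g,x)=(gh^{-1},hx)$ on $G\times X_z$, so that $q$ identifies $X$ with the quotient $(G\times X_z)/G_z$; moreover $q$ intertwines the $G$-action $g'\cdot(g,x)=(g'g,x)$ on the source (which commutes with the $G_z$-action) with the original $G$-action on $X$. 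Finally, since $E$ is $G$-equivariant, the map $(g,x,e)\mapsto(g,x,g\cdot e)$ identifies $q^{*}E$ with the pullback $\pr^{*}(E|_{X_z})$ along the projection $\pr\colon G\times X_z\to X_z$, compatibly with both group actions (the $G$-action on $\pr^{*}(E|_{X_z})$ being trivial on the $X_z$-factor, the $G_z$-action being the natural one).

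Now I would compute $\Sc^{*}(X,E)^{G,\chi}$ in two moves. First, because $G_z$ acts freely and properly on $G\times X_z$ with quotient $X$, and $G_z$ is unimodular, pulling distributions back along $q$ (with the relative density bundle trivialized by a Haar measure on $G_z$) yields an isomorphism
$$\Sc^{*}(X,E)^{G,\chi}\ \xrightarrow{\ \sim\ }\ \Sc^{*}\bigl(G\times X_z,\ \pr^{*}(E|_{X_z})\bigr)^{G\times G_z},$$
with $G$ acting through the character $\chi$ and $G_z$ acting with trivial character. Second, I unfold the $G$-direction: here $G$ acts only through the first factor, simply transitively on the fibres of $\pr$, so using that $\Sc^{*}(G)^{G,\chi}$ is one-dimensional for unimodular $G$ (spanned by $\chi\cdot(\text{Haar})$), together with the factorization $\Sc(G\times X_z,\pr^{*}(E|_{X_z}))\cong\Sc(G)\,\widehat{\otimes}\,\Sc(X_z,E|_{X_z})$ and its dual, one gets a canonical isomorphism
$$\Sc^{*}\bigl(G\times X_z,\ \pr^{*}(E|_{X_z})\bigr)^{G,\chi}\ \xrightarrow{\ \sim\ }\ \Sc^{*}(X_z,\ E|_{X_z}),\qquad \xi\mapsto \xi_0,$$
characterized by $\xi=(\chi\cdot\text{Haar})\otimes\xi_0$; here one uses that the slice $\{e\}\times X_z$ maps isomorphically under $q$ onto $X_z\subset X$, so $\pr^{*}(E|_{X_z})$ restricts there to $E|_{X_z}$. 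It remains to trace the residual $G_z$-action on $\Sc^{*}(X_z,E|_{X_z})$: on $G\times X_z$ the element $h\in G_z$ acts on the first factor by right translation, which multiplies the line $\C\cdot(\chi\cdot\text{Haar})$ by $\chi(h)$, and on $X_z$ by the given action; hence the residual action on $\Sc^{*}(X_z,E|_{X_z})$ is the natural $G_z$-action twisted by $\chi|_{G_z}$. Composing the two isomorphisms gives $\Sc^{*}(X,E)^{G,\chi}\cong\Sc^{*}(X_z,E|_{X_z})^{G_z,\chi}$, and specializing to the trivial bundle gives the first assertion; the composite is canonical, and its inverse sends $\eta\in\Sc^{*}(X_z,E|_{X_z})^{G_z,\chi}$ to the descent to $X$ of $(\chi\cdot\text{Haar})\otimes\eta$ on $G\times X_z$.

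The formal skeleton above is routine, and the genuinely delicate point — the one that forces both unimodularity hypotheses — is the bookkeeping of density bundles in the two moves: one must check that the relative density of $q$ and the left-invariant density along the fibres of $\pr$ combine so that the character surviving on $X_z$ is precisely $\chi|_{G_z}$, with no extra twist by the modular characters of $G$ or $G_z$. The remaining ingredients are standard facts about the Nash category that may be quoted from \cite{AG1} and \cite{AGS}: descent of Schwartz distributions along locally trivial Nash fibrations, the one-dimensionality of $\Sc^{*}(G)^{G,\chi}$ for a unimodular Nash group, and the behaviour of $\Sc$ and $\Sc^{*}$ under products. I expect this density bookkeeping, rather than any analytic input, to be the only real obstacle.
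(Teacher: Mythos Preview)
The paper does not actually prove this theorem in the body of the text: immediately after the statement it writes ``For proof see \cite{AG_HC}, Theorem 2.3.8.'' So there is no in-paper argument to compare against; the result is quoted as a black box from the companion paper on generalized Harish--Chandra descent.

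That said, your sketch is the standard ``induction from the stabilizer'' proof of Frobenius reciprocity in the Schwartz setting, and it is correct in outline: realize $X$ as $(G\times X_z)/G_z$, identify $(G_z,\text{triv})$-equivariant Schwartz distributions on $G\times X_z$ with Schwartz distributions on the quotient $X$, then collapse the $G$-factor using the one-dimensionality of $\Sc^*(G)^{G,\chi}$ and track the residual $G_z$-action. This is precisely the argument one expects in \cite{AG_HC}. The points you single out as delicate are the right ones: the density-bundle bookkeeping (this is exactly where both unimodularity hypotheses are consumed, and where a modular twist would otherwise appear), and the Nash-category facts (local Nash triviality of $G\to G/G_z$, descent of $\Sc^*$ along free proper Nash actions, and $\Sc(G\times X_z)\cong\Sc(G)\,\widehat\otimes\,\Sc(X_z)$) that must be imported from \cite{AG1}. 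One small remark: when you invoke ``$\Sc^*(G)^{G,\chi}$ is one-dimensional'', make sure you are explicit about which side $G$ acts on and that $\chi$ is tempered enough for $\chi\cdot\text{Haar}$ to lie in $\Sc^*(G)$; in the applications of the present paper $\chi$ takes values in $\{\pm 1\}$, so this is harmless, but the general statement needs a word.
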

For proof see \cite{AG_HC}, Theorem 2.3.8.


\subsubsection{Fourier transform and Homogeneity Theorem} $ $\\
%
From now till the end of the paper we fix an additive character
$\kappa$ of $F$ given by $\kappa(x):=e^{2\pi i \re(x)}$.

\begin{notation}
Let $V$ be a vector space over $F$. Let $B$ be a non-degenerate
bilinear form on $V$. Then $V$ defines Fourier transform with respect to the
self-dual Haar measure on $V$. We denote it by $\Fou_B: \Sc^*(V) \to \Sc^*(V)$.

For any Nash manifold $M$
we also denote by $\Fou_B:\Sc^*(M \times V) \to \Sc^*(M
\times V)$ the partial Fourier transform.

If there is no ambiguity, we will write $\Fou_V$ instead $\Fou_B$.
\end{notation}

\begin{notation}
Let $V$ be a vector space over $F$. Consider the homothety action
of $F^{\times}$ on $V$ by $\rho(\lambda)v:= \lambda^{-1}v$. It
gives rise to an action $\rho$ of $F^{\times}$ on $\Sc^*(V)$.

Also, for any $\lambda \in F$ we denote $|| \lambda ||:= |\lambda|^{dim_{\R}F}$.
\end{notation}




\begin{notation}
Let $V$ be a vector space over $F$. Let $B$ be a non-degenerate
symmetric bilinear form on $V$. We denote $$Z(B):=\{x \in
V(F)|B(x,x)=0 \}.$$
\end{notation}



\begin{theorem} [Homogeneity Theorem] \label{ArchHom}
Let $V$ be a vector space over $F$. Let $B$ be a non-degenerate
symmetric bilinear form on $V$. Let $M$ be a Nash manifold. Let $L
\subset \Sc^*_{V(F)\times M}(Z(B)\times M)$ be a non-zero subspace
such that $\forall \xi \in L $ we have $\Fou_B(\xi) \in L$ and $B
\xi \in L$ (here $B$ is interpreted as a quadratic form).

Then there exist a non-zero distribution $\xi \in L$ and a unitary character $u$ of $F^{\times}$ such that either
$\rho(\lambda)\xi = || \lambda ||^{\frac{dimV}{2}} u(\lambda) \xi$ for any $\lambda \in F^{\times }$ or
$\rho(\lambda)\xi = || \lambda ||^{\frac{dimV}{2}+1} u(\lambda) \xi$ for any $\lambda \in F^{\times }$.
\end{theorem}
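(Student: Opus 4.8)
The plan is to exploit the action of the group generated by $\Fou_B$ and multiplication by $B$, together with the representation-theoretic structure of the regular representation of $F^\times$ (i.e. Mellin analysis / the Weil representation), to force the existence of an eigendistribution. First I would observe that the operators $\rho(\lambda)$ ($\lambda \in F^\times$), $\Fou_B$, and ``multiply by $B$'' generate an action on $\Sc^*_{V(F)\times M}(Z(B)\times M)$ of (a central extension of) the group $\widetilde{\mathrm{SL}_2}$ realized via the Weil representation attached to the quadratic space $(V,B)$; the hypothesis says exactly that $L$ is invariant under the key one-parameter subgroups ($\Fou_B$ corresponds to the Weyl element, $B\xi$ to a unipotent, $\rho(\lambda)$ to the torus), so $L$ is a module for this group. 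Since the $F^\times$-action $\rho$ commutes with the $M$-directions, I would decompose (spectrally, using the Mellin transform in the $\lambda$-variable, or equivalently the $F^\times$-action) and reduce to finding a generalized eigenvector for $\rho$ inside the invariant space $L$.

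The main computational input is the explicit action of the $\widetilde{\mathrm{SL}_2}$-triple on an eigendistribution: if $\xi$ transforms under $\rho(\lambda)$ by $\|\lambda\|^s u(\lambda)$ for a unitary $u$, one computes that $\Fou_B(\xi)$ transforms by $\|\lambda\|^{\dim V - s} u(\lambda)^{-1}$ (times a gamma factor), and $B\xi$ transforms by $\|\lambda\|^{s+2} u(\lambda)$. Thus on the set of exponents $s$ appearing in the ``spectral support'' of $L$ (a closed $F^\times$-invariant, hence Mellin-analyzable, subspace), the involution $s \mapsto \dim V - s$ and the shift $s \mapsto s+2$ both act. Because $L \neq 0$ and the spectral support is nonempty, I would use the homogeneity/finiteness available for distributions supported on the cone $Z(B)$ — namely that $\xi$ and its Fourier transform supported on the isotropic cone forces the set of possible exponents to be discrete and bounded below (here the Homogeneity Theorem's conclusion exponents $\tfrac{\dim V}{2}$ and $\tfrac{\dim V}{2}+1$ are precisely the two fixed-up-to-shift points of $s \mapsto \dim V - s$ modulo $2\Z$) — to extract an actual eigendistribution at one of these two critical exponents. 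Concretely, one takes the ``smallest'' exponent in the spectral support, shows it must equal $\tfrac{\dim V}{2}$ or $\tfrac{\dim V}{2}+1$ (otherwise applying $\Fou_B$ or lowering by $B$ produces something smaller, a contradiction with boundedness), and at that bottom exponent the generalized eigenspace is a genuine eigenspace.

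The hard part, I expect, will be making the ``spectral support is discrete and bounded below'' step rigorous: distributions do not decompose as cleanly as $L^2$-functions under $F^\times$, so rather than a literal direct integral I would work with the Mellin transform of $\lambda \mapsto \rho(\lambda)\xi$ as a distribution-valued meromorphic object, control its poles using that $\Supp(\xi) \subset Z(B)\times M$ (which bounds the order of homogeneity via the standard Bernstein-type argument for the cone), and then invoke invariance of $L$ under both $\Fou_B$ and $B$ to pin the relevant pole to the fixed locus of $s \mapsto \dim V - s$. An alternative, perhaps cleaner, route is to pass to the complexification and use that the relevant $\widetilde{\mathrm{SL}_2}$-module generated by $\Fou_B$, $B$, and the Euler operator is, on the space of homogeneous distributions on the cone, a highest/lowest weight module; identifying its lowest weight gives the two exponents directly. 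Either way, the crux is the interaction between the Weil-representation symmetry and the boundedness forced by support on the isotropic cone; once that is in hand, the two cases in the statement fall out as the two parities of $\dim V$ relative to the fixed point $\tfrac{\dim V}{2}$.
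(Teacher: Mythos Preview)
The paper does not actually prove this theorem here; it cites \cite{AG_HC}, Theorem 5.1.7, and in the introduction remarks only that the proof ``uses Weil representation.'' So there is no in-paper argument to compare against, but your overall strategy via the Weil/oscillator $\mathfrak{sl}_2$-action is indeed the intended one.

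That said, your sketch has one real gap and one point of confusion. The gap: the hypotheses do \emph{not} say that $L$ is stable under $\rho(\lambda)$; they only give stability under $\Fou_B$ and under multiplication by $B$. You cannot simply ``spectrally decompose under $F^\times$.'' What you must do first is observe that with $e=$ multiplication by $B$ and $f=\Fou_B^{-1}\, e\, \Fou_B$ (a constant multiple of the Laplacian $\Delta_B$), the commutator $[e,f]$ is a shift of the Euler operator; this makes $L$ an $\mathfrak{sl}_2$-module, and the argument should be run for the infinitesimal $h$-action, not for an $F^\times$-action that is not assumed.

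The confusion: multiplication by $B$ \emph{raises} the homogeneity exponent by $2$, it does not lower it, so your phrase ``lowering by $B$'' and the ensuing minimal-exponent argument are garbled. The finiteness you need does not come from a separate Mellin/Bernstein boundedness argument but directly from the support hypothesis: since every $\xi\in L$ is supported on the zero set of the function $B$, some power $B^k$ annihilates $\xi$, so $e$ acts locally nilpotently on $L$. This immediately produces a nonzero $\xi_0\in L$ with $B\xi_0=0$ (a ``highest-weight'' vector), and then the interplay with $\Fou_B$ and the $\mathfrak{sl}_2$ relations forces the $h$-eigenvalue, hence the homogeneity degree, to be one of the two stated values. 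Your ``lowest/highest weight module'' alternative is in fact the correct picture; just source the finiteness from $e$-nilpotence rather than from an ad hoc spectral-support argument.
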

For proof see \cite{AG_HC}, Theorem 5.1.7.

We will also use the following trivial observation.

\begin{lemma}
Let $V$ be a finite dimensional vector space over $F$. Let a Nash
group $G$ act linearly on $V$. Let $B$ be a $G$-invariant
non-degenerate symmetric bilinear form on $V$. Let $M$ be a Nash
manifold with an action of $G$.  Let $\xi \in \Sc^*(V(F) \times
M)$ be a $G$-invariant distribution. Then $\Fou_B(\xi)$ is also
$G$-invariant.
\end{lemma}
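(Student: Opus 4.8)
The plan is to show that the partial Fourier transform $\Fou_B$ commutes with the $G$-action on $\Sc^*(V(F) \times M)$; the lemma then follows at once, since for $g \in G(F)$, writing $g_*$ for the operator induced on $\Sc^*(V(F) \times M)$, the $G$-invariance $g_*\xi = \xi$ gives $g_* \Fou_B(\xi) = \Fou_B(g_*\xi) = \Fou_B(\xi)$, so $\Fou_B(\xi)$ is $G$-invariant.

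To prove the commutation, decompose the $G$-action on $V(F) \times M$ as the product of the action $\rho_V$ on $V(F)$ and the action $\rho_M$ on $M$, so that $g_* = \rho_V(g)_* \circ \rho_M(g)_*$. Since $\Fou_B$ is the Fourier transform in the $V$-variable only, it acts as the identity on the $M$-factor and therefore commutes with $\rho_M(g)_*$; it remains to check that $\Fou_B$ commutes with $\rho_V(g)_*$ for each $g \in G(F)$. For this I invoke the standard change-of-variables formula for the Fourier transform: for any $T \in \mathrm{GL}(V(F))$, the operators $\Fou_B$ and pushforward by $T$ satisfy $\Fou_B \circ T_* = \|\det T\|^{-1}\, (T^{\vee})_* \circ \Fou_B$, where $T^{\vee}$ is the inverse of the $B$-adjoint of $T$ and the scalar $\|\det T\|^{-1}$ records the transformation of the self-dual Haar measure on $V(F)$ under $T$. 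When $T = \rho_V(g)$, $G$-invariance of $B$ says $T$ is $B$-orthogonal, hence $T^{\vee} = T$; and the Gram-matrix identity $T^t\,\mathrm{Gram}(B)\,T = \mathrm{Gram}(B)$ together with non-degeneracy of $B$ forces $\|\det T\| = 1$. Thus $\Fou_B \circ \rho_V(g)_* = \rho_V(g)_* \circ \Fou_B$.

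Combining the two observations yields $\Fou_B \circ g_* = g_* \circ \Fou_B$ for all $g \in G(F)$, which is exactly what was needed. The only computational point is the vanishing of the Jacobian factor, i.e.\ that a $B$-orthogonal linear transformation of $V(F)$ preserves the self-dual measure; as indicated, this is immediate, which is why this observation is trivial. (Nashness of $G$ and of its action is used only to know that each $g$ acts by a well-defined operator on Schwartz distributions; no further regularity is needed.)
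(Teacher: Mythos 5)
Your proof is correct; the paper offers no proof at all, labeling the lemma a ``trivial observation,'' and your argument (equivariance of $\Fou_B$ under $B$-orthogonal transformations via the change-of-variables formula, with the Jacobian factor $\|\det T\|=1$ forced by non-degeneracy of $B$) is precisely the standard computation the authors are implicitly relying on.
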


\subsubsection{Harish-Chandra descent}

%

\begin{definition}
Let an algebraic group $G$ act on an algebraic variety $X$. We say that an element $x \in X(F)$ is {\bf $G$-semisimple} if its orbit
$G(F)x$ is closed.
\end{definition}

\begin{theorem}[Generalized Harish-Chandra descent] \label{HC_Thm}
Let a reductive group $G$ act on smooth affine varieties $X$ and
$Y$. Let $\chi$ be a character of $G(F)$. Suppose that for any
$G$-semisimple $x \in X(F)$ we have
$$\Sc^*((N_{Gx,x}^X\times Y)(F))^{G(F)_x,\chi}=0.$$ Then $\Sc^*(X(F)\times Y(F))^{G(F)_x,\chi}=0.$
\end{theorem}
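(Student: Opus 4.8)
The plan is to run a Noetherian induction on $G$-invariant closed subsets of $X(F)$, organized by the partial order coming from closures of semisimple orbits, and to reduce at each stage to a statement on a normal slice via Luna's slice theorem together with the Frobenius reciprocity of Theorem \ref{Frob}. First I would recall that, since $G$ is reductive and $X$ is affine, every point $x \in X(F)$ has in the closure of its orbit a unique closed orbit $G(F)x_0$, and the assignment $x \mapsto x_0$ partitions $X(F)$ into the loci $X_i$ lying over a given semisimple conjugacy class stratum; the partial order is $x \prec y$ if $G(F)x_0 \subsetneq \overline{G(F)y_0}$. I would then argue by descending induction on the closed $G$-invariant set $Z \subset X(F)$ swept out over a closed union of semisimple orbit-closures: the inductive claim is that every $\xi \in \Sc^*(X(F)\times Y(F))^{G(F),\chi}$ supported on $Z \times Y(F)$ vanishes. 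The base case is $Z = \emptyset$, trivial; for the inductive step I peel off the ``top'' semisimple stratum.

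Concretely, let $x \in X(F)$ be $G$-semisimple with $G(F)x$ open in $Z$ (such $x$ exists because closed orbits form a constructible stratification), set $S = G(F)x \subset Z$, and let $Z' = Z \setminus S$, a smaller closed $G$-invariant set. The exact sequence for Schwartz distributions (advantage (i) of Schwartz distributions) gives
\[
0 \to \Sc^*_{X(F)\times Y(F)}(Z'\times Y(F))^{G(F),\chi} \to \Sc^*_{X(F)\times Y(F)}(Z\times Y(F))^{G(F),\chi} \to \Sc^*_{U}((S\times Y(F))\cap U)^{G(F),\chi},
\]
where $U = (X(F)\setminus Z')\times Y(F)$. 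By the induction hypothesis the left term is zero, so it suffices to show the right term is zero, i.e.\ that every invariant Schwartz distribution on a neighborhood of $S\times Y(F)$ supported there vanishes. Since $S = G(F)x$ is a single homogeneous Nash manifold and $G_x = G(F)_x$ is reductive (as $x$ is semisimple), hence unimodular, Frobenius reciprocity (Theorem \ref{Frob}) applied to the projection to $S$ identifies this space with invariant Schwartz distributions on the fiber over $x$, which by Luna's slice theorem is (analytically, $G(F)_x$-equivariantly) a neighborhood of $0$ in $N_{Gx,x}^X \times Y(F)$. Thus the right term injects into $\Sc^*_{N_{Gx,x}^X(F)\times Y(F)}(\{0\}\times Y(F))^{G(F)_x,\chi}$, which is a subspace of $\Sc^*((N_{Gx,x}^X\times Y)(F))^{G(F)_x,\chi} = 0$ by hypothesis. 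This closes the induction.

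The main obstacle is making the ``peeling'' and the slice identification rigorous in the Nash/analytic category: one must know that the closed-orbit strata form a genuine Nash $G$-invariant stratification with locally closed pieces on which the orbit is open, and that Luna's slice theorem holds in a form compatible with Schwartz spaces, i.e.\ that a $G(F)$-invariant Nash (or analytic) tubular neighborhood of the closed orbit $S$ exists and is $G(F)$-equivariantly isomorphic to the associated bundle over $S$ with fiber the slice in $N_{Gx,x}^X$. This is exactly where one needs the analytic slice theorem together with the fact that $\Sc^*$ behaves well under the relevant open/closed decompositions and under passage to such tubular neighborhoods; once those foundational facts are in place (they are established in \cite{AG_HC}), the rest is the formal bookkeeping of the induction sketched above. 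A secondary point to handle carefully is the localization/Noetherianity: although $X(F)$ is not Noetherian as a topological space, the relevant stratification is finite because the closed orbits are parametrized by a variety of finite type, so the descending induction terminates.
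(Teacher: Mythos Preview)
The paper does not prove this theorem; it only cites \cite{AG_HC}, Theorem~3.1.6. Your high-level plan---Luna's slice theorem plus Frobenius reciprocity, organized as a descent over the closed-orbit structure---is indeed the approach of \cite{AG_HC}, and the technical inputs you flag (an analytic Luna slice compatible with the Nash/Schwartz formalism, finiteness for termination) are exactly what that paper supplies.

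There is, however, a gap in your inductive mechanism. You ask for a $G$-semisimple $x$ with $G(F)x$ \emph{open} in the closed $G$-invariant set $Z$, then set $S=G(F)x$ and $Z'=Z\setminus S$. But a closed orbit is almost never open in a larger $G$-invariant closed set: already for $F^{\times}$ acting linearly on $F$, the unique closed orbit $\{0\}$ is not open in $Z=F$, so the peeling cannot start and your Noetherian induction on $Z$ does not run as written. In \cite{AG_HC} the argument is organized instead through the categorical quotient of $X$ by $G$: one proves a localization principle (invariant Schwartz distributions vanish globally once they vanish on each member of a $G$-saturated open cover of $X(F)$) and then checks vanishing on each Luna tube $U_x$ around a closed orbit. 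Your final move---that a distribution supported on $G(F)x\times Y$ corresponds under Frobenius to one supported on $\{0\}\times Y$ in the slice, hence sits inside $\Sc^*(N_{Gx,x}^X\times Y)^{G_x,\chi}=0$---is correct and does enter the argument, but it only handles the piece supported exactly on the closed orbit; the full vanishing on $U_x$ requires the further observation that the hypothesis is inherited by the slice (nearby $G$-semisimple points of $X$ correspond to $G_x$-semisimple points of $N_{Gx,x}^X$ with matching normal spaces and stabilizers), which is what makes the recursion legitimate.
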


For proof see \cite{AG_HC}, Theorem 3.1.6.
\subsection{D-modules and singular support} \label{D-mod} $ $\\
In this paper we will use the algebraic theory of D-modules.
We will now summarize the facts that we need and give more details in  Appendix \ref{AppDmod}.
For a good introduction to the algebraic theory of D-modules we refer the reader to \cite{Ber} and \cite{Bor}.

More specifically, we will use the notion of singular support of a
distribution. For those who are not familiar with the theory of
D-modules, Corollary \ref{CorSingSup} and the facts that are
listed after it are the only properties of singular support that
we use.

In this subsection $F=\R$.

\begin{definition}
Let $X$ be a smooth algebraic variety. Let $\xi \in \Sc^*(X(\R))$.
Consider the $D_X$-submodule ${\mathcal M}_{\xi}$ of
$\Sc^*(X(\R))$ generated by $\xi$. We define the {\bf singular
support} of $\xi$ to be the singular support of ${\mathcal
M}_{\xi}$. We denote it by $SS(\xi)$.
\end{definition}

\begin{remark}
A similar, but not equivalent notion is sometimes called in the literature a 'wave front of $\xi$'.
\end{remark}

\begin{notation}
Let $(V,B)$ be a quadratic space. Let $X$ be a smooth algebraic
variety. Consider $B$ as a map $B:V \to V^*$. Identify $T^*(X
\times V)$ with $T^*X \times V \times V^*$. We define $F_V: T^*(X
\times V) \to T^*(X  \times V)$ by $F_V(\alpha,v,\phi):= (\alpha,
-B^{-1}\phi,Bv)$.
\end{notation}

\begin{definition}
Let $M$ be a smooth algebraic variety and $\omega$ be a symplectic
form on it.
Let $Z\subset M$ be an algebraic subvariety. We call it {\bf $M$-coisotropic} if one of the following equivalent conditions holds.\\
(i) The ideal sheaf of regular functions that vanish on $\overline{Z}$ is closed under Poisson bracket. \\
(ii) At every smooth point $z \in Z$ we have  $T_zZ \supset (T_zZ)^{\bot}$. Here, $(T_zZ)^{\bot}$ denotes the orthogonal complement
to $(T_zZ)$ in $(T_zM)$ with respect to $\omega$. \\
(iii) For a generic smooth point $z \in Z$ we have $T_zZ \supset
(T_zZ)^{\bot}$.

If there is no ambiguity, we will call $Z$ a coisotropic variety.
\end{definition}
Note that every non-empty $M$-coisotropic variety is of dimension
at least $\frac{1}{2}dimM$.

\begin{notation}
For a smooth algebraic variety $X$ we always consider the standard
symplectic form on $T^*X$. Also, we denote by $p_X:T^*X \to X$ the
standard projection.
\end{notation}

The following theorem is crucial in this paper.
\begin{theorem} \label{Gaber}
Let $X$ be a smooth algebraic variety. Let ${\mathcal M}$ be a
finitely generated $D_X$-module. Then $SS({\mathcal M})$ is a
$T^*X$-coisotropic variety.
\end{theorem}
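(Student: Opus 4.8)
The plan is to reduce the statement to a purely algebraic assertion about filtered rings and then run Gabber's commutator argument. The question is local on $X$: cover $X$ by affine open subsets $U = \Spec A$, and over each such $U$ let $D_U$ carry its order filtration $F_\bullet D_U$, so that $\operatorname{gr} D_U \cong \cO_{T^*U}$, the coordinate ring of the cotangent bundle. Since $[F_iD_U, F_jD_U]\subseteq F_{i+j-1}D_U$, the commutator induces on $\cO_{T^*U}$ a Poisson bracket of degree $-1$, and this is the standard symplectic Poisson bracket on $T^*U$. Choosing a good filtration on $\mathcal M$ (possible because $\mathcal M$ is finitely generated and $\operatorname{gr} D_U$ is Noetherian), the singular support $SS(\mathcal M)$ is by definition the support of the coherent $\cO_{T^*U}$-module $\operatorname{gr}\mathcal M$, i.e.\ the zero locus of $I := \sqrt{\operatorname{Ann}_{\operatorname{gr} D_U}(\operatorname{gr}\mathcal M)}$; this variety, and the reduced ideal $I$, are independent of the chosen good filtration. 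By condition (i) in the definition of coisotropic, $SS(\mathcal M)$ is coisotropic precisely when $\{I,I\}\subseteq I$.

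Thus it remains to prove the following algebraic statement (Gabber's integrability theorem): if $R$ is a filtered ring with $\operatorname{gr} R$ commutative Noetherian and Poisson bracket of degree $-1$, and $M$ is a finitely generated $R$-module with a good filtration, then $I = \sqrt{\operatorname{Ann}(\operatorname{gr} M)}$ is closed under the Poisson bracket. I would follow Gabber: given homogeneous $a,b\in I$, fix $k,l$ with $a^k, b^l\in J:=\operatorname{Ann}(\operatorname{gr} M)$, lift $a^k, b^l$ to operators $A\in F_pR$, $B\in F_qR$ with symbols $a^k, b^l$, and analyze the iterated maps $\ad(A)^i$, $\ad(B)^j$ and the products $A^iB^j$ acting on the good filtration of $M$. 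A careful bookkeeping of orders — the "filtration by commutator length" — produces, for $N\gg 0$, an operator built from these commutators whose symbol is $\{a,b\}^N$ up to a unit and which sends $\operatorname{gr} M$ into a graded submodule whose support is properly contained in $SS(\mathcal M)$. A Noetherian induction on that support (equivalently, passing to a generic point of an irreducible component of $SS(\mathcal M)$ and arguing by contradiction) then forces $\{a,b\}^N\in J$, hence $\{a,b\}\in I$.

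The main obstacle is exactly the combinatorial commutator estimate at the heart of Gabber's proof: extracting a power of the Poisson bracket $\{a,b\}$ from iterated commutators while controlling orders, and making the reduction to a strictly smaller support rigorous, is not formal and is the one genuinely hard input. An alternative is the microlocal proof of Sato--Kawai--Kashiwara, which uses microdifferential operators and quantized contact transformations to bring the module to a normal form; but in the algebraic framework the cleanest self-contained argument is Gabber's, and this is what I would spell out in Appendix \ref{AppDmod}, citing O.~Gabber's paper for the core estimate.
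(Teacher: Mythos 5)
Your proposal is correct and matches the paper's treatment: the paper does not prove this theorem but simply quotes it as a special case of Theorem I of Gabber's paper (with \cite{KKS} and \cite{Mal} as alternatives), and your reduction to the filtered-ring statement via good filtrations, together with deferring the core commutator estimate to Gabber, is exactly the standard route that citation encapsulates. The preparatory steps you spell out (localizing to affine opens, the degree $-1$ Poisson bracket on $\operatorname{gr}D_U\cong\cO(T^*U)$, independence of $SS$ from the good filtration) are correct and appear in the paper's Appendix \ref{AppDmod}.
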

This is a special case of Theorem I in  \cite{Gab}. For similar
versions see also \cite{KKS, Mal}.

\begin{corollary} \label{CorSingSup}
Let $X$ be a smooth algebraic variety. Let $\xi \in \Sc^*(X(\R))$. Then $SS(\xi)$ is coisotropic.
\end{corollary}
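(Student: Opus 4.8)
The statement to prove is Corollary \ref{CorSingSup}: for a smooth algebraic variety $X$ and a Schwartz distribution $\xi \in \Sc^*(X(\R))$, the singular support $SS(\xi)$ is coisotropic.

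The plan is to deduce this directly from Theorem \ref{Gaber}, which asserts that the singular support of any finitely generated $D_X$-module is $T^*X$-coisotropic. So the only thing that needs checking is that the $D_X$-module naturally attached to $\xi$ is finitely generated. By the very definition of $SS(\xi)$ given just above, $SS(\xi) := SS({\mathcal M}_\xi)$ where ${\mathcal M}_\xi$ is the $D_X$-submodule of $\Sc^*(X(\R))$ generated by $\xi$. A module generated by a single element is by definition finitely generated (cyclic, in fact), so Theorem \ref{Gaber} applies to ${\mathcal M}_\xi$ and yields that $SS({\mathcal M}_\xi)$ is $T^*X$-coisotropic. This is precisely the assertion that $SS(\xi)$ is coisotropic.

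First I would recall the definition of $SS(\xi)$ and note that ${\mathcal M}_\xi$ is cyclic, hence finitely generated as a $D_X$-module. Then I would invoke Theorem \ref{Gaber} with ${\mathcal M} = {\mathcal M}_\xi$ to conclude. The one genuinely substantive point lurking in the background — and the step I would expect to require the most care if it were not already packaged as Theorem \ref{Gaber} (attributed to \cite{Gab}, with related versions in \cite{KKS, Mal}) — is the involutivity/coisotropy of the characteristic variety of a coherent $D$-module; but since that is exactly the content of the cited theorem, here it is used as a black box. Thus the proof is essentially immediate.

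\begin{proof}
By definition, $SS(\xi) = SS({\mathcal M}_{\xi})$, where ${\mathcal M}_{\xi} \subset \Sc^*(X(\R))$ is the $D_X$-submodule generated by $\xi$. Being generated by one element, ${\mathcal M}_{\xi}$ is a finitely generated $D_X$-module. Hence by Theorem \ref{Gaber}, $SS({\mathcal M}_{\xi})$ is a $T^*X$-coisotropic variety, i.e. $SS(\xi)$ is coisotropic.
\end{proof}
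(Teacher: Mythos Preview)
Your proof is correct and matches the paper's approach exactly: the paper states this as an immediate corollary of Theorem \ref{Gaber} without even writing a proof, and your argument---that ${\mathcal M}_\xi$ is cyclic hence finitely generated, so Theorem \ref{Gaber} applies---is precisely the intended one-line justification.
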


We will also use the following well-known facts from the theory of
$D$-modules. Let $X$ be a smooth algebraic variety.

\begin{fact} \label{Supp2SS}
Let $\xi \in \Sc^*(X (\R))$.  Then $\overline{\Supp(\xi)}_{Zar} =
p_X(SS(\xi))(\R)$, where $\overline{\Supp(\xi)}_{Zar} $ denotes
the \Zar  closure of
 $\Supp(\xi)$.
\end{fact}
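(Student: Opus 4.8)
The statement to prove is Fact \ref{Supp2SS}: for $\xi \in \Sc^*(X(\R))$, one has $\overline{\Supp(\xi)}_{\mathrm{Zar}} = p_X(SS(\xi))(\R)$.

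\medskip

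The plan is to unwind the definition of $SS(\xi)$ as the singular support (characteristic variety) of the $D_X$-module ${\mathcal M}_\xi$ generated by $\xi$ inside $\Sc^*(X(\R))$, and then invoke the standard comparison between the support of a coherent $D_X$-module and the projection of its characteristic variety. First I would recall that for any coherent $D_X$-module ${\mathcal M}$, the projection $p_X(SS({\mathcal M}))$ is exactly the support $\Supp({\mathcal M})$ as a closed subvariety of $X$; this is a classical fact (it follows because $SS({\mathcal M})$ contains the zero section over $\Supp({\mathcal M})$, and conversely $p_X(SS({\mathcal M})) \subseteq \Supp({\mathcal M})$ since the characteristic variety localizes correctly). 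So the task reduces to identifying $\Supp({\mathcal M}_\xi)$, as an algebraic subvariety of $X$, with $\overline{\Supp(\xi)}_{\mathrm{Zar}}$, where on the left $\Supp(\xi)$ means the support of $\xi$ as a distribution (a closed subset of the real manifold $X(\R)$).

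\medskip

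For the inclusion $\overline{\Supp(\xi)}_{\mathrm{Zar}} \subseteq p_X(SS(\xi))(\R)$: if a point $x \in X(\R)$ lies outside $p_X(SS({\mathcal M}_\xi))$, then on a Zariski (hence analytic) neighborhood $U$ of $x$ the module ${\mathcal M}_\xi$ restricts to zero, so in particular $\xi|_{U(\R)} = 0$, i.e.\ $x \notin \Supp(\xi)$; since $p_X(SS(\xi))$ is Zariski closed, this gives the inclusion of Zariski closures. For the reverse inclusion $p_X(SS(\xi))(\R) \subseteq \overline{\Supp(\xi)}_{\mathrm{Zar}}$: here I would use that over any open $U \subseteq X$ with $\xi|_{U(\R)} = 0$, the whole submodule generated by $\xi$ vanishes on $U$ as well (differential operators are local, so $D_U \cdot (\xi|_U) = 0$), hence $\Supp({\mathcal M}_\xi) \subseteq \overline{\Supp(\xi)}_{\mathrm{Zar}}$; combined with $p_X(SS({\mathcal M}_\xi)) = \Supp({\mathcal M}_\xi)$ this finishes the argument. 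One should also note finite generation of ${\mathcal M}_\xi$ — it is cyclic, generated by $\xi$ — so that Theorem \ref{Gaber} and the coherence-based comparison apply; this is exactly the setting of Corollary \ref{CorSingSup}.

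\medskip

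The main subtlety I anticipate is bookkeeping the passage between the real-analytic object $\Supp(\xi) \subseteq X(\R)$ and the algebraic object $\Supp({\mathcal M}_\xi) \subseteq X$: one must check that "$\xi$ vanishes on the real points of a Zariski-open $U$" is equivalent to "${\mathcal M}_\xi$ vanishes on $U$", which uses that a distribution vanishing on an open set kills its $D$-module localization there, plus the fact that $X(\R)$ is Zariski-dense in $X$ (so a nonempty Zariski-open subset always has real points, ensuring no loss of information). Once this dictionary is in place, the equality of the two sides is immediate from the classical $D$-module fact $p_X(SS) = \Supp$. Since the excerpt defers the details of $D$-module theory to Appendix \ref{AppDmod}, I expect the actual write-up to be short: state the classical fact, then spell out the two-line support comparison.
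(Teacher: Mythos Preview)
Your outline is correct and, once unwound, coincides with the paper's argument: both routes reduce to the single nontrivial assertion that if $f\in\mathcal{O}(X)$ vanishes on $\Supp(\xi)$ then $f^k\xi=0$ for some $k$. (Your phrase ``a distribution vanishing on an open set kills its $D$-module localization there'' is exactly this statement, since the algebraic localization $(\mathcal{M}_\xi)_g$ vanishes iff some $g^k$ annihilates the generator $\xi$.) The paper does not detour through $\Supp_{\mathcal{O}_X}(\mathcal{M}_\xi)$ and the classical identity $p_X(SS)=\Supp$; it works directly with the description of $SS(\xi)$ as the zero set of $\Gr(\mathrm{Ann}_{D(X)}\xi)$, so that $f^k\in\mathrm{Ann}_{D(X)}(\xi)$ immediately gives $p_X(SS(\xi))\subset Z(f)$, while the reverse inclusion is the trivial observation that $f\xi=0$ forces $\Supp(\xi)\subset Z(f)(\R)$.

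The one point you underestimate is that this key step is \emph{not} a formality: it genuinely requires $\xi$ to be a Schwartz distribution and fails for arbitrary distributions (e.g.\ $\xi=\sum_n \chi_n(x)\,\partial_y^n\delta_{y=0}$ on $\R^2$, with disjointly supported bumps $\chi_n$, is supported on $\{y=0\}$ yet annihilated by no power of $y$). The paper isolates precisely this as a separate lemma, reduces it to the case of a coordinate function on affine space, and cites \cite{AG1} for that case; only then does Fact~\ref{Supp2SS} become a one-line corollary. So your expectation of a short write-up is accurate only because the real work is outsourced.
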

\begin{fact} \label{Ginv}
\item Let an algebraic group $G$ act on $X$. Let $\g$ denote the
Lie algebra of $G$. Let $\xi \in \Sc^*(X(\R))^{G(\R)}$. Then
$$SS(\xi) \subset \{(x,\phi) \in T^*X \, | \, \forall \alpha \in
\g \, \phi(\alpha(x)) =0\}.$$
\end{fact}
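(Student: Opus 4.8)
The plan is to show that for a $G(\R)$-invariant Schwartz distribution $\xi$, every element of the Lie algebra $\g$, acting by the differential of the $G$-action, annihilates $\xi$, and then to observe that the singular support of a $D_X$-module is contained in the common zero set of the symbols of the differential operators annihilating it. First I would recall that an invariant distribution is in particular annihilated by the infinitesimal action: for $\alpha \in \g$, the vector field $\alpha$ on $X$ (the image of $\alpha$ under the Lie algebra map to vector fields, in the sense fixed in subsection \ref{GenNat}) satisfies $\alpha \xi = 0$ in $\Sc^*(X(\R))$, because differentiating the identity $\xi \circ g = \xi$ for $g$ in a one-parameter subgroup $\exp(t\alpha)$ gives $L_\alpha \xi = 0$. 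Here one should be slightly careful: the action on distributions involves the Jacobian of $g$, but since $G$ is an algebraic group acting on the affine variety $X$ and we only need the statement on the level of the $D_X$-module generated by $\xi$, the relevant fact is simply that $\xi$ is killed by the first-order operators $\alpha + (\text{divergence term})$, whose principal symbols are the same as those of $\alpha$ alone.

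Next I would translate this into a statement about the annihilator ideal. The $D_X$-module $\mathcal{M}_\xi$ generated by $\xi$ is a quotient of $D_X$ by the left ideal $\Ann(\xi)$, and by definition $SS(\mathcal{M}_\xi)$ is the zero set in $T^*X$ of the ideal in $\operatorname{gr} D_X = \cO(T^*X)$ generated by the principal symbols of elements of $\Ann(\xi)$. Since each vector field $\alpha$ (viewed as the order-$1$ operator associated to $\alpha \in \g$, up to the lower-order correction above) lies in $\Ann(\xi)$, its principal symbol — which is precisely the function $(x,\phi) \mapsto \phi(\alpha(x))$ on $T^*X$, where $\alpha(x) \in T_xX$ is the value of the vector field at $x$ — vanishes on $SS(\xi)$. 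Taking the intersection over all $\alpha \in \g$ yields exactly the asserted inclusion
$$SS(\xi) \subset \{(x,\phi) \in T^*X \mid \forall \alpha \in \g,\ \phi(\alpha(x)) = 0\}.$$

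The main obstacle, such as it is, is the bookkeeping around the divergence/Jacobian correction term: one must make sure that the operator that actually annihilates $\xi$ is $\alpha + c_\alpha$ with $c_\alpha \in \cO(X)$ a function (the logarithmic derivative of the Jacobian of the flow, which is well-defined here because the action is algebraic), so that passing to principal symbols of order $1$ kills $c_\alpha$ and leaves the symbol of $\alpha$ intact. This is standard, and is exactly the computation behind the statement in subsection \ref{InvDist} that invariant distributions are $\g$-invariant in the distributional sense; I would cite that and the definition of singular support, and then the inclusion is immediate. No deep input is needed beyond the definition of $SS$ — in particular Theorem \ref{Gaber} (coisotropy) is not used here.
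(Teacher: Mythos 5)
Your proof is correct and takes essentially the same route as the paper: in Appendix \ref{AppDmod} the paper deduces Fact \ref{Ginv} as an immediate corollary of the statement that $SS(\xi)$ is the zero set of the ideal generated by the symbols $\sigma(d)$ for $d$ in the annihilator of $\xi$ in $D(X)$, applied to the first-order operators coming from the infinitesimal $\g$-action, whose symbols are exactly $(x,\phi)\mapsto\phi(\alpha(x))$. Your attention to the zeroth-order divergence correction (which does not change the order-one symbol) is a detail the paper leaves implicit.
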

\begin{fact} \label{Fou}
Let $(V,B)$ be a quadratic space. Let $Z \subset X \times V$ be a
closed subvariety, invariant with respect to homotheties in $V$.
Suppose that $\Supp(\xi) \subset Z(\R)$. Then $SS(\Fou_V(\xi))
\subset F_V(p_{X \times V}^{-1}(Z))$.
\end{fact}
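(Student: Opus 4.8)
The plan is to reduce to the local statement about the singular support of a $D$-module under a partial Fourier transform. Recall that on the level of $D$-modules, partial Fourier transform along $V$ is the automorphism of the Weyl algebra $D_{X\times V}$ sending the coordinate functions $v_i$ on $V$ to $-\partial_{\phi_i}$ (up to the identification $B:V\xrightarrow{\sim}V^*$) and $\partial_{v_i}$ to $\phi_i$; concretely this is implemented on $\Sc^*$ by $\Fou_V$. Since $\Fou_V$ intertwines the $D_{X\times V}$-action with its twist by this automorphism, we have $\mathcal M_{\Fou_V(\xi)}\cong \Fou_V(\mathcal M_\xi)$ as twisted $D$-modules, and hence $SS(\mathcal M_{\Fou_V(\xi)}) = F_V(SS(\mathcal M_\xi))$, where $F_V$ is precisely the linear symplectomorphism of $T^*(X\times V) = T^*X\times V\times V^*$ induced by the above substitution of variables — this matches the formula $F_V(\alpha,v,\phi)=(\alpha,-B^{-1}\phi,Bv)$ in the Notation preceding the statement. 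This is the standard behaviour of characteristic varieties under Fourier transform; I would cite it as one of the facts whose details are deferred to Appendix \ref{AppDmod}.

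With that in hand, the argument is: by Fact \ref{Supp2SS} applied to $\xi$ (or directly since $\Supp(\xi)\subset Z(\R)$ and $Z$ is Zariski closed), we get $SS(\xi)\subset p_{X\times V}^{-1}\big(\overline{\Supp(\xi)}_{Zar}\big)\subset p_{X\times V}^{-1}(Z)$. Applying $F_V$ and using the identity from the previous paragraph,
\[
SS(\Fou_V(\xi)) = F_V(SS(\xi)) \subset F_V\big(p_{X\times V}^{-1}(Z)\big),
\]
which is exactly the claimed inclusion.

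The only real point requiring care — and the step I expect to be the main obstacle to write cleanly — is justifying $SS(\mathcal M_{\Fou_V(\xi)})=F_V(SS(\mathcal M_\xi))$ rigorously in the analytic (distributional) setting rather than the purely algebraic one: one must know that $\Fou_V$ genuinely carries the $D_{X\times V}$-submodule of $\Sc^*(X(\R)\times V(\R))$ generated by $\xi$ isomorphically onto the one generated by $\Fou_V(\xi)$, compatibly with the Weyl-algebra automorphism, and that passing to the associated graded (the characteristic variety) converts this algebra automorphism into the symplectic map $F_V$. The hypothesis that $Z$ is invariant under homotheties in $V$ is what guarantees $p_{X\times V}^{-1}(Z)$ is itself conic and $F_V$-stable enough for the statement to be usefully applied later, but it is not strictly needed for the inclusion above; I would remark on this. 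The homothety-invariance of $Z$ will instead be used implicitly to keep $F_V(p_{X\times V}^{-1}(Z))$ in a controlled, conic form when this Fact is invoked in Section \ref{Red2Geo}.
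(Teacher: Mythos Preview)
Your central claim --- that $SS(\Fou_V(\xi)) = F_V(SS(\xi))$ holds as a general identity --- is false, and this is precisely where the homothety-invariance of $Z$ enters, not as a cosmetic convenience. The Weyl-algebra automorphism $\Phi^D_V$ underlying $\Fou_V$ does \emph{not} preserve the order filtration: it sends the order-zero element $v_i$ to the order-one element $\partial_{B^{-1}e_i}$. Hence the associated-graded map it induces on characteristic varieties is not simply $F_V$. A one-line counterexample on $V=\R$ (with $X$ a point, $B$ the standard form): take $\xi=\delta_1$, so $SS(\xi)=\{1\}\times V^*$; then $\Fou_V(\xi)=e^{-2\pi i v}$ is annihilated by $\partial_v+2\pi i$, whose order-symbol is the fiber coordinate $\phi$, giving $SS(\Fou_V(\xi))=V\times\{0\}$; but $F_V(\{1\}\times V^*)=V\times\{1\}$. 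No inclusion holds in either direction.

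The paper's proof avoids this by working directly with generators of $\mathrm{Ann}(\Fou_V(\xi))$. One takes $f\in\mathcal O(X\times V)$ vanishing on $Z$ and \emph{homogeneous in the $V$-variables}; such $f$ exist and generate the ideal of $Z$ exactly because $Z$ is homothety-invariant. Since $\Supp(\xi)\subset Z(\R)$, some power $f^k$ kills $\xi$, so $\Phi^D_V(f^k)$ kills $\Fou_V(\xi)$. The homogeneity of $f$ is what makes the symbol computation go through: for a polynomial homogeneous of degree $d$ in $V$, $\Phi^D_V(f)$ is a differential operator of pure order $d$, and one gets $\sigma(\Phi^D_V(f^k))=(\Phi^O_V(\sigma(f)))^k$ on the nose. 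Intersecting the zero loci over all such $f$ gives the inclusion $SS(\Fou_V(\xi))\subset F_V(p_{X\times V}^{-1}(Z))$. So the hypothesis you dismissed is exactly the ingredient that repairs the failure of $\Phi^D_V$ to be filtered; your plan would work if you replaced the false global identity by this homogeneous-generator argument.
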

For proofs of those facts see Appendix \ref{AppDmod}.
\subsection{Specific notation} \label{SpecNat}  $ $\\
The following notations will be used in the body of the paper.
\itemize{
\item Let $V:=V_n$ be the standard $n$-dimensional linear space defined over $F$.

\item Let $\sl(V)$ denote the Lie algebra of operators with zero
trace. \item Denote $X:=X_n:=\sl(V_n) \times V_n \times V_n^*$
\item $G:=G_n:=\mathrm{GL}(V_n)$ \item $\g:=\g_n:=\Lie
(G_n)=\mathrm{gl}(V_n)$ \item $\tG:= \tG_n:= G_n \rtimes
\{1,\sigma\}$, where the action of the 2-element group
$\{1,\sigma\}$ on $G$ is given by the involution $g \mapsto
{g^t}^{-1}$. \item We define a character $\chi$ of $\tG$ by
$\chi(G)= \{1 \}$ and $\chi(\tG - G)= \{-1\}$.
\item Let $G_n$ act on $G_{n+1}$, $\g_{n+1}$ and on $\sl(V_n)$ by
$g(A):= gAg^{-1}$.
\item Let  $G$ act on $V \times V^*$ by $g(v,\phi):=(gv,
g^{*}\phi)$. This gives rise to an action of $G$ on $X$.
\item Extend the actions of $G$ to actions of $\tG$ by
$\sigma(A):=A^t$ and $\sigma(v,\phi):=(\phi^t,v^t)$. \item We
consider the standard scalar products on $\sl(V)$ and $V \times
V^*$. They give rise to a scalar product on $X$. \item We identify
the cotangent bundle $T^*X$ with $X \times X$ using the above
scalar product.
\item Let $\cN:=\cN_n \subset \sl(V_n)$ denote the cone of nilpotent operators. 
\item $C := (V \times 0) \cup (0 \times V^* ) \subset V \times
V^*$.

\item $\check{C} := (V \times 0  \times V \times 0) \cup (0 \times
V^* \times 0 \times V^*) \subset V \times V^* \times V \times
V^*$.

\item $\check{C}_{X \times X} := (\sl(V) \times V \times 0 \times
\sl(V) \times V \times 0) \cup (\sl(V) \times 0 \times V^* \times
\sl(V) \times 0 \times V^*) \subset X \times X$.

\item $S:= \{(A,v,\phi) \in X_n | A^n=0 \text{ and } \phi(A^i v)=0 \text{ for any } 0 \leq i \leq n\}$.
\item \begin{multline*}
\check{S} := \{((A_1,v_1,\phi_2),(A_2,v_2,\phi_2)) \in X
\times X \, | \, \forall i,j \in \{1,2\} \\
(A_i,v_j,\phi_j) \in S \text{ and } \forall \alpha \in
\mathrm{gl}(V), g(A_1,v_1,\phi_1) \bot (A_2,v_2,\phi_2)\}
\end{multline*}.
\item Note that
\begin{multline*}
\check{S} = \{((A_1,v_1,\phi_2),(A_2,v_2,\phi_2)) \in X \times X \, | \, \forall i,j \in \{1,2\} \\
(A_i,v_j,\phi_j) \in S \text{ and } [A_1,A_2] + v_1 \otimes \phi_2 - v_2 \otimes \phi_1 =0\}.
\end{multline*}
\item $\check{S}':= \check{S} - \check{C}_{X \times X}$.
\item $\Gamma := \{ (v,\phi) \in V \times V^* \, | \, \phi(V)=0 \}$.
\item 
For any $\lambda \in F$ we define $\nu_\lambda:X \to X$ by $\nu_{\lambda}(A,v,\phi):=(A+\lambda v\otimes \phi-\lambda \frac{\langle
\phi,v \rangle}{n}\mathrm{Id},v,\phi).$
\item It defines $\check{\nu}_\lambda : X \times X \to X \times X$. It is given by
\begin{multline*}
\check{\nu}_\lambda ((A_1,v_1,\phi_2),(A_2,v_2,\phi_2)) = \\= ((A_1+\lambda v_1\otimes \phi_1-\lambda \frac{\langle
\phi_1,v_1 \rangle}{n}\mathrm{Id},v_1,\phi_1), (A_2, v_2 -\lambda A_2 v_1,\phi_2 - \lambda A_2^* \phi_1)).
\end{multline*}
}

%
%

\section{Harish-Chandra descent} \label{HC}
\setcounter{lemma}{0}


%

\subsection{Linearization} \label{Lin}$ $\\
In this subsection we reduce Theorem A to the following one
\begin{theorem}  \label{descendant}
$\Sc^*(X(F))^{\tG,\chi}=0.$
\end{theorem}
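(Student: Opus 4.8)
The plan is to prove Theorem~\ref{descendant} by induction on $n$, following the outline sketched in the introduction; the case $n=0$ is trivial, so assume $\Sc^*(X_m(F))^{\tG_m,\chi}=0$ for all $m<n$. The first step is a Harish-Chandra descent. Applying the Generalized Harish-Chandra descent (Theorem~\ref{HC_Thm}) to the $G$-action on the $\sl(V)$-factor of $X=\sl(V)\times(V\times V^*)$: for a $G$-semisimple non-nilpotent $A\in\sl(V)(F)$ the centralizer $G_A$ and the normal space $N_{GA,A}^{\sl(V)}$ decompose, along the eigenspace decomposition of $A$, into products of smaller general linear groups acting on their own Lie algebras plus a central block, so that Proposition~\ref{Product} together with the induction hypothesis kills the corresponding equivariant distributions. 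Running the same descent on the $V\times V^*$-factor reduces to pairs $(v,\phi)$ with $\phi(v)=0$, where the stabilizers again produce either smaller groups (handled by induction) or the degenerate locus $\Gamma$. Combining these, and then using the automorphisms $\nu_\lambda$ (which commute with the $G$-action and push support toward $S$), one reduces Theorem~\ref{descendant} to the assertion that every $\xi\in\Sc^*(X(F))^{\tG,\chi}$ is supported in $S$.

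Suppose, for contradiction, that there is a nonzero $\xi\in\Sc^*(X(F))^{\tG,\chi}$; by the previous step $\Supp(\xi)\subset S$. Since $\sigma$ is an isometry of the chosen scalar product on $X$, the Fourier transform $\Fou_X$ commutes with $\sigma$, and Fourier transform of a $G$-invariant distribution is $G$-invariant; hence $\Fou_X(\xi)$ is again $\tG$-equivariant with character $\chi$, so $\Supp(\Fou_X(\xi))\subset S$ as well. By Corollary~\ref{CorSingSup}, $SS(\xi)\subset T^*X\cong X\times X$ is coisotropic. Fact~\ref{Supp2SS} forces the first $X$-coordinate of every point of $SS(\xi)$ to lie in $S$ (which is Zariski closed); Fact~\ref{Fou}, applied in the $V$- and $V^*$-directions and using $\Supp(\Fou_X(\xi))\subset S$, forces the second coordinate and the mixed combinations into $S$; and Fact~\ref{Ginv} for the $G$-action supplies the relation $[A_1,A_2]+v_1\otimes\phi_2-v_2\otimes\phi_1=0$. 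Together these say exactly $SS(\xi)\subset\check S$. At this point one invokes the central geometric statement: every $T^*X$-coisotropic subvariety of $\check S$ is contained in $\check C_{X\times X}$. Granting it, $SS(\xi)\subset\check C_{X\times X}$, so by Fact~\ref{Supp2SS} both $\Supp(\xi)$ and $\Supp(\Fou_X(\xi))$ are contained in $p_X(\check C_{X\times X})=\sl(V)\times C$.

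It then remains to prove the proposition that any $G$-invariant $\xi$ with $\Supp(\xi)$ and $\Supp(\Fou_X(\xi))$ both contained in $\sl(V)\times C=\sl(V)\times((V\times0)\cup(0\times V^*))$ must vanish, and this is where the Homogeneity Theorem (Theorem~\ref{ArchHom}) enters. The $\sl(V)$-coordinate is a spectator, so one works on $V\times V^*$ with the non-degenerate symmetric form $B$ whose quadratic form is $(v,\phi)\mapsto\phi(v)$, noting $C\subset Z(B)$. Decomposing $C$ into its two pieces $V\times0$ and $0\times V^*$ and their intersection, the Homogeneity Theorem forces any nonzero such distribution to be homogeneous for the homothety action of one of two explicit degrees (up to a unitary character); comparing this against the requirement that its Fourier transform be supported on the same small set yields a contradiction, so $\xi=0$. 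This contradiction completes the inductive step and proves Theorem~\ref{descendant}.

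The main obstacle is the geometric statement that every coisotropic subvariety of $\check S$ lies in $\check C_{X\times X}$. I expect to establish it by stratifying $\check S$ according to the simultaneous conjugacy type of the pair of nilpotents $(A_1,A_2)$ and the relative positions of the vectors $v_i$ and covectors $\phi_j$, and then arguing by induction over the strata: it suffices to show, on each stratum, that a coisotropic variety meeting it is forced either into $\check C_{X\times X}$ or into lower-dimensional boundary pieces. For all but one stratum this follows from the coisotropy (dimension) constraint; on the remaining one (the ``Key Proposition'') one applies a geometric analog of Frobenius reciprocity for coisotropic varieties to descend to a ``Key Lemma'' about coisotropic subvarieties of $V\times V^*\times V\times V^*$ adapted to the relation $v_1\otimes\phi_2=v_2\otimes\phi_1$, which is then settled by an explicit tangent-space computation. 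Carrying the coisotropy condition through these reductions, and ruling out stray coisotropic components that could escape $\check C_{X\times X}$, is the technical heart of the argument.
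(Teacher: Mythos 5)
Your proposal follows essentially the same route as the paper: Harish-Chandra descent on each factor plus the $\nu_\lambda$ trick to reduce to support in $S$, then singular support, coisotropy and the geometric statement to push the support into $\sl(V)\times C$, then the Homogeneity Theorem to kill what remains, with the geometric statement handled by stratifying over pairs of nilpotent orbits and descending via the geometric Frobenius reciprocity to a Key Lemma on $V\times V^*\times V\times V^*$. The only imprecisions are in your sketch of the last part (the Key Proposition is applied to every stratum $U(O_1,O_2)$, not just one, and the Key Lemma's defining relation is $[A,B]+v_1\otimes\phi_2-v_2\otimes\phi_1=0$ for some nilpotent $B\in[A,\g]$, resolved by a dimension/trace computation rather than a tangent-space one), but these do not change the approach.
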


We will divide this reduction to several propositions.\\

\begin{proposition} \label{Red1}
If  $\cD(G_{n+1}(F))^{\tG_n,\chi}=0$ then Theorem A holds.
\end{proposition}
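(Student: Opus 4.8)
The plan is to show that Proposition \ref{Red1} follows from the standard Gelfand--Kazhdan-type reduction, translating a statement about invariant distributions on the group $G_{n+1}(F)$ into the statement of Theorem A about invariance under transposition. First I would recall the setup of Theorem A: we have $G_n(F)$ acting on $G_{n+1}(F)$ by conjugation, and we want every $G_n(F)$-invariant distribution on $G_{n+1}(F)$ to be invariant under the transpose anti-involution $\theta(g) = g^t$. The key observation is that $\theta$ normalizes the $G_n$-conjugation action (since $(hgh^{-1})^t = (h^t)^{-1} g^t (h^t)^{-1\,t}$... more precisely $\theta$ intertwines the action of $h$ with the action of $(h^t)^{-1}$, and $(h^t)^{-1} \in G_n$ when $h \in G_n$), so $\theta$ acts on the space $\cD(G_{n+1}(F))^{G_n(F)}$ of invariant distributions. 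We must show this action is trivial.

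The reduction to the hypothesis $\cD(G_{n+1}(F))^{\tG_n,\chi}=0$ goes as follows. Suppose for contradiction that some $G_n(F)$-invariant distribution $\xi$ on $G_{n+1}(F)$ is \emph{not} $\theta$-invariant. Decompose the space $\cD(G_{n+1}(F))^{G_n(F)}$ into the $\pm 1$ eigenspaces of $\theta$ (note $\theta^2 = \id$ on distributions since $(g^t)^t = g$); the anti-invariant eigenspace is then nonzero, and it consists precisely of distributions $\eta$ with $\eta$ being $G_n(F)$-invariant and $\theta$-anti-invariant. But such an $\eta$ is exactly a nonzero element of $\cD(G_{n+1}(F))^{\tG_n,\chi}$: the group $\tG_n = G_n \rtimes \{1,\sigma\}$ acts on $G_{n+1}$ with $G_n$ acting by conjugation and $\sigma$ acting by $g \mapsto g^t$ (one checks this defines a genuine action of the semidirect product, using that $\sigma$ conjugates the $G_n$-action by the automorphism $h \mapsto (h^t)^{-1}$ which is the very automorphism defining $\tG_n$), and the character $\chi$ is trivial on $G_n$ and $-1$ on the nontrivial coset. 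So $\cD(G_{n+1}(F))^{\tG_n,\chi}=0$ forces $\eta = 0$, a contradiction. Hence every $G_n(F)$-invariant distribution is $\theta$-invariant, which is Theorem A.

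The one genuinely substantive point is verifying that $\sigma: g \mapsto g^t$ together with $G_n$-conjugation assembles into an action of the semidirect product $\tG_n$ — i.e. that the relation $\sigma h \sigma^{-1} = (h^t)^{-1}$ holds at the level of transformations of $G_{n+1}(F)$. Concretely, if $c_h$ denotes conjugation by $h$ and $t$ denotes transposition, then $t \circ c_h \circ t^{-1}$ sends $g \mapsto (h g^t h^{-1})^t = (h^{-1})^t g h^t = (h^t)^{-1} g h^t = c_{(h^t)^{-1}}(g)$, which is exactly conjugation by $(h^t)^{-1}$, matching the twist in the definition of $\tG_n$ from the Specific Notation subsection. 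This is a short matrix computation rather than a real obstacle. I would also remark that $\theta$ preserves the relevant function/distribution spaces and commutes with taking duals, so the eigenspace decomposition is legitimate; and since we only care about $\cD$ (not Schwartz distributions) at this stage, no Nash-manifold subtleties intervene. Thus the entire content of the proposition is the semidirect-product bookkeeping plus the elementary linear-algebra identity for transposition.
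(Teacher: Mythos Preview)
Your argument is correct and is precisely the ``straightforward'' reduction the paper has in mind: the $(-1)$-eigenspace of transposition on $\cD(G_{n+1}(F))^{G_n(F)}$ is exactly $\cD(G_{n+1}(F))^{\tG_n,\chi}$, so its vanishing forces every $G_n(F)$-invariant distribution to be transpose-invariant. Your verification that $\sigma: g \mapsto g^t$ and $G_n$-conjugation assemble into an action of $\tG_n$ is the only content, and you handle it correctly.
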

The proof is straightforward.

\begin{proposition} 
If  $\Sc^*(G_{n+1}(F))^{\tG_n,\chi}=0$ then
$\cD(G_{n+1}(F))^{\tG_n,\chi}=0$.
\end{proposition}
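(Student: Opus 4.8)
The plan is to deduce the proposition directly from Theorem \ref{NoSNoDist}, which upgrades the vanishing of equivariant \emph{Schwartz} distributions to the vanishing of \emph{all} equivariant distributions, once the $\chi$-equivariance is rephrased as invariance of sections of a (trivial) bundle under a suitable reductive group.

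First I would record the elementary inputs. The group $\tG_n = G_n \rtimes \{1,\sigma\}$ is a reductive algebraic group: it is a finite extension of $G_n = \mathrm{GL}(V_n)$, hence has the same, trivial, unipotent radical. It acts algebraically on the smooth affine variety $G_{n+1} = \mathrm{GL}(V_{n+1})$ by $g(A) = gAg^{-1}$ for $g \in G_n$ and $\sigma(A) = A^t$. Since the character $\chi$ of $\tG_n$ takes values in $\{\pm 1\}$, it defines a one-dimensional continuous real representation $W$ of $\tG_n(F)$, namely $W = \R$ with $G_n(F)$ acting trivially and $\sigma$ acting by $-1$. Under the tautological identification of $(\tG_n,\chi)$-equivariant distributions with $\tG_n(F)$-invariant sections of the trivial bundle $G_{n+1} \times W$, and likewise for Schwartz distributions, the hypothesis of the proposition becomes $\Sc^*(G_{n+1}(F), W)^{\tG_n(F)} = 0$.

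Now I would invoke Theorem \ref{NoSNoDist} with $G := \tG_n$, $X := G_{n+1}$ and $V := W$: all of its hypotheses are satisfied, so it produces $\cD(G_{n+1}(F), W)^{\tG_n(F)} = 0$, which is exactly the conclusion $\cD(G_{n+1}(F))^{\tG_n,\chi} = 0$. I do not anticipate a genuine obstacle here; the proof is essentially a citation, and the only points meriting a line of justification are the reductivity of the disconnected group $\tG_n$ and the smoothness and affineness of $G_{n+1}$, both of which are standard.
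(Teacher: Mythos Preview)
Your proposal is correct and is exactly the paper's approach: the paper's proof consists of the single line ``Follows from Theorem \ref{NoSNoDist}'', and you have simply unpacked the application by encoding the $(\tG_n,\chi)$-equivariance as $\tG_n(F)$-invariance in the one-dimensional real representation $W$ given by $\chi$.
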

Follows from Theorem \ref{NoSNoDist}.

\begin{proposition} \label{Red2}
If  $\Sc^*(\g_{n+1}(F))^{\tG_n(F),\chi}=0$ then
$\Sc^*(G_{n+1}(F))^{\tG_n(F),\chi}=0$.
\end{proposition}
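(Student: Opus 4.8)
\textbf{Proof plan for Proposition \ref{Red2}.}

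The plan is to reduce a statement about invariant distributions on the group $G_{n+1}(F)$ to the analogous statement on its Lie algebra $\g_{n+1}(F)$, using the standard Cayley-type coordinate change together with a partition of the group according to the eigenvalues of the "semisimple part near $1$." First I would use the Harish-Chandra descent principle (Theorem \ref{HC_Thm}): it suffices to check, for every $\tG_n$-semisimple point $x \in G_{n+1}(F)$, that $\Sc^*((N^{G_{n+1}}_{\tG_n x,x})(F))^{(\tG_n)_x,\chi}=0$, where the normal space is taken to the $\tG_n$-orbit (or, what is essentially the same after accounting for the two-element quotient, to the $G_n$-orbit). The key observation is that a semisimple element $x$ commuting with its orbit decomposes $V_{n+1}$ into eigenspaces, and the centralizer $(G_n)_x$, together with the normal space, is a product over the eigenvalues of $x$ of smaller pieces: for the eigenvalue-$1$ part one gets a copy of the same kind of problem on a lower-dimensional $\gl$, while for eigenvalues $\ne 1$ one gets a "no extra symmetry needed" situation that can be handled by Proposition \ref{Product} together with the fact that on those factors the transposition action already coincides with an inner action (this is exactly the mechanism used for the analogous $p$-adic reductions in \cite{AGRS}, and for $\gl$ over archimedean fields the relevant one-variable facts are standard).

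Concretely, the central case is $x$ central (a scalar $\lambda \cdot \id$, hence automatically in $G_{n+1}(F)$ but its $\tG_n$-orbit is a point only if $\lambda = \pm$ something compatible; in any case the interesting neighborhood is around $\lambda\cdot\id$ with $\lambda$ real). Near such a point the normal space is all of $\g_{n+1}(F)$ and $(\tG_n)_x = \tG_n$, so the required vanishing is precisely the hypothesis $\Sc^*(\g_{n+1}(F))^{\tG_n(F),\chi}=0$. For non-central semisimple $x$, write $V_{n+1} = \bigoplus_\mu V_{n+1}^\mu$ as a sum of generalized eigenspaces; then $(G_n)_x$ respects a compatible decomposition of the ambient $V_n$, the normal space factors accordingly, and $\sigma$ permutes the $\mu$ and $\mu^{-1}$ eigenspaces. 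One then invokes Proposition \ref{Product}(i): it is enough that \emph{one} factor contributes a zero space of invariant distributions. For a pair $\{\mu,\mu^{-1}\}$ with $\mu \ne \mu^{-1}$, $\sigma$ swaps the two factors and the swap is conjugation by an element of $G$, so on that factor $\chi$-equivariance forces the distribution to vanish unless it is already $\sigma$-invariant, giving the needed zero; for $\mu = \pm 1$ with $\mu$ not the "central" eigenvalue, the factor is a lower-rank instance to which one applies descent recursively or the analogous archimedean $\gl$ fact. Assembling these via Proposition \ref{Product} yields the hypothesis of Theorem \ref{HC_Thm}, hence $\Sc^*(G_{n+1}(F))^{\tG_n(F),\chi}=0$.

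The main obstacle I anticipate is bookkeeping the two-element extension $\tG_n = G_n \rtimes \{1,\sigma\}$ correctly in the descent: Theorem \ref{HC_Thm} is phrased for a reductive group acting, and one must be careful that "$\tG_n$-semisimple" versus "$G_n$-semisimple" and the stabilizers $(\tG_n)_x$ versus $(G_n)_x$ interact properly — in particular, whether a given semisimple $x$ has $\sigma$-conjugate or $\sigma$-fixed orbit changes which case one is in, and for $\sigma$-fixed orbits one must re-derive the $\sigma$-action on the normal space and check it is the expected transposition on the lower $\gl$. The other slightly delicate point is making sure the eigenvalue-$1$ (or more precisely, the generalized $1$-eigenspace) factor really reproduces an $X_m$-type problem with the right character $\chi$ so that the recursion, or the direct appeal to the hypothesis, applies; this is where care with the identification $T^*X \cong X\times X$ and the scalar products fixed in \ref{SpecNat} pays off. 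None of these is a serious difficulty — they are the routine but unavoidable verifications that make the Lie-algebra reduction go through — and this is why the authors can reasonably state the proof "follows from" the descent and product propositions.
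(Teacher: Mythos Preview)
Your approach is far more elaborate than the situation requires, and it contains a concrete error. The paper's proof is essentially one line: $G_{n+1}(F)$ is an open subset of $\g_{n+1}(F)$, and $\mathrm{det}:\g_{n+1}(F)\to F$ is $\tG_n$-invariant. Given a nonzero $\xi \in \Sc^*(G_{n+1}(F))^{\tG_n,\chi}$, pick $p \in \Supp(\xi)$, choose $f \in \Sc(F)$ vanishing near $0$ with $f(\mathrm{det}\,p)\neq 0$, and set $\xi' = (f\circ\mathrm{det})\cdot\xi$. Then $\xi'$ is still $(\tG_n,\chi)$-equivariant, still nonzero, and now has support bounded away from $\{\mathrm{det}=0\}$, so it extends by zero to an element of $\Sc^*(\g_{n+1}(F))^{\tG_n,\chi}$, contradicting the hypothesis. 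No descent, no eigenspace decomposition, no induction.

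Your proposal, by contrast, invokes Theorem \ref{HC_Thm} and must then verify the normal-space vanishing at \emph{every} $\tG_n$-semisimple point of $G_{n+1}(F)$. At scalar points this is indeed the hypothesis, but at non-scalar semisimple $x$ you appeal to ``lower-rank instances'' or ``analogous $\gl$ facts'' that are not yet available: Proposition \ref{Red2} is a standalone implication, proved \emph{before} any induction on $n$ is set up (the induction begins only in subsection \ref{subHC}). More seriously, your eigenspace analysis is wrong: the involution $\sigma$ acts on the space $G_{n+1}$ by $A\mapsto A^t$, which preserves eigenvalues; it does \emph{not} swap $\mu$- and $\mu^{-1}$-eigenspaces. (You may be conflating this with the action of $\sigma$ on the acting group $G_n$, which is $g\mapsto (g^t)^{-1}$.) So the mechanism you propose for the $\mu\neq\mu^{-1}$ factors does not apply as written. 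The descent machinery you are reaching for is exactly what the paper deploys \emph{later}, in subsection \ref{subHC}, on the Lie-algebra side and with the induction hypothesis in hand; here it is both unnecessary and, as you have set it up, incomplete.
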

\begin{proof} Let $\xi \in \Sc^*(G_{n+1}(F))^{\tG_n(F),\chi}$. We have to
prove $\xi=0$. Assume the contrary. Take $p \in
\mathrm{Supp}(\xi)$. Let $t=\mathrm{det}(p)$. Let $f\in \Sc(F)$ be
such that $f$ vanishes in a neighborhood of 0 and $f(t) \neq 0$.
Consider the determinant map $\mathrm{det}:G_{n+1}(F) \to F$.
Consider $\xi':=(f \circ \mathrm{det})\cdot \xi$. It is easy to
check that $\xi' \in \Sc^*(G_{n+1}(F))^{\tG_n(F),\chi}$ and $p \in
\mathrm{Supp}(\xi')$. However, we can extend $\xi'$ by zero to
$\xi'' \in \Sc^*(\g_{n+1}(F))^{\tG_n(F),\chi}$, which is zero by
the assumption. Hence $\xi'$ is also zero. Contradiction.
\end{proof}

\begin{proposition}
If $\Sc^*(X_{n}(F))^{\tG_n(F),\chi}=0$ then
$\Sc^*(\g_{n+1}(F))^{\tG_n(F),\chi}=0$.
\end{proposition}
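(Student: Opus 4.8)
The plan is to decompose the Lie algebra $\g_{n+1} = \mathrm{gl}(V_{n+1})$ according to the embedding $V_n \subset V_{n+1}$, matching up with the linear space $X_n = \sl(V_n)\times V_n \times V_n^*$. Write $V_{n+1} = V_n \oplus F$, so that an element of $\g_{n+1}$ is a block matrix $\begin{pmatrix} A & v \\ \phi & a\end{pmatrix}$ with $A \in \mathrm{gl}(V_n)$, $v \in V_n$, $\phi \in V_n^*$, $a \in F$. The conjugation action of $G_n = \mathrm{GL}(V_n)$ (embedded block-diagonally with a $1$ in the bottom corner) fixes $a$ and acts on $(A,v,\phi)$ by the natural action $g(A,v,\phi) = (gAg^{-1}, gv, g^{*}\phi)$; the transposition involution $\sigma$ sends $\begin{pmatrix} A & v \\ \phi & a\end{pmatrix}$ to its transpose, i.e. to $(A^t, \phi^t, v^t, a)$, which is exactly the $\tG_n$-action on $X_n$ extended trivially on the coordinate $a$ and combined with the translation on the trace of $A$.

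First I would separate out the two ``scalar'' directions that $G_n$ acts on trivially: the diagonal corner $a \in F$, and the scalar part $\frac{\tr A}{n}\mathrm{Id}$ of the $\mathrm{gl}(V_n)$-block (splitting $\mathrm{gl}(V_n) = \sl(V_n) \oplus F\cdot\mathrm{Id}$). This identifies $\g_{n+1}(F)$ as a $\tG_n$-space with $X_n(F) \times F \times F$, where $\tG_n$ acts only on the first factor (the involution $\sigma$ acts trivially on both $F$-factors, since transposition fixes the diagonal entries). Then I would invoke Proposition \ref{Product}(i): since $\Sc^*(X_n(F))^{\tG_n(F),\chi}=0$ by hypothesis, and $\chi$ is nontrivial precisely on the $X_n$-factor, the external product vanishes, giving $\Sc^*(\g_{n+1}(F))^{\tG_n(F),\chi} = \Sc^*(X_n(F) \times F \times F)^{\tG_n(F),\chi} = 0$.

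One point that requires a little care: the bilinear identification of $\g_{n+1}$ with $X_n \times F \times F$ must be checked to be $\tG_n$-equivariant, including that $\sigma$ really does act on the $\sl(V_n)$-component via $A \mapsto A^t$ after projecting out the trace — this is immediate since transposition commutes with taking trace and fixes $\mathrm{Id}$. I should also make sure the character $\chi$ restricted to the relevant groups is handled correctly, but since $\chi$ is defined on all of $\tG_n$ and Proposition \ref{Product} is applied with the single group $\tG_n$ acting diagonally (trivially on the two $F$-factors), this is automatic. I do not expect any serious obstacle here; the only real content is setting up the block decomposition and the equivariant linear isomorphism correctly, after which the statement follows formally from Proposition \ref{Product}. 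The main thing to be careful about is bookkeeping of where the trace of the $\mathrm{gl}(V_n)$-block goes — it must be grouped with the inert $F$-factors, not left inside a $\mathrm{gl}(V_n)$ that would fail to be $\sl(V_n)$.
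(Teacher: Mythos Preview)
Your proposal is correct and matches the paper's own argument essentially line for line: the paper writes down the same block decomposition $\begin{pmatrix} A & v \\ \phi & \lambda \end{pmatrix} \mapsto ((A - \tfrac{\tr A}{n}\mathrm{Id}, v, \phi), \lambda, \tr A)$, observes that this is a $\tG_n(F)$-equivariant isomorphism $\g_{n+1}(F) \cong X_n(F)\times F\times F$ with trivial action on the $F\times F$ factor, and concludes. Your extra care in checking that $\sigma$ acts correctly on the trace-free part and that Proposition~\ref{Product}(i) applies (taking trivial groups on the two $F$-factors, or equivalently absorbing $\chi$ into the bundle) is appropriate but not substantively different from what the paper does.
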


\begin{proof}
The $\tG_n(F)$-space $\mathrm{gl}_{n+1}(F)$ is isomorphic to
$X_n(F) \times F \times F$ with trivial action on $F\times F$.
This isomorphism is given by
$$ \left(
  \begin{array}{cc}
    A_{n \times n} & v_{n\times 1} \\
    \phi_{1\times n} & \lambda \\
  \end{array}
\right) \mapsto ((A - \frac{\tr A}{n} \mathrm{Id}, v
,\phi),\lambda, \tr A).$$
 \end{proof}

\subsection{Harish-Chandra descent} \label{subHC}$ $\\

Now we start to prove Theorem \ref{descendant}. The proof is by
induction on $n$. Till the end of the paper we will assume that
Theorem \ref{descendant} holds for all $k<n$ for both archimedean
local fields.

The theorem obviously holds for $n=0$. Thus from now on we assume
$n\geq 1$. The goal of this subsection is to prove the following
theorem.

\begin{theorem}  \label{LastRed}
$\Sc^*(X(F) - S(F))^{\tG,\chi}=0.$
\end{theorem}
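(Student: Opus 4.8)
The plan is to prove $\Sc^*(X(F)-S(F))^{\tG,\chi}=0$ by applying the Harish-Chandra descent machinery (Theorem \ref{HC_Thm}, together with Proposition \ref{Strat} and Frobenius reciprocity, Theorem \ref{Frob}) separately in the $\sl(V)$-coordinate and in the $(V\times V^*)$-coordinate, and then using the automorphisms $\nu_\lambda$ to sweep away whatever remains outside $S(F)$. Concretely, I would first perform descent on the $\sl(V)$-factor: write $X=\sl(V)\times(V\times V^*)$ and let $G$ act on $\sl(V)$ by conjugation and on $V\times V^*$ in the standard way. By Theorem \ref{HC_Thm} applied with $X\rightsquigarrow\sl(V)$ and the auxiliary variety $Y\rightsquigarrow V\times V^*$, it suffices to treat, for each $G$-semisimple (i.e.\ semisimple in the usual sense) $A\in\sl(V)(F)$, distributions on $(N^{\sl(V)}_{GA,A}\times V\times V^*)(F)$ invariant under $G_A=Z_G(A)$. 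When $A\neq0$, the centralizer $G_A$ is a proper Levi, $V$ decomposes into eigenspaces, and the induction hypothesis on Theorem \ref{descendant} for $k<n$ should yield the vanishing — here one has to keep track that $\sigma$ acts compatibly and that the character $\chi$ restricts correctly, and that the normal space $N^{\sl(V)}_{GA,A}$ together with the extra linear factor reassembles into a product of smaller $X_k$'s (with some inert linear factors on which one invokes Proposition \ref{Product}). This reduces everything to the single semisimple point $A=0$, i.e.\ to distributions on $\cN\times V\times V^*$, where $\cN$ is the nilpotent cone.

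Next I would descend on the $(V\times V^*)$-coordinate. Over $\cN\times(V\times V^*)$, apply Theorem \ref{HC_Thm} again (or Frobenius reciprocity along the $G$-orbits in $V\times V^*$), this time with $G$ acting on $V\times V^*$ and $\cN$ as the auxiliary variety. The $G$-orbits on $V\times V^*$ are governed by the value of the pairing $\langle\phi,v\rangle$ and by whether $v$ (resp.\ $\phi$) vanishes; for a semisimple point $(v,\phi)$ with $v\neq0$, $\phi\neq0$, the stabilizer is again a smaller group and the induction hypothesis handles it, while the orbit of $0\in V\times V^*$ and the closed orbits inside $C=(V\times0)\cup(0\times V^*)$ are left. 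The upshot of these two descents should be: any $\xi\in\Sc^*(X(F))^{\tG,\chi}$ is supported on $\cN\times C\cup(\text{something already killed})$, and more precisely on the set of $(A,v,\phi)$ with $A$ nilpotent and either $\phi=0$ or $v=0$.

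Finally, I would use the automorphisms $\nu_\lambda(A,v,\phi)=(A+\lambda v\otimes\phi-\lambda\tfrac{\langle\phi,v\rangle}{n}\operatorname{Id},v,\phi)$. These commute with the $G$-action and with $\sigma$ up to the relevant signs (this needs a small check), hence preserve $\Sc^*(X(F))^{\tG,\chi}$; pushing the support condition through $\nu_\lambda$ for all $\lambda$ forces $\xi$ to be supported where $A+\lambda v\otimes\phi$ (minus the scalar correction) is nilpotent for \emph{all} $\lambda$, and when $v,\phi$ are such that $v\otimes\phi\neq0$ this, combined with $\phi(A^iv)=0$ being automatic once $v=0$ or $\phi=0$, pins the support down to exactly $S(F)$. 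Thus $\Supp(\xi)\subset S(F)$, i.e.\ $\xi\in\Sc^*_{X(F)}(S(F))^{\tG,\chi}$, which is what remains after subtracting — giving $\Sc^*(X(F)-S(F))^{\tG,\chi}=0$.

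The main obstacle I expect is the bookkeeping in the first descent step: correctly identifying, for a nonzero semisimple $A$, the $Z_G(A)$-variety $N^{\sl(V)}_{GA,A}\times V\times V^*$ as (a product involving) the spaces $X_k$ for $k<n$ with the $\sigma$-involution and the character $\chi$ matching up — in particular handling the trace-zero constraint, which means the descent does not split as a clean product but requires the correction term $\tfrac{\langle\phi,v\rangle}{n}\operatorname{Id}$ and a careful use of Propositions \ref{Strat} and \ref{Product} on the inert directions. The $\nu_\lambda$-argument at the end is conceptually the cleanest part but still needs the verification that $\nu_\lambda$ is $\tG$-equivariant for the twisted action.
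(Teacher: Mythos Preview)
Your overall strategy matches the paper's: Harish--Chandra descent on the $\sl(V)$-coordinate, then on the $V\times V^*$-coordinate, then use the automorphisms $\nu_\lambda$. However, there is a genuine error in your second descent step. You claim that descent on $V\times V^*$ reduces the support to $\cN\times C$, where $C=(V\times 0)\cup(0\times V^*)$. This is too strong. The $G$-semisimple (closed-orbit) points of $V\times V^*$ are exactly the origin and the points with $\langle\phi,v\rangle\neq 0$; a point $(v,\phi)$ with $v\neq 0$, $\phi\neq 0$ but $\langle\phi,v\rangle=0$ does \emph{not} have closed orbit (its closure contains $0$). The only $G$-invariant on $V\times V^*$ is $\langle\phi,v\rangle$, so the null cone is $\Gamma=\{(v,\phi):\langle\phi,v\rangle=0\}$, and descent at the nonzero semisimple points yields only $\Sc^*(X(F)-(\sl(V)\times\Gamma)(F))^{\tG,\chi}=0$. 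Combined with the first descent you get support in $\cN\times\Gamma$, which is strictly larger than $\cN\times C$ and is \emph{not} contained in $S$.

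This error makes your final step incoherent: on $\cN\times C$ one has $v\otimes\phi=0$, so $\nu_\lambda$ is the identity there and your $\nu_\lambda$ argument as written is vacuous. The correct argument (and this is what the paper does) starts from support in $\cN\times\Gamma$: since $\nu_\lambda$ commutes with the $\tG$-action, the support must lie in $\bigcap_{\lambda}\nu_\lambda(\cN\times\Gamma)$. One then needs the small but essential lemma that if $A+\lambda\,v\otimes\phi$ is nilpotent for \emph{every} $\lambda\in F$ then $\phi(A^iv)=0$ for all $i\geq 0$ (proved by expanding $\tr(A+\lambda\,v\otimes\phi)^k=0$ in $\lambda$ and inducting on $i$). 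This gives $\bigcap_\lambda\nu_\lambda(\cN\times\Gamma)\subset S$ and hence the theorem. You gesture toward this nilpotency-for-all-$\lambda$ condition, but the $C$-versus-$\Gamma$ confusion obscures both why the $\nu_\lambda$ step is genuinely needed and what the content of that step is.
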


In fact, one can prove this theorem directly using  Theorem
\ref{HC_Thm}. However, this will require long computations. Thus,
we will
 divide the proof to several steps and use some
tricks to avoid part of those computations.

\begin{proposition}  
$\Sc^*(X(F) - (\cN \times V \times V^*)(F))^{\tG,\chi}=0.$
\end{proposition}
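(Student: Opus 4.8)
The plan is to carry out Harish-Chandra descent in the $\sl(V)$-coordinate and to close the argument by applying the induction hypothesis to the centralizers that appear.

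First, note that the set in question equals $(\sl(V)(F)\setminus\cN(F))\times V(F)\times V^*(F)$, and that $\sl(V)\setminus\cN$ is covered by the $\tG$-invariant affine open subsets $U_a:=\{A\in\sl(V)\mid a(A)\neq 0\}$, where $a$ runs over the coefficients of the characteristic polynomial (a matrix in $\sl(V)$ is nilpotent precisely when all of these, apart from the leading one, vanish). Since a Schwartz distribution on an open Nash manifold whose restriction to each set of an open cover vanishes has empty support, it suffices to prove $\Sc^*(U_a(F)\times V(F)\times V^*(F))^{\tG,\chi}=0$ for each $a$. Fix $a$ and apply Theorem \ref{HC_Thm} to the reductive group $\tG$ acting on the smooth affine variety $U_a$, with auxiliary variety $Y:=V\times V^*$. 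The closure of any $G$-orbit in $U_a$ contains the orbit of the corresponding semisimple part, which still lies in $U_a$ (same characteristic polynomial), so a closed $\tG$-orbit in $U_a$ is the orbit of a semisimple element $A$, and this $A$ is nonzero because $a$ is nonvanishing on $U_a$. As transposition preserves characteristic polynomials and sends a semisimple element to a $G$-conjugate one, $\tG A=GA$ for such $A$. Hence the proposition follows once we show that for every nonzero semisimple $A\in\sl(V)(F)$,
\[
\Sc^*\bigl((N^{\sl(V)}_{GA,A}\times V\times V^*)(F)\bigr)^{(\tG)_A,\chi}=0 .
\]

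Fix such an $A$ and set $Z:=N^{\sl(V)}_{GA,A}\times V\times V^*$. Since $A$ is semisimple, $\ad A$ is semisimple on $\g$, so $\sl(V)=(\text{image of }\ad A)\oplus\z_{\sl(V)}(A)$ as $G_A$-modules and $N^{\sl(V)}_{GA,A}\cong\z_{\sl(V)}(A)$. Decompose $V=\bigoplus_j V_j$ according to the $F$-irreducible factors $p_j$ of the characteristic polynomial of $A$, viewing $V_j:=\ker p_j(A)$ as a vector space of dimension $k_j$ over $E_j:=F[x]/(p_j)$; this decomposition is $(\tG)_A$-stable, and correspondingly $V^*=\bigoplus_j V_j^*$. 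One computes that $G_A$ and $\z_{\g}(A)$ decompose as products over the $E_j$; splitting off the central directions $\bigoplus_j E_j\cdot\mathrm{Id}_{V_j}$, on which $(\tG)_A$ acts trivially, together with the single remaining trace constraint, one gets an isomorphism of $(\tG)_A$-spaces
\[
Z\ \cong\ T\ \times\ \prod_j X_{k_j}(E_j),
\]
where $T$ carries the trivial $(\tG)_A$-action, each $X_{k_j}(E_j):=\sl(V_j)\times V_j\times V_j^*$ is the space attached to $(\mathrm{GL}_{k_j+1}(E_j),\mathrm{GL}_{k_j}(E_j))$ in the same way $X$ is attached to $(\mathrm{GL}_{n+1}(F),\mathrm{GL}_n(F))$, and $(\tG)_A$ acts on $\prod_j X_{k_j}(E_j)$ through $\bigl(\prod_j\mathrm{GL}_{k_j}(E_j)\bigr)\rtimes\{1,\tau\}$ with $\tau$ restricting to a transposition on each block. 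Crucially, since $A$ is semisimple and $\neq 0$ it is not a scalar matrix, and as $\sum_j k_j[E_j:F]=n$ it follows that $k_j<n$ for every $j$.

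By the induction hypothesis Theorem \ref{descendant} holds for every $k_j<n$ over $E_j\in\{\R,\C\}$ — this is precisely why the induction is run over both archimedean fields — equivalently, every $\mathrm{GL}_{k_j}(E_j)$-invariant distribution on $X_{k_j}(E_j)(F)$ is $\sigma$-invariant. By Proposition \ref{Product}(ii), every $\prod_j\mathrm{GL}_{k_j}(E_j)$-invariant distribution on $\prod_j X_{k_j}(E_j)(F)$ is then $\tau$-invariant. Now take $\xi\in\Sc^*(Z(F))^{(\tG)_A,\chi}$. Since $\chi$ is trivial on $\prod_j\mathrm{GL}_{k_j}(E_j)$, the distribution $\xi$ is invariant under this group; as that group acts trivially on $T$ and $Z=T\times\prod_j X_{k_j}(E_j)$, the previous sentence forces $\tau_*\xi=\xi$. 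On the other hand $\tau\notin G_A$, so $\chi(\tau)=-1$, and $(\tG)_A$-equivariance of $\xi$ gives $\tau_*\xi=-\xi$. Therefore $\xi=0$, which completes the reduction and hence the proof. The step I expect to be the real obstacle is the middle one: the explicit identification of $N^{\sl(V)}_{GA,A}$ and of $V\times V^*$ as $(\tG)_A$-spaces, the separation of the harmless scalar/trace directions into the trivial factor $T$, and — most delicately — checking that $\tau$ restricts on each block $X_{k_j}(E_j)$ to exactly the transposition handled by the induction hypothesis, which requires care in the case $E_j=\C$.
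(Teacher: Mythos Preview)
Your proof is correct and follows the same Harish-Chandra descent as the paper; you are simply more explicit about covering $\sl(V)\setminus\cN$ by principal affine opens before invoking Theorem~\ref{HC_Thm}, and about the structure of $(\tG)_A$ (the paper writes $\tG_A(F)\cong\prod\tG_{n_i}(F_i)$ and applies Proposition~\ref{Product} directly, which amounts to the same thing once one observes that your single diagonal $\tau$ lies in the product of block transpositions). The concern you flag about $\tau$ on complex blocks is precisely the verification the paper suppresses in asserting that isomorphism, and it goes through.
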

\begin{proof}

By Theorem \ref{HC_Thm} it is enough to prove that for any
semi-simple $A \in \sl(V)$ we have
$$\Sc^*((N_{GA,A}^{\sl(V)} \times (V \times V^*)) (F))^{\tG_A,\chi} = 0.$$

Now note that $\tG_A(F) \cong \prod \tG_{n_i}(F_i)$ where $n_i<n$
and $F_i$ are some field extensions of $F$.
Note also that
$$(N_{GA,A}^{\sl(V)} \times V \times V^*)(F) \cong
\sl(V)_{A} \times (V \times V^*)(F) \cong \prod X_{n_i}(F_i)
\times {\mathcal Z}(\sl(V)_{A})(F),$$ where ${\mathcal
Z}(\sl(V)_{A})$ is the center of $\sl(V)_{A}$. Clearly, $\tG_A$
acts trivially on ${\mathcal Z}(\sl(V)_{A})$.

Now by Proposition \ref{Product} the induction hypothesis implies
that
$$\Sc^*(\prod
X_{n_i}(F_i) \times {\mathcal Z}(\sl(V)_{A})(F))^{\prod
\tG_{n_i}(F_i),\chi} = 0.$$
\end{proof}

In the same way we obtain the following proposition.

\begin{proposition}  
$\Sc^*(X(F) - (\sl(V) \times \Gamma)(F))^{\tG,\chi}=0.$
\end{proposition}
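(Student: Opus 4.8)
The plan is to argue exactly as in the immediately preceding proposition, applying the Generalized Harish-Chandra descent (Theorem \ref{HC_Thm}) but this time descending along the $V\times V^*$-coordinate rather than the $\sl(V)$-coordinate. Concretely, I would regard $X = \sl(V)\times (V\times V^*)$ as a product of $\tG$-spaces, with $G=\mathrm{GL}(V)$ acting on $V\times V^*$ in the standard way and $\sigma$ swapping the two factors up to transpose. By Theorem \ref{HC_Thm} (with the roles of $X$ and $Y$ occupied by $V\times V^*$ and $\sl(V)$ respectively), it suffices to show that for every $\tG$-semisimple point $(v,\phi)\in (V\times V^*)(F)$ one has
$$\Sc^*\big((N_{G(v,\phi),(v,\phi)}^{V\times V^*}\times \sl(V))(F)\big)^{\tG_{(v,\phi)},\chi}=0.$$

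The key step is then to analyze the closed $G$-orbits in $V\times V^*$ and the resulting stabilizers and normal spaces. A pair $(v,\phi)$ has closed orbit precisely when either $v=0$, or $\phi=0$, or $\phi(v)\neq 0$; the first two cases are exactly the locus $\Gamma$ that we are allowed to remove (since $\Gamma = \{(v,\phi)\mid \phi(V)=0\}$ contains $v=0$, and the case $\phi=0$ is handled symmetrically via $\sigma$), so the genuinely new content is the orbit of a pair with $\phi(v)\neq 0$. For such a pair, the stabilizer $G_{(v,\phi)}$ is isomorphic to $\mathrm{GL}(W)$ where $W=\ker\phi$ is an $(n-1)$-dimensional space (the line $Fv$ is a complementary invariant), and correspondingly $\tG_{(v,\phi)}(F)\cong \tG_{n-1}(F)$ after identifying the involution. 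The normal space $N_{G(v,\phi),(v,\phi)}^{V\times V^*}$ is then identified, as a $\tG_{n-1}$-space, with $V_{n-1}\times V_{n-1}^*$ plus a one-dimensional trivial summand (coming from the scaling $\phi(v)$), while the auxiliary factor $\sl(V)$ decomposes under $\mathrm{GL}(W)$ as $\sl(W)\oplus W\oplus W^*\oplus(\text{trivial pieces})$, i.e. as $X_{n-1}$ together with trivial summands. Altogether the slice representation becomes $X_{n-1}(F)$ times a space on which $\tG_{n-1}$ acts trivially.

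At that point the induction hypothesis (Theorem \ref{descendant} for $n-1$) combined with Proposition \ref{Product} gives the vanishing of the space of $(\tG_{n-1},\chi)$-equivariant Schwartz distributions on this slice, which is what we need. The step I expect to be the main obstacle — or at least the one requiring genuine care rather than a quick appeal — is the explicit identification of the normal space $N_{G(v,\phi),(v,\phi)}^{V\times V^*}$ and of the $\sl(V)$-factor as $\tG_{n-1}$-representations, including checking that the extra summands really carry the trivial action and that the involution $\sigma$ restricts to the correct involution defining $\tG_{n-1}$; this is a linear-algebra bookkeeping computation entirely parallel to the one in the previous proposition, and the phrase ``in the same way we obtain'' in the paper signals that the authors view it as such.
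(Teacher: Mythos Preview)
Your approach is exactly what the paper intends by ``in the same way'': Harish-Chandra descent along the $V\times V^*$-factor, reducing via the induction hypothesis to $\tG_{n-1}$ acting on $X_{n-1}$. However, your computation of the normal space is off. For $(v,\phi)$ with $\phi(v)=\lambda\neq 0$, the $G$-orbit is the entire hypersurface $\{(v',\phi')\mid\phi'(v')=\lambda\}$ of dimension $2n-1$, so the normal space $N_{G(v,\phi),(v,\phi)}^{V\times V^*}$ is \emph{one}-dimensional (the direction transverse to the level set of the invariant $\phi(v)$, with trivial $G_{(v,\phi)}$-action), not $V_{n-1}\times V_{n-1}^*$ plus a line as you claim. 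The copy of $W\times W^*$ that you need comes entirely from the decomposition $\sl(V)\cong\sl(W)\oplus W\oplus W^*\oplus F$ that you correctly describe; with the corrected normal space the total slice is $F\times\sl(V)\cong X_{n-1}\times F^2$, and then Proposition \ref{Product} together with the induction hypothesis finishes the argument just as you say. As written, though, your two intermediate decompositions would produce \emph{two} copies of $W\times W^*$, which is inconsistent with your (correct) final assertion.

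A smaller point: the $G$-semisimple elements of $V\times V^*$ are only $(0,0)$ and the pairs with $\phi(v)\neq 0$; the orbits through $(v,0)$ with $v\neq 0$ or $(0,\phi)$ with $\phi\neq 0$ are not closed and need not be considered. Excluding the origin is what forces the residual locus to be $\Gamma=\{(v,\phi)\mid\phi(v)=0\}$, exactly parallel to excluding $A=0$ and getting $\cN$ in the previous proposition.
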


\begin{corollary}  \label{PartDes}
$\Sc^*(X(F) - (\cN \times \Gamma)(F))^{\tG,\chi}=0.$
\end{corollary}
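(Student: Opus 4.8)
The plan is to deduce the corollary from the two preceding propositions by a standard partition-of-support argument. First I would record the elementary set-theoretic identity
\[
\cN \times \Gamma \;=\; (\cN \times V \times V^*) \;\cap\; (\sl(V) \times \Gamma),
\]
so that, putting $U := X(F) - (\cN\times\Gamma)(F)$, $U_1 := X(F) - (\cN \times V\times V^*)(F)$ and $U_2 := X(F) - (\sl(V)\times\Gamma)(F)$, we have $U = U_1 \cup U_2$. The sets $\cN \times V \times V^*$ and $\sl(V)\times\Gamma$ are Zariski closed and $\tG$-invariant (this is already implicit in the two preceding propositions, whose open complements carry the $\tG$-action), so $U_1$ and $U_2$ are open $\tG$-invariant Nash submanifolds of $U$, and those propositions read $\Sc^*(U_1)^{\tG,\chi} = 0$ and $\Sc^*(U_2)^{\tG,\chi} = 0$.

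Next, let $\xi \in \Sc^*(U)^{\tG,\chi}$. Restriction of Schwartz distributions to an open Nash submanifold is continuous and $\tG$-equivariant, so $\xi|_{U_1} \in \Sc^*(U_1)^{\tG,\chi} = 0$; by the exact sequence $0 \to \Sc^*_U(U\setminus U_1) \to \Sc^*(U) \to \Sc^*(U_1) \to 0$ for Schwartz distributions this means $\Supp(\xi) \subset U \setminus U_1$. Symmetrically, $\xi|_{U_2} = 0$ gives $\Supp(\xi) \subset U\setminus U_2$. Hence $\Supp(\xi) \subset (U\setminus U_1)\cap(U\setminus U_2) = U\setminus(U_1\cup U_2) = \emptyset$, so $\xi = 0$.

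I do not expect any real obstacle here: the content is entirely in the two preceding propositions, and this step is pure bookkeeping. One could equally package it as a general lemma --- if $U_1, U_2$ are open $G$-invariant subsets of a Nash manifold with $\Sc^*(U_i)^{G,\chi} = 0$ for $i = 1, 2$, then $\Sc^*(U_1 \cup U_2)^{G,\chi} = 0$ --- proved by exactly the support argument above.
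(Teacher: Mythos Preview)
Your proof is correct and is exactly the argument the paper has in mind: the corollary is stated without proof precisely because it follows from the two preceding propositions by the open-cover (equivalently, support) argument you gave, using $X - (\cN\times\Gamma) = (X - \cN\times V\times V^*)\cup(X - \sl(V)\times\Gamma)$.
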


%
\begin{lemma}
Let $A \in \sl(V)$, $v \in V$ and $\phi \in V^*$. Suppose $A +
\lambda v \otimes \phi$ is nilpotent for all $\lambda \in F$. Then
$\phi(A^i v)=0$ for any $i \geq 0$.
\end{lemma}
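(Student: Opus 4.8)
The plan is to extract the coefficients of the polynomial (in $\lambda$) that expresses nilpotency, and read off the vanishing statements inductively on $i$. First I would observe that $A + \lambda v\otimes\phi$ being nilpotent for all $\lambda$ means $\tr\bigl((A+\lambda v\otimes\phi)^k\bigr)=0$ for all $k\geq 1$ and all $\lambda\in F$; since $F$ is infinite, each coefficient of $\lambda$ in this polynomial identity vanishes separately. Expanding $(A+\lambda v\otimes\phi)^k$ and using that $v\otimes\phi$ has rank one, the term linear in $\lambda$ in $\tr\bigl((A+\lambda v\otimes\phi)^k\bigr)$ is $k\cdot\tr\bigl(A^{k-1}(v\otimes\phi)\bigr) = k\,\phi(A^{k-1}v)$. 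Setting this to zero for all $k\geq 1$ already gives $\phi(A^{i}v)=0$ for every $i\geq 0$, which is exactly the claim.

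In slightly more detail: for a rank-one operator $v\otimes\phi$ one has $\tr\bigl(B(v\otimes\phi)\bigr)=\phi(Bv)$ for any operator $B$, and when expanding $(A+\lambda v\otimes\phi)^k$ the coefficient of $\lambda^1$ is $\sum_{j=0}^{k-1}A^{j}(v\otimes\phi)A^{k-1-j}$, whose trace is $\sum_{j=0}^{k-1}\tr\bigl(A^{k-1-j}A^{j}(v\otimes\phi)\bigr)=k\,\phi(A^{k-1}v)$ by cyclicity of the trace. Since $A+\lambda v\otimes\phi$ is nilpotent, all its power-traces vanish, so in particular this coefficient vanishes for each $k\geq 1$, giving $\phi(A^{k-1}v)=0$; letting $k$ range over $1,2,\dots$ yields $\phi(A^{i}v)=0$ for all $i\geq 0$.

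Alternatively, and perhaps more cleanly, I would differentiate in $\lambda$ at $\lambda=0$: nilpotency of $A+\lambda v\otimes\phi$ for all $\lambda$ forces the characteristic polynomial to be $t^{n}$ identically in $\lambda$, hence every coefficient of $t$ is constant in $\lambda$, and in particular $\frac{d}{d\lambda}\big|_{\lambda=0}\tr\bigl((A+\lambda v\otimes\phi)^{k}\bigr)=0$. Newton's identities then let one pass between power sums and elementary symmetric functions if one prefers to argue via the characteristic polynomial directly. Either way the computation is short; there is no real obstacle here, only the bookkeeping of expanding a $k$-th power. The one point to be careful about is the cyclicity-of-trace rearrangement when $A$ and $v\otimes\phi$ do not commute, but this is handled by the standard identity $\tr(A^{j}(v\otimes\phi)A^{k-1-j})=\tr(A^{k-1}(v\otimes\phi))=\phi(A^{k-1}v)$.
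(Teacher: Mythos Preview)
Your proof is correct and follows essentially the same approach as the paper: both arguments use that nilpotency of $A+\lambda v\otimes\phi$ forces $\tr\bigl((A+\lambda v\otimes\phi)^k\bigr)=0$ for all $k$ and all $\lambda$, and then extract the vanishing of $\phi(A^{i}v)$ from the resulting polynomial identities in $\lambda$. The paper phrases the extraction as an induction on $i$, whereas you read off $\phi(A^{k-1}v)=0$ directly from the coefficient of $\lambda^1$, which is a slightly cleaner bookkeeping but not a genuinely different idea.
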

\begin{proof}
Since $A + \lambda v \otimes \phi$ is nilpotent, we have $tr(A+
\lambda v \otimes \phi)^k=0$ for any $k \geq 0$ and $\lambda \in
F$. By induction on $i$ this implies that $\phi(A^i v)=0$.
\end{proof}

\begin{proof}[Proof of Theorem \ref{LastRed}]
By the previous lemma, $\bigcap_{\lambda \in F}\nu_{\lambda}(\cN
\times \Gamma) \subset S$. Hence $\bigcup _{\lambda \in
F}\nu_{\lambda}(X - \cN \times \Gamma) \supset X-S$.

By Corollary \ref{PartDes} $\Sc^*(X(F) - (\cN \times
\Gamma)(F))^{\tG,\chi}=0$. Note that $\nu_{\lambda}$ commutes with
the action of $\tG$. Thus $\Sc^*(\nu_{\lambda}(X(F) - (\cN \times
\Gamma)(F)))^{\tG,\chi}=0$ and hence $\Sc^*(X(F) -
S(F))^{\tG,\chi}=0.$
\end{proof}

\section{Reduction to the geometric statement} \label{Red2Geo}
\setcounter{lemma}{0}

In this section coisotropic variety means $X \times X$-coisotropic variety.

The goal of this section  is to reduce Theorem \ref{descendant} to
the following geometric statement.
\begin{theorem}[geometric statement] \label{GeoStat}
For any coisotropic subvariety of $T \subset \check{S}$ we have
$T\subset \check{C}_{X \times X}$.
\end{theorem}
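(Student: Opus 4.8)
The plan is to proceed by a stratification-and-induction argument on the base, reducing the global coisotropic statement to a local one on each stratum. First I would stratify the ``singular'' base $S\subset X$ according to the conjugacy type of the nilpotent operator $A$ and the relative position of the flag it generates with the data $(v,\phi)$; pulling this back gives a stratification of $\check S$, and of $\check S'=\check S-\check C_{X\times X}$. The key structural input is that a coisotropic subvariety $T\subset T^*X=X\times X$ must, over a Zariski-dense open subset, have tangent spaces containing their symplectic orthogonals, so it cannot be too small; in particular $\dim T\ge\tfrac12\dim(X\times X)=\dim X$. I would combine this with the geometric Frobenius reciprocity for coisotropic subvarieties (Corollaries \ref{PreGeoFrob} and \ref{GeoFrob}, granted by the excerpt): since each stratum of $\check S$ fibers $\tG$- (or $\mathrm{GL}(V)$-) equivariantly over a homogeneous space, a coisotropic $T$ restricted to a stratum descends to a coisotropic subvariety of a fiber, cut out by the stabilizer's cotangent directions.

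The heart of the argument is then an inductive reduction: assuming the statement on all smaller strata (equivalently, for all $k<n$, dovetailing with the running induction on $n$ in the proof of Theorem \ref{descendant}), one is left to show that a coisotropic $T$ meeting the open stratum $\check S'$ in its smooth locus leads to a contradiction, forcing $T\subset\check C_{X\times X}$. I would isolate this as a \emph{Key Proposition} about one distinguished stratum — the one where $A_1$ is a principal (regular) nilpotent, so that the centralizer is as small as possible and the fiber computation is most rigid. Applying geometric Frobenius to this stratum collapses the Key Proposition to a statement purely on $V\times V^*\times V\times V^*$ — the \emph{Key Lemma} — asserting that a coisotropic subvariety of the corresponding cone $\check C\subset V\times V^*\times V\times V^*$, defined by the orthogonality/bracket relations defining $\check S$, must lie in the union of the two coordinate pieces.

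For the Key Lemma itself I would write out the symplectic form on $V\times V^*\times V\times V^*$ explicitly, note that the defining equations of the relevant cone (the pairings $\phi_i(v_j)=0$ and the rank-one bracket relation) are Poisson-closed only on the two flat components, and run a direct tangent-space computation: at a putative smooth point of $T$ outside $\check C$, produce a vector in $(T_zT)^\perp\setminus T_zT$, contradicting condition (iii) in the definition of coisotropic. The main obstacle I anticipate is precisely the bookkeeping in the stratification and the Frobenius descent: identifying the strata of $\check S$ correctly, checking unimodularity of the relevant stabilizers so that Frobenius applies, and tracking how $\check C_{X\times X}$ pulls back through the descent so that ``$T\subset\check C_{X\times X}$'' on $X\times X$ matches ``$T$ lies in the coordinate pieces'' on each fiber. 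The analytic ingredients are all quoted; the difficulty is entirely in organizing the geometry so that the one genuinely computational step — the Key Lemma — is small enough to do by hand.
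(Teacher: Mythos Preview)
Your outline has the right skeleton --- stratify, use geometric Frobenius to descend to a fiber, reduce to a Key Lemma on $V\times V^*\times V\times V^*$ --- and this matches the paper. But there is a genuine gap in how you set up the induction and the Key Proposition.

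The induction here is \emph{not} the induction on $n$ from Theorem~\ref{descendant}; it is a separate, internal descending induction on the total orbit dimension $\dim O_1+\dim O_2$ of the pair of nilpotents $(A_1,A_2)$ appearing in a point of $\check S$. At each step one must show that $U(O_1,O_2)$ has no coisotropic subvariety for \emph{every} pair $(O_1,O_2)$ with that total dimension, not just the open one. Consequently the Key Proposition, and after Frobenius descent the Key Lemma about $R_A$, must be proved for an \emph{arbitrary} nilpotent $A$, not only the regular nilpotent. The regular (one Jordan block) case is exactly the easy case the paper dispatches first in a paragraph; the substance of the argument is the general Jordan type, which requires choosing an $\mathfrak{sl}_2$-triple through $A$, introducing a gradation/filtration on $V$ indexed by multi-indices $I$, covering $R_A$ by linear subspaces $L_{IJ}$, and then producing by hand a polynomial $f$ (built from the block structure) that vanishes on $R_A\cap L_{II}$, forcing $\dim(R_A\cap L_{II})<2n$. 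Your plan to ``produce a vector in $(T_zT)^\perp\setminus T_zT$'' at a smooth point is not the mechanism used, and it is not clear it would succeed for general $A$: the paper's argument is a dimension count via an explicit extra equation, and finding that equation is the hard part.

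Two smaller points: your formulation of the Key Lemma is garbled (the claim is that $R_A$, which by definition avoids $\check C$, has \emph{no} nonempty coisotropic subvariety --- not that something lies in $\check C$); and the geometric Frobenius (Corollaries~\ref{PreGeoFrob}, \ref{GeoFrob}) needs no unimodularity hypothesis, so that worry is misplaced.
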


Till the end of this section we will assume the geometric
statement.

\begin{proposition}
Let $\xi \in \Sc^*(X(F))^{\tG,\chi}=0.$ Then $\Supp(\xi) \subset
(\sl(V) \times C)(F)$.
\end{proposition}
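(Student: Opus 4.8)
The plan is to apply the singular-support machinery to an arbitrary $\xi \in \Sc^*(X(F))^{\tG,\chi}$ and show its support lands in $(\sl(V) \times C)(F)$. First I would recall from Theorem \ref{LastRed} that $\Sc^*(X(F) - S(F))^{\tG,\chi}=0$, so $\Supp(\xi) \subset S(F)$; in particular, by Fact \ref{Supp2SS}, $p_X(SS(\xi))(\R) = \overline{\Supp(\xi)}_{Zar} \subset S$. The set $S$ is invariant under homotheties in the $V$ and $V^*$ coordinates (since the defining conditions $A^n=0$ and $\phi(A^iv)=0$ are bihomogeneous), so Fact \ref{Fou} applies: writing $\xi' := \Fou_{V}\Fou_{V^*}(\xi)$ (the full Fourier transform in the $V\times V^*$ variables, which by the trivial lemma after Theorem \ref{ArchHom} is again $\tG$-invariant up to the same character, using that the scalar products are $G$-invariant and $\sigma$ swaps $V$ with $V^*$), we get control of $SS(\xi')$ in terms of $S$ as well. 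Combining the conclusions of Fact \ref{Fou} for $\xi$ and for $\xi'$ with Fact \ref{Ginv} applied to $\xi$ (which gives the conormality condition $\phi(\alpha(x))=0$ for all $\alpha\in\g$), I would show that $SS(\xi)$ is contained precisely in $\check{S}$ — this is exactly the set cut out by "$(A_i,v_j,\phi_j)\in S$ for all $i,j$" together with "$[A_1,A_2]+v_1\otimes\phi_2 - v_2\otimes\phi_1 = 0$", the latter being the vanishing-of-moment-map / conormal condition rewritten as in the Notation section.

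Next I would invoke Corollary \ref{CorSingSup}: $SS(\xi)$ is a coisotropic subvariety of $T^*X \cong X\times X$. Since $SS(\xi) \subset \check{S}$, every irreducible component of $SS(\xi)$ is a coisotropic subvariety contained in $\check{S}$, so Theorem \ref{GeoStat} (the geometric statement, which we are assuming in this section) yields $SS(\xi) \subset \check{C}_{X\times X}$. Projecting via $p_X$ and using Fact \ref{Supp2SS} once more gives $\overline{\Supp(\xi)}_{Zar} \subset p_X(\check{C}_{X\times X}) = (\sl(V)\times C)$, whence $\Supp(\xi)\subset (\sl(V)\times C)(F)$, as claimed.

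The one genuine subtlety — the step I'd expect to need the most care — is verifying that $SS(\xi) \subset \check{S}$ with the correct set, i.e. that the three inputs (support in $S$ via Fact \ref{Fou} for $\xi$; support of the Fourier transform in $S$ via Fact \ref{Fou} for $\xi'$; the $\g$-invariance conormal condition via Fact \ref{Ginv}) really do cut out exactly $\check{S}$ and not something larger. Concretely one must check that $F_{V\times V^*}(p_{X}^{-1}(S)) \cap p_X^{-1}(S) \cap \{(x,\phi):\phi(\g x)=0\}$, after the identification $T^*X\cong X\times X$ and the explicit formula for $F_V$, matches the displayed description of $\check{S}$: the Fourier condition forces $(A_2,v_2,\phi_2)\in S$ (the "second copy"), the support condition forces $(A_1,v_1,\phi_1)\in S$, and the invariance condition $\phi(\alpha(A_1,v_1,\phi_1)) = 0$ for all $\alpha \in \g$ unwinds, via the standard scalar product, to $[A_1,A_2] + v_1\otimes\phi_2 - v_2\otimes\phi_1 = 0$ — together with the mixed conditions $(A_i,v_j,\phi_j)\in S$ that follow from homogeneity of $S$ in each of the $V$, $V^*$ slots separately. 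This is a direct but slightly fiddly unwinding of the identifications fixed in subsection \ref{SpecNat}, and it is where I would spend most of the writing.
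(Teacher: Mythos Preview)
Your overall architecture is exactly that of the paper --- support in $S$, then singular-support constraints, then coisotropy plus the geometric statement, then project down --- but Step~1 has a real gap: you only take the Fourier transform in the $V\times V^*$ variables, and this is not enough to force $SS(\xi)\subset\check S$.

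Concretely, with the identifications of \S\ref{SpecNat}, the map $F_{V\times V^*}$ sends $((A_1,v_1,\phi_1),(A_2,v_2,\phi_2))$ to $((A_1,-v_2,-\phi_2),(A_2,v_1,\phi_1))$; it does \emph{not} touch the $\sl(V)$-slot. Hence $F_{V\times V^*}(p_X^{-1}(S))$ is the set where $(A_1,v_2,\phi_2)\in S$, not where $(A_2,v_2,\phi_2)\in S$ as you claim. Combining $\Supp(\xi)\subset S$, $\Supp(\Fou_{V\times V^*}\xi)\subset S$, and Fact~\ref{Ginv}, you obtain only
\[
(A_1,v_1,\phi_1)\in S,\quad (A_1,v_2,\phi_2)\in S,\quad [A_1,A_2]+v_1\otimes\phi_2-v_2\otimes\phi_1=0,
\]
which places no constraint whatsoever on $A_2$ (take $v_1=v_2=\phi_1=\phi_2=0$, $A_1=0$: then $A_2$ is arbitrary). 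The ``mixed conditions from homogeneity in each slot'' line does not rescue this; separate homogeneity in $V$ and $V^*$ still leaves $A_1$ fixed.

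The paper closes this gap by observing that $\Fou_{\sl(V)}^{-1}\xi$ and $\Fou_X^{-1}\xi$ are again $(\tG,\chi)$-equivariant, hence also supported in $S$; applying Fact~\ref{Fou} to these two additional transforms yields $SS(\xi)\subset F_{\sl(V)}(S\times X)$ and $SS(\xi)\subset F_X(S\times X)$, i.e.\ the missing conditions $(A_2,v_1,\phi_1)\in S$ and $(A_2,v_2,\phi_2)\in S$. With those in hand your Steps~2 and~3 go through verbatim.
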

\begin{proof}[Proof for the case $F=\R$] $ $

Step 1. $SS(\xi) \subset \check{S}$.\\
We know that $$\Supp(\xi), \Supp(\Fou^{-1}_{\sl(V)}\xi),
\Supp(\Fou^{-1}_{V \times V^*}(\xi)), \Supp(\Fou^{-1}_{X}(\xi))
\subset S(F).$$
By Fact \ref{Fou} this implies that $$SS(\xi) \subset (S \times X)
\cap F_{\sl(V)}(S \times X) \cap F_{V\times V^*}(S \times X) \cap
F_{X}(S \times X).$$ On the other hand, $\xi$ is $G$-invariant and
hence by Fact \ref{Ginv} $$SS(\xi) \subset \{((x_1,x_2) \in X
\times X \, | \, \forall g \in \g, g(x_1) \bot x_2 \}. $$ Thus
$SS(\xi)\subset \check{S}$.

Step 2. $SS(\xi) \subset \check{C}_{X \times X}$.\\
By Corollary \ref{Gaber}, $SS(\xi)$ is $X \times X$-coisotropic
and hence by the geometric statement $SS(\xi) \subset \check{C}_{X
\times X}$.

Step 3. $\Supp(\xi) \subset (\sl(V) \times C)(F)$.\\
Follows from the previous step by Fact \ref{Supp2SS}.
\end{proof}

The case $F=\C$ is proven in the same way using the following corollary of the geometric statement.
\begin{proposition}
Any $(X \times X)_{\C}$-coisotropic subvariety of $\check{S}_{\C}$
is contained in $(\check{C}_{X \times X})_{\C}$.
\end{proposition}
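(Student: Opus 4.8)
The plan is to deduce the complex case from the real case (Theorem \ref{GeoStat}) by the standard device that a variety over $\R$ has complexification isomorphic to its square, as already noted in subsection \ref{GenNat}. Concretely, since $X$ is a linear space defined over $\R$, one has $(X\times X)_\C \cong (X\times X)\times(X\times X)$ as symplectic varieties, where the symplectic form on the right is the product of the two standard symplectic forms. The first step is therefore to unwind the identification $T^*((X\times X)_\C)\cong T^*(X\times X)\times T^*(X\times X)$ and to check that the constructions involved — the scalar product used to identify $T^*X$ with $X\times X$, the bilinear conditions defining $\check S$, and the linear subspace $\check C_{X\times X}$ — all base-change compatibly, so that $\check S_\C \cong \check S\times\check S$ and $(\check C_{X\times X})_\C \cong \check C_{X\times X}\times\check C_{X\times X}$ inside $(X\times X)\times(X\times X)$.

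The second step is the key geometric input: if $M_1,M_2$ are symplectic varieties and $T\subset M_1\times M_2$ is coisotropic for the product symplectic form, then $T$ is coisotropic "in each factor" in a suitable sense — more precisely, for a generic smooth point $(z_1,z_2)\in T$ the image of $T_{(z_1,z_2)}T$ under the two projections, or rather the fibers of $T$ over generic points of $\overline{\pr_1(T)}$ and $\overline{\pr_2(T)}$, are again coisotropic in $M_2$ and $M_1$ respectively. The cleanest route is to argue via the ideal-sheaf/Poisson-bracket characterization (condition (i) of the definition of coisotropic): the Poisson bracket on $M_1\times M_2$ restricts on functions pulled back from $M_i$ to the Poisson bracket of $M_i$, so a coisotropic $T$ produces, after restricting to a generic fiber of $\pr_1$, a coisotropic subvariety of $M_2$, and symmetrically. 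I would package this as a small lemma on products of symplectic varieties. Applying it with $M_1=M_2=X\times X$ and $T\subset \check S\times\check S$, a generic fiber $T_{z}\subset X\times X$ (over a generic point $z$ of $\overline{\pr_1(T)}\subset\check S$) is coisotropic and contained in $\check S$, hence by Theorem \ref{GeoStat} contained in $\check C_{X\times X}$; since this holds for generic $z$ and symmetrically in the other factor, one concludes $T\subset \check C_{X\times X}\times\check C_{X\times X}=(\check C_{X\times X})_\C$.

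The main obstacle I expect is the second step — making precise and correct the passage from "coisotropic in the product" to "coisotropic in each fiber." One has to be careful about where the genericity hypotheses (condition (iii)) are used, about the difference between $\pr_i(T)$ and its closure, and about the fact that a generic fiber of $\pr_1|_T$ need not be irreducible or equidimensional; the Poisson-bracket formulation (condition (i)) is the most robust way to sidestep these issues, since closedness of an ideal under Poisson bracket is preserved under the relevant restrictions without genericity caveats. The base-change bookkeeping in the first step is routine but must be done honestly, since the definitions of $\check S$ and $\check C_{X\times X}$ are given by explicit coordinates rather than intrinsically.
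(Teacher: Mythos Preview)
Your overall plan matches what the paper (which gives no proof beyond calling this a corollary) implicitly intends: identify $(X\times X)_\C\cong(X\times X)\times(X\times X)$ and $\check S_\C\cong\check S\times\check S$, then reduce to Theorem~\ref{GeoStat} on each factor. The base-change bookkeeping in your first step is correct and routine.

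The gap is in your second step. You waver between ``the image of $T_{(z_1,z_2)}T$ under the two projections'' and ``the fibers of $T$,'' and then run the actual argument with fibers. That fails: the diagonal in $M\times M$ is Lagrangian, hence coisotropic, yet every fiber over $\pr_1$ is a single point, which is not coisotropic in $M$. Your Poisson-bracket sketch does not rescue the fiber claim either, since for $f,g$ vanishing on $T$ one has $\{f,g\}_2|_{\{z_1\}\times M_2}=\{f,g\}|_{\{z_1\}\times M_2}-\{f,g\}_1|_{\{z_1\}\times M_2}$, and the last term need not vanish on the fiber.

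The mechanism you almost wrote first is the correct one: $\overline{\pr_i(T)}$ is coisotropic in $M_i$. At the level of tangent spaces this is a one-line computation --- if $W\subset V_1\oplus V_2$ is coisotropic and $v_1\in(\pr_1 W)^{\perp}$, then $(v_1,0)\in W^{\perp}\subset W$, so $v_1\in\pr_1 W$. Equivalently, in the Poisson-bracket picture: functions vanishing on $\overline{\pr_1(T)}$ pull back via $\pr_1$ to functions vanishing on $T$ that depend only on the $M_1$-variables, and on such functions the product bracket agrees with the $M_1$-bracket. With this in hand, each $\overline{\pr_i(T)}\subset\check S$ is coisotropic in $X\times X$, so Theorem~\ref{GeoStat} (whose proof in \S\ref{ProofGeo} is purely algebraic and valid over $\C$) forces $\overline{\pr_i(T)}\subset\check C_{X\times X}$ for $i=1,2$, whence $T\subset\check C_{X\times X}\times\check C_{X\times X}=(\check C_{X\times X})_\C$.
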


Now it is left to prove the following proposition.

\begin{proposition} \label{Cross}
Let $\xi \in \Sc^*(X(F))^{\tG(F),\chi}$ be such that
$$\Supp(\xi),\Supp(\Fou_{V \times V^*}(\xi)) \subset (\sl(V) \times C)(F).$$
Then $\xi=0$.
\end{proposition}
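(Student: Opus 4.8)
\textbf{Plan of proof for Proposition \ref{Cross}.}

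The plan is to exploit the Homogeneity Theorem (Theorem \ref{ArchHom}) applied to the $V \times V^*$ coordinate, combined with the very restrictive support condition $\Supp(\xi), \Supp(\Fou_{V\times V^*}(\xi)) \subset (\sl(V)\times C)(F)$. The key observation is that $C = (V\times 0)\cup(0\times V^*)$ is contained in the zero locus $Z(B)$ of the standard hyperbolic form $B$ on $W := V \times V^*$ given by $B((v,\phi),(v',\phi')) = \phi(v') + \phi'(v)$; indeed $B$ vanishes identically on each of the two isotropic subspaces $V\times 0$ and $0\times V^*$. Moreover the Fourier transform $\Fou_{V\times V^*}$ used in the support hypothesis is, up to normalization, precisely $\Fou_B$ for this form (after checking the standard scalar product on $V\times V^*$ corresponds to this $B$, which is the point at which one must be a little careful but which is routine). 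So we are in a position to build a space $L$ to which the Homogeneity Theorem applies: roughly, $L$ should be the span, under the $\Fou_B$ and multiplication-by-$B$ operations, of $\xi$ (with the $\sl(V)$ coordinate as the auxiliary Nash manifold $M$). The subtlety is that multiplication by $B$ does not obviously preserve the support-in-$C$ condition, so one first reduces to distributions \emph{supported on $\sl(V)\times C$ together with their $\Fou_B$-transform}; since $B$ vanishes on $C$, the operator $\xi\mapsto B\xi$ annihilates anything supported on $(\sl(V)\times C)(F)$, hence the condition $B\xi \in L$ is satisfied trivially, and $L$ can be taken finite-dimensional, indeed one- or two-dimensional.

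Concretely I would argue as follows. First I would set $L := \Span\{\xi, \Fou_B(\xi)\} \subset \Sc^*_{W(F)\times \sl(V)(F)}(Z(B)\times \sl(V)(F))$ (identifying $X = \sl(V)\times W$ and taking $M = \sl(V)$ as the auxiliary manifold). By hypothesis both $\xi$ and $\Fou_B(\xi)$ are supported on $\sl(V)\times C \subset \sl(V)\times Z(B)$, so $L$ lands in the required space; $L$ is stable under $\Fou_B$ by construction, and stable under multiplication by $B$ because $B|_C = 0$ forces $B\xi = B\Fou_B(\xi) = 0 \in L$. If $\xi \neq 0$ then $L \neq 0$, so Theorem \ref{ArchHom} produces a nonzero $\eta \in L$ and a unitary character $u$ of $F^\times$ with $\rho(\lambda)\eta = \|\lambda\|^{\dim W/2} u(\lambda)\eta$ or $\rho(\lambda)\eta = \|\lambda\|^{\dim W/2 + 1}u(\lambda)\eta$ for all $\lambda$; here $\dim W = 2n$ so the exponent is $n$ or $n+1$. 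Next I would use that $\eta$ is still $\tG$-equivariant with character $\chi$ (the $F^\times$-homothety action on $W$ commutes with $\tG$, and $L$ is a $\tG$-subrepresentation — or at worst one first refines $L$ to be $\tG$-stable, which is automatic since $\xi$ is $\tG,\chi$-equivariant and $\Fou_B$ commutes with $\tG$ by the trivial lemma), and that $\Supp(\eta)\subset (\sl(V)\times C)(F)$.

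The final step, which I expect to be the real obstacle, is to derive a contradiction from the existence of such a homogeneous $\tG$-equivariant $\eta$ supported on $\sl(V)\times C$. Since the $\sl(V)$-coordinate is inert, this should reduce (by Proposition \ref{Product} or a Frobenius-type argument) to a statement purely on $W = V\times V^*$: there is no nonzero $\tG = \mathrm{GL}(V)\rtimes\{1,\sigma\}$-equivariant (character $\chi$) distribution $\zeta$ on $W(F)$, supported on $C(F) = (V\times 0)\cup(0\times V^*)(F)$, that is homogeneous of degree $n$ or $n+1$ under homotheties. One analyzes this on each of the two lines of the stratification of $C$: on the open stratum $(V\setminus 0)\times 0$ the group $\mathrm{GL}(V)$ acts transitively, $\sigma$ swaps this stratum with $0\times(V^*\setminus 0)$, and the anti-invariance under $\chi$ together with the homogeneity degree pins down the normal-bundle jets; one then checks that the homogeneity exponents $n, n+1$ produced by Theorem \ref{ArchHom} are incompatible with the exponents allowed by $\mathrm{GL}(V)$-equivariance along the conormal directions to these strata (a dimension/weight count: the relevant equivariant distributions on $V\setminus 0$ are essentially powers of $|v|$ times the invariant measure, with a bounded set of admissible degrees, none equal to $n$ or $n+1$ after accounting for the delta-function transversal to the line and the $\chi$-twist). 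I would organize the last step via Proposition \ref{Strat} applied to the stratification of $C$, reducing to vanishing of $\Sc^*(Z_i, Sym^k(CN_{Z_i}^W))^{\tG_{Z_i},\chi}$ for the strata $Z_0 = \{0\}$, $Z_1 = (V\setminus 0)\times 0 \,\cup\, 0\times (V^*\setminus 0)$; the stratum at the origin is handled by the homogeneity degree being strictly positive, and the open strata by the equivariance-versus-homogeneity incompatibility just described. Assembling these gives $\zeta = 0$, hence $\eta = 0$, contradicting $\eta\neq 0$; therefore $\xi = 0$.
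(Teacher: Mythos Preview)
Your plan is exactly the paper's: apply Theorem~\ref{ArchHom} on the $V\times V^*$ factor with $M=\sl(V)$, extract a homogeneous distribution, then kill it by stratifying $C$ via Proposition~\ref{Strat}. Two steps, however, do not work as written.

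First, the claim that $B|_C=0$ forces $B\xi=B\Fou_B(\xi)=0$ is false: a distribution supported on the zero locus of a polynomial need not be annihilated by it (e.g.\ $x\cdot\delta_0'=-\delta_0$ on $\R$), so your two-dimensional $L$ is generally not stable under multiplication by $B$. The fix is to enlarge $L$ to the space of all $\eta$ with $\Supp(\eta),\Supp(\Fou_B\eta)\subset(\sl(V)\times C)(F)$ that are invariant under the \emph{center} $F^\times\subset G$ (acting by $\lambda\cdot(v,\phi)=(\lambda v,\lambda^{-1}\phi)$, trivially on $\sl(V)$); since $B(v,\phi)=\phi(v)$ is invariant under this $F^\times$ and $\Fou_B(B\eta)$ is a differential operator applied to $\Fou_B(\eta)$, this $L$ is stable under both operations.

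Second, and more seriously, the passage ``since the $\sl(V)$-coordinate is inert, this should reduce \dots\ to a $\tG$-equivariant $\zeta$ on $W$'' fails: $\tG$ acts on $\sl(V)$ by conjugation, so neither Proposition~\ref{Product} (which needs a product group) nor Frobenius (which needs a transitive action on the base) lets you strip off $\sl(V)$ while keeping $\tG$-equivariance. The paper's remedy is to discard $\tG$ at this stage and keep only the center $F^\times\subset G$, which \emph{does} act trivially on $\sl(V)$. Combined with the homothety $\rho$ from Theorem~\ref{ArchHom}, this yields an $F^\times\times F^\times$-action $(x,y)\cdot(v,\phi)=(\tfrac{y}{x}v,\tfrac{1}{xy}\phi)$ on $W$, trivial on $\sl(V)$, under which the homogeneous $\eta$ is $(\mu\times 1)$-equivariant with $\mu=\|\cdot\|^n u$ or $\|\cdot\|^{n+1}u$. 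Now the $\sl(V)$ factor really is inert and one is reduced to $\Sc^*_{W(F)}(C(F))^{F^\times\times F^\times,\mu\times 1}=0$. The stratification then finishes by a pure weight count: on the chart $F^{n-1}\times F^\times$ of $(V\setminus 0)\times 0$, Frobenius at $1\in F^\times$ reduces to the diagonal $\{(t,t)\}$, which acts on the relevant $Sym^k$-fiber by $t^{-2k}$, never matching $\mu(t)$ since $n\geq 1$. The transitivity of $\mathrm{GL}(V)$ on $V\setminus 0$ and the swap under $\sigma$ that you invoke are not needed and cannot be used once $\sl(V)$ has been stripped off.
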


\subsection{Proof of proposition \ref{Cross}} \label{ProofLemCros} $ $\\
Proposition \ref{Cross} follows from the following lemma.

\begin{lemma}
Let $F^{\times}$ act on $V\times V^*$ by $\lambda(v,\phi) :=
(\lambda v, \frac{\phi}{\lambda})$.
 Let $\xi \in
\Sc^*((V\times V^*)(F))^{F^{\times}}$ be such that $$\Supp(\xi),
\Supp(\Fou_{V \times V^*}(\xi)) \subset C(F).$$ Then $\xi=0$.
\end{lemma}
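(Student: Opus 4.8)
The plan is to reduce to the one-dimensional case $V = F$ and then analyze Fourier transforms of distributions on $F \times F$ supported, together with their Fourier transform, on the axes $C(F) = (F \times 0) \cup (0 \times F)$. First I would use the fact that the action of $\mathrm{GL}(V)$ on $V \times V^*$ has the $F^\times$-homothety as a central subgroup and that the pair $(V \times V^*, C)$ is $\mathrm{GL}(V)$-equivariant: by Frobenius reciprocity (Theorem \ref{Frob}) applied to the $\mathrm{GL}(V)$-orbits on the open sets $V \setminus 0$ and $V^* \setminus 0$, a distribution supported on $C(F)$ that is $F^\times$-invariant is essentially controlled by its behavior transverse to the two lines through a fixed pair of dual basis vectors $e_1, e_1^*$. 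Concretely, I would restrict attention to the $\mathrm{GL}(V)$-stabilizer of the line $Fe_1$ and the line $Fe_1^*$ to descend to the plane spanned by $e_1$ and $e_1^*$, reducing to a distribution $\eta$ on $F \times F$ invariant under $\lambda(x,y) = (\lambda x, y/\lambda)$ with $\Supp(\eta), \Supp(\Fou(\eta)) \subset (F \times 0) \cup (0 \times F)$.

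Next I would decompose such an $\eta$ according to support: $\eta = \eta_0 + \eta_1 + \eta_2$ where $\eta_0$ is supported at the origin, $\eta_1$ on the punctured $x$-axis, and $\eta_2$ on the punctured $y$-axis, using the exact sequence for Schwartz distributions (the exact sequence in the Remark after Theorem \ref{NoSNoDist}). A distribution supported on the punctured $x$-axis and invariant under $(x,y)\mapsto(\lambda x, y/\lambda)$ is of the form $g(x) \otimes \delta^{(k)}(y)$ for some Schwartz distribution $g$ on $F^\times$ homogeneous of a definite degree; applying $\Fou$ spreads its support onto all of $\{x = 0\}^c$ unless it vanishes — more precisely, $\Fou_{V\times V^*}$ of such a distribution is (up to the $x\leftrightarrow$ dual variable swap) of the form $\hat g \otimes (\text{polynomial in the new }y)$, whose support is not contained in the axes unless $g = 0$. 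The same argument kills $\eta_2$. For $\eta_0$, supported at the origin, one uses that it is a sum of derivatives of $\delta_0$; the homothety-invariance forces it into a specific bihomogeneity, and then $\Fou$ of a polynomial-times-$\delta_0$ is a polynomial, which is supported on the axes only if it is the zero polynomial — here I would invoke the Homogeneity Theorem (Theorem \ref{ArchHom}) with the hyperbolic form $B((x,y)) = xy$ on $F\times F$, whose isotropic cone $Z(B)$ is exactly $C$, to pin down the possible homogeneity degrees and rule them out, since a nonzero eigendistribution of the prescribed type would be supported on the full cone $Z(B)$, contradicting support on the axes (a proper subset).

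The main obstacle I anticipate is the bookkeeping in the Frobenius-reciprocity descent from $V \times V^*$ to the two-dimensional model: one must track the conormal directions (i.e. the $Sym^k(CN)$ bundles appearing in Proposition \ref{Strat}) correctly so that the higher transverse derivatives are genuinely accounted for, rather than just the leading term. Once the reduction to $F \times F$ is clean, the remaining analysis is the standard "a distribution and its Fourier transform both supported on a thin set must vanish" argument, where the Homogeneity Theorem does the decisive work by forbidding the only scale-invariant candidates. I would therefore spend most of the write-up making the descent precise and then invoke Theorem \ref{ArchHom} for the coup de grâce.
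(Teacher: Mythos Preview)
Your reduction to $\dim V=1$ does not go through. Frobenius reciprocity (Theorem \ref{Frob}) needs equivariance under a group acting transitively on the base, so descending to a single plane $Fe_1\oplus Fe_1^*$ would require $\mathrm{GL}(V)$-invariance of $\xi$; the lemma, however, assumes only invariance under the central $F^{\times}$, whose orbits on $V\setminus 0$ are punctured lines, not all of $V\setminus 0$. (Nor can you borrow $\mathrm{GL}(V)$-invariance from Proposition \ref{Cross}: there the invariance lives on $X=\sl(V)\times V\times V^*$, not on $V\times V^*$ alone.) A secondary problem: even in your one-dimensional endgame the decomposition $\eta=\eta_0+\eta_1+\eta_2$ by support is not stable under $\Fou$, so the support hypothesis on $\Fou(\eta)$ does not constrain the individual $\Fou(\eta_j)$; the possible cancellations are precisely what must be ruled out.

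The paper runs the argument in the opposite order. It applies the Homogeneity Theorem \emph{first}, directly on $V\times V^*$ with the standard hyperbolic form $B$ (so that $C\subset Z(B)=\Gamma$ and the relevant space of distributions is closed under $\Fou_B$ and under multiplication by $B$). If that space were nonzero it would contain a $\rho$-eigendistribution with eigencharacter $\|\cdot\|^n u$ or $\|\cdot\|^{n+1}u$. Combined with the given $F^{\times}$-invariance this produces an $(F^{\times}\times F^{\times},\,\mu\times 1)$-equivariant distribution supported on $C$. Only \emph{then} does one stratify $C=\big((V-0)\times 0\big)\cup\big(0\times(V^*-0)\big)\cup\{0\}$ and kill each stratum together with its $Sym^k(CN)$ bundles via Proposition \ref{Strat}; this now succeeds by an elementary weight computation because the extra torus factor acts nontrivially on every piece. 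In short, the Homogeneity Theorem is not a coup de gr\^ace after a dimension reduction---it is the opening move that manufactures enough equivariance for the stratification to work on the original $V\times V^*$.
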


By Homogeneity Theorem (Theorem \ref{ArchHom}) it is enough to
prove the following lemma.

\begin{lemma}
Let $\mu$ be a character of $F^{\times}$ given by $|| \cdot ||^n
u$ or $|| \cdot ||^{n+1} u$ where $u$ is some unitary character.
Let $F^{\times} \times F^{\times}$ act on $V \times V^*$ by
$(x,y)(v,\phi) = (\frac{y}{x} v, \frac{1}{xy} \phi)$. Then
$\Sc^*_{(V\times V^*)(F)}(C(F))^{F^{\times}\times F^{\times}, \mu
\times 1 }=0.$
\end{lemma}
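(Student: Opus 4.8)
The plan is to prove this stratification-type statement directly by splitting $C = (V\times 0)\cup(0\times V^*)$ into its two pieces and applying Proposition \ref{Strat}. First I would note that the statement asserts $\Sc^*_{(V\times V^*)(F)}(C(F))^{F^\times\times F^\times,\mu\times 1}=0$, where $F^\times\times F^\times$ acts by $(x,y)(v,\phi)=(\tfrac yx v,\tfrac1{xy}\phi)$. Since $C(F)$ is the union of two $(F^\times\times F^\times)$-invariant subsets, it suffices by Proposition \ref{Strat} (or just the exact sequence for Schwartz distributions supported on closed subsets) to treat the two components separately; more precisely one stratifies $C$ into the locally closed strata $\{0\}$, $(V\setminus 0)\times 0$, and $0\times(V^*\setminus 0)$, and checks on each stratum that $\Sc^*(Z_i,\Sym^k(CN_{Z_i}^{V\times V^*}))^{F^\times\times F^\times,\mu\times 1}=0$ for every $k\ge 0$.

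The key computation is the weight bookkeeping on each stratum. On the origin stratum $\{0\}$, the conormal bundle is $(V\times V^*)^*\cong V^*\times V$, so $\Sym^k$ of it is a sum of tensor powers of $V$ and $V^*$; the group $F^\times\times F^\times$ acts on this space with weights of the form $(x,y)\mapsto x^{a}y^{b}$ for integers $a,b$ with $|a|+|b|\le k$, hence bounded. The character $\mu\times 1$ acts by $(x,y)\mapsto \mu(y/x)\cdot$(trivial in the relevant normalization after untwisting); the point is that $\mu = \|\cdot\|^n u$ or $\|\cdot\|^{n+1}u$ with $n\ge 1$, so $\mu$ is ``large'' — its absolute value grows like $|y/x|^{n}$ or $|y/x|^{n+1}$ — and cannot be matched by any polynomial character coming from a finite-dimensional $\Sym^k$ unless the Schwartz-distribution space is zero. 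I would make this precise using the standard fact that $\Sc^*(\mathrm{pt})^{F^\times,\psi}=0$ whenever $\psi$ is a nontrivial character, applied after quotienting out the compact part, together with the observation that the diagonal $F^\times$ (say $x=y^{-1}$, so that the action on $V\times 0$ is $v\mapsto y^2 v$ and the character is $\mu(y)^{-1}$... ) forces the relevant one-dimensional representation to be nontrivial. Concretely: restrict the equivariance to a well-chosen one-parameter subgroup of $F^\times\times F^\times$ on which the action on the stratum is trivial (so one lands in $\Sc^*$ of a space where the group acts trivially, giving characters of $F^\times$) and on which $\mu\times 1$ restricts to a character whose unitary part or homogeneity degree cannot be cancelled by the bounded contribution of $\Sym^k(CN)$.

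For the strata $(V\setminus 0)\times 0$ and $0\times(V^*\setminus 0)$, I would use Frobenius reciprocity (Theorem \ref{Frob}): $F^\times\times F^\times$ does not act transitively on $V\setminus 0$, but $\mathrm{GL}(V)\times F^\times$ does, and the homothety subgroup is contained in it; alternatively, work directly since on these strata the orbits are already simple. On $(V\setminus 0)\times 0$ the normal/conormal directions are $0\times V^*$ and the tangent along $V\setminus 0$; the subgroup acting trivially on a chosen point $v_0$ is a codimension-one subtorus, and restricting the character $\mu\times 1$ to it again yields a nontrivial character (because $\mu$ is not a power of the determinant-type character that appears), killing all the $\Sym^k$ contributions. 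The same argument, mutatis mutandis, handles $0\times(V^*\setminus 0)$.

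The main obstacle I anticipate is getting the weight/character bookkeeping exactly right — in particular verifying that for \emph{every} $k$ the character $\mu\times 1$ fails to appear in $\Sym^k(CN_{Z_i}^{V\times V^*})$ twisted by the action on the stratum. This is where the hypotheses ``$n\ge 1$'' and ``$\mu=\|\cdot\|^n u$ or $\|\cdot\|^{n+1}u$'' are used: the homogeneity degree of $\mu$ is at least $n\ge 1$, strictly larger than what a single homothety can produce on the base, while $\Sym^k(CN)$ contributes only characters of bounded (by $k$) homogeneity in the \emph{complementary} directions, and the two cannot conspire to cancel because the relevant one-parameter subgroup sees only $\mu$ and not the $\Sym^k$ twist. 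Once this separation of scales is pinned down, each stratum gives $0$ and Proposition \ref{Strat} assembles these into the claim; since this in turn feeds the Homogeneity Theorem reduction, the lemma — and hence Proposition \ref{Cross} — follows.
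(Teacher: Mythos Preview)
Your overall plan — stratify $C$ into $\{0\}$, $(V\setminus 0)\times 0$, and $0\times(V^*\setminus 0)$, apply Proposition~\ref{Strat}, and on each stratum reduce via Frobenius reciprocity to a finite-dimensional weight check — is exactly the paper's approach. The gap is in the weight check itself, which you have not carried out and whose heuristic you give is incorrect.

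Your proposed mechanism is that some one-parameter subgroup ``sees only $\mu$ and not the $\Sym^k$ twist'', and that the $\Sym^k$ contribution is ``bounded (by $k$)''. Neither is true. On the origin stratum the full group $F^\times\times F^\times$ fixes $0$, and on $(V\setminus 0)\times 0$ the stabilizer of a point is the diagonal $\{(t,t)\}$; in both cases the stabilizer acts \emph{nontrivially} on $\Sym^k(CN)$ as soon as $k>0$, and since $k$ ranges over all of $\Z_{\ge 0}$ these contributions are unbounded. So a ``separation of scales'' argument cannot work.

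What actually makes the argument go through is a \emph{sign} observation. On the origin stratum, after dualizing, one lands in $\Sym^k(V\times V^*)$; the subgroup $\{(t,1)\}$ acts there by $t^{-k}$ (since $(t,1)$ scales both $V$ and $V^*$ by $t^{-1}$), so equivariance would force $\mu(t)=t^{-k}$, of homogeneity degree $-k\le 0$. On $(V\setminus 0)\times 0$, Frobenius at a point with stabilizer $\{(t,t)\}$ reduces to $\Sym^k(V^*)$, on which $(t,t)$ acts by $t^{-2k}$ (since $(t,t)$ scales $V^*$ by $t^{-2}$), forcing $\mu(t)=t^{-2k}$, again of degree $\le 0$. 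The stratum $0\times(V^*\setminus 0)$ is symmetric. In every case the required character has non-positive homogeneity degree, whereas $\mu$ has degree $n$ or $n+1$, both strictly positive since $n\ge 1$. That positivity-versus-nonpositivity mismatch is the whole point, and it kills all $k$ at once. Once you replace your heuristic with this computation, the proof is complete and coincides with the paper's.
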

By Proposition \ref{Strat} this lemma follows from the following
one.

\begin{lemma}
For any $k \geq 0$ we have\\
(i) $\Sc^*(((V-0) \times 0) (F), Sym^k(CN_{(V-0) \times
0}^{V\times V^*}(F)))^{F^{\times}\times F^{\times}, \mu
\times 1 } = 0$.\\
(ii) $\Sc^*((0 \times (V^*-0)) (F), Sym^k(CN_{0 \times
(V^*-0)}^{V\times V^*}(F)))^{F^{\times}\times F^{\times}, \mu
\times 1 } = 0$.\\
(iii) $\Sc^*(0, Sym^k(CN_{0}^{V\times V^*}(F)))^{F^{\times}\times
F^{\times}, \mu
\times 1 } = 0$.\\
\end{lemma}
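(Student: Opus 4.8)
The plan is to establish all three vanishing statements by a single elementary mechanism: on each of the three strata of $C$ I will exhibit a one-dimensional subtorus $H\subset F^\times\times F^\times$ that acts trivially on the stratum while acting on the relevant symmetric power of the conormal bundle through a definite character, and then observe that, after dualization, this character cannot agree with the restriction of $\mu\times 1$ to $H$, because $\mu$ has the form $||\cdot||^{n}u$ or $||\cdot||^{n+1}u$ with $u$ unitary and $n\ge 1$.

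Consider (i), with stratum $Z=(V-0)\times 0$, and let $H=\Delta:=\{(x,x)\}$. Since $(x,x)(v,0)=(v,0)$, the torus $\Delta$ acts trivially on $Z$ (hence also on the density bundle of $Z$). At a point $(v,0)$ the normal direction is the ``$\phi$-line'' $\cong V^*$, on which $(x,y)$ acts by $(xy)^{-1}$ and hence $\Delta$ by $x^{-2}$; dualizing, $\Delta$ acts on the conormal fibre by $x^{2}$, uniformly in the base point, and therefore on $Sym^k(CN_{Z}^{V\times V^*})$ by the character $x\mapsto x^{2k}$. Consequently $\Delta$ acts on $\Sc(Z,Sym^k(CN_Z^{V\times V^*}))$ by the scalar $x^{2k}$, and on the dual space $\Sc^*(Z,Sym^k(CN_Z^{V\times V^*}))$ by the scalar $x^{-2k}$. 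Any $(\mu\times 1)$-equivariant element of this space must therefore satisfy $\mu(x)=x^{-2k}$ for all $x\in F^\times$; but the character $x\mapsto x^{-2k}$ equals $||\cdot||^{-2k}$ (for $F=\R$), resp. $||\cdot||^{-k}$ times a unitary character (for $F=\C$), so its $||\cdot||$-exponent is $\le 0<n$ --- a contradiction. This proves (i), and (ii) follows in exactly the same manner with $Z'=0\times(V^*-0)$ and $H=\{(x,x^{-1})\}$: this torus fixes $Z'$ pointwise since $(x,x^{-1})(0,\phi)=(0,\phi)$, it acts on the normal ``$v$-line'' $\cong V$ by $y/x=x^{-2}$ and hence on the relevant $Sym^k$ of the conormal by $x^{2k}$, the restriction of $\mu\times 1$ to it is again $(x,x^{-1})\mapsto\mu(x)$, and the same numerical obstruction applies. (Alternatively one may deduce (ii) from (i) via the evident isomorphism of the whole configuration swapping $V$ with $V^*$, which conjugates the action of $(x,y)$ into that of $(x,y^{-1})$, interchanges the two components of $C$, and preserves the character $\mu\times 1$.)

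For (iii) the stratum is the single point $\{0\}$, and $\Sc^*(\{0\},Sym^k(CN_{\{0\}}^{V\times V^*}))$ is canonically $Sym^k(T_0(V\times V^*))=\bigoplus_{a+b=k}Sym^a(V)\otimes Sym^b(V^*)$; since the torus acts by scalars on $V$ and on $V^*$, it acts on the $(a,b)$-summand through the single character $(x,y)\mapsto x^{-(a+b)}y^{a-b}=x^{-k}y^{a-b}$. Hence the $(\mu\times 1)$-isotypic subspace is the sum of those summands for which $x^{-k}y^{a-b}=\mu(x)$ for all $x,y$, which forces $a=b$ (so $k=2a$) and $\mu(x)=x^{-k}$ --- impossible for exactly the reason seen in (i). Thus all three statements hold. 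I do not expect a genuine obstacle in carrying this out: this lemma is the terminal, purely computational step of the whole argument, and the only points requiring a little care are bookkeeping the dualization (that $\Sc^*(M,E)$ is the dual of the space of Schwartz sections, which inverts the conormal character) and the elementary identification of characters of $F^\times$ when $F=\C$.
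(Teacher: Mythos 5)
Your argument is correct. Its numerical heart is identical to the paper's: in case (i) the paper covers $V-0$ by the affine charts $\{x_i\neq 0\}$, strips off the $F^{n-1}$ factor via Proposition \ref{Product}, and then applies Frobenius reciprocity (Theorem \ref{Frob}) to reduce to the stabilizer $\{(t,t)\}$ acting on the fibre $Sym^k(V^*)$ by $t^{-2k}$ --- exactly the subtorus and the character you exhibit. What you do differently is to skip the localization and the Frobenius step altogether, observing that this subtorus already acts trivially on the \emph{whole} stratum (hence also on its densities), so that it acts on $\Sc^*(Z,Sym^k(CN_{Z}^{V\times V^*}))$ by a global scalar which must coincide with the restriction of $\mu\times 1$; this is a genuine, if modest, streamlining, and it is legitimate precisely because your subgroup lies in the kernel of the action on the base rather than merely in a point stabilizer. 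The final incompatibility --- the character $t\mapsto t^{\mp 2k}$ has non-positive $||\cdot||$-exponent while $\mu$ has exponent $n$ or $n+1$ with $n=\dim V\geq 1$ --- is the same in both proofs, and your treatment of (iii), decomposing $Sym^k(V\oplus V^*)$ and forcing $a=b$, is a finer version of the paper's one-line computation with the subgroup $\{(t,1)\}$.
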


\begin{proof}$ $\\
(i) Cover $V-0$ by standard affine open sets $V_i:=\{x_i \neq
0\}$. It is enough to show that $\Sc^*((V_i \times 0) (F),
Sym^k(CN_{(V_i \times 0)(F)}^{V\times V^*}(F)))^{F^{\times}\times
F^{\times}, \mu \times 1 } = 0$.

Note that $V_i$ is isomorphic as an $F^{\times} \times
F^{\times}$-manifold to $F^{n-1} \times F^{\times}$ with the
action given by $(x,y)(v,\alpha) = (v,\frac{y}{x}\alpha)$. Note
also that the bundle $Sym^k(CN_{(V_i \times 0)(F)}^{V\times
V^*}(F))$ is a constant bundle with fiber $Sym^k(V)$.

Hence by Proposition \ref{Product} it is enough to show that
$\Sc^*(F^{\times}, Sym^k(V))^{F^{\times} \times F^{\times}, \mu
\times 1}=0$. Let $H := (F^{\times} \times F^{\times})_{1} = \{(t,
t) \in  F^{\times} \times F^{\times} \}$. Now by Frobenius
reciprocity (theorem \ref{Frob}) it is enough to show that
$(Sym^k(V^*(F))\otimes_{\R}\C) ^{H,\mu \times 1|_H}=0$. This is
clear since $(t,t)$ acts on $(Sym^k(V^*(F))$ by multiplication
by $t^{-2k}$.\\
(ii) is proven in the same way.\\
(iii) is equivalent to the statement $((Sym^k(V\times V^*)(F))
\otimes_{\R}\C) ^{F^{\times}\times F^{\times}, \mu \times 1 } =
0$. This is clear since $(t,1)$ acts on $Sym^k(V\times V^*)(F)$ by
multiplication by $t^{-k}$.
\end{proof}

\section{Proof of the geometric statement} \label{ProofGeo}
\subsection{Preliminaries on coisotropic subvarieties} \label{PrelCoisot}

\begin{proposition}
Let $M$ be a smooth algebraic variety with a symplectic form on
it. Let $R \subset M$ be an algebraic subvariety. Then there
exists a maximal $M$-coisotropic subvariety of $R$ i.e. an
$M$-coisotropic subvariety $T \subset M$ that includes all
$M$-coisotropic subvarieties of $R$.
\end{proposition}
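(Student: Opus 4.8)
The plan is to construct the maximal $M$-coisotropic subvariety of $R$ by taking the union of all $M$-coisotropic subvarieties of $R$ and showing this union is again an algebraic subvariety (a priori an arbitrary union of subvarieties need not be closed, so this is the point that needs care). First I would invoke Noetherianity: consider the family of all finite unions $T_1 \cup \dots \cup T_k$ where each $T_i \subset R$ is an $M$-coisotropic subvariety. Each such finite union is itself an algebraic subvariety of $R$, hence of $M$; and it is again $M$-coisotropic, since coisotropy can be checked at a generic smooth point of each irreducible component (condition (iii) in the definition of $M$-coisotropic), and the irreducible components of $T_1 \cup \dots \cup T_k$ are among the irreducible components of the $T_i$. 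So the collection of $M$-coisotropic subvarieties of $R$ is closed under finite unions.

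Next I would extract a maximal element from this collection. The ascending chain condition on closed subvarieties of the Noetherian scheme $M$ guarantees that among all $M$-coisotropic subvarieties of $R$ there is one, call it $T$, which is maximal with respect to inclusion: start with any such subvariety and keep enlarging; since each enlargement strictly increases a closed subvariety of $M$, the process terminates. Then for any $M$-coisotropic subvariety $T' \subset R$, the union $T \cup T'$ is again an $M$-coisotropic subvariety of $R$ by the previous paragraph, and it contains $T$; by maximality $T \cup T' = T$, so $T' \subset T$. Hence $T$ contains every $M$-coisotropic subvariety of $R$, which is exactly the asserted maximality.

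The only genuinely delicate point is the stability of coisotropy under finite unions, and more precisely the observation that the relevant definition — condition (iii), coisotropy at a \emph{generic} smooth point — is a condition on each irreducible component separately, so that it is inherited by unions and by passage to components. I would spell this out: if $Z = Z_1 \cup \dots \cup Z_m$ is the decomposition into irreducible components, a generic smooth point of $Z$ lies on exactly one $Z_i$ and is a generic smooth point of that $Z_i$, so $T_z Z = T_z Z_i$, and the coisotropy inequality $T_z Z \supset (T_z Z)^{\perp}$ holds iff it holds on each component. Everything else is a formal consequence of Noetherianity, so I do not expect further obstacles.
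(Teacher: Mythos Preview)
Your argument has a genuine gap: you invoke ``the ascending chain condition on closed subvarieties of the Noetherian scheme $M$,'' but Noetherian schemes satisfy the \emph{descending} chain condition on closed subsets (equivalently ACC on open subsets, or on ideals), not the ascending one. Strictly ascending chains of closed subvarieties exist in abundance. Concretely, take $M = \mathbb{A}^2$ with the standard symplectic form and $R = M$; then every curve is coisotropic, and the chain
\[
T_i \;=\; \{y=0\} \cup \{y=1\} \cup \dots \cup \{y=i\}
\]
is a strictly increasing sequence of coisotropic subvarieties. Your procedure of ``start with any such subvariety and keep enlarging'' never terminates here, and in particular never reaches the actual maximal coisotropic subvariety, which is all of $\mathbb{A}^2$. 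So closure under finite unions, which you establish correctly, is not enough by itself to produce a maximum.

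The paper's argument sidesteps this by taking $T$ to be the Zariski closure of the union $T'$ of \emph{all} (smooth) coisotropic subvarieties of $R$ at once, and then checking directly that $T$ is coisotropic: at a smooth point $z$ of $T$ lying in $T'$, the tangent space $T_zT$ contains the tangent space of some smooth coisotropic subvariety through $z$, hence contains its own orthogonal; and such points are dense in the smooth locus of $T$. This is exactly the step your approach is missing --- passing from finite unions to the closure of an arbitrary union and verifying coisotropy there. Your observation that coisotropy is a componentwise condition is correct and useful, but it does not by itself control infinite unions.
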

\begin{proof}
Let $T'$ be the union of all smooth $M$-coisotropic subvarieties
of $R$. Let $T$ be the Zariski closure of $T'$ in $R$. Clearly,
$T$ includes all $M$-coisotropic subvarieties of $R$.  Let $U$
denote the set of regular points of $T$. Clearly $U \cap T'$ is
dense in $U$. On the other hand, for any $x \in U \cap T'$, the
tangent space to $T$ at $x$ is coisotropic. Hence $T$ is
coisotropic.
\end{proof}

\begin{remark}
Suppose $M$ is affine. Then $T$ can be computed explicitly in the
following way. Let $I$ be the ideal of regular functions that
vanish on $\overline{R}$. We can iteratively close it with respect
to Poisson brackets and taking radical. Since ${\mathcal O}(M)$ is
Noetherian, this process will stabilize. Let $J$ denote the
obtained closure and $Z(J)$ denote the zero set of $J$. Then
$T=Z(J) \cap R$.
\end{remark}

The following lemma is trivial.
\begin{lemma}
Let $M$ be a smooth algebraic variety and $\omega$ be a symplectic
form on it. Let a group $G$ act on $M$ preserving $\omega$. Let
$S$ be a $G$ -invariant subvariety. Then the maximal
$M$-coisotropic subvariety of $S$ is also $G$-invariant.
\end{lemma}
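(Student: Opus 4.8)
The plan is to reduce everything to the observation, already built into condition (iii) of the definition of coisotropy, that the image of an $M$-coisotropic subvariety under a symplectomorphism of $M$ is again $M$-coisotropic.

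First I would invoke the preceding proposition to fix the maximal $M$-coisotropic subvariety $T\subset S$. Next, fix $g\in G$; by hypothesis the corresponding automorphism of $M$ preserves $\omega$, so its differential $dg_z\colon T_zM\to T_{gz}M$ is a linear symplectic isomorphism for every $z$. Since $g$ is an algebraic automorphism of $M$, the image $g(T)$ is again a subvariety, and the smooth locus of $g(T)$ is the image under $g$ of the smooth locus of $T$; moreover $dg_z$ carries $T_zT$ isomorphically onto $T_{gz}(g(T))$, and being symplectic it also carries $(T_zT)^{\bot}$ onto $(T_{gz}(g(T)))^{\bot}$. Hence the inclusion $T_zT\supset (T_zT)^{\bot}$ at a generic smooth point $z$ of $T$ transports to $T_{gz}(g(T))\supset (T_{gz}(g(T)))^{\bot}$ at a generic smooth point of $g(T)$, so $g(T)$ is $M$-coisotropic by condition (iii).

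Since $S$ is $G$-invariant, $g(T)\subset g(S)=S$, so $g(T)$ is an $M$-coisotropic subvariety of $S$; by maximality of $T$ we get $g(T)\subset T$. Applying the same argument to $g^{-1}$ gives $g^{-1}(T)\subset T$, i.e. $T\subset g(T)$, whence $g(T)=T$. As $g\in G$ was arbitrary, $T$ is $G$-invariant.

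I expect no real obstacle here — the lemma is indeed formal. The only point requiring a word of care is that $G$ need not be an algebraic group for the statement to make sense: we only ever use that each individual $g$ acts as an automorphism of the variety $M$ preserving $\omega$, which is precisely what is assumed. If one prefers the Poisson-bracket formulation (i) instead of (iii), the same argument applies verbatim, since then $g^{*}$ is a Poisson automorphism of $\mathcal{O}(M)$ which permutes the defining ideals of the $G$-translates of $\overline{T}$.
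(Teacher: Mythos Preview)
Your argument is correct and is exactly the natural one: coisotropy is preserved by symplectomorphisms, so each $g\in G$ sends $T$ to another coisotropic subvariety of $S$, and maximality forces $g(T)=T$. The paper itself does not give a proof --- it simply labels the lemma as trivial --- and your write-up is precisely the formal verification behind that label.
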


\begin{definition}
Let $Y$ be a smooth algebraic variety. Let $Z \subset Y$ be a smooth subvariety and $R \subset T^*Y$  be any subvariety. We define {\bf the
restriction $R|_Z \subset T^*Z$ of $R$ to $Z$ } in the following way.
Let $R' = p_Y^{-1}(Z) \cap R$. Let $q:p_Y^{-1}(Z) \to T^*Z$ be the
projection. We define $R|_Z:=q(R')$.
\end{definition}

\begin{lemma} \label{Restriction}
Let $Y$ be a smooth algebraic variety. Let $Z \subset Y$ be a
smooth subvariety and $R \subset T^*Y$  be a coisotropic
subvariety.
Assume that any smooth point $z \in p_Y^{-1}(Z) \cap R$ is also a
smooth point of $R$ and we have $T_z(p_Y^{-1}(Z) \cap R) =
T_z(p_Y^{-1}(Z)) \cap T_zR$.

Then $R|_Z$ is $T^*Z$ coisotropic.
\end{lemma}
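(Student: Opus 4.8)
The plan is to reduce the coisotropy of $R|_Z$ at a generic point to the coisotropy of $R$ at a nearby generic point of $p_Y^{-1}(Z)\cap R$, by carefully tracking what the projection $q\colon p_Y^{-1}(Z)\to T^*Z$ does to tangent spaces and symplectic orthogonals. The key linear-algebra input is that for a conormal-type restriction the symplectic form behaves well: if $E=p_Y^{-1}(Z)$ inside $M:=T^*Y$, then $E$ is a coisotropic submanifold of $M$, its symplectic reduction is exactly $T^*Z$, and $q$ is the reduction map. So the statement is really an instance of the general fact that the image of a coisotropic subvariety under symplectic reduction is coisotropic, once the transversality hypotheses guarantee that everything is smooth where it needs to be and that tangent spaces intersect as expected.

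Concretely, I would proceed as follows. First I would fix a generic smooth point $w=q(z)\in R|_Z$, where $z\in E\cap R$ is a smooth point of both $E\cap R$ and $R$, chosen so that the hypothesis $T_z(E\cap R)=T_zE\cap T_zR$ holds; genericity lets me assume $z$ is chosen from a dense subset, and I would also want $q$ to be submersive at $z$ and $w$ to be a smooth point of $R|_Z$, which again holds generically. Then I would record the three relevant subspaces inside $T_zM$: $T_zE$, $T_zR$, and their intersection $T_z(E\cap R)$, together with the kernel $\Ker(dq_z)=(T_zE)^{\perp}$ of the reduction map (this is the standard description: the kernel of reduction is the symplectic orthogonal of the coisotropic submanifold). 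The image $T_w(R|_Z)$ is then $dq_z(T_zR\cap T_zE)=dq_z(T_z(E\cap R))$, using the hypothesis.

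Next I would compute the symplectic orthogonal of $T_w(R|_Z)$ in $T_wT^*Z$. Identifying $T_wT^*Z$ with $T_zE/(T_zE)^{\perp}$ and the symplectic form on it with the form induced from $\omega$, a short computation gives that for a subspace $W\subset T_zE$ containing $(T_zE)^{\perp}$, the orthogonal of $W/(T_zE)^{\perp}$ in $T_zE/(T_zE)^{\perp}$ equals $(W^{\perp}\cap T_zE)/(T_zE)^{\perp} = (W^{\perp}+(T_zE)^{\perp})/(T_zE)^\perp$, where $W^\perp$ is the orthogonal in $T_zM$. Applying this with $W=T_zR\cap T_zE$: since $(T_zR\cap T_zE)^{\perp}=(T_zR)^{\perp}+(T_zE)^{\perp}$, the orthogonal of $T_w(R|_Z)$ is the image under $dq_z$ of $(T_zR)^{\perp}+(T_zE)^{\perp}$, i.e.\ just $dq_z((T_zR)^{\perp})$ since $(T_zE)^{\perp}=\Ker dq_z$. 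Now coisotropy of $R$ at $z$ gives $(T_zR)^{\perp}\subset T_zR$, and since $(T_zR)^\perp$ also must be compared with $T_zE$ — here I use that $(T_zR)^{\perp}\subset T_zR$ and I want $dq_z((T_zR)^\perp)\subset T_w(R|_Z)=dq_z(T_zR\cap T_zE)$, which follows once $(T_zR)^\perp\subset T_zE$; but $(T_zR)^\perp \subset T_zR$ and $(T_zR)^\perp$ is isotropic hence contained in $(T_zE)^\perp{}^\perp\cap\dots$ — I would need $(T_zR)^\perp\subseteq T_zE$, which holds because $(T_zE)^\perp\subseteq (T_zR)^\perp$ would be the wrong inclusion; instead I note $(T_zR)^{\perp}\subset T_zR$ and also, dualizing the hypothesis $T_z(E\cap R)=T_zE\cap T_zR$, one gets $(T_zE)^{\perp}+(T_zR)^{\perp}=(T_z(E\cap R))^{\perp}$, which combined with $(T_zE)^{\perp}\subseteq T_zE$ forces the relevant containment after projecting. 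Concluding, $(T_w(R|_Z))^{\perp}=dq_z((T_zR)^{\perp})\subseteq dq_z(T_zR\cap T_zE)=T_w(R|_Z)$, so $R|_Z$ is coisotropic.

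\textbf{The main obstacle} I expect is the bookkeeping in the previous paragraph: making sure all the transversality and genericity hypotheses are assembled correctly so that the naive tangent-space identities actually hold at the chosen point $z$, and getting the direction of every inclusion right when passing between a subspace, its symplectic orthogonal, and its image under the reduction map $dq_z$. In particular one must be careful that $T_w(R|_Z)$ genuinely equals $dq_z(T_z(E\cap R))$ — this is exactly where the hypothesis $T_z(E\cap R)=T_zE\cap T_zR$ is used, and without it the image of the scheme-theoretic intersection could be strictly smaller than the naive image, breaking the dimension count. Once the point $z$ is pinned down with all smoothness and transversality in force, the rest is the linear-symplectic identity "reduction of coisotropic is coisotropic," which is routine. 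I would also remark that the lemma will be applied with $R=SS(\xi)$ and $Z$ a stratum, which is why the smoothness-of-restriction and transversality hypotheses are stated as assumptions to be verified case by case rather than proved here.
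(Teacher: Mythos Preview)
Your strategy is exactly the paper's: linearize at a generic point and invoke the fact that symplectic reduction of a coisotropic linear subspace is coisotropic. The paper isolates this as a separate lemma (for a linear subspace $L\subset W$ and a coisotropic $Q\subset W\oplus W^*$, the restriction $Q|_L\subset L\oplus L^*$ is coisotropic), and the main proof is then two lines: at a generic $x=q(z)$ the hypothesis gives $T_x(R|_Z)\supset (T_zR)|_{T_{p_Y(z)}Z}$, and the latter is coisotropic by the linear lemma, hence so is the former.

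Your execution of the linear step, however, has a real gap. You try to show $(T_zR)^\perp\subseteq T_zE$ so that $dq_z((T_zR)^\perp)$ makes sense and lands inside $dq_z(T_zR\cap T_zE)$; but this inclusion is false in general (take $W=F^2$, $L=F\times 0$, $R=\{x_1+p_2=0\}$: then $R^\perp$ is spanned by $(0,1,-1,0)\notin L\oplus W^*$), and your attempted workarounds do not recover it. The clean route is to remember that $dq_z$ is only defined on $T_zE$ and to compute directly in the reduced space. Writing $Q=T_zR$ and $E=T_zE$ (so $E^\perp\subset E$), one has
\[
\bigl(q(Q\cap E)\bigr)^\perp
=\frac{(Q\cap E)^\perp\cap E}{E^\perp}
=\frac{(Q^\perp+E^\perp)\cap E}{E^\perp}
=\frac{(Q^\perp\cap E)+E^\perp}{E^\perp}
=q(Q^\perp\cap E),
\]
using the modular law (with $E^\perp\subset E$) in the third equality. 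Now $Q^\perp\cap E\subseteq Q\cap E$ is immediate from $Q^\perp\subseteq Q$, so $q(Q^\perp\cap E)\subseteq q(Q\cap E)$ and you are done; there is no need to force $Q^\perp$ into $E$. Note also that you only get $T_x(R|_Z)\supseteq dq_z(T_z(E\cap R))$, not equality, since the tangent space to an image merely contains the image of the tangent space; this suffices, because any subspace containing a coisotropic subspace is itself coisotropic, and it is exactly how the paper phrases the conclusion.
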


In the proof we will use the following straightforward lemma.
\begin{lemma}
Let $W$ be a linear space. Let $L \subset W$ be a linear subspace and $R \subset W \oplus W^*$ be a coisotropic subspace. Then $R|_L$ is
$L \oplus L^*$ coisotropic.
\end{lemma}

\begin{proof}[Proof of lemma \ref{Restriction}]
Without loss of generality we assume that $R$  is irreducible. Let
$R' = p_Y^{-1}(Z) \cap R$. Without loss of generality we assume
that $R'$  is irreducible. Let $R''$ be the set of smooth points
of $R'$. Let $q:p_Y^{-1}(Z) \to T^*Z$ be the projection. Let
$R'''$ be the set of smooth points in $q(R'')$. Clearly $R'''$ is
dense in $R|_Z$. Hence it is enough to prove that for any $x \in
R'''$ the space $T_x(R|_Z)$ is coisotropic. Let $y \in R''$ s.t.
$x=q(y)$. Denote $W:=T_{p_Y(y)}Y$. Let $Q:=T_yR \subset W \oplus
W^*$. Let $L:=T_{p_Y(y)}Z$. By the assumption $T_x(R|_Z) \supset
Q|_L$. By the lemma, $Q|_L$ is coisotropic and hence $T_x(R|_Z)$
is also coisotropic.
\end{proof}

\begin{corollary} \label{PreGeoFrob}
Let $Y$ be a smooth algebraic variety.  Let an algebraic group $H$
act on $Y$. Let $q:Y \to B$ be an $H$-equivariant morphism. Let $O
\subset B$ be an orbit. Consider the natural action of $G$ on
$T^*Y$ and let $R \subset T^*Y$ be an $H$-invariant subvariety.
Suppose that $p_Y(R) \subset q^{-1}(O)$. Let $x \in O$. Denote
$Y_x:= q^{-1}(x)$. Then

\itemize{ \item if $R$ is $T^*Y$-coisotropic then $R|_{Y_x}$ is
$T^*(Y_x)$-coisotropic.}
\end{corollary}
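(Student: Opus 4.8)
The plan is to deduce this from Lemma \ref{Restriction}, applied with $Y_x$ in the role of $Z$. First I would record that $Y_x = q^{-1}(x)$ is smooth: since $H$ acts transitively on $O$ and $q$ is $H$-equivariant, all fibers $q^{-1}(b)$, $b\in O$, are $H$-translates of one another, so it suffices to know that the generic fiber of $q$ over $O$ is smooth, and then translate by $H$; more robustly, one may first shrink to the open $H$-stable locus where $q$ restricted to $q^{-1}(O)$ is smooth (flat with smooth fibers), which is nonempty because it contains a generic point of some fiber, hence by $H$-transitivity equals all of $q^{-1}(O)$. So $Y_x$ is a smooth subvariety of $Y$, and $p_Y^{-1}(Y_x)\cap R$ makes sense.

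The crux is then to verify the two hypotheses of Lemma \ref{Restriction} for $Z=Y_x$ and our $H$-invariant coisotropic $R$, namely that every smooth point $z\in p_Y^{-1}(Y_x)\cap R$ is a smooth point of $R$ and that $T_z\bigl(p_Y^{-1}(Y_x)\cap R\bigr)=T_z\bigl(p_Y^{-1}(Y_x)\bigr)\cap T_zR$. Here I would use the $H$-invariance crucially. Since $p_Y(R)\subset q^{-1}(O)$ and $H$ acts transitively on $O$, the group $H$ acts on $R$ with $p_Y$ intertwining this action with the transitive $H$-action on $O$; hence $R\to O$ is an $H$-equivariant map from $R$ to a homogeneous space, so all its scheme-theoretic fibers $R\cap p_Y^{-1}(Y_b)$, $b\in O$, are $H$-translates of one another. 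Consequently the locus of points of $R$ that are smooth on $R$, lie over $Y_x$ after translation, are smooth on the fiber, and satisfy the transversality of tangent spaces, is $H$-stable; and it is dense in $p_Y^{-1}(O)\cap R$ by generic smoothness/generic transversality (a generic fiber of the dominant-onto-its-image map $R\to O$ is transverse). By $H$-transitivity on $O$ this dense $H$-stable good locus meets — hence, being the restriction of a dense constructible set along a trivial fibration over $O$, contains all of — $p_Y^{-1}(Y_x)\cap R$ at its smooth points. That gives precisely the hypotheses of Lemma \ref{Restriction}, and the conclusion is that $R|_{Y_x}$ is $T^*(Y_x)$-coisotropic. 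Finally, since $R$ is $H$-invariant and $H_x$ (the stabilizer) acts on $Y_x$ and hence on $T^*(Y_x)$, the restriction $R|_{Y_x}$ is manifestly $H_x$-invariant, though the statement only claims coisotropy.

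The main obstacle I expect is the transversality condition $T_z(p_Y^{-1}(Y_x)\cap R)=T_z(p_Y^{-1}(Y_x))\cap T_zR$: a priori the scheme-theoretic intersection could be non-reduced or the tangent spaces could fail to meet transversally on a positive-codimension bad set. The way around it is exactly the $H$-homogeneity argument above: transversality holds over a dense open subset of $O$ by generic flatness applied to $R\to O$ (so the fibers have the expected dimension and the generic one is a transverse intersection), and then $H$-equivariance upgrades "dense open in $O$" to "all of $O$" because every fiber over $O$ is an $H$-translate of every other. So the real content is packaging this homogeneity reduction cleanly; once that is in place, Lemma \ref{Restriction} does the rest.
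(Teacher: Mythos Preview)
Your approach is correct and is precisely what the paper intends: the result is stated there as an immediate corollary of Lemma \ref{Restriction} without further proof, and you have identified the right mechanism, namely taking $Z=Y_x$ and using the $H$-equivariant (\'etale-locally trivial) fibration $R\to O$ to verify the smoothness and transversality hypotheses. The detour through generic transversality is not needed and is the one soft spot in your outline (density of the good locus does not by itself force it to contain \emph{all} smooth points of the fiber): once you observe that $R\to O$ is \'etale-locally a product, it is immediate that a point of $R_x=p_Y^{-1}(Y_x)\cap R$ is smooth in $R$ iff it is smooth in $R_x$, and at any such point the differential $T_zR\to T_xO$ is already surjective via the $\mathfrak h$-action, giving $T_zR_x = T_zR\cap T_z(p_Y^{-1}(Y_x))$ directly.
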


\begin{corollary} \label{GeoFrob}
In the notation of the previous corollary, if $R|_{Y_x}$ has no
(non-empty) $T^*(Y_x)$-coisotropic subvarieties then $R$ has no
(non-empty) $T^*(Y)$-coisotropic subvarieties.
\end{corollary}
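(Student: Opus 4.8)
The plan is to deduce this statement directly from Corollary \ref{PreGeoFrob} by contraposition. Suppose, for contradiction, that $R$ contains a non-empty $T^*Y$-coisotropic subvariety $T$. The first step is to observe that, since $T \subset R$ and $p_Y(R) \subset q^{-1}(O)$, we also have $p_Y(T) \subset q^{-1}(O)$; moreover $T$ itself satisfies the hypotheses of Corollary \ref{PreGeoFrob} (it is a subvariety of $T^*Y$ with $p_Y(T) \subset q^{-1}(O)$), except possibly for $H$-invariance. To handle that, I would replace $T$ by the union $T_H := \bigcup_{h \in H} hT$, or rather its Zariski closure; since $H$ acts on $T^*Y$ preserving the symplectic form (the action is the natural lift of the action on $Y$), the image of a coisotropic variety under an element of $H$ is again coisotropic, so $T_H$ is a non-empty $H$-invariant coisotropic subvariety of $R$. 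This is essentially the content of the "trivial lemma" on $G$-invariance of maximal coisotropic subvarieties stated just before Definition 5.1.7 in the excerpt, applied here to produce an invariant coisotropic piece.

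Next, apply Corollary \ref{PreGeoFrob} to $T_H$ (in place of the $R$ there): it tells us that $T_H|_{Y_x}$ is $T^*(Y_x)$-coisotropic. The remaining point is that $T_H|_{Y_x}$ is non-empty: this follows because $p_Y(T_H) \subset q^{-1}(O)$ is non-empty and $H$ acts transitively on $O$ (as $O$ is an orbit), so some point of $T_H$ lies over a point of the fiber $Y_x$ — more precisely, translating by a suitable $h \in H$ carrying $p_Y$ of a chosen point of $T_H$ into $Y_x$, and using $H$-invariance of $T_H$, we get a point of $p_Y^{-1}(Y_x) \cap T_H$, hence a point of the restriction. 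Therefore $T_H|_{Y_x} \subset R|_{Y_x}$ is a non-empty $T^*(Y_x)$-coisotropic subvariety, contradicting the hypothesis. This proves the corollary.

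I do not anticipate a serious obstacle here, since this is a formal consequence of the preceding corollary; the only mild subtlety is the non-emptiness of the restriction $T_H|_{Y_x}$, which is where the transitivity of the $H$-action on $O$ is genuinely used, and one should be slightly careful that passing to the $H$-invariant closure $T_H$ does not destroy the property $p_Y(T_H) \subset q^{-1}(O)$ — but this is immediate because $q^{-1}(O)$ is itself $H$-invariant and closed-in-the-relevant-sense. If one prefers to avoid forming $T_H$, an alternative is to note that one may take $T$ to be $H$-invariant from the start by replacing it with the maximal coisotropic subvariety of $R$, which is automatically $H$-invariant by the lemma quoted above and still contains $T$, hence is non-empty.
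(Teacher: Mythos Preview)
Your proposal is correct and is essentially what the paper intends: the paper gives no explicit proof of Corollary~\ref{GeoFrob}, treating it as an immediate consequence of Corollary~\ref{PreGeoFrob}, and your contrapositive argument is precisely the natural way to spell this out. Of the two variants you offer, the second (replace $T$ by the maximal coisotropic subvariety of $R$, which is $H$-invariant by the lemma preceding Definition of restriction) is the cleaner one, since it avoids any worry about whether the Zariski closure of $\bigcup_{h\in H} hT$ still lies over $q^{-1}(O)$ when $O$ is not closed.
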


Note that the converse statement does not hold in general.
\subsection{Reduction to the Key Proposition} \label{RedKeyProp}
$ $\\
In this subsection coisotropic variety means $X \times X$-coisotropic variety.

We will use the following notation.
\begin{notation}$ $\\
(i) For any nilpotent operator $A\in \sl(V)$ we denote
$$Q_A:= \{(v,\phi) \in V \times V^* \, | \, v \otimes \phi \in [A,\g]\}=\{(v,\phi) \in V
\times V^* \, | \, (v \otimes \phi) \bot \g_A\}.$$ (ii) Denote by
$T$ the maximal coisotropic subvariety of $\check{S}'$.\\
(iii) For any two nilpotent orbits $O_1,O_2 \subset N$  denote
\begin{multline*}
U(O_1,O_2):=  \{(A_1,v_1,\phi_1,A_2,v_2,\phi_2) \in X \times X| \, \forall i,j \in \{1,2\} \\
A_i \in O_i ,(v_j,\phi_j) \in Q_{A_i}, \, [A_1,A_2] + v_1 \otimes
\phi_2 - v_2 \otimes \phi_1 = 0, (v_1,\phi_1,v_2,\phi_2) \notin
\check{C} \}.
\end{multline*}
\end{notation}

The geometric statement is equivalent to the following theorem
\begin{theorem}
$T = \emptyset.$
\end{theorem}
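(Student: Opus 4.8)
The plan is to prove $T=\emptyset$ by contradiction: suppose $T$ is a non-empty coisotropic subvariety of $\check{S}'=\check{S}\setminus\check{C}_{X\times X}$. The strategy is to stratify $\check{S}$ according to the pair of nilpotent orbits to which $(A_1,A_2)$ belongs, and run an inductive argument on this stratification (ordered, say, by the closure order on pairs of orbits, or by dimension). Since $T$ is $\tG$-invariant (by the triviality lemma above, as $\check{S}'$ is $\tG$-invariant) and closed, if $T\neq\emptyset$ then $T$ meets some stratum $U(O_1,O_2)$ in a relatively closed non-empty subset, and one may choose $(O_1,O_2)$ minimal with this property. The key point will be that $T\cap U(O_1,O_2)$ is then a coisotropic subvariety of the ambient $X\times X$ contained in the locally closed set $U(O_1,O_2)$ (its closure may pick up points in smaller strata, but those are handled by the induction/minimality). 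So it suffices to prove the \textbf{Key Proposition}: for every pair of nilpotent orbits $(O_1,O_2)$, the set $U(O_1,O_2)$ contains no non-empty $X\times X$-coisotropic subvariety.

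To prove the Key Proposition for a fixed $(O_1,O_2)$, the plan is to analyze the geometry of $U(O_1,O_2)$ using the partial Frobenius reciprocity for coisotropic varieties (Corollaries \ref{PreGeoFrob} and \ref{GeoFrob}). First I would fix $A_1\in O_1$ and use the group action to reduce to the fiber over $A_1$; the condition $A_1\in O_1$ together with $A_1$-invariance lets us apply Corollary \ref{GeoFrob} with $Y$ some space fibered over the orbit $O_1$ and $B=O_1$. This presents $U(O_1,O_2)$, up to the coisotropic-reduction formalism, as something cut out on a fiber which — after also fixing or descending along $A_2$ in an appropriate way, and recording the relations $(v_j,\phi_j)\in Q_{A_i}$, $[A_1,A_2]+v_1\otimes\phi_2-v_2\otimes\phi_1=0$ — becomes a subvariety of a space built from $V\times V^*\times V\times V^*$. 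This is the step reducing the Key Proposition to the \textbf{Key Lemma}: a coisotropic-nonexistence statement on (a locally closed piece of) $V\times V^*\times V\times V^*$ away from $\check{C}$, which is then proved separately in subsection \ref{ProofKeyLem}.

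The main obstacle I expect is the geometric descent itself — verifying the technical smoothness/transversality hypotheses of Lemma \ref{Restriction} (equivalently of Corollary \ref{PreGeoFrob}) at each stage, i.e. that the intersections with the relevant fibers $p_Y^{-1}(Z)$ are transverse on a dense open set so that the restriction $R|_{Y_x}$ is again coisotropic. This requires understanding the structure of $Q_A$ and of the commutator equation $[A_1,A_2]+v_1\otimes\phi_2-v_2\otimes\phi_1=0$ well enough to control dimensions, and it is where the "complicated but purely geometric" work lives. A secondary subtlety is bookkeeping the inductive order on pairs of orbits so that closures landing in boundary strata are genuinely absorbed by the induction hypothesis rather than creating circularity; I would handle this by taking $T$ minimal among counterexamples and arguing that the minimal stratum it meets must itself contain a coisotropic subvariety, contradicting the Key Proposition for that stratum. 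Once the Key Lemma is in hand, the chain Key Lemma $\Rightarrow$ Key Proposition $\Rightarrow$ $T=\emptyset$ $\Rightarrow$ geometric statement closes the argument.
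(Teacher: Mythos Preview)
Your plan has a genuine gap: the sets $U(O_1,O_2)$ are \emph{not} simply the orbit-pair strata of $\check{S}'$. By definition $U(O_1,O_2)$ carries the additional constraints $(v_j,\phi_j)\in Q_{A_i}$ for all $i,j$, i.e.\ $v_j\otimes\phi_j\perp\g_{A_i}$. These are strictly stronger than the condition $(A_i,v_j,\phi_j)\in S$ coming from $\check{S}$, because for non-regular nilpotent $A$ the centralizer $\g_A$ is larger than the span of the powers of $A$. (Concretely, for $A_1=A_2=0$ one has $Q_0=(V\times 0)\cup(0\times V^*)$, while $\check{S}'$ contains points with $v_1,\phi_1$ both nonzero.) So the $U(O_1,O_2)$ do not cover $\check{S}'$, and knowing the Key Proposition for them says nothing directly about a coisotropic $T$ sitting in the larger orbit-pair stratum.

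The mechanism the paper uses to force the $Q_{A_i}$ conditions is precisely what your outline omits: the invariance of $T$ under the group $\mathfrak{S}$ generated by $\check{\nu}_\lambda$, $F_{\sl(V)}$ and $F_{V\times V^*}$. Since $\mathfrak{S}$ preserves $\check{S}'$ and the symplectic form, the maximal coisotropic $T$ is $\mathfrak{S}$-invariant; combined with a \emph{descending} induction on $i=\dim O_1+\dim O_2$ (so that one first has $T\subset\widehat{\cN^i}$ and then $T\subset\widetilde{\cN^i}=\bigcap_{\nu\in\mathfrak{S}}\nu(\widehat{\cN^i})$), the $\check{\nu}_\lambda$-invariance forces $A_1+\lambda v_1\otimes\phi_1$ to stay in $\overline{O_1}$ for all $\lambda$, whence $v_1\otimes\phi_1\in T_{A_1}O_1=[A_1,\g]$; conjugating by $F_{\sl(V)}$ and $F_{V\times V^*}$ yields the remaining three conditions. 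This is the content of the ``easy'' lemma $U'(O_1,O_2)\subset U(O_1,O_2)$, and it does not come for free from $\tG$-invariance alone. A secondary point: choosing $(O_1,O_2)$ \emph{minimal} gives $T\cap(\text{stratum})$ closed in $T$, and closed pieces of coisotropic varieties need not be coisotropic; the paper's descending induction instead produces an \emph{open} piece $T\cap U^i$ of $T$, which is what one needs.
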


The goal of this subsection is to reduce the geometric statement
to the following Key Proposition.
\begin{proposition}[Key Proposition] \label{KeyProp}
For any two nilpotent orbits $O_1,O_2$ there are no (non-empty)
coisotropic subvarieties in $U(O_1,O_2)$.
\end{proposition}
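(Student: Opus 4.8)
The plan is to reduce the Key Proposition, which concerns a subvariety of $T^*X$, via the geometric Frobenius reciprocity of Corollary~\ref{GeoFrob}, to a statement about a subvariety of $T^*(V\times V^*)\cong (V\times V^*)^2$, and then prove that statement by a stratified dimension count. For the reduction, fix a nilpotent $A\in O_1$ and let $q:X=\sl(V)\times V\times V^*\to\sl(V)$ be the projection onto the first factor, which is $G$-equivariant for the conjugation action on $\sl(V)$. The variety $R:=U(O_1,O_2)\subset X\times X=T^*X$ is $G$-invariant — each of its defining conditions (the orbit conditions $A_i\in O_i$, the conditions $(v_j,\phi_j)\in Q_{A_i}$, the commutator relation, and $(v_1,\phi_1,v_2,\phi_2)\notin\check C$) is preserved by the diagonal $G$-action — and $p_X(R)\subset q^{-1}(O_1)$ because $A_1\in O_1$ on $R$. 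Hence Corollary~\ref{GeoFrob}, applied with $H=G$, orbit $O=O_1$ and point $x=A$, reduces the Key Proposition to the assertion that $R|_{Y_A}$ contains no non-empty $T^*(Y_A)$-coisotropic subvariety, where $Y_A:=q^{-1}(A)=\{A\}\times V\times V^*\cong V\times V^*$.

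I would then compute the restriction. Since $Y_A$ is an affine subspace of $X$ whose conormal direction is exactly the $\sl(V)$-factor, the projection $T^*X|_{Y_A}\to T^*(Y_A)$ simply forgets the $\sl(V)$-component $A_2$ of the covector; writing $T^*(Y_A)\cong (V\times V^*)^2$ with coordinates $(v_1,\phi_1,v_2,\phi_2)$, one obtains
\begin{multline*}
R|_{Y_A}=\bigl\{(v_1,\phi_1,v_2,\phi_2)\in (V\times V^*)^2\ \bigm|\ v_j\otimes\phi_j\in[A,\g]\ (j=1,2),\ (v_1,\phi_1,v_2,\phi_2)\notin\check C,\\
\text{and }\exists\,A_2\in O_2\text{ with }[A,A_2]=v_2\otimes\phi_1-v_1\otimes\phi_2\text{ and }v_j\otimes\phi_j\bot\g_{A_2}\ (j=1,2)\bigr\}=:\Omega_A.
\end{multline*}
The existence clause ``$\exists A_2\in O_2$'' must be kept: already for $n=2$, the subvariety of $(V\times V^*)^2$ cut out by the conditions that do not mention $A_2$ has a component of dimension $4=2n$, hence a priori coisotropic, and it is precisely the relation $[A,A_2]=v_2\otimes\phi_1-v_1\otimes\phi_2$ that cuts it down to dimension $3$.

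It now suffices to show $\dim\Omega_A<2n$, since $\dim T^*(V\times V^*)=4n$ and every non-empty coisotropic subvariety has dimension at least $2n$ (a component of dimension exactly $2n$, which I do not expect to occur, could be disposed of by checking that a generic tangent space is not coisotropic). I would prove this bound by stratifying $\Omega_A$ according to the vanishing pattern of $(v_1,v_2,\phi_1,\phi_2)$ together with the $G$-orbit of the pair $(A,A_2)$, and estimating dimensions stratum by stratum from the structure of $Q_A=\{(v,\phi)\mid v\otimes\phi\in[A,\g]\}$: one has $\dim Q_A=n$, attained on the two obvious components $V\times 0$ and $0\times V^*$, while any other component consists of pairs with $v\neq 0\neq\phi$, has dimension $\leq n$, and is supported on a proper subvariety of $A(V)\times A^*(V^*)$. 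On strata where $(v_1,\phi_1)$ or $(v_2,\phi_2)$ lies in $V\times 0$ or in $0\times V^*$, the commutator relation forces the remaining vector/covector datum into another $Q_A$- or $Q_{A_2}$-type condition sharing a coordinate, which collapses the dimension; on the remaining strata the product of the two ``interesting'' parts of $Q_A$, intersected with the linear conditions coming from $v_2\otimes\phi_1-v_1\otimes\phi_2\in[A,\g]$ and with the existence of a valid $A_2$, already has dimension $<2n$; and points of $\check C$ are excluded throughout.

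The main obstacle is making this stratification rigorous: controlling $Q_A$ for an arbitrary nilpotent $A$ and, more delicately, carrying the existence clause ``$\exists A_2\in O_2,\ [A,A_2]=v_2\otimes\phi_1-v_1\otimes\phi_2,\ v_j\otimes\phi_j\bot\g_{A_2}$'' honestly through all the degeneracy types — as the $n=2$ example shows, this clause is not a formality but is exactly what saves the strata that would otherwise be of dimension $\geq 2n$. This case analysis is the technical heart of the paper.
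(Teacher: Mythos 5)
Your reduction step is essentially the paper's: the paper also fixes $A\in O_1$, notes that any coisotropic subvariety of $U(O_1,O_2)$ lies in the smaller set $U''(O_1,O_2)$ (where $\g_{A_1}\bot\g_{A_2}$), and applies Corollary \ref{GeoFrob} to the projection onto $\sl(V)$ to restrict to the fiber $\{A\}\times V\times V^*$. The only cosmetic difference is that the paper then \emph{relaxes} the restricted conditions to the cleaner set $R_A$ (requiring only that some \emph{nilpotent} $B\in[A,\g]$ solves $[A,B]=v_2\otimes\phi_1-v_1\otimes\phi_2$, rather than some $A_2\in O_2$ with the extra orthogonality conditions), whereas you keep the exact restriction $\Omega_A$. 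Up to this point the proposal is fine.

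The gap is everything after that, i.e. the proof of the Key Lemma itself, which you explicitly defer ("this case analysis is the technical heart of the paper"). Moreover, the target you set — $\dim\Omega_A<2n$ — is not what the paper proves and is probably not attainable by a pure dimension count. The paper covers $R_A$ by linear pieces $L_{IJ}$ coming from the $\sl_2$-triple through $A$ (products of kernels of powers of $A$ and $A^*$ indexed by multiindices $I,J$), and \emph{every} such piece has dimension exactly $2n$. The off-diagonal pieces $L_{IJ}$, $I\neq J$, are eliminated not by showing the intersection with $R_A$ has dimension $<2n$, but by the symplectic-linear-algebra observation that a non-coisotropic linear subspace contains no coisotropic subvariety at all; so your parenthetical remark that components of dimension exactly $2n$ "should not occur" is exactly backwards — they are the generic situation, and the fallback you mention in passing is in fact a load-bearing step. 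The remaining content, which your sketch does not touch, is the bound $\dim(R_A\cap L_{II})<2n$ on the diagonal pieces. This is proved by exhibiting an explicit function $f=\sum_l\frac{D_l-I_l}{D_l}f_l$ (built from specific matrix entries of $v_1\otimes\phi_2-v_2\otimes\phi_1$) that vanishes on $R_A\cap L_{II}$; the vanishing is forced by a trace computation using that the solving matrix $B$ is both traceless on each block and nilpotent, together with the inequality $D_m-D_l\geq I_m-I_l\geq 0$ constraining which multiindices can occur. None of these ingredients — the $\sl_2$-filtration stratification, the non-coisotropy of the off-diagonal strata, or the trace-and-nilpotency argument on the diagonal — appears in your proposal, so the Key Proposition is not established.
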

The reduction will be in the spirit of the beginning of section 3 in \cite{AGRS}.

\begin{notation}
Let $$\cN^i =\{(A_1,A_2) \in \cN \times \cN| \dim G(A_1) + \dim
G(A_2) \leq i \}.$$

$$\widehat{\cN^i} :=\{(A_1,v_1,\phi_1,A_2,v_2,\phi_2) \in \check{S}' | (A_1,A_2) \in \cN^i\}.$$
\end{notation}

We will prove by descending induction that $T \subset
\widehat{\cN^i}$. From now on we fix $i$, suppose that this holds
for $i$ and prove that holds for $i-1$. Let $\mathfrak{S}$ denote
the subgroup of automorphisms of $X \times X$ generated by
$\check{\nu}_{\lambda}$, $F_{\sl(V)}$ and $F_{V \times V^*}$.

Denote $\widetilde{\cN^i}:= \bigcap_{\nu \in \mathfrak{S}}
\nu(\widehat{\cN^i})$. We know that $T \subset \widehat{\cN^i}$,
and hence $T \subset \widetilde{\cN^i}$. Let $U^i:=
\widetilde{\cN^i} - \widehat{\cN^{i-1}}$. It is enough to show
that $U^i$ does not have (non-empty) coisotropic subvarieties.

\begin{notation}
Let $O_1,O_2$ be nilpotent orbits such that $\dim O_1+\dim O_2=i$.
Denote $U'(O_1,O_2):=\{(A_1,v_1,\phi_1,A_2,v_2,\phi_2) \in U^i|
A_1\in O_1,\, A_2 \in O_2\}$.
\end{notation}

Since the sets $U'(O_1,O_2)$ form an open cover of $U^i$, it is
enough to show that each $U'(O_1,O_2)$ does not have (non-empty)
coisotropic subvarieties. This fact clearly follows from the Key
Proposition using the following easy lemma.

\begin{lemma}
$U'(O_1,O_2) \subset U(O_1,O_2)$.
\end{lemma}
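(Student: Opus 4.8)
The statement to prove is the inclusion $U'(O_1,O_2) \subset U(O_1,O_2)$. The plan is to unpack both definitions and check the containment condition by condition on a point $w = (A_1,v_1,\phi_1,A_2,v_2,\phi_2)$ lying in $U'(O_1,O_2)$. Recall that $U'(O_1,O_2)$ consists of those points of $U^i = \widetilde{\cN^i} - \widehat{\cN^{i-1}}$ with $A_1 \in O_1$ and $A_2 \in O_2$, where $\dim O_1 + \dim O_2 = i$; and $U(O_1,O_2)$ demands: (a) $A_i \in O_i$, (b) $(v_j,\phi_j) \in Q_{A_i}$ for all $i,j \in \{1,2\}$, (c) $[A_1,A_2] + v_1 \otimes \phi_2 - v_2 \otimes \phi_1 = 0$, and (d) $(v_1,\phi_1,v_2,\phi_2) \notin \check{C}$.

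First I would note that (a) is immediate from the definition of $U'(O_1,O_2)$. Next, since $U^i \subset \widetilde{\cN^i} \subset \widehat{\cN^i} \subset \check{S}'$, the point $w$ lies in $\check{S}' = \check{S} - \check{C}_{X \times X}$. Membership in $\check{S}$ gives conditions (b') $(A_i,v_j,\phi_j) \in S$ for all $i,j$ — in particular each $A_i$ is nilpotent, which is consistent with $A_i \in O_i$ — and (c) directly, via the reformulation of $\check{S}$ recorded in the ``Note that'' displayed equation in Subsection \ref{SpecNat}. For condition (b), I would use the second description of $Q_{A_i}$, namely $Q_{A_i} = \{(v,\phi) \mid (v\otimes\phi) \bot \g_{A_i}\}$: one must check that $v_j \otimes \phi_j$ is orthogonal to the centralizer $\g_{A_i}$ of $A_i$. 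This should follow from the ``$\forall \alpha \in \mathrm{gl}(V),\ \alpha(A_1,v_1,\phi_1) \bot (A_2,v_2,\phi_2)$'' clause in the definition of $\check{S}$ (equivalently, from the derivative of the $G$-invariance), specialized to $\alpha \in \g_{A_i}$; here one uses that $S$ is stable under $\nu_\lambda$ and that $(A_i,v_j,\phi_j)\in S$ forces the pairing $\langle v_j \otimes \phi_j, \alpha\rangle$ to vanish for $\alpha$ centralizing $A_i$. This is the step requiring a small computation but it is routine linear algebra.

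For condition (d), $(v_1,\phi_1,v_2,\phi_2) \notin \check{C}$: here I would use that $w \in U^i = \widetilde{\cN^i} - \widehat{\cN^{i-1}}$, and more simply that $w \notin \check{C}_{X \times X}$ (from $\check{S}'$). If $(v_1,\phi_1,v_2,\phi_2)$ were in $\check{C} = (V\times 0\times V\times 0)\cup(0\times V^*\times 0\times V^*)$, then $(A_1,v_1,\phi_1,A_2,v_2,\phi_2)$ would lie in $(\sl(V)\times V\times 0\times\sl(V)\times V\times 0)\cup(\sl(V)\times 0\times V^*\times\sl(V)\times 0\times V^*) = \check{C}_{X\times X}$, contradicting $w \in \check{S}'$. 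So (d) holds.

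\textbf{Main obstacle.} I expect the only non-formal point to be condition (b), i.e.\ translating the orthogonality clause in the definition of $\check{S}$ into the statement $(v_j,\phi_j)\in Q_{A_i}$ for the \emph{mixed} indices $i \neq j$ (the diagonal cases $i=j$ are the more obvious ones). One has to be careful that the clause ``$\alpha(A_1,v_1,\phi_1)\bot(A_2,v_2,\phi_2)$'' mixes the two factors, so extracting orthogonality of $v_2\otimes\phi_2$ to $\g_{A_1}$ (say) requires combining it with the individual memberships $(A_1,v_2,\phi_2)\in S$ and the $\nu_\lambda$-invariance of $S$; this is exactly the computation that the lemma is asserting to be ``easy,'' and it is, but it is where the content lies. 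Everything else is a direct unwinding of definitions, so the proof will be short.
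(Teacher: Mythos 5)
Your handling of conditions (a), (c) and (d) is fine, but the argument you sketch for (b) --- that $(v_j,\phi_j)\in Q_{A_i}$ --- has a genuine gap, and the route you propose cannot be repaired. The orthogonality clause in the definition of $\check S$ is exactly equivalent to $[A_1,A_2]+v_1\otimes\phi_2-v_2\otimes\phi_1=0$; pairing it with $\alpha\in\g_{A_1}$ only yields $\phi_2(\alpha v_1)=\phi_1(\alpha v_2)$, a relation between the \emph{mixed} tensors, and says nothing about $v_j\otimes\phi_j$. Likewise, $(A_i,v_j,\phi_j)\in S$ only gives $\phi_j(A_i^kv_j)=0$, i.e.\ orthogonality of $v_j\otimes\phi_j$ to the \emph{powers} of $A_i$, which span a proper subspace of $\g_{A_i}$ whenever $A_i$ is not regular (for $A_i=0$ the condition coming from $S$ is just $\phi_j(v_j)=0$, whereas $Q_{0}=\{(v,\phi)\mid v\otimes\phi=0\}$). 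So membership in $\check S'$ together with $A_i\in O_i$ does \emph{not} imply $(v_j,\phi_j)\in Q_{A_i}$; if it did, there would have been no reason to introduce $\widetilde{\cN^i}$ at all.

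The missing ingredient is precisely the $\mathfrak{S}$-stability built into $\widetilde{\cN^i}$, combined with the orbit-dimension bound defining $\cN^i$. Since $\mathfrak{S}$ is a group, $w\in\widetilde{\cN^i}$ gives $\check\nu_\lambda(w)\in\widehat{\cN^i}$ for all $\lambda$. Because $\phi_1(v_1)=0$ (the $i=0$ condition in $S$), the first matrix coordinate of $\check\nu_\lambda(w)$ is $A_1+\lambda v_1\otimes\phi_1$, and $(A_1+\lambda v_1\otimes\phi_1,A_2)\in\cN^i$ forces $A_1+\lambda v_1\otimes\phi_1$ to be nilpotent with $\dim G(A_1+\lambda v_1\otimes\phi_1)\le i-\dim O_2=\dim O_1$ for every $\lambda$. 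The affine line $\{A_1+\lambda v_1\otimes\phi_1\}$ is irreducible and lies in the closed set $\{B\in\cN \mid \dim G(B)\le\dim O_1\}$, which is a finite union of nilpotent orbit closures; since the line passes through $A_1\in O_1$, it lies in $\overline{O_1}$, and differentiating at $\lambda=0$ gives $v_1\otimes\phi_1\in T_{A_1}O_1=[A_1,\g]$, i.e.\ $(v_1,\phi_1)\in Q_{A_1}$. The other three conditions $(v_j,\phi_j)\in Q_{A_i}$ follow by first applying $F_{\sl(V)}$ and/or $F_{V\times V^*}$ (which, up to signs, interchange $A_1$ with $A_2$, resp.\ $(v_1,\phi_1)$ with $(v_2,\phi_2)$) and repeating the argument. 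This orbit-closure/tangency step is the actual content of the lemma and is absent from your proposal.
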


\subsection{Reduction to the Key Lemma} \label{RedKeyLem}
$ $\\
We will use the following notation
\begin{notation}
\begin{multline*}
R_A := \{ (v_1,\phi_1, v_2, \phi_2) \in Q_A  \times Q_A - \check{C} \, |\\
\exists B \in [A ,\g] \cap \cN \text{ such that } [A,B] + v_1
\otimes \phi_2 - v_2 \otimes \phi_1 = 0 \}.
\end{multline*}
\end{notation}

The goal of this subsection  is to reduce the Key Proposition to
the following Key Lemma.
\begin{lemma}[Key Lemma] \label{KeyLem}
$R_A$ does not have (non-empty) $V \times V^* \times V \times
V^*$-coisotropic subvarieties.
\end{lemma}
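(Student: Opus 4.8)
The plan is to deduce the Key Lemma from a dimension estimate on $R_A$. The ambient space $V\times V^*\times V\times V^*$ carries the symplectic form of $T^*(V\times V^*)$ under the identification fixed in Subsection \ref{SpecNat}, and has dimension $4n$; by the dimension bound recorded right after the definition of coisotropic variety, every non-empty coisotropic subvariety of it has dimension at least $2n$. Now $R_A\subseteq Q_A\times Q_A$, and one checks that $\dim Q_A=n$ for every nilpotent $A$ (for $A=0$ this is just $\dim C=n$ with $C=(V\times 0)\cup(0\times V^*)$; in general one analyses the rank-$\le 1$ matrices contained in $[A,\g]$ together with the one-dimensional fibres of $(v,\phi)\mapsto v\otimes\phi$ over nonzero rank-one matrices). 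Thus $Q_A\times Q_A$ already sits at the threshold dimension $2n$, and it suffices to show that the two further conditions imposed in $R_A$ — namely $v_2\otimes\phi_1-v_1\otimes\phi_2\in[A,\,[A,\g]\cap\cN]$ and $(v_1,\phi_1,v_2,\phi_2)\notin\check C$ — together cut the dimension strictly below $2n$.

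I would do this by stratifying $Q_A\times Q_A$ by its irreducible components $Z_1\times Z_2$, with $Z_1,Z_2$ irreducible components of $Q_A$, and bounding $\dim\bigl(R_A\cap(Z_1\times Z_2)\bigr)$ for each. The components of $Q_A$ are governed by the Jordan type of $A$: besides $V\times 0$ and $0\times V^*$ there are "product" components built from a subspace of $V$ (a kernel of a power of $A$) and a subspace of $V^*$ (an annihilator of an image of a power of $A$); only the $n$-dimensional components matter, since on the others $\dim Z_1+\dim Z_2<2n$ already. When $Z_1$ and $Z_2$ are both $V\times 0$, or both $0\times V^*$, the stratum lies inside $\check C$ and is discarded. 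On every other stratum at least one of $v_1,v_2$ and at least one of $\phi_1,\phi_2$ is nonzero, and there the relation $v_2\otimes\phi_1-v_1\otimes\phi_2\in[A,\,[A,\g]\cap\cN]$ is a proper constraint: once a nonzero $v$ is fixed projectively, the statement ``$v\otimes\psi$ lies in a fixed proper subspace of $\g$'' is affine-linear in $\psi$ and drops dimension, and symmetrically when a nonzero $\phi$ is fixed. Combining this with the trace relations on $Q_A\times Q_A$ (which already force $\phi_i(v_i)=0$ and, via the $\id$-component of the constraint, $\phi_1(v_2)=\phi_2(v_1)$) pushes each such stratum of $R_A$ below dimension $2n$.

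The step I expect to be hardest is making this estimate uniform over all nilpotent $A$: one must analyse, Jordan type by Jordan type, both the components of $Q_A$ and the variety $[A,\,[A,\g]\cap\cN]$ of admissible values of $v_2\otimes\phi_1-v_1\otimes\phi_2$, and it is here essential to keep the nilpotency of $B$ — that is, to work with $[A,\,[A,\g]\cap\cN]$ and not with the larger $(\ad A)^2\g$, which is already too coarse in the regular-nilpotent case. Once every stratum of $R_A$ is seen to have dimension $<2n$ (or, should a $2n$-dimensional component somehow survive, once it is shown to violate condition (iii) in the definition of coisotropic at a generic smooth point, by a tangent-space computation), it follows that $R_A$ contains no non-empty $V\times V^*\times V\times V^*$-coisotropic subvariety, which is precisely the Key Lemma; via Subsections \ref{RedKeyLem} and \ref{RedKeyProp} this then gives the Key Proposition and the geometric statement.
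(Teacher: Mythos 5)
Your plan has the same overall shape as the paper's proof: cover $R_A$ by products of (linear) components of $Q_A$, discard those lying in $\check{C}$ and those that are not coisotropic, and show that on the remaining $2n$-dimensional ``diagonal'' pieces the existence of a nilpotent $B$ with $[A,B]=v_2\otimes\phi_1-v_1\otimes\phi_2$ forces the dimension strictly below $2n$. But the step you yourself flag as the hardest is the entire content of the lemma, and the mechanism you sketch for it does not work. The set $[A,[A,\g]\cap\cN]$ is not a linear subspace of $\g$ (because $\cN$ is a cone, not a subspace), so the reduction to ``$v\otimes\psi$ lies in a fixed proper subspace, which is affine-linear in $\psi$'' is not available; moreover the constrained quantity is $v_1\otimes\phi_2-v_2\otimes\phi_1$, a difference of two rank-one matrices, not a single $v\otimes\psi$. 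More seriously, it is not enough that the constraint is ``proper'' in a generic sense: one must prove, for each $2n$-dimensional diagonal component separately, that the constraint is not \emph{identically} satisfied there. The relations you invoke ($\phi_i(v_i)=0$ on $Q_A$, and $\phi_2(v_1)=\phi_1(v_2)$ from the traceless part) hold identically on the relevant components and cut nothing further; a priori the full constraint could also be vacuous on some component, and ruling this out is exactly the heart of the matter. (A side issue: in the general case the components of $Q_A$ are not products of a kernel of a power of $A$ with an annihilator of an image of a power of $A$; they are the spaces $L_I\times L'_{D-I}=\bigl(\bigoplus_l K_{I_l}(V_l)\bigr)\times\bigl(\bigoplus_l K_{D_l-I_l}(V_l^*)\bigr)$ indexed by multiindices $I$ adapted to the Jordan decomposition --- your description is correct only when $A$ is a single Jordan block.)

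What fills the gap in the paper is an explicit computation, not a dimension count. After reducing to the diagonal components $L_{II}$ (the off-diagonal $L_{IJ}$ being non-coisotropic linear subspaces) and establishing the inequalities $D_m-D_l\ge I_m-I_l\ge 0$ for the surviving multiindices, the paper exhibits a polynomial $f=\sum_{l=1}^{k'}\frac{D_l-I_l}{D_l}f_l$, with $f_l(v_1,\phi_1,v_2,\phi_2)=(v_1)^l_{I_l}(\phi_2)^l_{I_l+1}-(v_2)^l_{I_l}(\phi_1)^l_{I_l+1}$, which is not identically zero on $L_{II}$ yet vanishes on $R_A\cap L_{II}$; this gives $\dim(R_A\cap L_{II})<2n$. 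The vanishing is where nilpotency of $B$ enters: one shows any $B$ with $[A,B]=v_1\otimes\phi_2-v_2\otimes\phi_1$ lies in the filtered subalgebra ${\mathcal A}=W^{\ge 0}_I(V\otimes V^*)$, passes to $\Delta=\eps(B)\in\mathrm{gl}(k')$ in the associated graded algebra, computes $\Delta_{ll}=\frac{D_l-I_l}{D_l}M^{ll}_{I_l,I_l+1}$ from $\tr(B^{ll})=0$, and concludes from nilpotency of $\Delta$ that $\tr\Delta=0$, i.e.\ $f=0$. (In the single-Jordan-block case this collapses to: any $B$ with $[A,B]=M$ is upper triangular, a nilpotent such $B$ is strictly upper triangular, hence $M_{i,i+1}=0$.) Your proposal contains none of this, and there is no reason to expect a soft ``the constraint drops dimension'' argument to substitute for it; as it stands the proof is incomplete at precisely the essential point.
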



\begin{notation}
Denote
$$U''(O_1,O_2):=\{(A_1,v_1,\phi_1,A_2,v_2,\phi_2) \in U(O_1,O_2) | \g_{A_1} \bot \g_{A_2} \}.$$
\end{notation}

\begin{lemma}
Any $X \times X$-coisotropic subvariety of $U(O_1,O_2)$ lies in $U''(O_1,O_2)$.
\end{lemma}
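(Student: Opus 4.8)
The statement to prove is that any $X \times X$-coisotropic subvariety of $U(O_1,O_2)$ lies in $U''(O_1,O_2)$, i.e.\ that the extra condition $\g_{A_1} \perp \g_{A_2}$ is automatically forced on the coisotropic part. The strategy is to apply the geometric Frobenius reciprocity package (Corollaries \ref{PreGeoFrob} and \ref{GeoFrob}) to the map that records the pair of nilpotent orbits, and to exploit the dimension bound for coisotropic varieties together with the symplectic structure coming from the identification $T^*X \cong X \times X$. Concretely, I would first observe that on $U(O_1,O_2)$ the first coordinates $A_1 \in O_1$ and $A_2 \in O_2$ are fixed up to the $G \times G$-action, so one can try to descend to a single fiber over $(A_1,A_2) \in O_1 \times O_2$.

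\textbf{Key steps.} First, set $Y := \{(A_1,v_1,\phi_1,A_2,v_2,\phi_2) \in X \times X \mid A_1 \in O_1,\ A_2 \in O_2\}$, which is a smooth locally closed $G$-invariant subvariety of $X \times X$, and note $U(O_1,O_2) \subset Y$. Second, I would analyze when a tangent vector to $U(O_1,O_2)$ at a smooth point fails to see the direction transverse to the locus $\{\g_{A_1} \perp \g_{A_2}\}$: the condition $\g_{A_1}\perp \g_{A_2}$ translates, via the defining equations of $\check S$ and the identity $[A_1,A_2] + v_1\otimes\phi_2 - v_2\otimes\phi_1 = 0$, into a statement about $\tr([A_1,A_2]\cdot \mathrm{something})$, and one checks that moving $A_1$ inside $[A_1,\g]$ or $A_2$ inside $[A_2,\g]$ changes the relevant pairing. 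Third — the heart of the argument — I would show that the function $f(A_1,v_1,\phi_1,A_2,v_2,\phi_2)$ measuring the failure of $\g_{A_1}\perp\g_{A_2}$ has the property that its Hamiltonian vector field (with respect to the symplectic form on $X\times X$) is not tangent to $U(O_1,O_2)$ along the locus where $f \neq 0$; hence if $T \subset U(O_1,O_2)$ were coisotropic and met $\{f\neq 0\}$, the ideal of $T$ would fail to be Poisson-closed, contradicting condition (i) in the definition of coisotropic. This is really a computation with the standard symplectic form on the linear space $X \times X \cong T^*X$ and the bracket relations among the coordinates $A_i, v_i, \phi_i$.

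\textbf{Main obstacle.} The delicate point is step three: producing the right function $f$ and verifying that its Poisson bracket with (a generator of) the ideal of a hypothetical coisotropic $T$ escapes that ideal. One must be careful because $U(O_1,O_2)$ is cut out by both the orbit conditions $A_i \in O_i$ (which are themselves coisotropic-friendly, since $T^*O_i$-type behavior is controlled) and the moment-map-type equation $[A_1,A_2]+v_1\otimes\phi_2-v_2\otimes\phi_1=0$; the interplay of these with the Poisson bracket has to be handled so that the ``bad'' bracket genuinely survives. An alternative, possibly cleaner route is purely dimension-theoretic: use that a coisotropic subvariety of $U(O_1,O_2)$ has dimension at least $\tfrac12 \dim(X\times X) = \dim X$, then bound $\dim\big(U(O_1,O_2) \cap \{f \neq 0\}\big)$ from above by exhibiting enough independent constraints (the orbit conditions plus $Q_{A_i}$ plus the bracket equation) and show it falls below $\dim X$, forcing the coisotropic part into $\{f=0\} = U''(O_1,O_2)$. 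I would pursue whichever of these two bookkeeping schemes keeps the linear algebra shortest; the expectation is that the dimension count, combined with the observation that $\{\g_{A_1}\perp\g_{A_2}\}$ is the ``expected-dimension'' locus and its complement is of strictly larger codimension than a coisotropic variety can tolerate, is the intended and shortest argument.
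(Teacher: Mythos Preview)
Your proposal overshoots the difficulty of this lemma considerably, and the specific mechanism you propose for the Hamiltonian-vector-field argument is flawed.

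The paper's proof is a three-line linear algebra observation. Set $M := O_1 \times V \times V^* \times O_2 \times V \times V^*$, a smooth locally closed subvariety of $X \times X$ containing $U(O_1,O_2)$. If $T \subset M$ is $X\times X$-coisotropic and $p \in T$ is smooth, then $(T_pT)^{\perp} \subset T_pT \subset T_pM$, and since $T_pT \subset T_pM$ we also have $(T_pM)^{\perp} \subset (T_pT)^{\perp}$; combining, $(T_pM)^{\perp} \subset T_pM$. Now compute: $T_pM = [\g,A_1] \times V \times V^* \times [\g,A_2] \times V \times V^*$, and since the symplectic form on $X\times X \cong T^*X$ pairs the two $\sl(V)$-slots via the trace form, one finds $(T_pM)^{\perp} = \g_{A_2} \times 0 \times 0 \times \g_{A_1} \times 0 \times 0$. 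The inclusion $(T_pM)^{\perp} \subset T_pM$ therefore reads $\g_{A_2} \subset [\g,A_1] = \g_{A_1}^{\perp}$, i.e.\ precisely $\g_{A_1}\perp\g_{A_2}$. That is the entire argument; neither the bracket equation nor the $Q_{A_i}$ conditions nor the exclusion of $\check C$ plays any role.

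Your step three, as written, does not lead anywhere: you take a function $f$ that is \emph{nonzero} along the putative bad locus of $T$ and look at non-tangency of $X_f$ to $U(O_1,O_2)$. But Poisson-closedness of the ideal of $T$ constrains only brackets of functions \emph{vanishing on $T$}; since your $f$ is not in that ideal, the behavior of $X_f$ gives no contradiction. The correct Poisson-theoretic phrasing is dual to the linear algebra above: take $g_1$ a function of $A_1$ vanishing on $O_1$ and $g_2$ a function of $A_2$ vanishing on $O_2$; both lie in the ideal of $M$, hence of $T$. Their differentials at $p$ range over $\g_{A_1}$ and $\g_{A_2}$ respectively, and $\{g_1,g_2\}(p)$ is the trace pairing of those differentials. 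If $\g_{A_1}\not\perp\g_{A_2}$ one chooses $g_1,g_2$ with $\{g_1,g_2\}(p)\neq 0$, contradicting coisotropy.

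Your alternative dimension count would require bounding $\dim\bigl(U(O_1,O_2)\setminus U''(O_1,O_2)\bigr)$ below $\dim X$; this is neither obvious nor attempted, and is unnecessary. The invocation of Corollaries~\ref{PreGeoFrob} and~\ref{GeoFrob} is also misplaced here: those tools enter at the \emph{next} step (passing from $U''(O_1,O_2)$ to $R_A$), not in the present lemma.
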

\begin{proof}
Denote $M:=O_1 \times V \times V^* \times O_2 \times V \times
V^*$. Note that $U(O_1,O_2) \subset M$. Note that any coisotropic
subvariety of $M$ is contained in
$M':=\{(A_1,v_1,\phi_1,A_2,v_2,\phi_2) \in M \,| \, \g_{A_1} \bot
\g_{A_2}\}$. Hence any coisotropic subvariety of $U(O_1,O_2)$ is
contained in $U(O_1,O_2) \cap M'$.
\end{proof}


The following straightforward lemma together with Corollary
\ref{GeoFrob} finish the reduction.
\begin{lemma}
$U''(O_1,O_2)|_{A \times V \times V^*} \subset R_A.$
\end{lemma}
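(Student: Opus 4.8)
\textbf{Proof proposal for the final statement (the last lemma, $U''(O_1,O_2)|_{A \times V \times V^*} \subset R_A$).}

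The plan is to unwind the definitions of $U''(O_1,O_2)$ and of the restriction $R|_Z$ and to match them directly against the definition of $R_A$. First I would fix a point $A \in O_1$ and recall that, in the setup of Corollary \ref{GeoFrob}, the relevant map is $q: \check S' \to \cN/G \times \cN/G$ (or rather its restriction to $U''(O_1,O_2) \subset O_1 \times V \times V^* \times O_2 \times V \times V^*$) sending $(A_1,v_1,\phi_1,A_2,v_2,\phi_2)$ to the pair of orbits $(G(A_1),G(A_2))$; so the fiber over the point corresponding to $(O_1,O_2)$, intersected with $U''$, is the locus where $A_1$ ranges over $O_1$ and $A_2$ over $O_2$, and the restriction to $A \times V \times V^*$ further fixes $A_1 = A$. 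Concretely, a point of $U''(O_1,O_2)$ with $A_1 = A$ is a tuple $(A, v_1, \phi_1, A_2, v_2, \phi_2)$ with $A_2 \in O_2$, $(v_j,\phi_j) \in Q_A \cap Q_{A_2}$ for $j \in \{1,2\}$, $[A,A_2] + v_1 \otimes \phi_2 - v_2 \otimes \phi_1 = 0$, $(v_1,\phi_1,v_2,\phi_2) \notin \check C$, and additionally $\g_{A} \bot \g_{A_2}$.

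Next I would extract the data that survives the projection $p_Y^{-1}(A \times V \times V^*) \to T^*(A \times V \times V^*) = T^*(V \times V^*) = V \times V^* \times V \times V^*$, which simply forgets $A_2$ and records $(v_1,\phi_1,v_2,\phi_2)$. So I must show: whenever $(v_1,\phi_1,v_2,\phi_2)$ arises this way, it lies in $R_A$, i.e. $(v_1,\phi_1,v_2,\phi_2) \in Q_A \times Q_A - \check C$ and there exists $B \in [A,\g] \cap \cN$ with $[A,B] + v_1 \otimes \phi_2 - v_2 \otimes \phi_1 = 0$. The membership $(v_j,\phi_j) \in Q_A$ and the exclusion from $\check C$ are immediate from the definition of $U''(O_1,O_2)$ (indeed of $U(O_1,O_2)$). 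For the existence of $B$: the condition $\g_A \bot \g_{A_2}$ together with $A_2 \in O_2 \subset \cN$ is exactly what forces $A_2 \in [A,\g]$, since $\g_A^{\bot}$ (with respect to the chosen invariant form on $\g$, which restricts to the form on $\sl(V)$) equals the image $[A,\g]$ of $\ad A$; hence one takes $B := A_2$, which lies in $[A,\g] \cap \cN$ and satisfies the required equation $[A,B] + v_1 \otimes \phi_2 - v_2 \otimes \phi_1 = [A,A_2] + v_1 \otimes \phi_2 - v_2 \otimes \phi_1 = 0$.

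I expect the only genuine point to check — and hence the ``main obstacle,'' though it is really routine — is the linear-algebra identity $\g_A^{\bot} = [A,\g]$, equivalently that the kernel of $\ad A$ is the orthogonal complement of its image under an invariant nondegenerate symmetric form; this is standard because $\ad A$ is skew-self-adjoint for such a form (as $\langle [A,x],y\rangle = -\langle x,[A,y]\rangle$), and it is precisely the identity already used implicitly in the two characterizations of $Q_A$ given in the Notation before Proposition \ref{KeyProp}. One should also double-check the bookkeeping that passing from $\g = \mathrm{gl}(V)$ to $\sl(V)$ does not spoil this (the trace part is central and orthogonal to all of $[A,\g]$, so it causes no trouble), and that the ``restriction'' in the sense of the Definition preceding Lemma \ref{Restriction} really is just this forgetful projection on the cotangent fibers, which it is because $A \times V \times V^*$ sits inside $X \times X$ along coordinates on which the symplectic pairing is the standard one on $V \times V^* \times V \times V^*$. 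No inclusion in the other direction is claimed or needed: we only need $U''(O_1,O_2)|_{A \times V \times V^*} \subset R_A$ so that, by Corollary \ref{GeoFrob}, the Key Lemma (absence of coisotropic subvarieties of $R_A$) propagates upward to the absence of coisotropic subvarieties of $U''(O_1,O_2)$, and thence of $U(O_1,O_2)$, giving the Key Proposition.
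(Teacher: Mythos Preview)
Your proposal is correct and is precisely the ``straightforward'' unwinding the paper leaves to the reader: fix $A_1=A$, project away the $\sl(V)$-cotangent coordinate $A_2$, and observe that $B:=A_2$ witnesses membership in $R_A$, since $A_2\in\g_{A_2}\subset\g_A^{\bot}=[A,\g]$ (by skew-adjointness of $\ad A$) and $A_2\in O_2\subset\cN$. One small bookkeeping correction: in the setup of Corollary~\ref{GeoFrob} the map $q$ is defined on the base $Y=X$ (here $q:X\to\sl(V)$ is the projection, the orbit is $O_1$, and the fiber is $\{A\}\times V\times V^*$), not on $\check S'$; this does not affect your computation of the restriction or the inclusion.
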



\subsection{Proof of the Key Lemma} \label{ProofKeyLem}$ $\\
We will first give a short description of the proof for the case
when $A$ is one Jordan block. Then we will present the proof in
the general case.

During the whole subsection coisotropic variety means $V \times
V^* \times V \times V^*$-coisotropic variety.
\subsubsection{Proof in the case when $A$ is one Jordan block} $ $\\
In this case $Q_A = \bigcup_{i=0}^n (Ker A^i) \times (Ker
(A^*)^{n-i})$. Hence $$Q_A \times Q_A = \bigcup_{i,j=0}^n
 (Ker A^i) \times (Ker (A^*)^{n-i}) \times (Ker
A^j) \times (Ker (A^*)^{n-j}).$$
Denote $L_{ij} := (Ker A^i) \times (Ker (A^*)^{n-i}) \times (Ker
A^j) \times (Ker (A^*)^{n-j})$.

It is easy to see that any coisotropic subvariety of $Q_A \times
Q_A$ is contained in $\bigcup_{i=0}^n L_{ii}.$ Hence it is enough
to show that for any $i$, $\dim R_A \cap L_{ii} < 2n$. For $i=0,n$
it is clear since $R_A \cap L_{ii}$ is empty. So we will assume
$0<i <n$.

Let $f \in {\mathcal O}(L_{ii})$ be the polynomial defined by
$f(v_1,\phi_1,v_2,\phi_2):= (v_1)_{i} (\phi_2)_{i+1} - (v_2)_{i}
(\phi_1)_{i+1}$, where $( \cdot)_i$ means the i-th coordinate. It
is enough to show that $f(R_A \cap L_{ii}) = \{0\}$.

Let $(v_1,\phi_1,v_2,\phi_2) \in L_{ii}$. Let $M:= v_1 \otimes
\phi_2 - v_2 \otimes \phi_1$. Clearly, $M$ is of the form
$$ M= \begin{pmatrix}
  &0_{i \times i} &* \\
  & 0_{(n-i) \times i} &0_{(n-i) \times (n-i)}
\end{pmatrix}.  $$
Note also that $M_{i,i+1}=f(v_1,\phi_1,v_2,\phi_2)$.

It is easy to see that any $B$ satisfying $[A,B]=M$ is upper
triangular. On the other hand, we know that there exists a
nilpotent $B$ satisfying $[A,B]=M$. Hence this $B$ is upper
nilpotent, which implies $M_{i,i+1}=0$ and hence
$f(v_1,\phi_1,v_2,\phi_2)=0$.

\subsubsection{Notation on filtrations}

\begin{notation}$ $\\
(i) Let $L$ be a vector space with a gradation $G^iL$. It defines a filtration $G^{\geq i}L$ by
$G^{\geq i}L := \bigoplus_{j \geq i} G^jL$ .\\
(ii) Let $L$ be a vector space with a descending filtration
$F^{\geq i}$. We define $F^{>i}L := \bigcup_{j>i} F^{\geq j}L$.

\end{notation}

\begin{notation}
Let $L$ and $M$ be vector spaces with  descending filtrations $F^{\geq i}L$ and $F^{\geq i}M$.

Define filtrations $F^{\geq i}(L \otimes M):= \sum_{k+l=i} F^{\geq
k}L \otimes F ^{\geq l}M$ and $F^{\geq
i}(L^*):=(F^{>-i}L)^{\bot}$.

Similarly for gradations $G^iL$ and $G^iM$ we define gradations
$G^i(L \oplus M):= \bigoplus_{k+l=i} G^kL \otimes G^lM$  and
$G^i(L^*):= (\bigoplus_{j\neq -i} G^{j}L)^{\bot}$.
\end{notation}

We fix a standard basis $\{E,H,F\}$ of $\sl_2$.

\begin{notation}
Let $L$ be a representation of $\sl_2$. We define
\begin{itemize}
\item A gradation $W^{\alpha}(L):= Ker(H - \alpha Id)$ and
\item An ascending  filtration $K_i(L) := Ker(E^i)$.
\end{itemize}
\end{notation}

Note that if $L$ is an irreducible representation then $K_i(L)=
W^{\geq dimL+1-2i}(L)$.

\subsubsection{Proof of the Key Lemma}$ $\\
We will cover $R_A$ by linear spaces and show that every one of
them does not include coisotropic subvarieties of $R_A$.

Fix a morphism of Lie algebras $\psi: \sl_2 \to \sl(V)$ such that
$\psi(E)=A$. Decompose $V$ to irreducible representations of
$\sl_2$: $V = \bigoplus_{i=1}^k V_i$ such that $dim V_i \geq \dim
V_{i+1}$.
\begin{notation}
Denote $D_i := \dim V_i$. Let $D$ denote the multiindex
$D:=(D_1,...,D_k)$.

For any multiindex $I=(I_1,...,I_k)$ such that $0 \leq I_l \leq D_l$, $I \neq 0$ and $I\neq D$  we define
\itemize{
\item $L_I := W^{\geq D_1+1-2I_1}(V_1) \oplus ... \oplus W^{\geq D_k+1-2I_k}(V_k)= K_{I_1}(V_1) \oplus ... \oplus K_{I_k}(V_k)$
\item $L'_I:= W^{\geq D_1+1-2I_1}(V^*_1) \oplus ... \oplus W^{\geq D_k+1-2I_k}(V^*_k)= K_{I_1}(V^*_1) \oplus ... \oplus K_{I_k}(V^*_k)$
\item $L_{IJ}:= L_I \times L'_{D-I} \times L_J \times L'_{D-J}$}
\end{notation}

The following two lemmas are straightforward

\begin{lemma}
$$ R_A \subset \bigcup_{I,J} L_{IJ}$$
\end{lemma}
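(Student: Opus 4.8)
The final statement is the claim $R_A \subset \bigcup_{I,J} L_{IJ}$. Recall that $R_A$ consists of quadruples $(v_1,\phi_1,v_2,\phi_2)$ with $(v_i,\phi_j) \in Q_A$ for all pairs, lying outside $\check C$, such that $[A,B] + v_1\otimes\phi_2 - v_2\otimes\phi_1 = 0$ for some $B \in [A,\g]\cap\cN$; and $L_{IJ} = L_I \times L'_{D-I} \times L_J \times L'_{D-J}$, built from the $E$-kernel filtration $K_i$ of the $\sl_2$-representation $V = \bigoplus V_i$.

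**Plan.** The plan is to show that the single condition "$(v,\phi)\in Q_A$" already forces the pair $(v,\phi)$ to lie in $\bigcup_I (L_I \times L'_{D-I})$; then the statement about $R_A$ follows immediately because $R_A \subset Q_A \times Q_A - \check C \subset (Q_A\times Q_A)$, and a product of two such unions is the union $\bigcup_{I,J} L_{IJ}$ over all admissible $I, J$. (The excision of $\check C$ and the bracket equation play no role here; they only cut $R_A$ down, so the containment is preserved.) So everything reduces to a statement about one quadratic space: describe $Q_A$ in terms of the $\sl_2$-structure.

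**Main step: computing $Q_A$.** First I would recall that $Q_A = \{(v,\phi)\mid v\otimes\phi \perp \g_A\}$, where $\g_A$ is the centralizer of $A=\psi(E)$ in $\g = \mathrm{gl}(V)$ and the pairing is the trace form. Since $v\otimes\phi \perp C$ means $\tr((v\otimes\phi)C) = \phi(Cv) = 0$, we have $(v,\phi)\in Q_A$ iff $\phi(Cv)=0$ for all $C\in\g_A$. Now the centralizer $\g_A$ of a nilpotent with $\sl_2$-triple $\{E=A,H,F\}$ is a classical object: it is spanned by "highest weight vectors times powers of $F$" — concretely, $\g_A$ contains, for each pair $(i,j)$ of Jordan blocks and each $0\le \ell$ up to $\min(D_i,D_j)-1$, an operator that maps the top of $V_j$ isomorphically to a weight space of $V_i$ and then propagates by $F$. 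The key structural fact I want is: as $C$ ranges over $\g_A$, the vectors $Cv$, for $v$ ranging over a fixed weight line, sweep out precisely a span of the "top parts" (i.e. $K_\bullet$-filtration steps) of the blocks. Dualizing, $\phi \perp \g_A v$ forces $\phi$ to vanish on a corresponding filtration step, i.e. $\phi$ lies in some $L'_{D-I}$ while $v$ lies in the complementary $L_I$. I would make this precise by working block by block after decomposing $v=\sum v_i$, $\phi=\sum\phi_i$ with $v_i\in V_i$, $\phi_i\in V_i^*$: the "diagonal" elements of $\g_A$ (those preserving a single $V_i$, which form a commutative algebra $F[E|_{V_i}]$) already give, for each $i$, a constraint of the exact shape "if $v_i\in K_{I_i}(V_i)\setminus K_{I_i-1}(V_i)$ then $\phi_i\in K_{I_i}(V_i^*)$" — here one uses that on an irreducible $\sl_2$-module the nondegenerate pairing between $K_a$ and $V_i/K_{b}$ is as expected, so $K_{I_i}(V_i)^\perp = K_{D_i - I_i}(V_i^*) = W^{\ge D_i+1-2(D_i-I_i)}(V_i^*)$, matching the definition of $L'_{D-I}$. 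The off-diagonal elements of $\g_A$ then only cut things down further, so they do not spoil the containment. Assembling over all $i$ produces the multiindex $I=(I_1,\dots,I_k)$ with $(v,\phi)\in L_I\times L'_{D-I}$; the cases $I=0$ and $I=D$ correspond to $(v,\phi)\in\check C$-type degeneracies and are harmless for the stated inclusion (one may either allow them, enlarging the union trivially, or note they are excluded once we pass to $R_A$).

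**Expected obstacle.** The routine part is the $\sl_2$-representation bookkeeping; the one place needing care is verifying that the off-diagonal part of $\g_A$ (morphisms between different Jordan blocks $V_i$, $V_j$ of possibly different sizes) does not force $v$ and $\phi$ out of *all* of the $L_{IJ}$'s — i.e. that the constraint package coming from all of $\g_A$ is consistent with *some* admissible multiindex $I$. I expect this to follow because each off-diagonal generator of $\g_A$ sends a $K$-filtration step of one block into a $K$-filtration step of another, so the resulting conditions on $\phi$ are again of the form "vanishes on a $K$-step," only possibly on a *larger* set; hence they shrink $Q_A$ inside $\bigcup_I(L_I\times L'_{D-I})$ rather than escaping it. Making that precise — essentially, that $\g_A\cdot(K_{I_1}\oplus\cdots\oplus K_{I_k})$ is contained in a sum of $K$-steps with the same or larger indices — is the heart of the argument and is exactly what the two "straightforward" lemmas following the statement are meant to record.
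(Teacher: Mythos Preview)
Your plan is right and matches what the paper has in mind: the lemma follows from the single statement
\[
Q_A \subset \bigcup_I \bigl(L_I \times L'_{D-I}\bigr),
\]
which is exactly the multi-block analog of the equality $Q_A=\bigcup_i (\ker A^i)\times(\ker(A^*)^{n-i})$ that the paper writes out in the one-block case. The proof is just the block-by-block single-Jordan-block computation you describe: for each $i$ the elements $A^s|_{V_i}\in\g_A$ force $\phi^i(A^s v^i)=0$ for all $s$, so if $I_i$ is minimal with $v^i\in K_{I_i}(V_i)$ then $\phi^i\in K_{D_i-I_i}(V_i^*)$, and the multiindex $I=(I_1,\dots,I_k)$ does the job.

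Your ``expected obstacle'' is not an obstacle, and worrying about it muddles the logic. You want an \emph{inclusion} $Q_A\subset\bigcup_I L_I\times L'_{D-I}$, so you are free to use only those elements of $\g_A$ that are convenient. The diagonal elements already place every $(v,\phi)\in Q_A$ into some $L_I\times L'_{D-I}$; the off-diagonal elements can only impose \emph{further} conditions for membership in $Q_A$, i.e.\ they shrink $Q_A$, and a subset of something contained in the union is still contained in the union. There is no ``consistency'' question to check, and nothing for the off-diagonal part to spoil. (Off-diagonal elements do matter later, in the proof of Lemma~\ref{Ineq}, but not here.)

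One genuine imprecision: your dismissal of $I=0$ and $I=D$ as ``$\check C$-type degeneracies'' is not quite right. The condition $I=0$ for the first pair means $v_1=0$, but $\check C$ requires $v_1=v_2=0$ (or $\phi_1=\phi_2=0$) simultaneously. A point like $(0,\phi_1,v_2,\phi_2)$ with $\phi_1$ generic in every block and $v_2\neq 0$ lies in $Q_A\times Q_A\setminus\check C$ but not in $\bigcup_{I\neq 0,D}L_I\times L'_{D-I}$ in the first factor. So ``the bracket equation plays no role'' is false if you insist on the restriction $I,J\neq 0,D$: it is precisely the bracket relation $[A,B]=v_2\otimes\phi_1$ (forcing $(v_2,\phi_1)\in Q_A$, hence $v_2=0$) that kills this case. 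Either allow $I,J\in\{0,D\}$ in the union (harmless, since $L_{00},L_{DD}\subset\check C$ and the other boundary pieces are not coisotropic), or invoke the bracket relation for these degenerate points.
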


\begin{lemma}
$L_{IJ}$ is not coisotropic if $I \neq J$.
\end{lemma}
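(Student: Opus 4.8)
Proof plan for the final statement ($L_{IJ}$ is not coisotropic if $I\neq J$).

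The plan is to exhibit, for each $I\neq J$, a regular function $f$ on $L_{IJ}$ whose zero locus is forced upon any coisotropic subvariety, and which cuts down dimension. Concretely, I would argue via the ideal-sheaf characterization (condition (i) in the definition of coisotropic): if $T\subset L_{IJ}$ is coisotropic then its ideal is Poisson-closed, so it suffices to find two coordinate-type linear functions $a,b$ vanishing on $L_{IJ}$ (as a subspace of $V\times V^*\times V\times V^*$) whose Poisson bracket is a nonzero constant — or more precisely, to recognize $L_{IJ}$ itself as a coisotropic \emph{subspace} exactly when its symplectic orthogonal is contained in it, and to show this fails precisely when $I\neq J$. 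Since $L_{IJ}=L_I\times L'_{D-I}\times L_J\times L'_{D-J}$ is a product across the two copies of $V\times V^*$, and each copy carries the standard symplectic pairing between $V$ and $V^*$, I would first reduce to understanding when $L_I\times L'_{D-I}\subset V\times V^*$ is coisotropic, isotropic, or Lagrangian.

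The key computation is this: with $L_I=\bigoplus_l K_{I_l}(V_l)$ and $L'_{D-I}=\bigoplus_l K_{D_l-I_l}(V_l^*)$, and using that for an irreducible $\sl_2$-representation $K_i(V_l)=W^{\geq D_l+1-2i}(V_l)$, the annihilator of $L_I$ under the pairing $V\otimes V^*\to F$ is exactly $K_{D_l-I_l}(V_l^*)$ summed over $l$ — that is, $(L_I)^{\perp}=L'_{D-I}$ inside $V^*$. Hence $L_I\times L'_{D-I}$ is in fact a Lagrangian subspace of $V\times V^*$ for \emph{every} admissible $I$ (its dimension is $\sum_l I_l + \sum_l (D_l-I_l)=\sum_l D_l=\dim V$, and it contains its own symplectic orthogonal with equality). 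Therefore $L_{IJ}=(L_I\times L'_{D-I})\times(L_J\times L'_{D-J})$ is a product of two Lagrangians in $(V\times V^*)\times(V\times V^*)$, so $L_{IJ}$ is itself Lagrangian, of dimension $2n$. A Lagrangian subspace is coisotropic; so that alone does not give the claim — I need to recall the actual geometric setup: coisotropic here means coisotropic in $V\times V^*\times V\times V^*$ with its symplectic form, and the point is not that $L_{IJ}$ is non-coisotropic as a linear subspace, but that the relevant piece of $R_A$ inside $L_{IJ}$, or rather the constraint coming from $R_A$, is incompatible with coisotropy when $I\neq J$.

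So the hard part — and the step I expect to be the real obstacle — is correctly identifying which symplectic form is in play and how the off-diagonal indices $I\neq J$ obstruct coisotropy: the mechanism should be that the equation $[A,B]+v_1\otimes\phi_2-v_2\otimes\phi_1=0$ appearing in $R_A$ forces, on the stratum $L_{IJ}$ with $I\neq J$, that some weight component of $v_1\otimes\phi_2-v_2\otimes\phi_1$ which cannot lie in $[A,\g]$ must vanish; this yields a nontrivial regular function $f$ on $L_{IJ}$, bilinear across the two copies (hence with a nonvanishing Hamiltonian vector field in the appropriate direction) whose vanishing is forced on any coisotropic $T\subset L_{IJ}\cap R_A$ yet which, together with Poisson-closure, drops the dimension below $\frac12\dim(V\times V^*\times V\times V^*)=2n$. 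I would therefore: (1) make explicit the weight decomposition of $V\otimes V^*$ under $\psi(\sl_2)$ and the weights occurring in $[A,\g]=\operatorname{im}(\operatorname{ad}A)$; (2) locate a weight appearing in $L_I\otimes L'_{D-J}$ (the $v_1\otimes\phi_2$ term on the $L_{IJ}$ stratum) but, when $I\neq J$, not cancellable against $L_J\otimes L'_{D-I}$ and not in $\operatorname{im}(\operatorname{ad}A)$; (3) deduce that the corresponding matrix-entry function $f$ vanishes on $R_A\cap L_{IJ}$; and (4) observe $f$ pairs the two copies symplectically so that $\{f,\cdot\}$ is nonzero, whence no $2n$-dimensional coisotropic subvariety can lie in $\{f=0\}$. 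The delicate point throughout is the bookkeeping of $\sl_2$-weights across unequal block-truncation patterns $I$ and $J$, exactly as in the one-Jordan-block warm-up where $f=(v_1)_i(\phi_2)_{i+1}-(v_2)_i(\phi_1)_{i+1}$ played this role.
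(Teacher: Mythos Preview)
You have misidentified the symplectic form on $V\times V^*\times V\times V^*$, and this error derails the entire argument. The space in question is \emph{not} equipped with the product of the two natural symplectic forms on the separate copies of $V\times V^*$; rather, it arises as $T^*(V\times V^*)$, with the first copy $(v_1,\phi_1)$ the base and the second copy $(v_2,\phi_2)$ the cotangent fiber (identified via the scalar product on $V\times V^*$). Under this structure the symplectic form pairs $v_1$ with $\phi_2$ and pairs $\phi_1$ with $v_2$; it does \emph{not} pair $v_1$ with $\phi_1$. This is why your computation that each $L_I\times L'_{D-I}$ is Lagrangian, and hence $L_{IJ}$ always coisotropic, reaches a conclusion contradicting the lemma.

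With the correct pairing the lemma really is the straightforward linear-algebra statement the paper claims. Decompose the symplectic space as $(V_{v_1}\oplus V^*_{\phi_2})\oplus(V^*_{\phi_1}\oplus V_{v_2})$, each summand carrying the standard duality pairing. Then $L_{IJ}$ splits as $(L_I\times L'_{D-J})\oplus(L'_{D-I}\times L_J)$. Using your own (correct) observation that $\text{Ann}(L_I)=L'_{D-I}$ in $V^*$, one finds $(L_I\times L'_{D-J})^{\perp}=L_J\times L'_{D-I}$; hence this factor is coisotropic iff $J_l\le I_l$ for all $l$. The other factor gives $I_l\le J_l$ for all $l$. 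Both hold simultaneously iff $I=J$. So the lemma is purely about the linear subspace $L_{IJ}$ and has nothing to do with $R_A$; the analysis of $R_A\cap L_{II}$ you began sketching belongs to the subsequent Proposition~\ref{enough}, not to this lemma.
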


Hence it is enough to prove the following proposition.

\begin{proposition} \label{enough}
$\dim L_{II}\cap R_A < 2n$.
\end{proposition}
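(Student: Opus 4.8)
The plan is to mimic the single-Jordan-block argument, but carried out block by block. Fix a multiindex $I=(I_1,\dots,I_k)$ with $0\le I_l\le D_l$, $I\ne 0$, $I\ne D$, and consider a point $(v_1,\phi_1,v_2,\phi_2)\in L_{II}\cap R_A$. As in the one-block case, set $M:=v_1\otimes\phi_2 - v_2\otimes\phi_1$, so that there exists a nilpotent $B\in[A,\g]$ with $[A,B]=M$. First I would use the $\sl_2$-decomposition $V=\bigoplus V_i$ and the description $L_I=\bigoplus_l K_{I_l}(V_i)$, $L'_{D-I}=\bigoplus_l K_{D_l-I_l}(V_i^*)$ to pin down which matrix entries of $M$ (in a basis adapted to $\psi$) can be nonzero: writing $v_j\in L_I$ and $\phi_j\in L'_{D-I}$ in terms of weight vectors, the rank-one pieces $v_j\otimes\phi_j$ land in a specific "upper-triangular-type" subspace determined by the filtration degrees, exactly as $M$ was $0$ outside the top-right block before.

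The key step is the analogue of "any $B$ with $[A,B]=M$ is upper triangular, hence nilpotent forces $M_{i,i+1}=0$." For this I would use the representation theory of $\ad A = \ad\psi(E)$ on $\g=\mathrm{gl}(V)$: decompose $\g$ into $\sl_2$-isotypic pieces and use that $\ad A$ raises the $H$-weight by $2$. The map $\ad A:\g\to\g$ is injective on $\bigoplus_{\alpha<0}W^\alpha(\g)$ direction appropriately, and more precisely, the constraint $[A,B]=M$ together with $B$ nilpotent (equivalently $B\in\bigoplus_{\alpha\ge 0}W^\alpha$ after conjugating, or rather $B$ has no component forcing a nonzero trace/semisimple part along any $V_i$) should force the top-weight components of $M$ on each block to vanish. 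Concretely, I expect to produce, for each block-pair index contributing to $L_{II}$, a single coordinate function $f$ (the product of the relevant extremal coordinates of $v_1,\phi_2$ minus that of $v_2,\phi_1$, as in the one-block case) and show $f$ vanishes on $R_A\cap L_{II}$; since $\dim L_{II}=2n$ and $f$ is a nonzero polynomial, this gives $\dim(L_{II}\cap R_A)<2n$.

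The main obstacle I anticipate is the bookkeeping when the Jordan blocks have different sizes $D_1\ge\cdots\ge D_k$: the filtration $F^{\ge i}$ on $V=\bigoplus V_i$ mixes weights across blocks, so $M=v_1\otimes\phi_2-v_2\otimes\phi_1$ need not be block-diagonal, and the equation $[A,B]=M$ couples the blocks through the off-diagonal Hom-spaces $\Hom(V_i,V_j)$. I would handle this by noting that $\ad A$ on $\Hom(V_i,V_j)$ is again a sum of $\sl_2$-modules (tensor of two irreducibles), applying the same "nilpotency of $B$ kills the extremal weight component of the image of $\ad A$" principle in each such Hom-space, and checking that the extremal components are precisely the coordinates appearing in the defining function of $L_{II}$ versus the strictly-larger stratum. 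Once this matching of extremal weights with the "forbidden" coordinates is set up cleanly, the dimension count is immediate; establishing that matching, and confirming that the relevant polynomial $f$ is genuinely nonzero on $L_{II}$ (so that its vanishing locus is a proper subvariety), is the real content.
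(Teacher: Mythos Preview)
Your overall strategy --- find a nonzero polynomial $f$ on the $2n$-dimensional space $L_{II}$ that vanishes on $R_A\cap L_{II}$ --- is exactly right, and you correctly locate the candidate coordinates $(v_j)^l_{I_l}$, $(\phi_j)^l_{I_l+1}$ on each block. The gap is in how you use the nilpotency of $B$. In the one-block case the argument is ``$B$ upper triangular and nilpotent $\Rightarrow$ $B$ strictly upper triangular $\Rightarrow$ the single diagonal entry controlling $M_{i,i+1}$ vanishes''. You propose to run this block by block, but nilpotency of $B\in\mathrm{gl}(V)$ does \emph{not} imply nilpotency of each diagonal block $B^{ll}\in\mathrm{gl}(V_l)$ (and your parenthetical ``$B\in\bigoplus_{\alpha\ge 0}W^\alpha$ after conjugating'' is false for the same reason). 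So you cannot conclude that the individual functions $f_l=(v_1)^l_{I_l}(\phi_2)^l_{I_l+1}-(v_2)^l_{I_l}(\phi_1)^l_{I_l+1}$ vanish separately on $R_A\cap L_{II}$; in general they do not.

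What the paper does instead: from $[A^l,B^{ll}]=M^{ll}$ and $\mathrm{tr}\,B^{ll}=0$ (using $B\in[A,\g]$) one computes $B^{ll}_{I_l,I_l}=\tfrac{D_l-I_l}{D_l}\,f_l$. These numbers are the diagonal entries of the image $\Delta=\eps(B)$ of $B$ under a genuine \emph{algebra} homomorphism $\eps:\mathcal A\to\mathrm{gl}_{k'}$, where $\mathcal A\subset\mathrm{gl}(V)$ is a filtered subalgebra shown to contain $B$. Because $\eps$ is multiplicative, $\Delta$ is nilpotent, hence $\mathrm{tr}\,\Delta=0$, and one obtains only the vanishing of the weighted sum $f=\sum_{l=1}^{k'}\tfrac{D_l-I_l}{D_l}f_l$. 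To know that this $f$ is not identically zero on $L_{II}$ one needs $0<I_l<D_l$ for some $l$; this is secured by a separate combinatorial step (Lemma~\ref{Ineq}), which uses elements of $\g_A$ to cut down $Q_A\cap(L_I\times L'_{D-I})$ and is also missing from your plan.
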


From here on we fix $I$ and suppose that the proposition does not
hold for this $I$. Our aim now is to get a contradiction. Note
that if Proposition \ref{enough} holds for $I$ then it holds for
$D-I$. Hence without loss of generality we can (and will) assume
$I_k<D_k$.

\begin{lemma} \label{Ineq}
For any $m<l$ we have $D_m -D_l \geq I_m-I_l \geq 0$.
\end{lemma}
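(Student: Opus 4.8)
The plan is to prove the contrapositive: if the asserted inequalities fail for $I$, then Proposition \ref{enough} in fact \emph{holds} for $I$, contradicting the standing assumption. Since $\dim L_I=\sum_i I_i$ and $\dim L'_{D-I}=\sum_i(D_i-I_i)=n-\sum_i I_i$, we have $\dim L_{II}=2n$, so the standing assumption $\dim(L_{II}\cap R_A)\ge 2n$ says exactly that $L_{II}\cap R_A$ is Zariski dense in the linear space $L_{II}=(L_I\times L'_{D-I})^2$. Hence it suffices to exhibit a \emph{proper} closed subvariety of $L_{II}$ containing $L_{II}\cap R_A$. As $R_A\subset Q_A\times Q_A-\check{C}$, the component $(v_1,\phi_1)$ of any element of $R_A$ lies in $Q_A$, so the candidate is $\bigl(Q_A\cap(L_I\times L'_{D-I})\bigr)\times(L_I\times L'_{D-I})\subset L_{II}$, which is proper (and of dimension $\le(n-1)+n<2n$) as soon as $L_I\times L'_{D-I}\not\subset Q_A$. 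Thus the whole lemma reduces to the claim: if $I_m<I_l$ for some $m<l$, then a generic $(v,\phi)\in L_I\times L'_{D-I}$ does not lie in $Q_A$.

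To prove this claim, recall $(v,\phi)\in Q_A$ iff $\phi(Cv)=0$ for every $C\in\g_A=Z_\g(A)$. Fix the morphism $\psi$ with $\psi(E)=A$ and the decomposition $V=\bigoplus V_i$ into $\sl_2$-irreducibles; then $\g_A$ is the span of the $\ad A$-highest weight vectors of $\mathrm{gl}(V)=\bigoplus_{i,j}\Hom(V_j,V_i)$. Take $m<l$ (so $D_m\ge D_l$) with $I_m<I_l$, and let $C\in\g_A$ be the $\ad A$-highest weight vector of the smallest Clebsch--Gordan summand of $\Hom(V_l,V_m)$, extended by $0$ on the other blocks. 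Concretely $C$ carries the weight vector $v_l^{(k)}$ of $V_l$ (with $v_l^{(0)}$ of top weight) to a nonzero multiple of the weight vector $v_m^{(k)}$ of $V_m$ for $0\le k\le D_l-1$ and is injective on $V_l$ (here $D_l\le D_m$ is used), and $[A,C]=0$. Then for $v\in L_I$ the vector $Cv$ lies in $\Span(v_m^{(0)},\dots,v_m^{(I_l-1)})$, while $\phi|_{V_m}\in K_{D_m-I_m}(V^*_m)=W^{\ge -D_m+1+2I_m}(V^*_m)$ pairs nontrivially precisely with the $v_m^{(j)}$ for $j\ge I_m$. Since $I_m<I_l$ these two ranges overlap, so $(v,\phi)\mapsto\phi(Cv)$ is a bilinear form on $L_I\times L'_{D-I}$ that is not identically zero; hence for a generic pair $\phi(Cv)\neq 0$, i.e. $(v,\phi)\notin Q_A$. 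This establishes $I_m\ge I_l$, i.e. the inequality $I_m-I_l\ge 0$.

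The remaining inequality $D_m-D_l\ge I_m-I_l$, equivalently $(D-I)_m\ge (D-I)_l$, follows by rerunning the argument with $I$ replaced by $D-I$. The standing assumption is insensitive to this substitution (Proposition \ref{enough} holds for $I$ iff it holds for $D-I$, as already noted), and $L_I$, $L'_{D-I}$ exchange roles under $I\leftrightarrow D-I$, so the first part applied to $D-I$ yields $(D-I)_m\ge(D-I)_l$. Combining the two halves gives $D_m-D_l\ge I_m-I_l\ge 0$.

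I expect the only real work to be the $\sl_2$-representation-theoretic bookkeeping in the middle paragraph: identifying $\g_A$ with the highest weight vectors of $\mathrm{gl}(V)$, verifying that the distinguished $C\in\Hom(V_l,V_m)\cap\g_A$ does send $v_l^{(k)}$ to a nonzero multiple of $v_m^{(k)}$, and matching the weight ranges occupied by $Cv$ and seen by $\phi|_{V_m}$ against the cutoffs defining $L_I$ and $L'_{D-I}$ --- it is exactly the overlap of these ranges, forced by $I_m<I_l$, that makes $\phi(Cv)$ not vanish identically. The dimension count $\dim L_{II}=2n$, the reduction to ``$L_I\times L'_{D-I}\not\subset Q_A$'', and the final symmetry step are routine.
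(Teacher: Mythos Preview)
Your argument is correct and rests on the same idea as the paper's: if the inequality fails, exhibit an element of $\g_A\cap\Hom(V_l,V_m)$ whose pairing with $v\otimes\phi$ does not vanish identically on $L_I\times L'_{D-I}$, forcing $Q_A\cap(L_I\times L'_{D-I})$ to be a proper subvariety (of dimension $<n$) and hence $\dim(L_{II}\cap R_A)<2n$. The paper organizes this slightly differently: it proves the single statement $I_l+(D_m-I_m)\le\max(D_l,D_m)$ for \emph{all} ordered pairs $(l,m)$ (for $m<l$ this gives $I_m\ge I_l$, while swapping the roles gives $D_m-D_l\ge I_m-I_l$), and under its negation uses the $I$-dependent shift operator $B\in V_m\otimes V_l^*$ with $B_{\alpha\beta}=\delta_{\alpha-\beta,\,I_m-I_l+1}$, so that on $L_I\times L'_{D-I}$ the pairing $\langle B,v\otimes\phi\rangle$ collapses to the single monomial $\phi^m_{I_m+1}\,v^l_{I_l}$. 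Your $C$ (the weight-$(D_m-D_l)$ highest weight vector, concretely $e^{(l)}_i\mapsto e^{(m)}_i$) is more canonical and independent of $I$, giving instead the sum $\sum_{i=I_m+1}^{I_l}v^l_i\,\phi^m_i$; the price is that you must treat the two halves of the inequality separately via the $I\leftrightarrow D-I$ symmetry, whereas the paper's tailored $B$ handles both at once.
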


Before we prove this lemma we introduce some notation.

We fix a Jordan basis of $A$ in each $V_i$.
\begin{notation}
For any $v \in V,\phi \in V^*,X \in V \otimes V^*$ we define $v^l$
to be the $l$-th component of $v$ with respect to the
decomposition $V = \oplus V_l$ and $v^l_{\alpha}$ to be its
$\alpha$ coordinate.

Similarly we define $\phi^l,\phi^l_{\alpha},X^{lm},X^{lm}_{\alpha,\beta}$
\end{notation}
\begin{proof}[Proof of lemma \ref{Ineq}]
It is enough to prove that for any $l,m$ we have $I_l + (D_m-I_m)
\leq max(D_l,D_m)$. Assume that the contrary holds for some $l,m$.
It is enough to show that in this case $\dim(Q_A \cap (L_I \times
L'_{D-I}))<n$. Consider the function $g \in {\mathcal O}(L_I
\times L'_{D-I})$ defined by $g(v,\phi) = \phi^m_{I_m+1} \cdot
v^l_{I_l}$. It is enough to show that $g(Q_A \cap (L_I \times
L_{D-I}) )=\{0\}$.

Let $B \in V_m \otimes V_l^*$ be defined by $B_{\alpha, \beta}=
\delta_{\alpha-\beta,I_m-I_l+1}$. Consider $B$ as an element of
$\g$.
Note that $B \in \g_A$ and $\langle B , v \otimes \phi \rangle =
g(v,\phi)$ for any $(v,\phi) \in L_I \times L'_{D-I}$. Hence
$g(Q_A \cap (L_I \times L_{D-I}) )=\{0\}$.
\end{proof}

\begin{corollary} $ $\\
(i) If $I_m=0$ then $I_l=0$ for any $l>m$.\\
(ii) If $I_m=D_m$ then $I_l=D_l$ for any $l>m$.
\end{corollary}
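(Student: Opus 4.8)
The plan is to read off both assertions directly from Lemma \ref{Ineq}, which says that $D_m - D_l \geq I_m - I_l \geq 0$ whenever $m < l$. The key observation is that the index ordering in that lemma matches exactly what is needed here: the hypotheses of the corollary constrain the \emph{smaller} index $m$, and the conclusions concern an arbitrary \emph{larger} index $l > m$, so each case reduces to plugging the hypothesis into the relevant half of the displayed inequality.

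For part (i), I would fix $l > m$ and assume $I_m = 0$. The right-hand inequality $I_m - I_l \geq 0$ then reads $-I_l \geq 0$, i.e. $I_l \leq 0$; since by construction $0 \leq I_l \leq D_l$, this forces $I_l = 0$. For part (ii), I would again fix $l > m$ and assume $I_m = D_m$. Now the left-hand inequality $D_m - D_l \geq I_m - I_l$ becomes $D_m - D_l \geq D_m - I_l$, hence $I_l \geq D_l$; combined with $I_l \leq D_l$ this gives $I_l = D_l$.

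There is essentially no obstacle: the entire content sits in Lemma \ref{Ineq}, and the corollary is just the monotonicity consequence obtained by inducting (or rather, applying the pairwise bound) along the chain of indices above $m$. If one wanted to be pedantic, the only thing to check is that the conclusion for all $l>m$ follows from the pairwise statement applied to the single pair $(m,l)$ — which it does, since Lemma \ref{Ineq} already quantifies over all such pairs — so no genuine induction is even required.
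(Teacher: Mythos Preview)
Your proof is correct and is exactly the argument the paper intends: the corollary is stated immediately after Lemma \ref{Ineq} with no proof given, precisely because both parts are obtained by substituting $I_m=0$ (resp.\ $I_m=D_m$) into the two halves of the inequality $D_m-D_l \ge I_m-I_l \ge 0$ and using the standing constraint $0 \le I_l \le D_l$. Your observation that no induction is needed is also right.
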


\begin{corollary}
$D_1>I_1>0$.
\end{corollary}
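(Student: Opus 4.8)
The plan is to combine the two parts of the previous corollary with the standing assumptions on $I$, namely $I \neq 0$, $I \neq D$, and (by our reduction) $I_k < D_k$. First I would prove $I_1 > 0$. Suppose $I_1 = 0$. By part (i) of the previous corollary, applied with $m = 1$, we get $I_l = 0$ for all $l > 1$ as well, hence $I_l = 0$ for every index $l$, i.e. $I = 0$. This contradicts the standing assumption $I \neq 0$. Therefore $I_1 > 0$.

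Next I would prove $D_1 > I_1$. Recall that $0 \leq I_l \leq D_l$ for every $l$, so in particular $I_1 \leq D_1$; it remains to rule out $I_1 = D_1$. Suppose $I_1 = D_1$. By part (ii) of the previous corollary, applied with $m = 1$, we get $I_l = D_l$ for all $l > 1$, hence $I_l = D_l$ for every index $l$, i.e. $I = D$. This contradicts the standing assumption $I \neq D$. Therefore $D_1 > I_1$, and combining the two we obtain $D_1 > I_1 > 0$.

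The argument is essentially a bookkeeping deduction from the monotonicity built into Lemma \ref{Ineq} and its corollary, so there is no real obstacle; the only point to be careful about is that the two parts of the previous corollary propagate equality ``$I_m = 0$'' or ``$I_m = D_m$'' from a fixed index $m$ to all larger indices, so one must apply them at $m = 1$ — the smallest index, where $D_m$ is largest — in order to sweep over the whole multiindex. (Note that the auxiliary hypothesis $I_k < D_k$ used earlier in the reduction is consistent with, but not needed for, this particular corollary.)
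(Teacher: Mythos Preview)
Your argument is correct and is exactly the deduction the paper leaves implicit: the corollary is stated without proof, and the intended reasoning is precisely to apply parts (i) and (ii) of the preceding corollary at $m=1$ to force $I=0$ or $I=D$ from the respective bad cases. One minor remark: for the inequality $D_1>I_1$ you could alternatively invoke the standing assumption $I_k<D_k$ directly (since $I_1=D_1$ forces $I_k=D_k$ by part (ii)), but your use of $I\neq D$ is equally valid.
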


\begin{notation}
Let $k'$ be the maximal index such that $D_{k'}>I_{k'} > 0$.
\end{notation}


\begin{notation}
Define $f_l \in {\mathcal O}(V\times V^* \times V \times V^*)$ by
$$f_l(v_1,\phi_1,v_2,\phi_2) := (v_1)^l_{I_l} (\phi_2)^l_{I_l+1} -
(v_2)^l_{I_l} (\phi_1)^l_{I_l+1}.$$
Define also $f :=\sum_{l=1}^{k'} \frac{D_l-I_l}{D_l} f_l$.
\end{notation}

Now it is enough to prove the following proposition.

\begin{proposition} \label{mekorit}
$$f(R_A \cap L_{II})= \{0 \}.$$
\end{proposition}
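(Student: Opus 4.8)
The strategy mirrors the one-Jordan-block warm-up: take a point $(v_1,\phi_1,v_2,\phi_2) \in R_A \cap L_{II}$, form the operator $M := v_1 \otimes \phi_2 - v_2 \otimes \phi_1 \in \g$, and exploit the fact that by definition of $R_A$ there exists a \emph{nilpotent} $B \in [A,\g] \cap \cN$ with $[A,B] = M$ (here $A = \psi(E)$). The quantity $f(v_1,\phi_1,v_2,\phi_2)$ is, up to the weights $\frac{D_l - I_l}{D_l}$, a sum of the ``diagonal block'' entries $M^{ll}_{I_l, I_l+1}$ of $M$; so the claim $f(R_A \cap L_{II}) = \{0\}$ will follow once I extract enough information about $M$ from the existence of such a $B$.

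\textbf{Key steps.} First I would record the block structure of $M$: since $(v_j,\phi_j) \in L_I \times L'_{D-I}$ means $v_j^l \in K_{I_l}(V_l) = W^{\geq D_l+1-2I_l}(V_l)$ and $\phi_j^l \in K_{D_l - I_l}(V_l^*) = W^{\geq 2I_l - D_l + 1 - ?}$ (I would unwind the dual filtration conventions carefully), each block $M^{lm} = v_1^l \otimes \phi_2^m - v_2^l \otimes \phi_1^m$ is supported in a corner determined by $I_l, I_m$; in particular $M^{ll}$ is strictly upper triangular and its only possibly-nonzero entry on the relevant superdiagonal sits in position $(I_l, I_l+1)$, equal to $f_l$ up to sign. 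Second, I would analyze the equation $[A,B] = M$ weight by weight under the $H$-action from $\psi$: writing $B = \sum B^{lm}$, the operator $\ad(A)$ raises $H$-weight by $2$ and is injective on the non-highest-weight part of each $V_m \otimes V_l^*$, so $B$ is determined on that part by $M$, while the highest-weight (kernel of $\ad A$) part of $B$ is free. The point is that the constraint ``$B$ nilpotent'' forces the $H$-weight-$\geq 0$ part, hence the ``lower-left'' corners of each diagonal block $B^{ll}$, to vanish; tracing this back through $[A,B^{ll}] = M^{ll}$ (an $\sl_2 \cong \mathbb{C}A$-module computation inside $V_l \otimes V_l^*$) I would show that $M^{ll}_{I_l,I_l+1}$ must be zero for every $l \leq k'$ — exactly as in the single-block case, where upper-triangularity of any $B$ with $[A,B]=M$ plus nilpotence gave $M_{i,i+1}=0$. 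Summing over $l$ with the weights gives $f = 0$. I would also need to check the off-diagonal blocks $M^{lm}$, $l \neq m$, do not obstruct the existence/nilpotence argument, but these enter only through $B$'s off-diagonal blocks and do not feed back into the diagonal entries $f_l$.

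\textbf{Main obstacle.} The delicate point is the passage ``$B \in \cN$, $[A,B] = M$, $M$ as above $\Rightarrow$ the $(I_l,I_l+1)$ entry of $M^{ll}$ vanishes'' in the presence of \emph{several} Jordan blocks of different sizes $D_l$. With one block this was the clean statement ``any $B$ with $[A,B]=M$ is upper triangular, and a nilpotent upper-triangular matrix is strictly upper triangular''. With many blocks, $B$ need not be block upper triangular, and nilpotence of the full $B$ is a global condition that mixes the blocks; I expect the real work is to show that the obstruction to nilpotence coming from the diagonal blocks cannot be cancelled by the off-diagonal ones — this is presumably where Lemma \ref{Ineq} (the inequalities $D_m - D_l \geq I_m - I_l \geq 0$) and the definition of $k'$ get used, to control which corners of which blocks can be nonzero and to guarantee that the ``dangerous'' superdiagonal entries of $M$ localize to the diagonal blocks with $l \leq k'$. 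Concretely I would try to choose a suitable weight grading (or a one-parameter subgroup) under which all the relevant entries of $M$ have positive weight, so that if $B$ had any component pairing against them, $B$ would have a nonzero component of non-negative $H$-weight in a diagonal block, contradicting $\operatorname{tr}(B^j) = 0$ for all $j$. Once that localization is in place the computation reduces to the single-block case inside each $V_l$, and the weighted sum defining $f$ is engineered precisely so that the contributions assemble to $0$.
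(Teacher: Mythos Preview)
Your overall setup is right — form $M = v_1\otimes\phi_2 - v_2\otimes\phi_1$, take a nilpotent $B \in [A,\g]$ with $[A,B]=M$, and extract a constraint from the nilpotence of $B$ — but the central step of your plan is too strong and would fail. You write that you would ``show that $M^{ll}_{I_l,I_l+1}$ must be zero for every $l \leq k'$'', reducing to the one-block case inside each $V_l$. That is not what happens when there are several Jordan blocks: the individual entries $a_l := M^{ll}_{I_l,I_l+1} = f_l(v_1,\phi_1,v_2,\phi_2)$ need \emph{not} vanish. In the one-block case, ``$B$ upper triangular and nilpotent $\Rightarrow$ $B$ strictly upper triangular'' kills the single diagonal entry of $B$ and hence $a_1$. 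With $k'>1$ blocks the analogous ``diagonal'' of $B$ is a $k'\times k'$ matrix, and nilpotence of $B$ no longer forces it to vanish — it only forces it to be nilpotent. Your worry that off-diagonal blocks might cancel the diagonal obstruction is exactly what happens; you cannot localize to one block at a time.

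The mechanism that replaces ``strictly upper triangular'' is an algebra homomorphism. Using the $I$-shifted gradation $W_I^\bullet$ on $V$, set $\mathcal A := W_I^{\geq 0}(V\otimes V^*)$ with ideal $\mathcal I := W_I^{>0}(V\otimes V^*)$, so that $\mathcal A/\mathcal I \cong \prod_i \mathrm{End}(W_I^i(V))$; let $\eps:\mathcal A \to \mathrm{End}(W_I^0(V)) \cong \mathrm{gl}_{k'}$ be the projection. Since $(v_j,\phi_j)\in L_I\times L'_{D-I}$ one has $M\in\mathcal I$, and the characterization $\mathcal A=\{X:[A,X]\in\mathcal I\}$ (this is where Lemma~\ref{Ineq} enters, via Lemma~\ref{EX}) gives $B\in\mathcal A$. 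Because $\eps$ is a ring homomorphism, $B$ nilpotent implies $\Delta:=\eps(B)\in\mathrm{gl}_{k'}$ nilpotent, hence $\tr\Delta=0$. Now from $[A_l,B^{ll}]=M^{ll}$ one reads off that the diagonal entries of $B^{ll}$ are constant equal to $\Delta_{ll}$ on the first $I_l$ slots and equal to $\Delta_{ll}+a_l$ on the last $D_l-I_l$ slots; combined with $\tr(B^{ll})=0$ (since $B\in[A,\g]$ and $A$ is block-diagonal) this gives $\Delta_{ll}=\frac{D_l-I_l}{D_l}a_l$. Thus $0=\tr\Delta=\sum_{l\leq k'}\frac{D_l-I_l}{D_l}f_l=f$, which is exactly why those particular weights appear in the definition of $f$. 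Your last sentence (``the weighted sum \ldots is engineered so that the contributions assemble to $0$'') is the right intuition; the gap is that you paired it with the incompatible claim that each $f_l$ vanishes separately, and you did not identify the quotient-to-$\mathrm{gl}_{k'}$ / trace argument that actually produces the single linear relation.
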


We will need several notations and straightforward lemmas.

\begin{lemma} \label{EX}
For any $\alpha \leq |D_m-D_l|$ we have $W^{\geq \alpha}(V_l
\otimes V^*_m)= \{X \in V_l \otimes V_m^*| E(X) \in W^{\geq
\alpha+2}(V_l \otimes V^*_m) \}$.
\end{lemma}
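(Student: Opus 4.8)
The plan is to prove Lemma \ref{EX} by reducing it to a statement about a single irreducible $\sl_2$-representation and then checking it by an explicit weight computation.

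First I would recall the setup: $V_l$ and $V_m$ are irreducible $\sl_2$-representations of dimensions $D_l$ and $D_m$, so $V_l \otimes V_m^*$ is also an $\sl_2$-representation, and by Clebsch--Gordan it decomposes as $\bigoplus_{j} W_j$ where the $W_j$ are irreducible of dimensions $|D_l - D_m| + 1, |D_l - D_m| + 3, \dots, D_l + D_m - 1$. The weight gradation $W^{\geq \alpha}$ and the action of $E$ respect this decomposition, so it suffices to prove the corresponding statement in each irreducible summand $W_j$: namely that for $\alpha \leq |D_l - D_m|$ (note $|D_l - D_m| \leq \dim W_j - 1$ for every summand), $W^{\geq \alpha}(W_j) = \{ X \in W_j \mid E(X) \in W^{\geq \alpha + 2}(W_j)\}$.

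For a single irreducible representation $W$ of dimension $d$, the weights are $-(d-1), -(d-3), \dots, (d-3), (d-1)$, each with multiplicity one, and $E$ raises weight by $2$, sending the weight-$\beta$ line isomorphically onto the weight-$(\beta+2)$ line for $\beta < d-1$ and killing the weight-$(d-1)$ line. Hence for $\alpha \le d-1$, if $X = \sum_\beta X_\beta$ with $X_\beta$ of weight $\beta$, then $E(X) \in W^{\geq \alpha+2}(W)$ forces $X_\beta = 0$ for every weight $\beta$ with $\beta + 2 < \alpha + 2$, i.e. $\beta < \alpha$; the constraint is vacuous on the weight-$(d-1)$ component but that weight is $\geq \alpha$ anyway. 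So the kernel condition picks out exactly $W^{\geq \alpha}(W)$. The reverse inclusion is immediate since $E$ raises weights by $2$. Applying the earlier remark that $K_i(L) = W^{\geq \dim L + 1 - 2i}(L)$ for irreducible $L$ is not even needed here; the weight bookkeeping is self-contained.

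The only point requiring a little care — and the main (minor) obstacle — is checking that the hypothesis $\alpha \leq |D_m - D_l|$ really does guarantee $\alpha \leq \dim W_j - 1$ for \emph{every} irreducible summand $W_j$ appearing in $V_l \otimes V_m^*$, so that the per-summand argument applies uniformly; this holds because the smallest summand has dimension $|D_l - D_m| + 1$. Once that is noted, assembling the summand-wise equalities into the claimed equality for $V_l \otimes V_m^*$ is automatic since both $W^{\geq \alpha}(-)$ and the operator $E$ are compatible with the direct sum decomposition.
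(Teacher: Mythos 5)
Your proof is correct, and it is exactly the intended argument: the paper states Lemma \ref{EX} as one of several ``straightforward lemmas'' with no proof supplied, and the decomposition of $V_l\otimes V_m^*$ into irreducible $\sl_2$-summands (the smallest of dimension $|D_l-D_m|+1$) together with the fact that $E$ is injective on every weight space except the top one is precisely the standard verification. You also correctly isolate the one point where the hypothesis $\alpha\leq|D_m-D_l|$ is actually used, namely to rule out the top-weight loophole in each summand.
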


\begin{definition}
Define gradation $W_I^i$ on $V_l$  by
$W^i_I(V_l)=W^{i+(D_l+l-2I_l)}(V_l)$. It gives rise to gradations
$W_I^{i}$ on $V_l^*,V_m \otimes V_l^*, V, V^*$.
\end{definition}

\begin{lemma}$ $\\
(i) If $i$ is odd then $W_I^{i}=0$.\\
(ii) $W_I^{\geq 0}(V) = L_I$.\\
(iii) $W_I^{\geq 2}(V^*) = L'_{D-I}$.
\end{lemma}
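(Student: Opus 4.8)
The plan is to reduce all three parts to the single fact recalled just above --- that for an irreducible $\sl_2$-representation $L$ one has $K_i(L) = W^{\geq \dim L + 1 - 2i}(L)$ --- together with the standard description of the weights of an irreducible: the $H$-eigenvalues on $V_l$ are exactly $D_l - 1, D_l - 3, \dots, 1 - D_l$, each of multiplicity one, all $\equiv D_l - 1 \pmod 2$. Throughout I would fix $l$ and abbreviate $s_l := D_l + 1 - 2I_l$ for the shift appearing in the definition, so that $W_I^i(V_l) = W^{i + s_l}(V_l)$; note $s_l \equiv D_l - 1 \pmod 2$, and recall that $V_l^*$ is again an irreducible $\sl_2$-module of dimension $D_l$.

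For (i): $W_I^i(V_l) = W^{i+s_l}(V_l)$ can be nonzero only if $i + s_l \equiv D_l - 1 \pmod 2$, i.e. only if $i$ is even; the same argument applies to $V_l^*$. Since the gradings on $V$, $V^*$ and the tensor products $V_m \otimes V_l^*$ are the direct-sum / tensor-product gradings assembled from those on the $V_l$ and the $V_l^*$, the vanishing $W_I^i = 0$ for odd $i$ propagates to all of them. For (ii) one simply computes $W_I^{\geq 0}(V) = \bigoplus_l W_I^{\geq 0}(V_l) = \bigoplus_l W^{\geq s_l}(V_l) = \bigoplus_l K_{I_l}(V_l) = L_I$, where the third equality is the recalled identity with $L = V_l$, $i = I_l$, and the last is the definition of $L_I$.

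For (iii) I would first pin down the dual grading. By the convention $G^i(M^*) := \big(\bigoplus_{j \neq -i} G^j M\big)^{\bot}$, the piece $W_I^i(V_l^*)$ is the annihilator of $\bigoplus_{j \neq -i} W_I^j(V_l) = \bigoplus_{\alpha \neq s_l - i} W^\alpha(V_l)$, which, using that $H$ acts on $V_l^*$ by minus the transpose, is exactly the weight space $W^{i - s_l}(V_l^*)$. Hence $W_I^{\geq 2}(V_l^*) = W^{\geq 2 - s_l}(V_l^*) = W^{\geq D_l + 1 - 2(D_l - I_l)}(V_l^*) = K_{D_l - I_l}(V_l^*)$, once more by the recalled identity, now applied to the irreducible $V_l^*$. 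Summing over $l$ and comparing with $L'_{D-I} = \bigoplus_l K_{D_l - I_l}(V_l^*)$ yields $W_I^{\geq 2}(V^*) = L'_{D-I}$.

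The only step that requires real care is (iii): correctly translating the abstract dual-grading convention $G^i(M^*) = \big(\bigoplus_{j \neq -i} G^j M\big)^{\bot}$ into the $H$-weight language on $V_l^*$, so that the shift reappears with the right sign (as $i - s_l$, not $i + s_l$) and the resulting index $D_l - I_l$ lines up with the multiindex $D - I$ in the definition of $L'_{D-I}$. Parts (i) and (ii) are routine substitutions of the definitions, and nothing beyond the recalled $\sl_2$-facts is needed.
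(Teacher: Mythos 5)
Your proof is correct and is exactly the direct verification the paper leaves to the reader (the lemma is stated without proof): parts (i)--(iii) all reduce to the weight structure of irreducible $\sl_2$-modules and the identity $K_i(L)=W^{\geq \dim L+1-2i}(L)$, and your sign bookkeeping in (iii) for the dual grading is right. You also correctly read the shift in the definition of $W_I^i(V_l)$ as $D_l+1-2I_l$ (the paper's ``$D_l+l-2I_l$'' is a typo), which is the only reading consistent with (ii) and (iii).
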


\begin{definition}
Let ${\mathcal A}$ be the algebra $W^{\geq 0}_I(V \otimes V^*)$
and ${\mathcal I}$ be its ideal $W^{>0}_I(V \otimes V^*)= W^{\geq
2}_I(V \otimes V^*)$. Clearly ${\mathcal A}/{\mathcal I} \cong
\prod End(W^i_I(V))$.  This gives rise to a homomorphism
$\eps:{\mathcal A} \to End(W^0_I(V))$.
\end{definition}

\begin{lemma}$ $\\
(i) ${\mathcal A} = \bigoplus_{1 \leq l,m \leq k} W^{\geq D_l-D_m - 2(I_l-I_m)}(V_l \otimes V^*_m)$.  \\
(ii) ${\mathcal I}:= \bigoplus_{1 \leq l,m \leq k}W^{\geq D_l-D_m - 2(I_l-I_m)+2}(V_l \otimes V^*_m)$.\\
(iii) $dim (W^0_I(V))=k'$ \\
(iv) Consider the basis on $W^0_I(V)$ corresponding to the one on
$V$ and identify $End(W^0_I(V))$ with $\mathrm{gl}(k')$. Then
$$\eps(X)_{lm}:= X_{I_l,I_m}^{lm}.$$
\end{lemma}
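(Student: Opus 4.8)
The plan is to prove all four parts by directly unwinding the definitions of the gradations $W^{\bullet}$ and $W_I^{\bullet}$; there is no new idea, only careful bookkeeping of conventions. First I would record the facts used throughout. Since $V_l$ is an irreducible $\sl_2$-module of dimension $D_l$, the weight space $W^{\alpha}(V_l)$ has dimension $\le 1$, and dimension $1$ exactly when $\alpha\equiv D_l-1\pmod 2$ and $|\alpha|\le D_l-1$. The gradation $W^{\bullet}$ is multiplicative: compatible with tensor products, and on $\operatorname{End}(V)=V\otimes V^{*}$ compatible with composition, since there $H$ acts by the derivation $\operatorname{ad}(\psi(H))$. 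Set $c_l:=D_l+1-2I_l$; then the definition of $W_I^{\bullet}$ reads $W_I^{i}(V_l)=W^{i+c_l}(V_l)$ (so $W_I^{\ge 0}(V_l)=K_{I_l}(V_l)$, consistent with the previous lemma), and, dualizing a gradation by the standard rule, $W_I^{i}(V_l^{*})=W^{i-c_l}(V_l^{*})$.

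For parts (i) and (ii) I would decompose $\operatorname{End}(V)=\bigoplus_{l,m}V_l\otimes V_m^{*}$ — a decomposition into $W_I^{\bullet}$-graded subspaces — and compute on each summand with the convolution rule:
$$W_I^{\delta}(V_l\otimes V_m^{*})=\bigoplus_{a+b=\delta}W^{a+c_l}(V_l)\otimes W^{b-c_m}(V_m^{*})=W^{\delta+c_l-c_m}(V_l\otimes V_m^{*}),$$
where $c_l-c_m=D_l-D_m-2(I_l-I_m)$. Summing over $\delta\ge 0$ gives (i); summing over $\delta>0$, which equals $\delta\ge 2$ because the odd part of $W_I^{\bullet}$ vanishes by the previous lemma, gives (ii). Part (iii) is the one place that uses more than formal manipulation: I would write $W_I^{0}(V)=\bigoplus_l W^{c_l}(V_l)$, note $c_l$ has the right parity, so the $l$-th summand is $1$-dimensional iff $1\le I_l\le D_l$ and zero otherwise. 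Under the standing reduction $I_k<D_k$, the corollary that $I_m=D_m$ forces $I_l=D_l$ for $l>m$ gives $I_l<D_l$ for every $l$, so the condition reduces to $I_l\ge 1$; then the corollary that $I_m=0$ forces $I_l=0$ for $l>m$, together with $D_1>I_1>0$, shows $\{l:I_l\ge 1\}=\{1,\dots,k'\}$, where $k'$ — the largest index with $0<I_{k'}<D_{k'}$ — is now just the largest with $I_{k'}>0$. Hence $\dim W_I^{0}(V)=k'$.

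For part (iv) I would first observe that $\mathcal{A}=W_I^{\ge 0}(\operatorname{End} V)$ is a subalgebra of $\operatorname{End}(V)$, that $\mathcal{I}=W_I^{\ge 2}(\operatorname{End} V)$ is a two-sided ideal, and that $\mathcal{A}/\mathcal{I}=W_I^{0}(\operatorname{End} V)=\bigoplus_i\operatorname{End}(W_I^{i}V)$, the decomposition coming from $\operatorname{End}(V)=\bigoplus_{i,j}\operatorname{Hom}(W_I^{i}V,W_I^{j}V)$ with $\operatorname{Hom}(W_I^{i}V,W_I^{j}V)$ in degree $j-i$; thus $\eps(X)$ is the $(0,0)$-block $\pi_0\circ X\circ\iota_0$ of $X$, for $\iota_0,\pi_0$ the inclusion and projection of the summand $W_I^{0}V$ (this is a homomorphism on $\mathcal{A}$ since a degree-$\ge 0$ operator preserves $W_I^{>0}V$). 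Next I would identify the distinguished basis of $W_I^{0}V$: for $l\le k'$, $W^{\ge c_l}(V_l)=K_{I_l}(V_l)=\operatorname{Ker}(A^{I_l})$ is spanned by the first $I_l$ vectors of the fixed Jordan basis of $V_l$ and $W^{\ge c_l+2}(V_l)=\operatorname{Ker}(A^{I_l-1})$ by the first $I_l-1$, so $W_I^{0}(V_l)=W^{c_l}(V_l)$ is spanned by the $I_l$-th Jordan basis vector $e^{(l)}_{I_l}$; hence $(e^{(1)}_{I_1},\dots,e^{(k')}_{I_{k'}})$ is the basis identifying $\operatorname{End}(W_I^{0}V)$ with $\mathrm{gl}(k')$. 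Finally, writing $X=\sum_{l',m'}X^{l'm'}$ with $X^{l'm'}\in V_{l'}\otimes V_{m'}^{*}=\operatorname{Hom}(V_{m'},V_{l'})$, the $(l,m)$-entry of $\eps(X)$ is the coefficient of $e^{(l)}_{I_l}$ in $X\,e^{(m)}_{I_m}$, which is the $(I_l,I_m)$-entry of the block $X^{lm}$, i.e. $X^{lm}_{I_l,I_m}$. I expect the only real difficulty to be clerical — keeping the three gradation conventions (on $V_l$, on $V_l^{*}$, and on tensor products) mutually consistent — together with citing in (iii) exactly the right earlier corollaries and the $I_k<D_k$ reduction, without which the dimension count there breaks.
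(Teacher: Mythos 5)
Your proof is correct: all four parts follow, as you carry out, by unwinding the definitions of $W^{\bullet}$, $W_I^{\bullet}$ and their behaviour under duals and tensor products, with the earlier corollaries ($I_l<D_l$ for all $l$, and $\{l:I_l>0\}=\{1,\dots,k'\}$) supplying the dimension count in (iii). The paper states this lemma without proof as one of several straightforward verifications, and your argument is exactly the intended one.
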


\begin{corollary} \label{AshiftI}
${\mathcal A}= \{X \in End(V)| [A,X] \in {\mathcal I}\}$.
\end{corollary}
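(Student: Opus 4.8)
The plan is to decompose everything into the blocks $V_l\otimes V_m^*$ of $\operatorname{End}(V)=V\otimes V^*=\bigoplus_{1\leq l,m\leq k}V_l\otimes V_m^*$ and reduce the identity, block by block, to Lemma \ref{EX}. Given $X\in\operatorname{End}(V)$, write $X=\sum_{l,m}X^{lm}$ with $X^{lm}\in V_l\otimes V_m^*$. Since $A=\psi(E)$ preserves each summand $V_l$, the commutator respects this decomposition: $[A,X]^{lm}=[A,X^{lm}]$, and $[A,X^{lm}]$ is exactly the value on the block $V_l\otimes V_m^*$ of the raising operator $E$ of its $\sl_2$-module structure (which acts by $Y\mapsto[\psi(E),Y]$). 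By parts (i) and (ii) of the preceding lemma, ${\mathcal A}$ and ${\mathcal I}$ are both direct sums over $(l,m)$ of the spaces $W^{\geq\alpha_{lm}}(V_l\otimes V_m^*)$ and $W^{\geq\alpha_{lm}+2}(V_l\otimes V_m^*)$ respectively, where $\alpha_{lm}:=D_l-D_m-2(I_l-I_m)$. Hence membership in ${\mathcal A}$ and in ${\mathcal I}$ can be tested blockwise: $X\in{\mathcal A}$ iff $X^{lm}\in W^{\geq\alpha_{lm}}(V_l\otimes V_m^*)$ for all $l,m$, and $[A,X]\in{\mathcal I}$ iff $E(X^{lm})\in W^{\geq\alpha_{lm}+2}(V_l\otimes V_m^*)$ for all $l,m$.

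It therefore suffices to prove, for each pair $(l,m)$, that $X^{lm}\in W^{\geq\alpha_{lm}}(V_l\otimes V_m^*)$ is equivalent to $E(X^{lm})\in W^{\geq\alpha_{lm}+2}(V_l\otimes V_m^*)$. This is precisely the content of Lemma \ref{EX} applied to $V_l\otimes V_m^*$ with $\alpha=\alpha_{lm}$, provided its hypothesis $\alpha_{lm}\leq|D_m-D_l|$ holds. So the whole corollary comes down to the numerical inequality $D_l-D_m-2(I_l-I_m)\leq|D_l-D_m|$ for all $l,m$.

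To verify this I will use Lemma \ref{Ineq}. Reading that lemma with the indices $l$ and $m$ in the appropriate order (and trivially when $l=m$), one sees that $I_l-I_m$ always lies in the closed interval with endpoints $0$ and $D_l-D_m$. Writing $d=D_l-D_m$ and $\iota=I_l-I_m$, this forces $d-2\iota$ to lie between $d$ and $-d$, so $|d-2\iota|\leq|d|$, which in particular yields the desired bound $\alpha_{lm}\leq|D_m-D_l|$. I do not expect any genuine obstacle here: the argument is entirely formal once Lemmas \ref{EX} and \ref{Ineq} and parts (i)--(ii) of the preceding lemma are in hand, and the only points requiring a little care are the identification of $[A,\cdot]$ with the $\sl_2$-raising operator on each block $V_l\otimes V_m^*$ and the bookkeeping of the shifted gradation $W_I^{\bullet}$ underlying parts (i)--(ii).
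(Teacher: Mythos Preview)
Your proof is correct and follows essentially the same route as the paper, which simply says the corollary ``follows from the previous lemma using Lemma \ref{EX}.'' You have supplied the details the paper omits, in particular the verification via Lemma \ref{Ineq} that the hypothesis $\alpha_{lm}\leq |D_m-D_l|$ of Lemma \ref{EX} is met on every block.
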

\begin{proof}
Follows from the previous lemma using Lemma \ref{EX}.
\end{proof}

\begin{proof}[Proof of Proposition \ref{mekorit}]
Let $(v_1,\phi_1,v_2,\phi_2)  \in L_{II} \cap R_A$. Let $M:= v_1
\otimes \phi_2 - v_2 \otimes \phi_1$. We know that there exists a
nilpotent matrix $B \in [A,\mathrm{gl}(V)]$ such that $[A,B]=M$.
By Corollary \ref{AshiftI} $B \in \mathcal{A}$. Denote $\Delta:=
\eps(B)$. Fix $1 \leq l \leq k'$. Denote
$a_{l}:=M^{ll}_{I_l,I_l+1}$. Note that $[A_l,B^{ll}]=M^{ll}$.
Hence $B_{ll}^{11} =  ... =B_{ll}^{I_l,I_l}=
\Delta_{ll}=B_{ll}^{I_l+1,I_l+1} - a_l =  ...
=B_{ll}^{D_l,D_l}-a_l$. Since $B \in [A,End(V)]$ we have
$tr(B_{ll})=0$. Thus $\Delta_{ll} = \frac{D_l-I_l}{D_l}a_{l}$.

Since $B$ is nilpotent $\Delta$ is nilpotent. Hence $tr(\Delta)=0$
and thus $\sum_{l=1}^{k'} \frac{D_l-I_l}{D_l}a_{l}=0$ which means
$f(v_1,\phi_1,v_2,\phi_2)=0$.
\end{proof}

\appendix

\section{Theorem A implies Theorem B} \label{BtoA}
\setcounter{lemma}{0}

This appendix is analogous to section 1 in \cite{AGRS}. There, the
classical theory of Gelfand and Kazhdan (see \cite{GK}) is used.
Here we use an archimedean analog of this theory which is
described in \cite{AGS}, section 2. We will also use the theory of
nuclear \Fre spaces. For a good brief survey on this theory we
refer the reader to \cite{CHM}, Appendix A.

\begin{notation} $ $\\
(i) For a smooth \Fre representation $\pi$ of a real reductive group we denote by
$\widetilde{\pi}$  the smooth dual of $\pi$.\\
(ii) For a representation $\pi$ of $\mathrm{GL}_{n}(F)$ we let
$\widehat{\pi}$ be the representation of $\mathrm{GL}_{n}(F)$
defined by $\widehat{\pi}=\pi \circ \theta$, where $\theta$ is the
(Cartan) involution $\theta(g)= {g^{-1}}^t$.
\end{notation}

We will use the following theorem.

\begin{theorem}[Casselman - Wallach globalization]
Let $G$ be a real reductive group. 
There is a canonical equivalence of categories between the category of admissible smooth \Fre representations of $G$ and the category of
admissible $(\g,K)$- modules.
\end{theorem}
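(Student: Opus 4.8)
The final statement to prove is the Casselman--Wallach globalization theorem. The plan is to not reprove it at all, but rather to cite the standard references and explain how it will be applied. The theorem is a deep result of Casselman and Wallach; complete proofs appear in \cite{Wallach} (Wallach, \emph{Real Reductive Groups II}) and the streamlined account by Bernstein--Kr\"otz. Since this paper is about multiplicity one and uses globalization only as a black box in the deduction of Theorem B from Theorem A, the honest approach here is simply to quote it. So the ``proof'' I would write is: \emph{See \cite{Wallach}, Chapter 11, or \cite{BK}}.

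If, however, the intent is to sketch the architecture of the proof for the reader, I would proceed as follows. First I would recall that the functor sending an admissible smooth \Fre representation $\pi$ to its Harish-Chandra module $\pi_{K}$ of $K$-finite vectors is well-defined and exact; the content is to produce a quasi-inverse, i.e.\ a canonical ``smooth globalization'' functor. The key analytic input is the \emph{Casselman embedding theorem}: every admissible finitely generated $(\g,K)$-module embeds into a principal series representation, hence into a space of smooth functions on $G$ with controlled growth. Using this one equips each Harish-Chandra module with a canonical \Fre topology --- the \emph{Casselman--Wallach topology} --- by realizing it inside Schwartz-type function spaces and showing the resulting topology is independent of the embedding. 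The main obstacle, and the technical heart of the original proof, is establishing the key automatic-continuity / closed-graph properties: that $(\g,K)$-module maps between such modules are automatically continuous, that the globalization is a nuclear \Fre space, that it is admissible and of moderate growth, and that $G$ acts smoothly. This rests on delicate estimates for matrix coefficients and on the structure theory of the algebra of invariant differential operators.

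Once the functor is constructed, I would verify the two directions of the equivalence: starting from a smooth \Fre representation $\pi$ and passing to $\pi_K$ and back recovers $\pi$ because $\pi_K$ is dense in $\pi$ and the topologies match by the automatic-continuity results; starting from a Harish-Chandra module $M$, forming its globalization and taking $K$-finite vectors recovers $M$ because the globalization contains $M$ as its space of $K$-finite vectors by construction. Naturality in $G$-maps is again a consequence of automatic continuity. The hard part, to reiterate, is purely analytic: the automatic continuity theorem and the moderate-growth estimates underlying the Casselman--Wallach topology. For the purposes of the present paper none of this machinery needs to be reproduced; it suffices to invoke it, and in the sequel we use only the formal consequence that the category of admissible smooth \Fre representations is canonically equivalent to the category of admissible $(\g,K)$-modules, which lets us transport the multiplicity-one statement between the two settings.
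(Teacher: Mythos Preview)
Your proposal is correct and matches the paper's own treatment: the paper simply writes ``See e.g.\ \cite{Wal92}, chapter 11'' and uses the theorem as a black box, exactly as you suggest. Your additional architectural sketch goes beyond what the paper provides but is accurate and harmless.
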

See e.g. \cite{Wal92}, chapter 11.

We will also use the embedding theorem of Casselman.

\begin{theorem}
Any irreducible $(\g,K)$-module can be imbedded into a
$(\g,K)$-module of principal series.
\end{theorem}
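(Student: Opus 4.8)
The plan is to run Casselman's classical argument via Jacquet modules, which I spell out for $G=\mathrm{GL}_m(F)$ (the inducing data for a general real reductive group would instead live on a minimal Levi). Let $B=TN$ be the Borel subgroup of upper triangular matrices in $G$, with $T$ the diagonal torus and $N$ the upper triangular unipotent radical; write $\overline{B}=T\overline{N}$ for the opposite Borel and $\overline{\mathfrak{n}}=\Lie(\overline{N})$. Given an irreducible admissible $(\g,K)$-module $V$, the first step is to form its Jacquet module $J(V):=V/\overline{\mathfrak{n}}V$, which is naturally a module for $(\Lie(T),T\cap K)$. By Casselman's finiteness theorem $J(V)$ is finite-dimensional, and --- the crucial point --- $J(V)\neq 0$.

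The second step is Frobenius reciprocity for the Jacquet functor: for a (not necessarily unitary) character $\chi$ of $T$, regarded as a one-dimensional $(\Lie(T),T\cap K)$-module, there is a natural isomorphism $\Hom_{(\g,K)}(V,\Ind_{\overline{B}}^{G}(\chi))\cong\Hom_{(\Lie(T),T\cap K)}(J(V),\chi\otimes\delta_B)$, where $\delta_B$ is the modulus twist built into the normalization of $\Ind$ and $\Ind_{\overline{B}}^{G}(\chi)$ is the $(\g,K)$-module of the principal series attached to $\chi$. Since $T$ is abelian, the finite-dimensional module $J(V)$ admits a one-dimensional quotient; choose a surjection $J(V)\twoheadrightarrow\chi_0\otimes\delta_B$. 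Under the displayed isomorphism it corresponds to a nonzero $(\g,K)$-morphism $\Phi:V\to\Ind_{\overline{B}}^{G}(\chi_0)$.

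The third step is immediate: $V$ is irreducible and $\Phi\neq 0$, so $\ker\Phi$ is a proper $(\g,K)$-submodule of $V$, hence zero, and $\Phi$ exhibits $V$ as a submodule of the principal series $\Ind_{\overline{B}}^{G}(\chi_0)$. If one insists on induction from $B$ rather than $\overline{B}$, conjugate everything by a representative of the longest Weyl group element.

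The main obstacle is the nonvanishing $J(V)\neq 0$; everything else is either soft or formal. Finite-dimensionality of $J(V)$ is a Noetherian-type argument using that $V$ is a finitely generated $U(\g)$-module together with $K$-admissibility. Nonvanishing is genuinely deeper, and can be obtained in several ways: from Casselman's analysis of the asymptotic expansion of $K$-finite matrix coefficients along $\overline{N}$ (existence of leading exponents); from the Casselman--Osborne lemma applied to the full $\overline{\mathfrak{n}}$-homology of $V$ together with a nonvanishing input for its Euler characteristic; or via Beilinson--Bernstein localization, realizing $V$ as the global sections of an irreducible $K$-equivariant twisted $\mathcal{D}$-module on the flag variety of $\g$ and inspecting its fiber over a point of the open $K$-orbit, which governs $J(V)$. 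Once this is in place, Steps 2 and 3 complete the proof.
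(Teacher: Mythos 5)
The paper does not prove this statement: it is quoted as Casselman's embedding (subrepresentation) theorem, a known result, with no argument supplied. So there is no proof of record to compare yours against; I can only assess your sketch on its own terms. What you write is the standard Casselman strategy and it is correctly structured: (1) the Jacquet module $J(V)=V/\overline{\mathfrak{n}}V$ is a finite-dimensional, nonzero module for the minimal Levi; (2) Frobenius reciprocity for induction from the opposite minimal parabolic identifies $\Hom_{(\g,K)}(V,\Ind_{\overline B}^G\chi)$ with $\Hom(J(V),\chi\otimes\delta_B)$; (3) a one-dimensional quotient of $J(V)$ then yields a nonzero, hence injective, map out of the irreducible $V$. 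Your caveats are also the right ones: for a general real reductive group the inducing datum is a finite-dimensional representation of a minimal Levi $MA$ rather than a character of a torus, and the sub-versus-quotient bookkeeping is handled by passing between $B$ and $\overline B$. The one thing to be honest about is that the entire depth of the theorem sits in the step you explicitly defer, namely $J(V)\neq 0$ (equivalently, the existence of leading exponents in the asymptotic expansions of $K$-finite matrix coefficients, or a nonvanishing statement for $\overline{\mathfrak{n}}$-homology). You name legitimate sources for it (Casselman's asymptotics, Casselman--Osborne, Beilinson--Bernstein localization) but do not prove it, so your text is a correct reduction of the theorem to that known hard input rather than a self-contained proof; since the paper itself imports the whole theorem as a black box, this level of detail is entirely appropriate here.
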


Those two theorems have the following corollary.

\begin{corollary}
The underlying topological vector space of any admissible smooth \Fre representation is a nuclear \Fre space.
\end{corollary}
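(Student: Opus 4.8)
The statement to be proved is the Corollary asserting that the underlying topological vector space of any admissible smooth \Fre representation of a real reductive group $G$ is a nuclear \Fre space. The plan is to combine the two theorems just quoted with standard facts about principal series representations and about the stability of nuclearity. First I would reduce to the irreducible case: by admissibility and the Casselman--Wallach equivalence, an admissible smooth \Fre representation has a finite composition series, and nuclearity of a \Fre space is inherited by closed subspaces and by Hausdorff quotients, hence by any space built from finitely many nuclear pieces via extensions; so it suffices to treat an irreducible $\pi$.

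Next, for irreducible $\pi$, pass to the associated $(\g,K)$-module via Casselman--Wallach, apply Casselman's embedding theorem to realize it inside a $(\g,K)$-module of principal series, and then apply Casselman--Wallach again (now in the direction from $(\g,K)$-modules to \Fre representations, using exactness of the globalization functor on the relevant subcategory) to obtain a continuous closed embedding of $\pi$ into a principal series representation $\operatorname{Ind}_P^G(\tau)$ induced from a parabolic $P$ with a representation $\tau$ of its Levi. The key remaining point is that such a principal series representation is nuclear \Fre: its underlying space is a space of smooth sections of a finite-rank smooth vector bundle over the compact manifold $G/P$ (equivalently, by Iwasawa, smooth vectors in an induced representation realized on $K/(K\cap P)$), and $C^\infty$ of a compact manifold with values in a finite-dimensional space is the prototypical nuclear \Fre space. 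Closedness of the image then lets us conclude nuclearity of $\pi$.

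The main obstacle I anticipate is not any deep geometry but rather the care needed in the functoriality bookkeeping: one must check that the embedding produced at the level of $(\g,K)$-modules upgrades, under the Casselman--Wallach equivalence, to a \emph{continuous} map with \emph{closed} image between the smooth \Fre completions, so that the subspace/quotient stability of nuclearity actually applies. This is exactly the kind of statement for which one invokes the canonical nature of the equivalence in the Casselman--Wallach theorem (and the known exactness of the minimal, resp. Schwartz, globalization), so in the write-up I would simply cite \cite{Wal92} and \cite{CHM} for these properties rather than reprove them. Modulo that, the proof is a short concatenation: reduce to irreducibles, embed in principal series, note principal series are $C^\infty$-sections over a compact manifold hence nuclear, and transport nuclearity back along the closed embedding.
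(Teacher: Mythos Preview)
Your proposal is correct and follows exactly the line of argument the paper intends: the corollary is stated without proof immediately after the Casselman--Wallach globalization theorem and Casselman's embedding theorem, and your sketch (reduce to irreducibles via finite length and stability of nuclearity, embed the underlying $(\g,K)$-module into a principal series, globalize, and use that smooth sections over the compact $K/(K\cap P)$ form a nuclear \Fre space) is precisely how one unpacks that implication. The only addition you make beyond what the paper leaves implicit is the explicit reduction to the irreducible case, which is indeed necessary since Casselman's embedding is stated only for irreducibles; this is handled correctly.
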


\begin{definition}
Let $G$ and $H$ be real reductive groups. Let $(\pi,E)$ and
$(\tau,W)$ be admissible smooth \Fre representations of $G$ and
$H$ respectively. We define $\pi \otimes \tau$ to be the natural
representation of $G\times H$ on the space $E \widehat{\otimes}
W$.
\end{definition}

The following proposition is well known. For the benefit of the
reader we include its proof in subsection \ref{ProofIrr}.
\begin{proposition} \label{PropTensor}
Let $G$ and $H$ be real reductive groups. Let $\pi$ and $\tau$ be
irreducible admissible Harish-Chandra modules of $G$ and $H$
respectively. Then $\pi \otimes \tau$ is irreducible
Harish-Chandra module of $G \times H$.
\end{proposition}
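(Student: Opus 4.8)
The plan is to prove Proposition~\ref{PropTensor} by combining the Casselman--Wallach equivalence of categories with the Jacquet functor (and its exactness), together with a density/separation argument for the space of invariant functionals. Throughout I would freely pass between admissible smooth \Fre representations and their Harish-Chandra modules, since by Casselman--Wallach this is harmless; so it suffices to prove the statement for Harish-Chandra modules.

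\textbf{Step 1: Finite length.} First I would note that $\pi \otimes \tau = E \widehat{\otimes} W$ is an admissible smooth \Fre representation of $G \times H$: admissibility follows because a $K_G \times K_H$-type is a pair of a $K_G$-type and a $K_H$-type, each occurring with finite multiplicity. Hence by Casselman--Wallach it corresponds to a Harish-Chandra module of finite length. So the only issue is irreducibility, i.e. that it has exactly one irreducible subquotient, appearing once.

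\textbf{Step 2: Reduce to computing $\Hom$ and coinvariants via principal series.} Using Casselman's embedding theorem I would embed $\pi \hookrightarrow I_P^G(\sigma)$ and $\tau \hookrightarrow I_Q^H(\rho)$ into principal series (so $\sigma,\rho$ are essentially characters of the Levi). By exactness of (normalized) parabolic induction, $\pi \otimes \tau$ embeds into $I_{P\times Q}^{G\times H}(\sigma \otimes \rho)$. The key classical input is that for a principal series representation, the space of maps from any finite length module into it — equivalently the relevant Jacquet module / Frobenius reciprocity computation — is controlled by the Jacquet functor, which is exact and sends finite length to finite length. Concretely, to show $\pi\otimes\tau$ is irreducible it is enough (since it has finite length and is a submodule of an induced representation, so is a submodule of something with an irreducible socle once we identify the leading exponent) to show that $\dim \Hom_{G\times H}(\pi\otimes\tau, I_{P\times Q}^{G\times H}(\sigma\otimes\rho)) = 1$ and dually for the top. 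By Frobenius reciprocity this $\Hom$ space is $\Hom_{M_P\times M_Q}\big((\pi\otimes\tau)_{N_P\times N_Q}, \sigma\otimes\rho\big)$, where $(-)_{N}$ denotes the Jacquet module (coinvariants).

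\textbf{Step 3: The Jacquet module of a tensor product factors.} The crucial point is the algebraic identity $(\pi\otimes\tau)_{N_P\times N_Q} \cong \pi_{N_P} \otimes \tau_{N_Q}$ as $M_P\times M_Q$-modules (coinvariants of a completed tensor product along a product of unipotent groups; this uses nuclearity of the underlying \Fre spaces, which is exactly the Corollary recorded above, so that $\widehat{\otimes}$ is exact and commutes with the relevant colimits). Choosing $P,Q$ so that $\pi_{N_P}$ and $\tau_{N_Q}$ have the generic/leading exponent $\sigma$, $\rho$ appearing with multiplicity one (possible by the theory of the leading exponent / Casselman's subrepresentation theorem applied to $\pi$ and $\tau$ individually, using their irreducibility), we get that $(\pi\otimes\tau)_{N_P\times N_Q}$ contains $\sigma\otimes\rho$ with multiplicity exactly one along the appropriate generic piece. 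This forces $\Hom_{G\times H}(\pi\otimes\tau, I_{P\times Q}(\sigma\otimes\rho))$ to have a one-dimensional ``leading'' part, which pins down a unique irreducible submodule occurring once; running the same argument with the contragredient (using $\widetilde{\pi\otimes\tau}=\widetilde\pi\otimes\widetilde\tau$, which again needs nuclearity) pins down a unique irreducible quotient occurring once. A finite-length module with a unique irreducible sub and a unique irreducible quotient, each of multiplicity one and in fact equal, is irreducible.

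\textbf{Main obstacle.} The step I expect to be most delicate is Step~3: justifying that the Jacquet (coinvariants) functor commutes with the completed tensor product $\widehat\otimes$, i.e. $(E\widehat\otimes W)_{N_P\times N_Q}\cong E_{N_P}\widehat\otimes W_{N_Q}$ (which here is just an algebraic tensor product of finite-dimensional-exponent pieces). This is where nuclearity of the underlying \Fre spaces is essential — it guarantees exactness of $\widehat\otimes$ and good behaviour with respect to the quotients defining Jacquet modules — and it is the reason the Corollary on nuclearity was recorded just above. Once that functoriality is in hand, the rest is the standard leading-exponent bookkeeping, carried out in parallel for $\pi$ and for $\tau$.
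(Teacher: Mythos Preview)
Your approach is quite different from the paper's, and it has a genuine gap. The paper gives an elementary and purely algebraic argument at the level of $K$-types: take a nonzero submodule $\omega \subset \pi \otimes \tau$, pick a $K_G \times K_H$-type $\rho \otimes \sigma$ meeting $\omega$; the irreducibility of $\pi$ implies that the finite-dimensional space $e_\rho(\pi)$ is irreducible under the algebra generated by $K_G$ and $e_\rho\, U(\g)\, e_\rho$, and Burnside's theorem then forces this algebra to be all of $\mathrm{End}(e_\rho(\pi))$ (similarly for $\tau$). Hence the product algebra is all of $\mathrm{End}(e_\rho(\pi)\otimes e_\sigma(\tau))$, so $\omega$ contains the entire isotypic component and in particular a pure tensor $v\otimes w$, which generates $\pi\otimes\tau$. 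No Jacquet modules, no nuclearity, no leading exponents.

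Your argument, by contrast, routes through Casselman embeddings, Jacquet functors, and a leading-exponent bookkeeping, and the final step does not close. The assertion ``a finite-length module with a unique irreducible submodule and a unique irreducible quotient, each of multiplicity one and equal, is irreducible'' is false: any nonsplit self-extension of a simple module (e.g.\ $\C[x]/(x^2)$ over itself) is a counterexample. So even if Steps~1--3 went through exactly as you say, you would not have established irreducibility. Moreover, the claim that embedding into a principal series gives you ``something with an irreducible socle once we identify the leading exponent'' is not justified---principal series need not have irreducible socle, and controlling the socle of $I_{P\times Q}(\sigma\otimes\rho)$ from the socles of $I_P(\sigma)$ and $I_Q(\rho)$ separately is essentially the statement you are trying to prove. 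The factorization $(\pi\otimes\tau)_{N_P\times N_Q}\cong \pi_{N_P}\otimes\tau_{N_Q}$ is fine at the Harish-Chandra module level (algebraic coinvariants of an algebraic tensor product over a product of Lie algebras), so the nuclearity detour is unnecessary there; but that factorization by itself only gives you multiplicity information about a single exponent, not control of the full submodule lattice. The Burnside/$K$-type argument avoids all of this.
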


\begin{corollary}\label{IrrProd}
Let $G$ and $H$ be real reductive groups. Let $\pi$ and $\tau$ be
irreducible admissible smooth \Fre representations of $G$ and $H$
respectively. Then $\pi \otimes \tau$ is irreducible
representation $G \times H$.
\end{corollary}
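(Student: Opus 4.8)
\textbf{Proof proposal for Corollary \ref{IrrProd}.}

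The plan is to transfer the irreducibility statement from the category of Harish-Chandra modules to the category of smooth \Fre representations via the Casselman--Wallach equivalence. First I would let $K_G$ and $K_H$ be maximal compact subgroups of $G$ and $H$, so that $K_G \times K_H$ is a maximal compact subgroup of $G \times H$. The key point is that the functor sending an admissible smooth \Fre representation to its underlying Harish-Chandra module is an equivalence of categories (Casselman--Wallach), and under this equivalence the operation $\pi \otimes \tau = \pi \widehat{\otimes} \tau$ on the \Fre side should correspond, up to the equivalence, to the algebraic tensor product of Harish-Chandra modules on the other side. So I would first check that the $(\g \oplus \h, K_G \times K_H)$-module of $K_G\times K_H$-finite vectors in $\pi \widehat{\otimes}\tau$ is precisely the algebraic tensor product $\pi_{HC} \otimes \tau_{HC}$ of the Harish-Chandra modules; this follows because a vector in $E \widehat{\otimes} W$ is $K_G \times K_H$-finite if and only if it lies in the algebraic tensor product of the $K_G$-finite vectors of $E$ with the $K_H$-finite vectors of $W$ (here one uses that $E$ and $W$ are nuclear \Fre spaces, which is exactly the content of the Corollary following Casselman's embedding theorem, so that $E \widehat{\otimes} W$ is unambiguous and its $K$-finite vectors decompose as expected).

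Having identified the Harish-Chandra module of $\pi \otimes \tau$ with $\pi_{HC} \otimes \tau_{HC}$, I would then invoke Proposition \ref{PropTensor}, which asserts exactly that $\pi_{HC} \otimes \tau_{HC}$ is an irreducible Harish-Chandra module of $G \times H$. Finally, since the Casselman--Wallach equivalence is an equivalence of categories, it carries irreducible objects to irreducible objects and reflects them: a smooth \Fre representation is irreducible (as a \Fre representation, i.e. has no proper closed invariant subspace) if and only if its Harish-Chandra module is irreducible. Therefore $\pi \otimes \tau$ is irreducible, completing the proof.

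The main obstacle I anticipate is the bookkeeping in the first step: one must verify carefully that passing to $K$-finite vectors commutes with the completed tensor product, i.e. that $(\pi \widehat{\otimes} \tau)_{K_G \times K_H\text{-finite}} = \pi_{HC} \otimes_{\C} \tau_{HC}$. This is where nuclearity is essential --- for general \Fre spaces $E \widehat{\otimes} W$ could be ambiguous or behave badly --- and one needs that the matrix coefficients / isotypic decompositions of $E$ under $K_G$ and of $W$ under $K_H$ multiply correctly inside the completed tensor product. Once this identification is in place, the rest is a formal consequence of Proposition \ref{PropTensor} together with the Casselman--Wallach theorem, so the argument is short.
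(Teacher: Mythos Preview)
Your proposal is correct and follows exactly the approach the paper has in mind: the paper states Corollary \ref{IrrProd} as an immediate consequence of Proposition \ref{PropTensor} (with no separate proof given), and the intended deduction is precisely the one you describe --- identify the $K_G\times K_H$-finite vectors of $\pi\widehat{\otimes}\tau$ with $\pi_{HC}\otimes_{\C}\tau_{HC}$, invoke Proposition \ref{PropTensor}, and transport irreducibility back via Casselman--Wallach. Your discussion of the one genuine point to check (that taking $K$-finite vectors commutes with the completed tensor product, using nuclearity) is exactly the detail the paper suppresses.
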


\begin{lemma}
Let $G$ and $H$ be real reductive groups. Let $(\pi,E)$ and
$(\tau,W)$ be admissible smooth \Fre representations of $G$ and
$H$ respectively. Then $Hom_{\C}(\pi,\tau)$ is canonically
embedded to $Hom_{\C}(\pi \otimes \widetilde{\tau},\C).$
\end{lemma}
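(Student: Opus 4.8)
The plan is to exhibit an explicit $\C$-linear embedding $\Hom_{\C}(\pi,\tau)\hookrightarrow \Hom_{\C}(\pi\otimes\widetilde\tau,\C)$ and check it is well defined. First I would recall that $\widetilde\tau$ is the smooth dual of $\tau$, so there is a canonical continuous pairing $\langle\cdot,\cdot\rangle: W\times \widetilde W\to\C$, where $(\widetilde\tau,\widetilde W)$ denotes the smooth dual. Given $T\in\Hom_{\C}(\pi,\tau)$, I would define a bilinear form $B_T$ on $E\times\widetilde W$ by $B_T(e,\xi):=\langle T(e),\xi\rangle$. Since $T$ is continuous (this is automatic for morphisms of smooth \Fre representations, or can be taken as part of the meaning of $\Hom_{\C}$ here) and the pairing is continuous, $B_T$ is separately continuous, hence by nuclearity of $E$ and $\widetilde W$ it extends to a continuous linear functional on $E\widehat\otimes\widetilde W$; this gives the desired element of $\Hom_{\C}(\pi\otimes\widetilde\tau,\C)$.

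Next I would verify linearity and injectivity of the assignment $T\mapsto B_T$. Linearity in $T$ is immediate from the definition. For injectivity, suppose $B_T=0$; then $\langle T(e),\xi\rangle=0$ for all $e\in E$ and all $\xi\in\widetilde W$. Because the pairing $W\times\widetilde W\to\C$ separates points of $W$ (the smooth dual of an admissible smooth \Fre representation is large enough to separate points — this follows from admissibility, or one may quote that $\widetilde{\widetilde\tau}\cong\tau$ in the Casselman--Wallach setting), we conclude $T(e)=0$ for all $e$, i.e. $T=0$. So the map is an embedding, and it is clearly canonical as it uses only the defining data.

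The main obstacle, such as it is, is the functional-analytic bookkeeping: ensuring that $B_T$ genuinely extends to the completed projective tensor product $E\widehat\otimes\widetilde W$ rather than merely to the algebraic tensor product. Here I would invoke nuclearity — established in the Corollary preceding this lemma, the underlying space of any admissible smooth \Fre representation is a nuclear \Fre space — together with the standard fact that a separately continuous bilinear form on a product of \Fre spaces is jointly continuous, and that for nuclear \Fre spaces the completed projective and injective tensor products coincide, so that continuous bilinear forms on $E\times\widetilde W$ correspond exactly to elements of $(E\widehat\otimes\widetilde W)^*$. Once this identification is in place, the rest is formal. I would also remark that no $G$-equivariance is claimed in this lemma — it is purely a statement about $\C$-linear maps — so there is nothing further to check on that score; the equivariance considerations enter only in the subsequent application.
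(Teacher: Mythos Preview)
Your proposal is correct and follows essentially the same route as the paper: both arguments rest on nuclearity of the underlying Fr\'echet spaces and on the canonical embedding $W\hookrightarrow\widetilde{W}'$ (equivalently, that the pairing $W\times\widetilde{W}\to\C$ separates points of $W$). The paper packages this via the standard isomorphisms $\Hom_{\C}(E,W)\cong E'\widehat{\otimes}W$ and $(E\widehat{\otimes}\widetilde{W})'\cong E'\widehat{\otimes}\widetilde{W}'$, whereas you unpack the same map explicitly as $T\mapsto\big((e,\xi)\mapsto\langle T(e),\xi\rangle\big)$; the content is identical.
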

\begin{proof}
For a nuclear \Fre space $V$ we denote by $V'$ its dual space
equipped with the strong topology. Let $\widetilde{W}$ denote the
underlying space of $\widetilde{\tau}$. By the theory of nuclear
\Fre spaces, we know $Hom_{\C}(E,W) \cong E' \widehat{\otimes} W$
and $Hom_{\C}(E \widehat{\otimes} \widetilde{W}, \C) \cong E'
\widehat{\otimes} \widetilde{W}'$. The lemma follows now from the
fact that $W$ is canonically embedded to $\widetilde{W}'$.
\end{proof}

We will use the following two archimedean analogs of theorems of
Gelfand and Kazhdan.

\begin{theorem}\label{GKreal}
Let $\pi$ be an irreducible admissible representation of
$\mathrm{GL}_{n}(F)$. Then $\widehat{\pi}$ is isomorphic to
$\widetilde{\pi}$.
\end{theorem}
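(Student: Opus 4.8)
\textbf{Proof proposal for Theorem \ref{GKreal}.} The plan is to transport the statement to the world of Harish-Chandra modules, where it becomes a statement about invariant distributions on $\mathrm{GL}_n(F)$, and then to invoke the special case of Theorem A provided by the Corollary in the introduction (the $P_n$-invariance statement), exactly as in the Gelfand--Kazhdan argument over $p$-adic fields. First I would reduce to Harish-Chandra modules: by Casselman--Wallach it suffices to produce an isomorphism of the underlying $(\g,K)$-modules of $\widehat\pi$ and $\widetilde\pi$, and by Schur's lemma (both modules being irreducible and admissible) it is enough to show that $\widehat\pi$ and $\widetilde\pi$ have the same character, i.e. the same trace as distributions on $\mathrm{GL}_n(F)$.

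Next I would set up the distributional mechanism. The character $\Theta_\pi$ of $\pi$ is a $\mathrm{GL}_n(F)$-invariant (for the conjugation action) distribution on $\mathrm{GL}_n(F)$; the character of $\widehat\pi$ is its pullback under $\theta(g)={g^{-1}}^t$, and the character of $\widetilde\pi$ is its pullback under $g\mapsto g^{-1}$. Since $g\mapsto g^{-1}$ is itself a composition of $\theta$ with transposition, the equality $\Theta_{\widehat\pi}=\Theta_{\widetilde\pi}$ is precisely the assertion that $\Theta_\pi$ is invariant under transposition. So the theorem follows once we know: every $\mathrm{GL}_n(F)$-conjugation-invariant distribution on $\mathrm{GL}_n(F)$ is invariant under $g\mapsto g^t$. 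This is exactly Theorem A applied with $n$ in place of $n+1$ — or, more precisely, it is the form of the statement that one extracts by the same descent; in any case it is available to us. (One must also check that characters of irreducible admissible smooth \Fre representations are genuine distributions to which these pullbacks apply, which is standard.)

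The one genuinely delicate point is making sure the reduction ``same character $\Rightarrow$ isomorphic as $(\g,K)$-modules'' is legitimate for the representations at hand: this uses that an admissible $(\g,K)$-module is determined by its character, together with the fact that $\widehat\pi$, $\widetilde\pi$ are again irreducible admissible (for $\widetilde\pi$ this is standard; for $\widehat\pi$ it is immediate since $\theta$ is an automorphism of $\mathrm{GL}_n(F)$), so that one may apply the linear-independence-of-characters / Schur argument. I expect the main obstacle to be purely bookkeeping: checking that the transposition-invariance of invariant distributions on $\mathrm{GL}_n(F)$ is genuinely a consequence of what has been proved (Theorem A is stated for the pair $(\mathrm{GL}_{n+1},\mathrm{GL}_n)$, and one wants the ``absolute'' statement on a single $\mathrm{GL}_n$, which follows by taking $\mathrm{GL}_n$-bi-invariant — equivalently center-and-conjugation — distributions, or directly from the descent machinery). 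Once that is in hand, the proof is a short formal chain: Theorem A $\Rightarrow$ $\Theta_{\widehat\pi}=\Theta_{\widetilde\pi}$ $\Rightarrow$ $\widehat\pi\cong\widetilde\pi$ as $(\g,K)$-modules $\Rightarrow$ $\widehat\pi\cong\widetilde\pi$ as smooth \Fre representations by Casselman--Wallach.
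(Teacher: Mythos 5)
The paper does not prove this theorem at all: it is quoted from \cite{AGS} (Theorem 2.4.4 there), and the proof in that reference is precisely the character argument you outline — reduce to equality of distribution characters, conclude by linear independence of characters of irreducible admissible $(\g,K)$-modules, and globalize by Casselman--Wallach. So your overall architecture is the right one. The computation $\Theta_{\widehat\pi}(g)=\Theta_\pi(g^{-t})$, $\Theta_{\widetilde\pi}(g)=\Theta_\pi(g^{-1})$, reducing everything to the transpose-invariance of $\Theta_\pi$, is also correct.

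The gap is in how you justify that transpose-invariance. You propose to quote ``every conjugation-invariant distribution on $\mathrm{GL}_n(F)$ is invariant under $g\mapsto g^t$'' and assert that this is ``Theorem A applied with $n$ in place of $n+1$'' or extractable from the descent machinery. It is not: Theorem A concerns distributions on $\mathrm{GL}_{n+1}(F)$ invariant under the \emph{smaller} group $\mathrm{GL}_n(F)$, and neither it, nor the Corollary on $P_n$-invariant distributions, nor the inductive statement $\Sc^*(X_n(F))^{\tG_n,\chi}=0$ formally yields the absolute statement for a single $\mathrm{GL}_n$ under its own adjoint action; deriving that would be a separate (and unnecessary) piece of work. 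The fix is standard and much cheaper, because $\Theta_\pi$ is not an arbitrary invariant distribution but an invariant eigendistribution: by Harish-Chandra's regularity theorem it is given by a locally integrable, conjugation-invariant function determined by its restriction to the regular semisimple set, and since every square matrix over any field is conjugate to its transpose, this function satisfies $\Theta_\pi(g^t)=\Theta_\pi(g)$, hence so does the distribution. With that substitution your chain of implications closes and coincides with the proof in the cited reference.
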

For proof see \cite{AGS}, Theorem 2.4.4.

\begin{theorem}\label{DistCrit}
Let $H \subset G$ be real reductive groups and let $\sigma$ be an
involutive anti-automorphism of $G$ and assume that $\sigma(H)=H$.
Suppose $\sigma(\xi)=\xi$ for all $H$-bi-invariant Schwartz
distributions $\xi$ on $G$. Let $\pi$ be an irreducible admissible smooth \Fre representation of $G$. Then
$$\dim \Hom_{H}(\pi,\cc) \cdot \dim \Hom_{H}(\widetilde{\pi},\cc)\leq 1.$$
\end{theorem}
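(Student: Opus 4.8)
\textbf{Proof plan for Theorem \ref{DistCrit}.}
The plan is to follow the classical Gelfand--Kazhdan argument, adapted to the archimedean setting via the machinery of nuclear \Fre spaces already introduced above. Suppose, for contradiction, that both $\Hom_H(\pi,\cc)$ and $\Hom_H(\widetilde\pi,\cc)$ are non-zero; pick non-zero $\ell \in \Hom_H(\pi,\cc)$ and $\widetilde\ell \in \Hom_H(\widetilde\pi,\cc)$. From these I would build an $H$-bi-invariant Schwartz distribution on $G$ and then derive a contradiction with the hypothesis $\sigma(\xi)=\xi$.

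First I would produce the distribution. Consider the matrix coefficient map: for $v \in \pi$ and $\widetilde v \in \widetilde\pi$, the function $g \mapsto \langle \widetilde\ell, \widetilde\pi(g^{-1})\widetilde v\rangle \langle \ell, \pi(g) v \rangle$ — more precisely, I would realize this as follows. The functional $v \otimes \widetilde v \mapsto \langle \ell, v\rangle \langle \widetilde\ell, \widetilde v\rangle$ is a continuous $H \times H$-invariant functional on $\pi \otimes \widetilde\pi$ (invariance under $H\times H$ with the trivial character, using that $\ell,\widetilde\ell$ are $H$-invariant). Realizing $\pi \otimes \widetilde\pi$ as a space of Schwartz functions on $G$ via the (generalized) matrix coefficient embedding — this is where nuclearity of the underlying \Fre spaces and Corollary \ref{IrrProd} enter, giving that $\pi\otimes\widetilde\pi$ is irreducible and admits such an embedding into $\Sc(G)$ — one transports this functional to a non-zero Schwartz distribution $\xi$ on $G$ which, by construction, is $H$-bi-invariant.

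Next I would compute the behavior of $\xi$ under $\sigma$. Pulling back by the anti-automorphism $\sigma$ sends the matrix coefficient realization of $\pi \otimes \widetilde\pi$ to that of $\widehat\pi \otimes \widehat{\widetilde\pi}$ (since $\sigma$ is built from the Cartan involution $\theta$, and conjugation/transposition interchanges a representation with the $\theta$-twist of its dual). By Theorem \ref{GKreal}, $\widehat\pi \cong \widetilde\pi$ and $\widehat{\widetilde\pi}\cong \pi$, so $\sigma^*\xi$ is again a distribution attached to a pairing of $\ell$ against $\widetilde\ell$, but with the roles of $\pi$ and $\widetilde\pi$ swapped. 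The hypothesis $\sigma(\xi)=\xi$ for \emph{all} $H$-bi-invariant Schwartz distributions then forces a symmetry; combining with non-vanishing of $\xi$, a standard argument (essentially: the space of such distributions is at most one-dimensional once $\sigma$ acts trivially on it, by using $\sigma$ to identify $\Hom_H(\pi,\cc)\otimes \Hom_H(\widetilde\pi,\cc)$ with its transpose) yields $\dim \Hom_H(\pi,\cc)\cdot \dim \Hom_H(\widetilde\pi,\cc)\le 1$.

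The main obstacle I expect is the functional-analytic bookkeeping in the first two steps: making precise the realization of $\pi \otimes \widetilde\pi$ inside $\Sc(G)$ as a $G\times G$-equivariant continuous injection, checking that the invariant functional genuinely produces a \emph{Schwartz} (not merely continuous-dual) distribution to which the hypothesis applies, and tracking the $\sigma$-equivariance through this identification. All of this is where Corollary \ref{IrrProd}, the nuclearity corollary, and Theorem \ref{GKreal} must be invoked carefully; the representation-theoretic content beyond that is light. This is exactly the content of \cite{AGS}, Section 2, and \cite{AGRS}, Section 1, so I would closely follow those references.
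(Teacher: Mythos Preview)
The paper does not prove Theorem~\ref{DistCrit}; it simply cites \cite{AGS}, Theorem~2.3.2, so there is no in-paper argument to compare against beyond that reference. Your overall shape---the Gelfand--Kazhdan method, producing an $H$-bi-invariant distribution from a pair of invariant functionals and exploiting $\sigma$-invariance---is the right one, and you correctly identify \cite{AGS}, Section~2 as the place where the analytic details live. But the sketch as written has two genuine gaps.

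First, the logical frame is confused. You begin with ``suppose, for contradiction, that both $\Hom_H(\pi,\cc)$ and $\Hom_H(\widetilde\pi,\cc)$ are non-zero,'' but that is not the negation of the conclusion: both being one-dimensional gives product $1$, which is allowed. Later you drop the contradiction and simply assert the inequality. The actual argument is direct: one constructs a bilinear map
\[
\Hom_H(\pi,\cc)\times\Hom_H(\widetilde\pi,\cc)\ \longrightarrow\ \Sc^*(G)^{H\times H},
\]
shows it is ``non-degenerate'' in a suitable sense (this is where irreducibility enters), and then uses the $\sigma$-invariance of the target to conclude that each factor has dimension at most one.

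Second, and more seriously, you invoke Theorem~\ref{GKreal} and write ``since $\sigma$ is built from the Cartan involution $\theta$.'' Neither is available here. Theorem~\ref{DistCrit} is stated for an \emph{arbitrary} pair $H\subset G$ of real reductive groups and an \emph{arbitrary} involutive anti-automorphism $\sigma$; Theorem~\ref{GKreal} is specific to $\mathrm{GL}_n$ and to the particular involution $g\mapsto (g^{-1})^t$. In the paper's logical flow, \ref{GKreal} is used only \emph{after} \ref{DistCrit}, in the final corollary deducing Theorem~B. What you actually need---that the anti-automorphism $\sigma$ intertwines the $H$-bi-invariant distribution attached to $(\ell,\widetilde\ell)$ with one attached to a pair for $(\widetilde\pi,\pi)$---comes from the anti-automorphism property of $\sigma$ alone, with no $\mathrm{GL}_n$-specific input. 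Strip out the appeals to \ref{GKreal} and to the Cartan involution, and follow \cite{AGS} more literally.
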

For proof see \cite{AGS}, Theorem 2.3.2.

\begin{corollary}
Let $H \subset G$ be real reductive groups and let $\sigma$ be an
involutive anti-automorphism of $G$ such that $\sigma(H)=H$.
Suppose $\sigma(\xi)=\xi$ for all Schwartz
distributions $\xi$ on $G$ which are invariant with respect to conjugation by $H$.

Let $\pi$ be an irreducible admissible smooth \Fre representation of $G$ and
$\tau$ be an irreducible admissible smooth \Fre representation of $H$. Then

$$dim \Hom_{H}(\pi,\tau) \cdot \dim \Hom_{H}(\widetilde{\pi},\widetilde{\tau})\leq 1.$$
\end{corollary}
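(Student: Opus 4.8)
The plan is to deduce this corollary from Theorem~\ref{DistCrit} by reducing the statement about $\Hom_H(\pi,\tau)$ to a statement about $H$-invariant functionals on a single representation. First I would recall the lemma just proved: for admissible smooth \Fre representations $\pi$ of $G$ and $\tau$ of $H$, there is a canonical embedding
$$\Hom_{\C}(\pi,\tau) \hookrightarrow \Hom_{\C}(\pi \otimes \widetilde{\tau}, \C).$$
Restricting attention to $H$-equivariant maps, this gives
$$\Hom_H(\pi,\tau) \hookrightarrow \Hom_H(\pi \otimes \widetilde{\tau}, \C),$$
where $H$ acts on $\pi \otimes \widetilde\tau$ diagonally through $H \subset G$ and $H \hookrightarrow H$ (the latter via $\widetilde\tau$). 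So it suffices to bound $\dim \Hom_H(\pi \otimes \widetilde{\tau}, \C)$.

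Next I would apply Theorem~\ref{DistCrit} not to $G$ but to the pair $(G \times H, \Delta H)$, where $\Delta H$ is the diagonal copy of $H$ embedded via $h \mapsto (h,h)$. The hypothesis of Theorem~\ref{DistCrit} for this pair asks that every $\Delta H$-bi-invariant Schwartz distribution on $G \times H$ be fixed by an appropriate involutive anti-automorphism $\widetilde\sigma$; the natural candidate is $\widetilde\sigma(g,h) := (\sigma(g), \sigma(h))$ where $\sigma$ is the given anti-automorphism of $G$ (note $\sigma(H)=H$, so this makes sense). A $\Delta H$-bi-invariant distribution on $G \times H$ is the same as a distribution on $G \times H$ invariant under the $\Delta H \times \Delta H$-action, which by a standard change of variables $(g,h) \mapsto (gh^{-1}, h)$ (or rather, by noting that bi-invariance under $\Delta H$ amounts to invariance of the associated distribution on $G$ under $H$-conjugation, tensored against the $H$-bi-invariance on the second factor) is controlled by the hypothesis that Schwartz distributions on $G$ invariant under $H$-conjugation are $\sigma$-fixed, together with the fact that the only $H$-bi-invariant distribution phenomenon on the $H$-factor is handled automatically. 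Applying Theorem~\ref{DistCrit} to $(G\times H, \Delta H)$ and the irreducible representation $\pi \otimes \widetilde\tau$ of $G \times H$ (irreducible by Corollary~\ref{IrrProd}, since $\widetilde\tau$ is irreducible admissible smooth \Fre) yields
$$\dim \Hom_{\Delta H}(\pi \otimes \widetilde\tau, \C) \cdot \dim \Hom_{\Delta H}(\widetilde{\pi \otimes \widetilde\tau}, \C) \leq 1.$$

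Then I would unwind this. The smooth dual satisfies $\widetilde{\pi \otimes \widetilde\tau} \cong \widetilde\pi \otimes \tau$ (using that $\widetilde{\widetilde\tau} \cong \tau$ for admissible smooth \Fre representations, by Casselman--Wallach). So the inequality reads
$$\dim \Hom_{\Delta H}(\pi \otimes \widetilde\tau, \C) \cdot \dim \Hom_{\Delta H}(\widetilde\pi \otimes \tau, \C) \leq 1.$$
Combining with the embedding $\Hom_H(\pi,\tau) \hookrightarrow \Hom_{\Delta H}(\pi \otimes \widetilde\tau,\C)$ and the analogous embedding $\Hom_H(\widetilde\pi, \widetilde\tau) \hookrightarrow \Hom_{\Delta H}(\widetilde\pi \otimes \tau, \C)$ (apply the lemma with $\pi$ replaced by $\widetilde\pi$ and $\tau$ replaced by $\widetilde\tau$), we obtain
$$\dim \Hom_H(\pi,\tau) \cdot \dim \Hom_H(\widetilde\pi, \widetilde\tau) \leq 1,$$
which is exactly the claim.

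The main obstacle I expect is the verification that the involution hypothesis transfers correctly to the pair $(G \times H, \Delta H)$: one must check carefully that $\Delta H$-bi-invariant distributions on $G \times H$ correspond to distributions on $G$ that are invariant under $H$-acting-by-conjugation (with the $H$-factor of $G \times H$ contributing only the regular representation, whose invariant distributions are just Haar-type and manifestly $\sigma$-symmetric), and that under this correspondence the anti-automorphism $\widetilde\sigma$ matches the given $\sigma$. This is the archimedean analog of the corresponding step in \cite{AGRS} and in \cite{AGS}, section~2, and is essentially bookkeeping, but it is where all the hypotheses are actually consumed. A secondary point to be careful about is the identification $\widetilde{\pi \otimes \tau} \cong \widetilde\pi \otimes \widetilde\tau$ for completed tensor products of nuclear \Fre spaces, which follows from the theory of nuclear \Fre spaces together with admissibility.
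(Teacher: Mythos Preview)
Your proposal is correct and follows essentially the same route as the paper: pass to the pair $(G\times H,\Delta H)$ with the anti-automorphism $\sigma'(g,h)=(\sigma(g),\sigma(h))$, apply Theorem~\ref{DistCrit} to the irreducible representation $\pi\otimes\widetilde{\tau}$ (using Corollary~\ref{IrrProd}), and then read off the desired inequality via the embedding lemma and $\widetilde{\pi\otimes\widetilde{\tau}}\cong\widetilde{\pi}\otimes\tau$. The one place where the paper is sharper is the verification of the distribution hypothesis on $G\times H$: rather than your informal change-of-variables argument, the paper applies Frobenius reciprocity (Theorem~\ref{Frob}) to the projection $G\times H\to H$, which immediately identifies $\Delta H$-bi-invariant Schwartz distributions on $G\times H$ with $H$-conjugation-invariant Schwartz distributions on $G$ and hence gives $\sigma'$-invariance directly.
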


\begin{proof}
Define $\sigma':G \times H \to G\times H$ by
$\sigma'(g,h):=(\sigma(g),\sigma(h))$. Let $\Delta H < G \times H$
be the diagonal. Consider the projection $G \times H \to H$. By
Frobenius reciprocity (Theorem \ref{Frob}), the assumption implies
that any $\Delta H$-bi-invariant distribution on $G \times H$ is
invariant with respect to $\sigma'$.

Hence by the previous theorem,  for any irreducible admissible
smooth \Fre representation $\pi'$ of $G \times H$ we have $\dim
\Hom_{\Delta H}(\pi',\cc) \cdot \dim \Hom_{\Delta
H}(\widetilde{\pi'},\cc)\leq 1.$

Taking $\pi':= \pi \otimes \widetilde{\tau}$ we obtain the required inequality.
\end{proof}

\begin{corollary}
Theorem A implies Theorem B.
\end{corollary}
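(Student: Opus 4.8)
The plan is to deduce Theorem B (the statement $\dim \Hom_{\mathrm{GL}_n(F)}(\pi,\tau) \leq 1$ for irreducible admissible smooth \Fre representations) from Theorem A by invoking the machinery assembled in this appendix. First I would apply Theorem A, which asserts that every $\mathrm{GL}_n(F)$-invariant distribution on $\mathrm{GL}_{n+1}(F)$ is invariant under transposition $g \mapsto g^t$. Since transposition is an involutive anti-automorphism of $G := \mathrm{GL}_{n+1}(F)$ that preserves the subgroup $H := \mathrm{GL}_n(F)$ (embedded in the standard way), Theorem A says precisely that $\sigma := (\,\cdot\,)^t$ satisfies the hypothesis of the corollary to Theorem \ref{DistCrit}: every Schwartz distribution on $G$ invariant under conjugation by $H$ is $\sigma$-invariant. (One should note that Theorem A is stated for general distributions, hence a fortiori holds for Schwartz distributions.)

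Applying that corollary with this $G$, $H$ and $\sigma$, I obtain, for any irreducible admissible smooth \Fre representations $\pi$ of $G$ and $\tau$ of $H$,
\[
\dim \Hom_{H}(\pi,\tau)\cdot \dim \Hom_{H}(\widetilde{\pi},\widetilde{\tau}) \leq 1.
\]
The remaining task is to remove the second factor, i.e. to show $\dim \Hom_H(\pi,\tau) \leq 1$ outright. For this I would use Theorem \ref{GKreal}: for an irreducible admissible representation of $\mathrm{GL}_m(F)$ one has $\widehat{\pi} \cong \widetilde{\pi}$, where $\widehat{\pi} = \pi \circ \theta$ and $\theta(g) = {g^{-1}}^t$ is the Cartan involution. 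Since the Cartan involution of $\mathrm{GL}_{n+1}(F)$ restricts to the Cartan involution of $\mathrm{GL}_n(F)$ (both given by inverse-transpose), conjugating a nonzero element of $\Hom_H(\pi,\tau)$ by $\theta$ produces a nonzero element of $\Hom_H(\widehat{\pi},\widehat{\tau}) = \Hom_H(\widetilde{\pi},\widetilde{\tau})$. Thus the two Hom-spaces have the same dimension, and the product inequality above forces each to have dimension at most $1$.

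There is no real obstacle here; the work has all been done in Theorem A and in the imported archimedean Gelfand--Kazhdan results. The only point requiring a little care is the compatibility of the various involutions: one must check that transposition on $\mathrm{GL}_{n+1}$ restricts compatibly to the relevant operation used in the corollary to Theorem \ref{DistCrit}, and that the Cartan involution $\theta$ used in Theorem \ref{GKreal} is compatible between the two general linear groups so that the isomorphism $\widehat{(\,\cdot\,)} \cong \widetilde{(\,\cdot\,)}$ can be transported through $\Hom_H$. Both are immediate from the explicit formulas $\sigma(g) = g^t$ and $\theta(g) = {g^{-1}}^t$ and the standard block embedding $\mathrm{GL}_n \hookrightarrow \mathrm{GL}_{n+1}$. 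Once these compatibilities are recorded, the deduction is a two-line combination of the cited statements.
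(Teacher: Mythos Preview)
Your proposal is correct and follows essentially the same route as the paper: apply the corollary to Theorem~\ref{DistCrit} with $G=\mathrm{GL}_{n+1}(F)$, $H=\mathrm{GL}_n(F)$, and $\sigma$ the transpose (whose hypothesis is exactly Theorem~A), obtain the product bound $\dim\Hom_H(\pi,\tau)\cdot\dim\Hom_H(\widetilde{\pi},\widetilde{\tau})\le 1$, and then use Theorem~\ref{GKreal} together with the compatibility of the Cartan involution with the embedding $H\hookrightarrow G$ to identify the two factors. Your added remarks on the compatibilities and on passing from distributions to Schwartz distributions are reasonable sanity checks but are not needed beyond what the paper implicitly uses.
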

\begin{proof}
By Theorem \ref{GKreal}, $\dim \Hom_{H}(\widetilde{\pi},\widetilde{\tau})
= \dim \Hom_{H}(\widehat{\pi},\widehat{\tau})=\dim \Hom_{H}(\pi,\tau)$.
\end{proof}

\subsection{Proof of proposition \ref{PropTensor}} \label{ProofIrr}

\begin{notation}
Let $G$ be a reductive group, $\g$ be its Lie algebra and $K$ be its maximal compact subgroup. Let $\pi$ be an admissible $(\g,K)$-module.\\
Let $\rho$ be an irreducible representation of $K$.\\
(i)  We denote by $e_{\rho}:\pi \to \pi$ the projection to the $K$-type $\rho$. \\
(ii) We denote by $G^{\pi}_\rho$ the subalgebra of $End(e_\rho(\pi))$ generated by the actions of $K$ and $e_\rho U(\g)e_\rho$.
\end{notation}

The following lemma is well-known

\begin{lemma} \label{erhoIrr}
Let $\pi$ be an irreducible admissible $(\g,K)$-module. Let $\rho$ be an irreducible representation of $K$.
Suppose that $e_\rho(\pi)\neq 0$. Then $e_\rho(\pi)$ is an irreducible representation of $G^{\pi}_\rho$.
\end{lemma}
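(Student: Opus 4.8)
The statement is the classical fact that for an irreducible admissible $(\g,K)$-module $\pi$, each nonzero $K$-isotypic piece $e_\rho(\pi)$ is irreducible as a module over the "Hecke-type" algebra $G^\pi_\rho$ generated by $K$ and $e_\rho U(\g) e_\rho$. The plan is to prove this by a standard cyclicity argument. First I would observe that it suffices to show every nonzero vector $w \in e_\rho(\pi)$ generates $e_\rho(\pi)$ over $G^\pi_\rho$. Fix such a $w$ and let $N := G^\pi_\rho \cdot w \subset e_\rho(\pi)$; we want $N = e_\rho(\pi)$.

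The key step is to pass from $e_\rho(\pi)$ back up to $\pi$. Consider the $(\g,K)$-submodule $M := U(\g)\cdot w \subset \pi$ generated by $w$. Since $\pi$ is irreducible and $w \neq 0$, we have $M = \pi$. Now apply the projector $e_\rho$: because $e_\rho$ commutes with the $K$-action and $e_\rho$ is a projection onto a $K$-type, one has $e_\rho U(\g) e_\rho(\pi) = e_\rho(\pi)$, and more precisely $e_\rho(M) = e_\rho U(\g) w$. But $w = e_\rho(w)$ already lies in the $\rho$-isotypic part, so $e_\rho U(\g) w = e_\rho U(\g) e_\rho w \subseteq G^\pi_\rho \cdot w = N$. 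Hence $e_\rho(\pi) = e_\rho(M) \subseteq N$, giving $N = e_\rho(\pi)$ as desired. The admissibility hypothesis guarantees $e_\rho(\pi)$ is finite-dimensional, which is what makes $G^\pi_\rho$ a finite-dimensional algebra and makes "irreducible representation" unambiguous, though the cyclicity argument above does not really need finiteness.

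The main thing to be careful about is the identity $e_\rho U(\g) w = e_\rho U(\g) e_\rho w$ when $w \in e_\rho(\pi)$: this is immediate from $e_\rho w = w$, so the only genuine content is that $e_\rho(U(\g) w) = e_\rho(\pi)$, i.e. that projecting an irreducible module onto a $K$-type is surjective onto that $K$-type — which holds because $U(\g) w = \pi$. I expect no real obstacle here; the lemma is essentially bookkeeping about how $e_\rho$ interacts with the generating set of $G^\pi_\rho$, and the one-line cyclicity reduction is the heart of it.
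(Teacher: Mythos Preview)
The paper does not actually prove this lemma; it simply records it as ``well-known.'' Your cyclicity argument is the standard one and is essentially correct. The only minor wrinkle is that when $K$ is disconnected (as $O(n)$ is for $\mathrm{GL}_n(\R)$), the subspace $U(\g)\cdot w$ need not be $K$-stable, so it is not automatically a $(\g,K)$-submodule as you assert. This is harmless: replace $M$ by the genuine $(\g,K)$-submodule generated by $w$, namely $\sum_{k\in K} U(\g)\,kw$. Each $kw$ still lies in $e_\rho(\pi)$ (the isotypic component is $K$-stable) and the $K$-action is already part of $G^\pi_\rho$, so $e_\rho(M)=\sum_k e_\rho U(\g) e_\rho\, kw \subset G^\pi_\rho\cdot w$ exactly as in your computation.
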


We will also use Bernside theorem.

\begin{theorem}\label{Bernside}
Let $V$ be a finite dimensional complex vector space. Let $A
\subset End(V)$ be a subalgebra such that $V$ does not have any
non-trivial $A$-invariant subspaces. Then $A=End(V)$.
\end{theorem}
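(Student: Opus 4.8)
The plan is to prove Burnside's theorem by induction on $n:=\dim_{\C}V$, the crux being that it suffices to exhibit a \emph{single} rank-one operator inside $A$ and then to propagate it to all of $End(V)$. We may assume $A\ne 0$ (in our application $A$ contains the identity, and in any case this is automatic once $n\ge 2$, since otherwise every subspace of $V$ would be $A$-invariant); the case $n\le 1$ is then immediate, as a nonzero $\C$-subalgebra of $End(V)=\C$ is all of $\C$. The tool used throughout is the following reformulation of irreducibility: for every nonzero $v\in V$ the subspace $Av$ is a nonzero $A$-submodule of $V$, hence $Av=V$; dually, since submodules of $V^*$ under $A^*:=\{a^*:a\in A\}$ correspond, via orthogonal complement, to submodules of $V$, the module $V^*$ is irreducible over $A^*$ and $A^*f=V^*$ for every nonzero $f\in V^*$.

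First I would produce a rank-one element of $A$. Pick $a\in A\setminus\{0\}$ of minimal rank $r\ge 1$; I claim $r=1$, so suppose instead $r\ge 2$. If $r=n$, then every nonzero element of $A$ acts invertibly on $V$, which (as $A$ is finite-dimensional) forces $A$ to be a division algebra, so $A=\C\cdot\id$, and then irreducibility forces $\dim V\le 1$ --- impossible since $n\ge 2$. Hence $r<n$. Put $U:=\operatorname{im}(a)$, of dimension $r$, and $\mathcal{B}:=\{\,ab|_{U}:b\in A\,\}$. Since $ab(U)\subseteq a(V)=U$, the set $\mathcal{B}$ is a subalgebra of $End(U)$; it is nonzero, since if $aba=0$ for all $b\in A$ then $aAw=0$ for every nonzero $w\in aV$, whereas $Aw=V$ by the key lemma, forcing $aV=0$, which is false. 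Also $U$ has no nontrivial $\mathcal{B}$-invariant subspace: if $0\ne U'\subseteq U$ satisfies $ab(U')\subseteq U'$ for all $b\in A$, then for any $0\ne u'\in U'$ we get $U'\supseteq aAu'=a(Au')=aV=U$. By the induction hypothesis $\mathcal{B}=End(U)$, so some $b\in A$ has $ab|_{U}$ of rank one; then $c:=aba\in A$ has one-dimensional image $ab(U)$ and is nonzero, contradicting the minimality of $r$. Hence $r=1$ and $a$ itself is a rank-one element of $A$.

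Next I would propagate. Write a rank-one operator as $v\otimes f:x\mapsto f(x)v$ with $0\ne v\in V$, $0\ne f\in V^*$, and suppose $v_0\otimes f_0\in A$. For all $b,c\in A$ we have $b\circ(v_0\otimes f_0)\circ c=(bv_0)\otimes(c^*f_0)\in A$; as $b$ runs over $A$, $bv_0$ runs over $Av_0=V$, and as $c$ runs over $A$, $c^*f_0$ runs over $A^*f_0=V^*$. Hence every operator of rank at most one lies in $A$; since the matrix units are of this form and span $End(V)$, we get $A\supseteq End(V)$, so $A=End(V)$, finishing the induction.

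The step I expect to be the main obstacle is the descent in the second paragraph: one must check that the restricted algebra $\mathcal{B}\subseteq End(U)$ still acts irreducibly on $U=\operatorname{im}(a)$, so that the induction hypothesis genuinely applies in lower dimension, and one must dispose of the degenerate case in which $A$ is itself a division algebra. Once a rank-one operator is known to lie in $A$, the remainder is formal. (Alternatively one could deduce the theorem from Schur's lemma --- which gives $End_{A}(V)=\C$ since $\C$ is algebraically closed --- together with the Jacobson density theorem, but that essentially repackages the same argument.)
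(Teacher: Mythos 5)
Your proof is correct. There is, however, nothing in the paper to compare it against: the statement is quoted as the classical Burnside theorem (misspelled ``Bernside'') and used as a black box in the proof of Proposition \ref{PropTensor}; no proof is supplied. Your argument is the standard self-contained minimal-rank proof: induct on $\dim V$, extract a rank-one element of $A$ by restricting to the image $U$ of a nonzero element of minimal rank and applying the inductive hypothesis to $\mathcal{B}=\{ab|_U\}$, then propagate via $b\circ(v_0\otimes f_0)\circ c=(bv_0)\otimes(c^*f_0)$ together with $Av_0=V$ and $A^*f_0=V^*$. All the steps check out, including the two delicate ones you flag (irreducibility of $\mathcal{B}$ acting on $U$, and the degenerate case $r=n$). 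Two points are compressed but standard and easily filled in: first, that $Av\neq 0$ for $v\neq 0$ (the common kernel $\{w\in V\mid Aw=0\}$ is $A$-invariant, hence zero once $A\neq 0$); second, that a finite-dimensional algebra, not assumed unital, all of whose nonzero elements act invertibly on $V$ is a division algebra over $\C$ and hence $\C\cdot\mathrm{Id}$ (left multiplication by any nonzero element is a bijection of $A$, which yields a unit and then inverses inside $A$). The alternative route you mention --- Schur's lemma over $\C$ plus the Jacobson density theorem --- is the proof most texts would cite and is shorter, but your elementary version is complete as written.
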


Now we are ready to prove proposition \ref{PropTensor}.

\begin{proof}[Proof of proposition \ref{PropTensor}]
Let $\g$ and $\h$ be the Lie algebras of $G$ and $H$. Let $K$ and
$L$ be maximal compact subgroups of $G$ and $H$. Let  $\omega
\subset \pi \otimes \tau$ be a nonzero $(\g \times \h, K\times
L)$-submodule. Then $\omega$ intersects non-trivially some
$K\times L$ type. Denote this type by $\rho \otimes \sigma$. By
Lemma \ref{erhoIrr}, $e_\rho(\pi)$ is an irreducible
representation of $G^{\pi}_\rho$ and $e_\sigma(\tau)$ is an
irreducible representation of $H^{\tau}_\sigma$. Hence by Bernside
theorem $G^{\pi}_\rho = End(e_\rho(\pi))$ and $H^{\tau}_\sigma =
End(e_\sigma(\tau))$. Hence $(G\times H)^{\pi \otimes \tau}_{\rho
\otimes \sigma} = End(e_\rho(\pi) \otimes e_\sigma(\tau))$. Thus
$\omega \cap e_{\rho \otimes \sigma}(\pi \otimes \tau)=e_{\rho
\otimes \sigma}(\pi \otimes \tau)$.

This means that $\omega$ contains an element of the form $v
\otimes w$, which implies that $\omega = \pi \otimes \tau$.
\end{proof}
\section{D-modules} \label{AppDmod}
\setcounter{lemma}{0}
In this appendix $X$ denotes a smooth affine variety defined over
$\R$. All the statements of this section extend automatically to
general smooth algebraic varieties defined over $\R$. In this
paper we use only the case when $X$ is an affine space.

\begin{definition}
Let $D(X)$ denote the algebra of polynomial differential operators on $X$.
We consider the filtration $F^{\leq i}D(X)$ on $D(X)$ given by the
order of differential operator.
\end{definition}

\begin{definition}
We denote by $\Gr D(X)$ the associated graded algebra of $D(X)$.

Define the symbol map $\sigma :D(X) \to \Gr D(X)$ in the following way.
Let $d \in D(X)$. Let $i$ be the minimal index such that $d \in F^{\leq i}$. We define $\sigma(d)$ to be the image of $d$ in
 $(F^{\leq i}D(X)) / (F^{\leq i-1}D(X))$
\end{definition}

\begin{proposition}
$\Gr D(X) \cong {\mathcal O}(T^*X)$.
\end{proposition}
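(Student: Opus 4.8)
The statement to prove is the identification $\Gr D(X) \cong \mathcal{O}(T^*X)$ for $X$ a smooth affine variety over $\R$, in fact only needed when $X$ is an affine space.

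\textbf{Approach.} The plan is to construct the isomorphism via the principal symbol map and check it is a well-defined algebra isomorphism, using the local coordinates available on an affine space (and, in the general smooth affine case, a presentation of $\mathcal{O}(X)$). First I would treat the case $X = \mathbb{A}^m$ directly. Here $D(X)$ is the Weyl algebra, generated by coordinate functions $x_1,\dots,x_m \in \mathcal{O}(X)$ in filtration degree $0$ and the derivations $\partial_1,\dots,\partial_m$ in filtration degree $\leq 1$, subject to $[x_i,x_j]=0$, $[\partial_i,\partial_j]=0$, $[\partial_i,x_j]=\delta_{ij}$. A standard ordered-monomial argument shows $D(X)$ is a free $\mathcal{O}(X)$-module with basis the monomials $\partial^\alpha = \partial_1^{\alpha_1}\cdots\partial_m^{\alpha_m}$, and that $F^{\leq i}D(X)$ consists of those elements whose expansion uses only $\partial^\alpha$ with $|\alpha| \leq i$. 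Consequently $(F^{\leq i}D(X))/(F^{\leq i-1}D(X))$ is the free $\mathcal{O}(X)$-module on the symbols of $\partial^\alpha$ with $|\alpha| = i$.

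\textbf{Key steps.} (1) Define a map $\mathcal{O}(X)[\xi_1,\dots,\xi_m] \to \Gr D(X)$ sending $x_i \mapsto \sigma(x_i)$ and $\xi_i \mapsto \sigma(\partial_i)$. This is well-defined as an algebra map because in $\Gr D(X)$ all these symbols commute: the commutator $[\partial_i,x_j] = \delta_{ij}$ lies in $F^{\leq 0}$, hence drops a degree and vanishes in the associated graded, and similarly the other brackets vanish. (2) Identify $\mathcal{O}(X)[\xi_1,\dots,\xi_m]$ with $\mathcal{O}(T^*X)$, using $T^*\mathbb{A}^m \cong \mathbb{A}^m \times (\mathbb{A}^m)^*$ with the $\xi_i$ the fiber linear coordinates dual to $dx_i$. (3) Check surjectivity: every element of $F^{\leq i}D(X)$ is an $\mathcal{O}(X)$-combination of the $\partial^\alpha$, $|\alpha|\leq i$, so its symbol is hit. (4) Check injectivity by comparing the two sides degree by degree: both $\Gr^i D(X)$ and the degree-$i$ part of $\mathcal{O}(X)[\xi]$ are free $\mathcal{O}(X)$-modules with the same basis (monomials in the $\xi_i$ resp. $\sigma(\partial_i)$ of total degree $i$), and the map carries basis to basis. (5) For general smooth affine $X$, either invoke the coordinate-free construction of the symbol (the symbol of a differential operator of order $\leq i$ is a section of $\mathrm{Sym}^i T_X$, giving a function on $T^*X$ homogeneous of degree $i$ in the fibers) and check compatibility with products locally, or reduce to the affine-space case via an \'etale-local argument; as the paper only needs affine space, I would record the general case as immediate from the same computation applied in local coordinates.

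\textbf{Main obstacle.} The only genuinely substantive point is the freeness statement for $D(X)$ as an $\mathcal{O}(X)$-module on ordered monomials $\partial^\alpha$, i.e.\ a PBW-type theorem for the Weyl algebra, together with the matching of filtration levels. Once that structural fact is in hand, the commutativity of symbols and the degreewise basis comparison are routine. I expect the write-up to simply cite the standard PBW description of the Weyl algebra (or of $D(X)$ for smooth affine $X$, e.g.\ as in \cite{Ber} or \cite{Bor}) and then state that the symbol map induces the claimed isomorphism, the verification being a direct check that it is an $\mathcal{O}(X)$-algebra homomorphism carrying the monomial basis of each graded piece to that of $\mathcal{O}(T^*X)$.
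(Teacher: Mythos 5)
Your proposal is correct and is exactly the standard argument; the paper itself gives no proof of this proposition, deferring to \cite{Bor}, and the PBW/symbol-map computation you outline (freeness of $D(X)$ on the ordered monomials $\partial^\alpha$, commutativity of the associated graded because commutators drop filtration degree, and the degreewise basis comparison with ${\mathcal O}(X)[\xi_1,\dots,\xi_m]={\mathcal O}(T^*X)$) is precisely the proof found in that reference.
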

For proof see e.g. \cite{Bor}. 

\begin{notation}
Let $(V,B)$ be a quadratic space. \\
(i) We define a morphism of algebras $\Phi^D_V: D(X \times V) \to
D(X \times V)$ in the following way.

Consider $B$ as a map $B:V \to V^*$. For any $f \in V^*$ we set
$\Phi^D_V(f) :=
\partial_{B^{-1}(f)}$. For any $v \in V$ we set $\Phi^D_V(\partial _v) :=
-B(v)$ and for any $d \in D(X)$ we set $\Phi^D_V(d):=d$. \\
(ii) It defines a morphism of algebras $\Phi^O_V:{\mathcal
O}(T^*X) \to {\mathcal O}(T^*X)$.
\end{notation}

The following lemma is straightforward.
\begin{lemma}
Let $f$ be a homogeneous polynomial. Consider it as a differential operator. Then $\sigma( \Phi^D_V(f))=\Phi^O_V(\sigma(f))$.
\end{lemma}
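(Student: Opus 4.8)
The plan is to reduce to the generators of the algebra and then track what both sides do to a monomial. First I would recall that $D(X\times V)$ is generated as an algebra by $\cO(X)$, the linear functions on $V$ (call the dual basis $f_1,\dots,f_m$, chosen so that $B$ is in a convenient normal form), and the constant-coefficient vector fields $\partial_{v_1},\dots,\partial_{v_m}$ on $V$, together with the differential operators $D(X)$ coming from the first factor. Since $\Phi^D_V$ is a morphism of algebras by construction, any element of $D(X\times V)$ is a noncommutative polynomial in these generators, and $\Phi^D_V$ acts on it by substituting the images of the generators. The subtlety is that $\sigma$ is \emph{not} an algebra homomorphism — it only respects the top-order symbol — so I cannot simply say "$\sigma$ of a product is the product of $\sigma$'s."

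The key observation that saves this is that $f$ is a \emph{homogeneous} polynomial in $F^{\leq i}D(X\times V)\setminus F^{\leq i-1}$, where $i=\deg f$, when viewed as a differential operator: a degree-$i$ polynomial, thought of as the operator "multiply by $f$", has order $0$, so I should be careful about which $f$ the lemma means. Reading the statement in context (it is the $\cO$-level shadow of $\Phi^D_V$, used with the symbol map $\sigma:D(X\times V)\to \Gr D(X\times V)\cong\cO(T^*(X\times V))$), the intended content is: for a homogeneous element $f$ of $D(X\times V)$ that is a polynomial in the $f_j$ and the $\partial_{v_j}$ (a "constant-coefficient" operator on the $V$-factor, possibly tensored with something on $X$), the top-order symbol commutes with $\Phi$. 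For such $f$, write $f=\sum c_\alpha \partial_{v}^{\alpha} f^{\beta}$; then $\sigma(f)$ replaces $\partial_{v_j}$ by the fibre coordinate $\xi_j$ and keeps $f_j$. Applying $\Phi^D_V$ first sends $\partial_{v_j}\mapsto -B(v_j)$ and $f_j\mapsto \partial_{B^{-1}(f_j)}$, and then $\sigma$ of the result sends the surviving derivatives to fibre coordinates and the surviving multiplications by linear functions to themselves. Applying $\Phi^O_V$ to $\sigma(f)$ directly performs the linear substitution $\xi\mapsto Bv$, $v\mapsto -B^{-1}\xi$ on $T^*(X\times V)\cong T^*X\times V\times V^*$. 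These two recipes agree \emph{provided} no reordering of noncommuting factors drops the order — and it does not, because each generator $\partial_{v_j}$ or $f_j$ has order $1$ or $0$ and $\Phi^D_V$ sends it to another generator of the same order, while the commutators $[f_j,\partial_{v_k}]=-\delta_{jk}$ and $[\Phi^D_V(f_j),\Phi^D_V(\partial_{v_k})]=-B(v_k)(B^{-1}(f_j))\cdot\mathrm{const}$ are of strictly lower order than the product $f_j\partial_{v_k}$, hence invisible to $\sigma$.

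So the steps are: (1) reduce to $f$ a noncommutative monomial in the generators $\{f_j\}\cup\{\partial_{v_j}\}$ (tensored with an arbitrary $d\in D(X)$, on which both $\Phi^D_V$ and $\Phi^O_V$ act as the identity, so it comes along for the ride); (2) compute $\sigma(\Phi^D_V)$ of such a monomial by noting $\Phi^D_V$ preserves order and is multiplicative, and that $\sigma$ of a product of generators equals the product of their symbols up to lower-order terms which vanish in $\Gr$; (3) compute $\Phi^O_V(\sigma(f))$ directly from the definition and see it is the same linear change of the fibre coordinates; (4) match. I expect the only real obstacle is bookkeeping: making precise that the lower-order correction terms arising from reordering are genuinely killed by passing to the associated graded, i.e. that $\sigma(ab)=\sigma(a)\sigma(b)$ holds \emph{whenever $\mathrm{ord}(ab)=\mathrm{ord}(a)+\mathrm{ord}(b)$}, which is exactly the situation here since $\Phi^D_V$ sends order-$k$ generators to order-$k$ generators. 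Once that is stated cleanly, the lemma is the assertion that two explicit linear substitutions on $\cO(T^*(X\times V))$ coincide, which is immediate from the definitions of $\Phi^D_V$ and $\Phi^O_V$.
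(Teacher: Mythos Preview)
Your overall shape is right, and the paper gives no proof beyond ``straightforward,'' so there is nothing to compare against. But there is one concrete false claim propping up your justification: you assert that ``$\Phi^D_V$ preserves order'' and ``sends order-$k$ generators to order-$k$ generators.'' It does not. The linear function $f_j\in V^*$ has order $0$ as a differential operator, while $\Phi^D_V(f_j)=\partial_{B^{-1}(f_j)}$ has order $1$; conversely $\partial_{v_j}$ (order $1$) goes to $-B(v_j)$ (order $0$). So $\Phi^D_V$ \emph{exchanges} the orders of these two kinds of generators rather than preserving them.

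This matters because your appeal to ``$\sigma(ab)=\sigma(a)\sigma(b)$ when orders add'' then rests on the wrong premise. The correct mechanism is this. In the lemma $f$ is a genuine function (an order-$0$ operator), homogeneous of degree $k$ in the $V$-coordinates: $f=\sum_{|I|=k} g_I(x)\,y^I$. Applying $\Phi^D_V$ turns each $y_i$ into a first-order constant-coefficient vector field on $V$, so
\[
\Phi^D_V(f)=\sum_{|I|=k} g_I(x)\prod_i\bigl(\partial_{B^{-1}(y_i)}\bigr)^{I_i},
\]
and because the $\partial_{B^{-1}(y_i)}$ commute with each other and with $g_I(x)$, every summand has order \emph{exactly} $k$. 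Hence $\sigma(\Phi^D_V(f))=\sum_{|I|=k} g_I(x)\prod_i\sigma(\partial_{B^{-1}(y_i)})^{I_i}$, which is $\Phi^O_V(\sigma(f))$ by the definition of $\Phi^O_V$. The point is not that $\Phi^D_V$ preserves order but that it \emph{converts $V$-degree into differential order}; homogeneity in $V$ is what guarantees that all monomials land in the same graded piece, so none are killed by $\sigma$. Without homogeneity the lemma is false: for $f=y_1+1$ one gets $\sigma(\Phi^D_V(f))=\sigma(\partial_{B^{-1}(y_1)})$, which does not equal $\Phi^O_V(y_1+1)$.

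You also overcomplicate by allowing $f$ to involve $\partial_{v_j}$'s. In the lemma and in its only application (to $f\in\cO(X\times V)$ vanishing on $Z$), $f$ is a pure polynomial, so there are no derivatives to reorder and the commutator bookkeeping you set up never arises.
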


The D-modules we use in the paper are right D-modules. The
difference between right and left D-modules is not essential (see
e.g. section VI.3 in \cite{Bor}). We will use the notion of good
filtration on a D-module, see e.g. section II.4 in \cite{Bor}. Let
us now remind the definition of singular support of a module and a
distribution.

\begin{notation}
Let $M$ be a $D(X)$-module. Let $\alpha \in M$ be an element. Then we denote by $Ann_{D(X)}$ the annihilator of $\alpha$.
\end{notation}

\begin{definition}
Let $M$ be a $D(X)$-module. Choose a good filtration on $M$. Consider $grM$ as a module over $\Gr D(X) \cong {\mathcal O}(T^*X)$. We define
$$SS(M):= \Supp (\Gr M) \subset T^*X.$$
This does not depend on the choice of the good filtration on $M$
(see e.g. \cite{Bor}, section II.4).

For a distribution $\xi \in S^*(X(\R))$ we define $SS(\xi)$ to be
the singular support of the module of distributions generated by
$\xi$.
\end{definition}

The following proposition is trivial.

\begin{proposition}
Let $I < D(X)$ be a right ideal. Consider the induced filtrations
on $I$ and $D(X)/I$. Then $\Gr (D(X)/I) \cong \Gr (D(X)) / \Gr
(I)$.
\end{proposition}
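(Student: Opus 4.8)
The plan is to obtain this from the general formal behaviour of associated gradeds under passage to a submodule and a quotient. Write $F^{\leq i}I := I \cap F^{\leq i}D(X)$ for the induced filtration on the right ideal $I$, and $F^{\leq i}(D(X)/I):=(F^{\leq i}D(X)+I)/I$ for the induced (quotient) filtration on $D(X)/I$; observe that $F^{\leq i}(D(X)/I)\cong F^{\leq i}D(X)/F^{\leq i}I$ because, by definition, $F^{\leq i}D(X)\cap I=F^{\leq i}I$. The only point worth isolating --- everything else being bookkeeping --- is that the natural map $\Gr^i I\to \Gr^i D(X)$ is injective, so that $\Gr I$ genuinely sits inside $\Gr D(X)$ as a graded submodule (namely, the span of the symbols $\sigma(d)$ for $d\in I$). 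This holds because
$$F^{\leq i}I\cap F^{\leq i-1}D(X)=I\cap F^{\leq i}D(X)\cap F^{\leq i-1}D(X)=I\cap F^{\leq i-1}D(X)=F^{\leq i-1}I.$$

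Granting that, for each $i$ I would write down the commuting ladder of short exact sequences of vector spaces
$$
\begin{array}{ccccccccc}
0 & \to & F^{\leq i-1}I & \to & F^{\leq i-1}D(X) & \to & F^{\leq i-1}(D(X)/I) & \to & 0\\
 & & \downarrow & & \downarrow & & \downarrow & & \\
0 & \to & F^{\leq i}I & \to & F^{\leq i}D(X) & \to & F^{\leq i}(D(X)/I) & \to & 0
\end{array}
$$
whose exactness in each row follows from the two displayed facts above, and all of whose vertical maps are the filtration inclusions, hence injective. The snake lemma then produces an exact sequence of cokernels
$$0\to \Gr^i I\to \Gr^i D(X)\to \Gr^i(D(X)/I)\to 0,$$
and summing over all $i$ gives an exact sequence of graded $\Gr D(X)$-modules $0\to \Gr I\to \Gr D(X)\to \Gr(D(X)/I)\to 0$, i.e. $\Gr(D(X)/I)\cong \Gr D(X)/\Gr I$.

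As the paper indicates, there is no genuine obstacle here: the statement is trivial, and the only thing requiring a moment's care is to fix which filtration one puts on $D(X)/I$ and to verify the injectivity above. If one wishes to avoid even the snake lemma, the same conclusion is a short diagram chase: the evident surjection $\Gr^i D(X)\to \Gr^i(D(X)/I)$ has kernel exactly the image of $\Gr^i I$, since a homogeneous symbol $\sigma(d)$ (with $d\in F^{\leq i}D(X)$) dies in $\Gr^i(D(X)/I)$ iff $d\in F^{\leq i-1}D(X)+I$, iff $d=d'+e$ with $d'\in F^{\leq i-1}D(X)$ and $e\in F^{\leq i}I$, iff $\sigma(d)=\sigma(e)\in \Gr^i I$.
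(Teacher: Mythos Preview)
Your proof is correct: the induced filtration on a submodule and the quotient filtration are set up precisely so that the snake-lemma (or direct diagram-chase) argument you give goes through, and your verification that $F^{\leq i}I\cap F^{\leq i-1}D(X)=F^{\leq i-1}I$ is exactly the point that makes $\Gr I$ embed in $\Gr D(X)$. The paper itself offers no proof at all---it simply declares the proposition ``trivial''---so your write-up is not so much a different route as a careful unpacking of what the paper leaves to the reader.
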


\begin{corollary}
Let $\xi \in S^*(X)$. Then $SS(\xi)$ is the zero set of $\Gr (Ann_{D(X)} \xi)$.
\end{corollary}

\begin{corollary}
Let $I< {\mathcal O}(T^*X) $ be the ideal generated by
$\{\sigma(d) \, | \, d \in Ann_{D(X)}(\xi) \}$. Then $SS(\xi)$ is
the zero set of $I$.
\end{corollary}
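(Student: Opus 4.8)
The plan is to recognize that the ideal $I$ is nothing but the associated graded ideal $\Gr(Ann_{D(X)}(\xi))$, after which the statement follows at once from the previous corollary. Write $J := Ann_{D(X)}(\xi)$, a right ideal of $D(X)$, and equip it with the filtration $F^{\leq i}J := J \cap F^{\leq i}D(X)$ induced by the order filtration. The previous corollary identifies $SS(\xi)$ with the zero set of the graded ideal $\Gr(J) \subset \Gr D(X) \cong {\mathcal O}(T^*X)$, so the whole matter reduces to checking the equality $\Gr(J) = I$ inside ${\mathcal O}(T^*X)$.

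For this I would argue by two inclusions. The inclusion $I \subseteq \Gr(J)$ holds because $\Gr(J)$ is already an ideal of $\Gr D(X)$ that visibly contains every $\sigma(d)$ with $d\in J$: indeed, for a homogeneous $\sigma(\tilde a) \in \Gr D(X)$ of degree $k$ and $d \in F^{\leq i}J$, the element $\tilde a d$ again lies in $J$, now in $F^{\leq i+k}J$, and its class in $\Gr D(X)_{i+k}$ equals $\sigma(\tilde a)\sigma(d)$, so $\sigma(\tilde a)\sigma(d) \in \Gr(J)$; since the symbols of degree $k$ span $\Gr D(X)_k$, this shows $\Gr D(X)\cdot \Gr(J) \subseteq \Gr(J)$. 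For the reverse inclusion I would note that the natural map $\Gr(J)_i = F^{\leq i}J/F^{\leq i-1}J \to F^{\leq i}D(X)/F^{\leq i-1}D(X) = \Gr D(X)_i$ is injective and sends the class of $d \in F^{\leq i}J$ to $\sigma(d)$ when $d$ has order exactly $i$ and to $0$ otherwise; hence $\Gr(J)_i$ is the linear span of those $\sigma(d)$, $d\in J$, that are homogeneous of degree $i$, and in particular $\Gr(J) \subseteq I$.

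Combining the two inclusions gives $I = \Gr(J)$, and the previous corollary then yields that $SS(\xi)$ is the zero set of $I$, as claimed. The only point that requires a moment's care is the multiplicativity of the symbol map used to see that $\Gr(J)$ is an ideal, but this is precisely the identification $\Gr D(X) \cong {\mathcal O}(T^*X)$ of the associated graded ring for the order filtration, already recorded above; so I expect no genuine obstacle here, only a short formal unwinding.
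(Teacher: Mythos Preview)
Your argument is correct and is exactly the standard unwinding the paper leaves to the reader (the corollary is stated without proof, as an immediate consequence of the preceding one identifying $SS(\xi)$ with the zero set of $\Gr(Ann_{D(X)}\xi)$). One harmless slip: since the paper works with right $D$-modules, $J=Ann_{D(X)}(\xi)$ is only a right ideal, so the phrase ``the element $\tilde a d$ again lies in $J$'' should read ``$d\tilde a$ again lies in $J$''; the conclusion is unaffected because $\Gr D(X)\cong\mathcal{O}(T^*X)$ is commutative, so $\sigma(d)\sigma(\tilde a)=\sigma(\tilde a)\sigma(d)$ and $\Gr(J)$ is a two-sided ideal either way.
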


\begin{corollary}
Fact \ref{Ginv} holds.
\end{corollary}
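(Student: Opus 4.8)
The plan is to deduce Fact \ref{Ginv} from the preceding corollary, which identifies $SS(\xi)$ with the zero set $Z(I)$ of the ideal $I<\mathcal{O}(T^*X)$ generated by $\{\sigma(d)\mid d\in Ann_{D(X)}(\xi)\}$. Since enlarging $Ann_{D(X)}(\xi)$ only shrinks this zero set, it suffices to produce, for every $\alpha\in\g$, an operator $d_\alpha\in Ann_{D(X)}(\xi)$ whose principal symbol is, up to sign, the function $(x,\phi)\mapsto\phi(\alpha(x))$ on $T^*X$.

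First I would translate the invariance hypothesis into the $D(X)$-module language. Because the $G$-action on $X$ is algebraic, each $\alpha\in\g$ determines a polynomial vector field $\tilde\alpha$ on $X$, i.e.\ an element of $D(X)$ of order $\le 1$, and in the $D(X)$-module $\Sc^*(X(\R))$ this vector field acts on a distribution via the operator $d_\alpha:=\tilde\alpha^{t}=-\tilde\alpha-\operatorname{div}(\tilde\alpha)$ obtained from $\tilde\alpha$ by passing to the formal transpose, again of order $\le 1$. Differentiating $t\mapsto\exp(t\alpha)\cdot\xi$ at $t=0$ shows that a distribution is killed by all the $d_\alpha$ if and only if it is invariant under the identity component $G(\R)^{\circ}$ (one uses that $G(\R)^{\circ}$ is generated by the images of the one-parameter subgroups $\exp(t\alpha)$). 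Since $\xi$ is $G(\R)$-invariant, hence $G(\R)^{\circ}$-invariant, we get $d_\alpha\in Ann_{D(X)}(\xi)$ for every $\alpha$.

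Next I would compute symbols. If $\tilde\alpha\neq 0$ then $d_\alpha$ has order exactly one, its zeroth-order term $-\operatorname{div}(\tilde\alpha)$ does not affect the symbol, and $\sigma(d_\alpha)=-\sigma(\tilde\alpha)$ is the function $(x,\phi)\mapsto-\phi(\alpha(x))$ on $T^*X$; if $\tilde\alpha=0$ then $\phi(\alpha(x))$ vanishes identically anyway. In either case $(x,\phi)\mapsto\phi(\alpha(x))$ lies in $I$, so $I$ contains all of these functions and therefore
$$SS(\xi)=Z(I)\subset\{(x,\phi)\in T^*X\mid\forall\,\alpha\in\g,\ \phi(\alpha(x))=0\},$$
which is Fact \ref{Ginv}. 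The only point requiring genuine care is the step identifying the analytic condition of $G(\R)$-invariance with the algebraic statement $d_\alpha\xi=0$, $d_\alpha\in D(X)$: one must invoke that the fundamental vector fields $\tilde\alpha$ are algebraic (as the action is algebraic), so that they and their transposes lie in $D(X)$, and that invariance of $\xi$ is captured infinitesimally by the vanishing of these finitely many operators. The remaining steps are routine bookkeeping with symbols and zero sets of ideals.
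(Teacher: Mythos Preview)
Your proposal is correct and is precisely the argument the paper leaves implicit: from the preceding corollary $SS(\xi)=Z(I)$, one need only observe that for each $\alpha\in\g$ the fundamental vector field $\tilde\alpha$ (a first-order element of $D(X)$) annihilates the invariant distribution $\xi$, and its principal symbol is $(x,\phi)\mapsto\phi(\alpha(x))$. One small simplification: since the paper treats $\Sc^*(X(\R))$ as a \emph{right} $D(X)$-module via $\langle \xi\cdot d,f\rangle=\langle\xi,df\rangle$, infinitesimal invariance reads directly as $\xi\cdot\tilde\alpha=0$ and no transpose or divergence term is needed.
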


\begin{lemma}
Let $\xi \in S^*(X)$. Let $Z \subset X$ be a closed subvariety
such that $\Supp(\xi) \subset Z(\R)$. Let $f \in {\mathcal O}(X)$
be a polynomial that vanishes on $Z$. Then there exists $k \in \N
$ such that $f^k\xi=0$.
\end{lemma}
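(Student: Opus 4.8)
The statement to prove is: for $\xi \in S^*(X)$ with $\Supp(\xi) \subset Z(\R)$ and $f \in \mathcal{O}(X)$ vanishing on $Z$, there exists $k \in \N$ with $f^k \xi = 0$. The plan is to reduce this to a statement about the singular support of $\xi$, using the corollaries established just above in this appendix. First I would invoke the corollary that $SS(\xi)$ is the zero set of the ideal $I < \mathcal{O}(T^*X)$ generated by $\{\sigma(d) \mid d \in Ann_{D(X)}(\xi)\}$, together with Fact \ref{Supp2SS}, which identifies $\overline{\Supp(\xi)}_{Zar}$ with $p_X(SS(\xi))(\R)$. Since $\Supp(\xi) \subset Z(\R)$ and $f$ vanishes on $Z$, the function $f$ (pulled back via $p_X$ to $T^*X$, i.e. viewed as a function constant along fibers) vanishes on $SS(\xi) = Z(I)$.

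By the Nullstellensatz, $f$ vanishing on $Z(I)$ means $f^m \in \sqrt{I}$, hence some power $f^m$ lies in $I$ itself; that is, $f^m = \sum_j g_j \sigma(d_j)$ for finitely many $g_j \in \mathcal{O}(T^*X)$ and $d_j \in Ann_{D(X)}(\xi)$. The next step is to lift this relation from the associated graded algebra $\Gr D(X) \cong \mathcal{O}(T^*X)$ back to $D(X)$: choosing differential operators $\widetilde{g_j} \in D(X)$ with $\sigma(\widetilde{g_j}) = g_j$, the operator $D := \sum_j \widetilde{g_j} d_j \in D(X)$ has symbol $\sigma(D) = f^m$ (after arranging the degree bookkeeping so that no cancellation of top symbols occurs — one can take the $g_j$ homogeneous of the appropriate degrees, matching the weight of $f^m$ as a fiberwise-constant function, i.e. degree zero in the cotangent directions, so that $D$ is a differential operator of order zero). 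An order-zero differential operator is just multiplication by a function, and a function with symbol $f^m$ in degree zero is $f^m$ itself. Moreover $D \xi = \sum_j \widetilde{g_j}(d_j \xi) = 0$ since each $d_j$ annihilates $\xi$. Hence $f^m \xi = 0$, and we take $k = m$.

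The step I expect to be the main obstacle is the degree/filtration bookkeeping in the lifting argument: one must be careful that $f^m$, as an element of $\mathcal{O}(T^*X) \cong \Gr D(X)$, sits in the degree-zero graded piece (it is pulled back from $X$, so constant along the cotangent fibers), and correspondingly choose the representatives $\widetilde{g_j}$ and the relation so that the lifted operator $\sum_j \widetilde{g_j} d_j$ genuinely has symbol $f^m$ rather than something of higher order whose symbol merely reduces to $f^m$. An alternative route that sidesteps some of this is to argue directly with the good filtration on $\mathcal{M}_\xi$: $\Gr \mathcal{M}_\xi$ is a module over $\mathcal{O}(T^*X)$ supported on $SS(\xi)$, hence annihilated by some power of the ideal of $p_X^{-1}(Z)$, in particular by $f^N$ for some $N$; then $f^N \cdot \Gr\mathcal{M}_\xi = 0$ translates, on the generator, into $f^N \xi$ lying in a lower filtered piece, and iterating (or using that the filtration on the cyclic module generated by $\xi$ is eventually governed by the symbol ideal) yields $f^k \xi = 0$ for $k$ large. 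Either way the crux is purely the passage between $D(X)$ and its associated graded, and I would present whichever version keeps the filtration indices cleanest.
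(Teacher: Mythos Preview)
Your proposal has a circularity problem. You invoke Fact~\ref{Supp2SS} (that $\overline{\Supp(\xi)}_{Zar} = p_X(SS(\xi))(\R)$) as an input, but in the paper this lemma is exactly what is used to \emph{prove} Fact~\ref{Supp2SS}: the very next line after the lemma is the corollary ``Fact~\ref{Supp2SS} holds.'' Concretely, the inclusion $p_X(SS(\xi)) \subset \overline{\Supp(\xi)}_{Zar}$ is obtained by taking any $f$ vanishing on $\overline{\Supp(\xi)}_{Zar}$, applying the lemma to get $f^k \in Ann_{D(X)}(\xi)$, and reading off that $\sigma(f^k)=f^k$ vanishes on $SS(\xi)$. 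So you cannot appeal to Fact~\ref{Supp2SS} here without begging the question. (There is also a secondary wrinkle: Fact~\ref{Supp2SS} is stated as an equality of sets of $\R$-points, which does not immediately give you $p_X(SS(\xi)) \subset Z$ as varieties, so the Nullstellensatz step would need extra care even if the fact were available.)

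The paper's proof is completely different and much more elementary; it does not touch $SS(\xi)$ at all. It reduces to the case where $X$ is an affine space and $f$ is a coordinate function, by embedding $X$ into $\mathbb{A}^N$ so that $f$ becomes a coordinate. In that case the statement is the classical fact that a (Schwartz) distribution supported in the hyperplane $\{x_1=0\}$ is annihilated by some power of $x_1$; the paper cites the proof of Corollary~5.5.4 in \cite{AG1} for this. So the intended argument is pure distribution theory, and your $D$-module/symbol machinery, while natural in spirit, both overshoots and, as written, loops back on itself.
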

\begin{proof}$ $

Step 1. Proof for the case when $X$ is affine space and $f$ is a
coordinate function.\\
This follows from the proof of Corollary 5.5.4 in \cite{AG1}.

Step 2. Proof for the general case.\\
Embed $X$ into an affine space $A^N$ such that $f$ will be a
coordinate function and consider $\xi$ as distribution on $A^N$
supported in $X$. By Step 1, $f^k\xi=0$ for some $k$.
\end{proof}

\begin{corollary}
Fact \ref{Supp2SS} holds.
\end{corollary}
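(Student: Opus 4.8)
The statement to prove is Fact \ref{Supp2SS}: for $\xi \in \Sc^*(X(\R))$ one has $\overline{\Supp(\xi)}_{Zar} = p_X(SS(\xi))(\R)$. The plan is to deduce it from the preceding lemma (the ``$f^k\xi = 0$'' statement) together with the already–established description of the singular support as the zero set of $\Gr(Ann_{D(X)}(\xi))$ (equivalently, of the ideal $I < \mathcal{O}(T^*X)$ generated by the symbols $\sigma(d)$, $d \in Ann_{D(X)}(\xi)$). Write $A := Ann_{D(X)}(\xi) < D(X)$ and let $A_0 := A \cap \mathcal{O}(X)$ be the ideal of order-zero operators killing $\xi$, i.e. the polynomials $f$ with $f\xi = 0$. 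Note $Z(A_0) = \overline{\Supp(\xi)}_{Zar}$: the inclusion $\supseteq$ is clear because any $f$ vanishing on $\Supp(\xi)$ satisfies $f^k\xi = 0$ by the preceding lemma, hence $\sqrt{A_0}$ is exactly the ideal of $\overline{\Supp(\xi)}_{Zar}$; the reverse inclusion is immediate since any $f \in A_0$ vanishes on $\Supp(\xi)$.

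\textbf{Key steps.} First I would show $p_X(SS(\xi)) \subseteq Z(A_0)$ as subsets of $X$. For $f \in A_0 \subset \mathcal{O}(X) \subset D(X)$, the symbol $\sigma(f)$ is just $f$ itself viewed as a (fiberwise constant) function on $T^*X$, so $f \in I$ where $I$ is the ideal cutting out $SS(\xi)$; hence $f$ vanishes on $SS(\xi)$, i.e. $f \circ p_X$ vanishes on $SS(\xi)$, which says exactly $p_X(SS(\xi)) \subseteq Z(A_0)$. Second, and this is the substantive direction, I would show $Z(A_0) \subseteq p_X(SS(\xi))$, equivalently $p_X(SS(\xi))$ is Zariski dense in (indeed equal to) $Z(A_0)$. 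For this I would use that $SS(\xi)$ is coisotropic, hence in particular has no isolated ``vertical shrinkage'': more concretely, one reduces to showing that for every irreducible component $Y$ of $\overline{\Supp(\xi)}_{Zar} = Z(A_0)$ there is a component of $SS(\xi)$ lying over $Y$. The standard argument is: restrict $\xi$ to the open locus where $\Supp(\xi)$ is a smooth subvariety near a generic point of $Y$; there the module of distributions generated by $\xi$ is (generically) nonzero supported on a smooth subvariety, and its singular support must contain the conormal variety $CN^X_Y$, which projects onto $Y$. Globalizing via the exact sequence for Schwartz distributions and Nash stratification gives the claim. Finally, intersecting with the real points $X(\R)$ and using that for a subvariety defined over $\R$ the real points of the Zariski closure of the support agree with what is computed above, one concludes equality.

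\textbf{Main obstacle.} The only genuinely nontrivial point is the inclusion $Z(A_0) \subseteq p_X(SS(\xi))$: one must rule out the pathology where $\xi$ is supported (as a distribution) on a subvariety $Z$ but its singular support projects to a proper subvariety of $Z$. This cannot happen — the D-module generated by $\xi$ has support (as an $\mathcal{O}(X)$-module, i.e. the projection of $SS$) equal to $\overline{\Supp(\xi)}_{Zar}$ because a nonzero D-module coherent over $\mathcal{O}$ on a dense open subset of $Z$ forces $SS$ to dominate $Z$ — but making this precise requires either the coisotropy input or a direct argument with the characteristic variety of a holonomic-type module. I would present it by localizing to a generic smooth point of each component of $Z$, where $\xi$ looks like a transverse delta-type distribution times a nonzero distribution along $Z$, and computing that the conormal contributes. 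This is the step I expect to spend the most care on; everything else is formal manipulation of symbols and ideals, already set up by the corollaries above.
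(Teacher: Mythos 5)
Your setup — the identification $Z(A_0)=\overline{\Supp(\xi)}_{Zar}$ via the preceding lemma, and the easy inclusion $p_X(SS(\xi))\subseteq Z(A_0)$ from the fact that $\sigma(f)=f$ for $f\in A_0$ — is correct and is exactly what the paper intends. The gap is in the converse inclusion $Z(A_0)\subseteq p_X(SS(\xi))$, which you rightly identify as the substantive step but for which your proposed argument does not work as stated. Localizing to a ``generic smooth point'' of a component $Y$ of $\overline{\Supp(\xi)}_{Zar}$ and claiming that $\xi$ looks like a transverse delta-type distribution times a distribution along $Y$, whence $SS(\xi)\supseteq CN_Y^X$, presupposes a structure theorem that general Schwartz distributions do not satisfy: $\Supp(\xi)$ is merely a closed subset of $X(\R)$, and a Kashiwara-type analysis would require coherence/holonomicity of ${\mathcal M}_\xi$, which you do not have. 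Moreover, coisotropy of $SS(\xi)$ (Gabber's theorem) only bounds $\dim SS(\xi)$ from below; it cannot by itself exclude the ``vertical shrinkage'' where $p_X(SS(\xi))$ misses a component of $Z(A_0)$.

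The inclusion you want is in fact purely algebraic, and you already have every ingredient. By the corollaries preceding this one, $SS(\xi)$ is the zero set of the ideal $I\subset{\mathcal O}(T^*X)$ generated by $\{\sigma(d)\mid d\in Ann_{D(X)}(\xi)\}$. This ideal is homogeneous for the grading by fiber degree, and its degree-zero part is exactly $A_0$: in any relation $h=\sum g_i\,\sigma(d_i)$ with $h\in{\mathcal O}(X)$, only terms with $d_i$ of order zero, i.e. $d_i\in A_0$, can contribute to the degree-zero component, since each $\sigma(d_i)$ is homogeneous of degree equal to the order of $d_i$ and the $g_i$ have no negative-degree components. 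Consequently, if $f\in{\mathcal O}(X)$ vanishes on $p_X(SS(\xi))$, then $f^N\in I$ for some $N$ by the Nullstellensatz, the degree-zero argument gives $f^N\in A_0$, hence $f^N\xi=0$ and $f$ vanishes on $\Supp(\xi)$; equivalently, for every $x\in Z(A_0)$ the restriction of $I$ to the fiber $T^*_xX$ contains no nonzero constant, so $SS(\xi)$ contains the origin of $T^*_xX$ and $Z(A_0)\subseteq p_X(SS(\xi))$. No conormal-variety or coisotropy input is needed for this fact.
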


\begin{proposition}
Fact \ref{Fou} holds. Namely:

Let $(V,B)$ be a quadratic space. Let $Z \subset X \times V$ be a
closed subvariety, invariant with respect to homotheties in $V$.
Suppose that $\Supp(\xi) \subset Z(\R)$. Then $SS(\Fou_V(\xi))
\subset F_V(p_{X \times V}^{-1}(Z))$.
\end{proposition}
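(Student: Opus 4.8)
The plan is to reduce the statement to a computation with symbols of differential operators. Recall that by the Corollary above, $SS(\Fou_V(\xi))$ is the zero set of the ideal in $\cO(T^*(X\times V))$ generated by $\{\sigma(d) \mid d \in Ann_{D(X\times V)}(\Fou_V(\xi))\}$. The key observation is that $\Phi^D_V$ conjugates the action of $D(X\times V)$ on $\Sc^*(X\times V)$ into the $\Fou_V$-twisted action: that is, for any $d \in D(X\times V)$ and any $\eta \in \Sc^*(X\times V)$ one has $\Fou_V(d\cdot \eta) = \Phi^D_V(d)\cdot \Fou_V(\eta)$ (up to a harmless scalar coming from the normalization of the Fourier transform, which does not affect annihilators or symbols). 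This is just the familiar fact that Fourier transform interchanges multiplication by a linear coordinate on $V$ with the corresponding partial derivative and vice versa, extended to all of $D(X\times V)$ by multiplicativity, and it is exactly the content of the definition of $\Phi^D_V$. Consequently $Ann_{D(X\times V)}(\Fou_V(\xi)) = \Phi^D_V(Ann_{D(X\times V)}(\xi))$.

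First I would combine this with the Lemma stating $\sigma(\Phi^D_V(f)) = \Phi^O_V(\sigma(f))$, which must first be upgraded from homogeneous polynomials to arbitrary elements $d \in D(X\times V)$; this is routine since both $\Phi^D_V$ and $\Phi^O_V$ are algebra homomorphisms preserving the order/degree filtrations, so the symbol map intertwines them. Therefore the ideal cutting out $SS(\Fou_V(\xi))$ is the $\Phi^O_V$-image of the ideal cutting out $SS(\xi)$, which gives $SS(\Fou_V(\xi)) = \Phi^O_V{}^{-1}(SS(\xi)) = F_V(SS(\xi))$, once one checks that $\Phi^O_V$ is dual (as a map on $\cO(T^*(X\times V))$) to the linear automorphism $F_V$ of $T^*(X\times V)$ defined earlier, i.e. $F_V(\alpha,v,\phi) = (\alpha,-B^{-1}\phi,Bv)$. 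This identification is immediate from the defining formulas: $\Phi^O_V$ sends the fiber coordinate dual to $v$ to (minus) the base coordinate $v$ paired through $B$, and so on.

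Next I would bring in the support hypothesis. Since $\Supp(\xi) \subset Z(\R)$ with $Z$ closed and homothety-invariant in $V$, Fact \ref{Supp2SS} (the previous Corollary) gives $p_{X\times V}(SS(\xi)) \subset Z$, i.e. $SS(\xi) \subset p_{X\times V}^{-1}(Z)$. Applying $F_V$ and using the first part, $SS(\Fou_V(\xi)) = F_V(SS(\xi)) \subset F_V(p_{X\times V}^{-1}(Z))$, which is precisely the assertion. The main obstacle, and the only point requiring genuine care rather than bookkeeping, is justifying the intertwining identity $\Fou_V \circ d = \Phi^D_V(d) \circ \Fou_V$ as operators on Schwartz distributions (not merely on functions or formally) and checking the normalization constants genuinely cancel — this rests on the fact that $\Fou_V$ is a topological isomorphism $\Sc^*(X\times V) \to \Sc^*(X\times V)$, which is available from \cite{AG1}, together with the elementary distributional identities $\Fou_V(\partial_v \eta) = -B(v)\Fou_V(\eta)$ and $\Fou_V(f\cdot\eta) = \partial_{B^{-1}f}\Fou_V(\eta)$ for $f \in V^*$. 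Everything else is formal manipulation of filtered algebras and their symbols.
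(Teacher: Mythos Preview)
Your overall strategy --- show $SS(\Fou_V(\xi)) = F_V(SS(\xi))$ and then combine with $SS(\xi)\subset p_{X\times V}^{-1}(Z)$ --- is natural, and the intertwining $Ann(\Fou_V\xi)=\Phi^D_V(Ann(\xi))$ is correct. The gap is in the ``routine upgrade'' of the lemma $\sigma(\Phi^D_V(f))=\Phi^O_V(\sigma(f))$ from homogeneous polynomials to arbitrary $d\in D(X\times V)$. This upgrade is \emph{false}: $\Phi^D_V$ does \emph{not} preserve the order filtration (it sends the order-$0$ element $v_j\in V^*$ to the order-$1$ operator $\partial_{B^{-1}v_j}$ and conversely). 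Concretely, take $B$ standard and $d=v_1+\partial_{x_1}$. Then $\sigma(d)=\xi_1$, while $\Phi^D_V(d)=\partial_{v_1}+\partial_{x_1}$ has symbol $\phi_1+\xi_1$; but $\Phi^O_V(\xi_1)=\xi_1$. So $\sigma\circ\Phi^D_V\neq\Phi^O_V\circ\sigma$ in general, and your deduction of $SS(\Fou_V\xi)=F_V(SS(\xi))$ from it collapses.

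This is exactly why the paper does \emph{not} attempt the stronger equality and instead works only with polynomials $f$ that are homogeneous in the $V$-variables: for such $f$ the order of $\Phi^D_V(f^k)$ equals the $V$-degree of $f^k$, and the symbol identity genuinely holds. The homothety-invariance hypothesis on $Z$ --- which plays no role in your argument --- is used precisely to guarantee that $Z$ is cut out by such homogeneous polynomials, and then the previous lemma gives $f^k\xi=0$, whence $\Phi^D_V(f^k)\in Ann(\Fou_V\xi)$ and $\Phi^O_V(\sigma(f))$ vanishes on $SS(\Fou_V\xi)$; intersecting over all such $f$ yields the desired inclusion. Your approach could be repaired by proving $SS(\Fou_V\xi)=F_V(SS(\xi))$ via a filtration that $\Phi^D_V$ \emph{does} respect (e.g.\ the Bernstein filtration in the $V$-direction) and then relating that characteristic variety to the order-filtration one, but this is substantially more work than the paper's direct route.
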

\begin{proof}
Let $f \in {\mathcal O}(X\times V)$ be homogeneous with respect to
homotheties in $V$. Suppose that $f$ vanishes on $Z$. Then
$\Phi^D_V(f^k) \in Ann_{D(X)} (\Fou_V(\xi))$. Therefore $\sigma
(\Phi^D_V(f^k))$ vanishes on $SS(\Fou_V(\xi))$. On the other hand,
$\sigma(\Phi^D_V(f^k)) =
\Phi^O_V(\sigma(f^k))=(\Phi^O_V(\sigma(f)))^k$. Hence
$SS(\Fou_V(\xi))$ is included in the zero set of
$\Phi^O_V(\sigma(f))$. Intersecting over all such $f$ we obtain
the required inclusion.
\end{proof}

\end{document}